\documentclass[11pt,a4paper,reqno]{amsart}
\usepackage{amsbsy,amsfonts,amsmath,amssymb,amscd,amsthm,mathtools,bm}
\usepackage{mathrsfs,float,color}
\usepackage[mathcal]{euscript}
\usepackage{subfigure,enumitem,graphicx,epstopdf}
\usepackage[a4paper,text={6.1in,8in},centering,top=1.5in]{geometry}
\usepackage{pxfonts}
\usepackage[colorlinks=true,linkcolor=blue,citecolor=green]{hyperref}
\usepackage{srcltx}
\usepackage[margin=20pt,font=small,labelfont=bf,textfont=it]{caption}
\usepackage[latin1]{inputenc}
\numberwithin{equation}{section}

\theoremstyle{plain}
\newtheorem{theorem}{Theorem}[section]
\newtheorem{lemma}[theorem]{Lemma}
\newtheorem{proposition}[theorem]{Proposition}
\newtheorem{corollary}[theorem]{Corollary}
\newtheorem{claim}[theorem]{Claim}

\theoremstyle{definition}
\newtheorem{definition}[theorem]{Definition}
\newtheorem{remark}[theorem]{Remark}

\hypersetup{linktocpage}

\small\normalsize
\theoremstyle{plain}
\newtheorem{mainthm}{Theorem}
\newtheorem{dualmainthm}{Theorem}
\newtheorem{mainprop}{Proposition}

\newtheorem{maincor}[mainprop]{Corollary}
\newtheorem{dualmaincor}{Corollary}
\newtheorem{thm}{Theorem}[section]
\newtheorem{cor}[thm]{Corollary}
\newtheorem{lem}[thm]{Lemma}
\newtheorem{prop}[thm]{Proposition}

\newtheorem{defi}{Definition}
\newtheorem{dualdefi}{Definition}

\theoremstyle{definition}

\newcommand{\Diff}{\mathrm{Diff}}

\newcommand{\hatF}{\hat{F}}

\newcommand{\supp}{\mathrm{supp}}

\makeatletter
\def\l@part{\@tocline{0}{-2pt}{1pc}{}{}}
\def\l@section{\@tocline{1}{-2pt}{1pc}{4.6em}{}}
\renewcommand{\tocpart}[3]{%
  \indentlabel{\@ifnotempty{#2}{\makebox[2.3em][l]{%
    \ignorespaces#1 #2.\hfill}}}\bf{#3}}
\renewcommand{\tocsection}[3]{%
  \indentlabel{\@ifnotempty{#2}{\hspace*{2.3em}\makebox[2.3em][l]{%
    \ignorespaces#1 #2.\hfill}}}#3}

\makeatother

\makeatletter
\newcommand{\exterior}[1]{\mathop{\mathpalette\exterior@{#1}}}
\newcommand{\exterior@}[2]{%
  \raisebox{\depth}{%
  \fontsize{\sf@size}{0}%
  \m@th
  $\ifx#1\displaystyle\textstyle\else#1\fi\bigwedge$}%
  ^{\mspace{-2mu}#2}%
  \kern-\scriptspace
}
\makeatother

\title{(co)Quasi-irreducible and {(co)}expanding random maps}
\title{(co)Quasi-irreducible and {(co)}expanding random maps}

\author{Pablo G.~Barrientos}
\address{
Instituto de Matem\'atica e Estat\'istica,
Universidade Federal Fluminense,
Niter\'oi, Brazil
}
\email{pgbarrientos@id.uff.br}

\author{Isaia Nisoli}
\address{
Instituto de Matem\'atica,
Universidade Federal do Rio de Janeiro,
Av.\ Athos da Silveira Ramos 149,
Rio de Janeiro, RJ 21941-909, Brazil
}
\email{nisoli@im.ufrj.br}

\author{Dominique Malicet}
\address{
LAMA,
Universit\'e Gustave Eiffel,
5 boulevard Descartes,
77420 Champs-sur-Marne, France
}
\email{mdominique@crans.org}

\date{}
\date{}

\begin{document}
\maketitle

\begin{abstract}
We study quasi-irreducibility, expansion, and their cotangent duals for random \(C^1\)~(local) diffeomorphisms on compact invariant sets of a Riemannian manifold. Quasi-irreducibility is defined through stationary lifts to the projective tangent bundle: every lift must realize the Furstenberg--Kifer formula for the top Lyapunov exponent. We prove that, for ergodic stationary measures, this is equivalent to the absence of equators, namely non-random invariant subbundles on which the top exponent drops. Under simplicity of the first Lyapunov exponent for every stationary measure, quasi-irreducibility is also equivalent to vertical mostly contraction, contraction on average, a vertical spectral gap, and uniqueness of stationary projective lifts. We then apply these criteria to continuity of the top Lyapunov exponent and to expansion on average. In particular, expansion is characterized by positivity of the top Lyapunov exponent on all non-random invariant subbundles; under quasi-irreducibility, it is equivalent to positivity of the top Lyapunov exponent for every stationary measure. Dual statements hold for coquasi-irreducibility, coequators, coexpansion, and continuity of the bottom Lyapunov exponent. We also describe the interplay between expansion and coexpansion and extend the formalism to Grassmannian bundles, obtaining higher-dimensional versions controlling intermediate sums of Lyapunov exponents.

\end{abstract}

\thispagestyle{empty}

\section{Introduction}

Random dynamical systems generated by diffeomorphisms have become a natural
setting in which to study rigidity, orbit closures, stationary measures and
statistical properties beyond the classical deterministic theory. In the
homogeneous setting, the works of Furstenberg~\cite{Fur:63}, Benoist-Quint~\cite{BQ11},
Bourgain-Furman-Lindenstrauss-Mozes~\cite{BFLM07} and Eskin--Lindenstrauss~\cite{EL} show that stationary measures and orbit closures often satisfy
strong rigidity phenomena under irreducibility or expansion assumptions. A central problem is to understand which
analogues of these mechanisms survive for random walks by general smooth
diffeomorphisms.

For smooth dynamics on surfaces, Brown and Rodriguez-Hertz~\cite{BR17} proved a fundamental
trichotomy for hyperbolic stationary measures: either a stable distribution is
non-random (a non-trivial equator), or the measure is SRB, or the measure is finite. For non-uniform hyperbolic random maps of conservative surfaces diffeomorphisms, the absence of non-trivial equators is equivalent to the notions of expansion and coexpansion on average. Recently, expansion and coexpansion on average have appeared as robust
hypotheses~\cite{Pot22,Ell23} leading to absolute continuity~\cite{BLOPR25}, ergodicity~\cite{DDZ25}, spectral gaps~\cite{DD25}, 
exponential mixing~\cite{DD24}, rigidity~\cite{BEFR25} and equidistribution~\cite{BZ26} among others. These results show that
expansion and coexpansion on average conditions are powerful, but they also raise a basic
structural question: which projective or cotangent obstructions prevent expansion and coexpansion on average from being equivalent to the positivity or negativity of the corresponding extremal Lyapunov exponents?

The purpose of this paper is to isolate this obstruction at the level of the derivative cocycle. We introduce and study quasi-irreducibility for random \(C^1\) local diffeomorphisms in terms of stationary measures on the projective tangent bundle generalizing previous results in~\cite{BM} for linear cocycles. Namely, a stationary measure is quasi-irreducible when every stationary lift to the projective bundle maximizes the Furstenberg-Kifer variational formula for the top Lyapunov exponent. Dually, we define
coquasi-irreducibility using the inverse-dual cotangent cocycle. These notions include the usual irreducibility assumptions for random
matrix products.

Our first main result characterizes quasi-irreducibility by the absence of
equators, that is, invariant subbundles on which the top Lyapunov exponent drops. The dual statement characterizes coquasi-irreducibility by the absence of
coequators. Under a uniform gap between the first two Lyapunov exponents, we
prove that quasi-irreducibility is equivalent to several metric and spectral
properties of the projective dynamics: vertical mostly contracting, contraction
on average along projective fibers, a vertical spectral gap on anisotropic
H\"older spaces, and uniqueness of stationary projective lifts. The cotangent
analogue gives the corresponding criteria for coquasi-irreducibility.

We then apply this framework to continuity and expansion. Quasi-irreducibility
gives continuity of the top Lyapunov exponent, while coquasi-irreducibility gives
continuity of the bottom exponent. We also introduce expanding and coexpanding
on average through lower variational formulas on the projective tangent and
cotangent bundles. This yields criteria showing when expansion forces
quasi-irreducibility, when coexpansion forces coquasi-irreducibility, and how the
two notions interact. In dimension two, equators and coequators have the same
dimension, and the resulting statements give a particularly transparent
equivalence between expansion, coexpansion, quasi-irreducibility and
coquasi-irreducibility. 

Finally, we explain that the same formalism coming from~\cite{BN26} applies to Grassmannian bundles and
exterior powers. This gives higher-dimensional versions of
quasi-irreducibility, expansion and coexpansion controlling intermediate sums of
Lyapunov exponents. Thus the paper provides a unified Lyapunov-theoretic
language for the projective, cotangent and Grassmannian mechanisms underlying
recent rigidity and statistical results for random smooth dynamics.

\subsection{Setting}
We establish the framework for the Lyapunov exponents of random maps on a Riemannian manifold $M$ of dimension $m$.  Let $(T , \mathscr{A}, p)$ be a probability space, and consider the Bernoulli product space $(\Omega , \mathscr{F}, \mathbb{P}) = (T^\mathbb{N}, \mathscr{A}^\mathbb{N}, p^\mathbb{N})$. 
Let $f: \Omega \times M \to M$ be a $C^1$ random map, i.e.  $f$ is a measurable map such that $
f_\omega := f(\omega, \cdot) = f_{\omega_0}$ is a local $C^1$  diffeomorphism for $\mathbb{P}$-a.e.~$\omega = (\omega_i)_{i \geq 0} \in \Omega$. 
We introduce the following non-autonomous iterations:
$$
f^0_\omega = \mathrm{id} \quad \text{and} \quad f^n_\omega = f^{}_{\omega_{n-1}} \circ \dots \circ f^{}_{\omega_0} \quad \text{for} \ \ n > 0 \ \ \text{and $\mathbb{P}$-a.e.~$\omega = (\omega_i)_{i \geq 0} \in \Omega$}.
$$
Let $X$ be a compact $f$-invariant set, i.e., $f_\omega(X) \subset X$ for $\mathbb{P}$-a.e.~$\omega\in\Omega$. A typical $f$-invariant set  is $X=\supp \mu$ where $\mu$ is an $f$-stationary measure c.f.~\cite[Claim~3.4.7]{BM}. Recall that $\mu$ is said to be $f$-stationary measure if $\mu=\int f_\omega \mu \,d\mathbb{P}$. This formalism also includes the study of \emph{random linear cocycles}  when $\mu=\delta_p$ is $f$-stationary, i.e., by setting $X=\{p\}$ with $f_\omega(p)=p$  for $\mathbb{P}$-a.e.~$\omega\in \Omega$ and the random matrices $A(\omega)=Df_\omega(p)$.  To simplify notation, in what follows $f$ denotes the random map $f|_X$, i.e., restricting its domain to $\Omega \times X$.

\subsection{Maximal Lyapunov exponents} \label{ss:maximalLE}
Under the integrability condition
\begin{equation} \label{eq:log-integrability}
\int \big( \log^+  \|Df_\omega\|_\infty + \log^+  \|(Df_\omega)^{-1}\|_\infty \big) \, d\mathbb{P} < \infty,
\end{equation}
where $\|\cdot\|_\infty$ denotes the sup-norm (on $X$), 
we define the \emph{Lyapunov exponents} $\lambda_1(\mu)\geq \dots \geq \lambda_m(\mu)$ of any $f$-stationary measure $\mu$ inductively for $k=1,\dots,m$ by
\[
\sum_{i=1}^k \lambda_i(\mu) :=   \lim_{n\to\infty} \frac{1}{n} \int \log \|\wedge^k Df^n_\omega(x)\| \, d\mathbb{P}\, d\mu = \inf_{n\geq 1} \frac{1}{n} \int \log \|\wedge^k Df^n_\omega(x)\| \, d\mathbb{P}\, d\mu.
\]
Here, $\|\wedge^k Df^n_\omega(x)\|$ denotes the operator norm of the $k$-th exterior power of the differential $Df^n_\omega(x)$ defined as the supremum of $\|Df^n_\omega(x) v_1 \wedge \dots \wedge Df^n_\omega(x) v_k\|$ over all $k$-vectors $v_1 \wedge \dots \wedge v_k$ of unit norm in the exterior power space $\wedge^k T_x M$.
We denote by $\mathcal{I}(f)$  the set of  $f$-stationary  measures (supported on~$X$).  We define the \emph{maximal  Lyapunov exponent of $f$} as
$$\lambda_1(f) := \sup \left\{ \lambda_1(\mu) : \mu \in \mathcal{I}(f) \right\}.$$ 
According to~\cite[Thm.~3.2.1]{BM}, this exponent can be characterized as 
\begin{equation} \label{eq:lambdaf}
\begin{aligned}
    \lambda_1(f) =& \max \{\lambda_1(\mu) : \mu \in \mathcal{I}(f) \ \text{ergodic } \} =  \sup_{x\in X} \lambda_1(x)\\ 
    =&  \lim_{n\to\infty} \max_{x \in X} \frac{1}{n}  \int\log\|{Df^n_\omega(x)}\| \, d \mathbb{P} = \inf_{n\geq 1} \max_{x \in X} \frac{1}{n}  \int \log\|{Df^n_\omega(x)}\| \, d \mathbb{P}
\end{aligned}
\end{equation}
where 
\[
\lambda_1(x):= \limsup_{n\to\infty} \frac{1}{n}\int \log \|Df^n_\omega(x)\|\, d\mathbb{P}. 
\]
Moreover, the limsup in the definition of the annealed Lyapunov exponent $\lambda_1(x)$ exists and  coincides with the pointwise Lyapunov exponents almost surely with respect to any $f$-stationary measure $\mu$, that is, 
$$
\lambda_1(x)=  \lim_{n\to\infty} \frac{1}{n}\int \log \|Df^n_\omega(x)\|\, d\mathbb{P} = \lim_{n\to\infty} \frac{1}{n} \log \|Df^n_{\tilde\omega}(x)\| \quad \text{for $(\mathbb{P}\times \mu)$-a.e.~$(\tilde{\omega},x)$}.
$$
The last characterization of $\lambda_1(f)$ in~\eqref{eq:lambdaf} as the infimum of the sequence of continuous functions 
$b_n(f)= \max_{x \in X} \frac{1}{n}  \int \log\|{Df^n_\omega(x)}\| \, d \mathbb{P}$  
implies that $\lambda_1(f)$ is upper semicontinuous. One of the goals of this paper is to study the continuity of this map.


We denote by $\hat X=P(T_XM)$  
the projective tangent bundle of $M$ at $X$. An element of $\hat X$ is a pair $(x, \hat{v})$, where $x \in X$ and $\hat{v}$ is the projective class of a unit vector $v \in T_xM$, i.e., the one-dimensional vector space spanned by $v$. To study directional growth, we lift $f$ to a random map $\skew{6}{\hat}{f}: \Omega \times \hat  X \to \hat  X$ defined by
\[
\hat{f}_\omega(x,\hat{v}) = \left( f_\omega(x), \, Df_\omega(x)\hat{v} \right).
\]
We define the \emph{annealed growth rate function} 
    \[
    \phi_f(x, \hat{v}) := \int \log \|{Df_\omega(x)v} \| \, d \mathbb{P}, \quad (x,\hat v ) \in \hat X
    \]
    where $v$ is any unit vector in the class $\hat{v}$, and the \emph{annealed Lyapunov exponent} of $\skew{6}{\hat}{f}$ as 
    \[
    \lambda_1(x, \hat{v}) := \limsup_{n\to\infty} \frac{1}{n} \int \log \|{Df^n_\omega(x)v}\| \, d \mathbb{P}.
    \]
The sequence of projective continuous maps $\{\phi_n\}_{n\geq 1}$ given by $\phi_{n}(x,\hat v)=\int \log \|Df^n_\omega(x)v\|\, d\mathbb{P}$ is $\hat P$-additive, where $\hat P$ is the annealed Koopman operator associated with the random maps $\skew{6}{\hat}{f}$, i.e.,
$\hat P\varphi
  :=
  \int \varphi \circ \skew{6}{\hat}{f}_\omega \,d\mathbb P$ for $\varphi\in C(\hat X)$.
Hence, by Kingman's subadditive ergodic theorem (c.f.~\cite[Thm.~D and Cor.~2.2.1]{BM}),  the following holds for any $\skew{6}{\hat}{f}$-stationary measure $\nu$:
\begin{enumerate}[label=(\roman*)]
\item $\displaystyle \int \lambda_1\, d\nu= \int \phi_f \, d\nu
        = \lim_{n\to \infty}
        \frac1n
        \int
        \log\|Df_\omega^n(x)v\|
        \,d\mathbb P d\nu$, 
        
\item $\displaystyle \lambda_1(x,\hat v)=\lim_{n\to\infty} \frac{1}{n} \int \log \|Df^n_\omega(x)v\| \, d\mathbb{P}$  for $\nu$-a.e.~$(x,\hat v)\in \hat X$, \\[-0.2cm]
\item $\lambda_1(x,\hat v)=\int \phi_f \, d\nu$ for $\nu$-a.e.~$(x,\hat v)\in \hat X$, provided $\nu$ is ergodic. \\
\end{enumerate}

Denote by $\mathcal{I}_\mu(\skew{6}{\hat}{f}\,)$ the set of $\skew{6}{\hat}{f}$-stationary measures (supported on $\hat X$) with first marginal a measure $\mu$.  Notice that this set is non-empty only if $\mu$ is an $f$-stationary measure. According to~\cite[Prop.~IV, Cor.~V, and Cor.~C.1]{BN26}, under the integrability condition~\eqref{eq:log-integrability} we have
\begin{enumerate}[label=(\alph*)]
    \item \emph{Furstenberg-Kifer variational formula:} for any  $\mu\in \mathcal{I}(f)$, 
    \[
    \lambda_1(\mu) = \sup\left\{\int\phi_f \, d\nu : \nu \in \mathcal{I}_\mu(\skew{6}{\hat}{f}\,) \, \right\} = \max\left\{\int\phi_f \, d\nu : \nu \in \mathcal{I}_\mu(\skew{6}{\hat}{f}\,) \, \text{ergodic} \right\}.
    \]
     Consequently,
        $$\lambda_1(f) = \sup \left\{\int \phi_f \, d\nu : \nu \in \mathcal{I}(\skew{6}{\hat}{f}\,)\right\} = \max \left\{\int \phi_f \, d\nu : \nu \in \mathcal{I}(\skew{6}{\hat}{f}\,) \, \text{ergodic}\right\}.
     $$
    \item \emph{Uniform limit characterization:} 
    \begin{align*}
    \lambda_1(\mu) &= \lim_{n\to\infty} \frac{1}{n}\int \big(\max_{\hat v\in P(T_xM)} \int \log \|Df^n_\omega(x)v\|\, d\mathbb{P}\big)\,d\mu \\[0.25cm]
    \lambda_1(f) &= \lim_{n\to\infty}  \max_{(x,\hat{v}) \in\hat X} \frac{1}{n} \int \log \|{Df^n_\omega(x)v}\| \, d \mathbb{P} = \sup_{(x,\hat{v}) \in \hat X} \lambda_1(x, \hat{v}) 
    \end{align*}
    Moreover, the  "$\displaystyle\lim_{n\to\infty}$" can be replaced by "$\displaystyle\inf_{n\geq 1}$".
\end{enumerate}

\subsection{Quasi-irreducibility}

The following definition is motivated by the analogous concept of random linear cocycles, which extend the classical notion of irreducibility (see~\cite[Prop.~5.3.2]{BM}).

\begin{defi}
\label{def:proj-QI}
We say that $\mu\in \mathcal{I}(f)$ is \emph{quasi-irreducible} if 
every $\nu \in \mathcal{I}_\mu(\skew{6}{\hat}{f}\,)$ is maximizing, i.e., $\int\phi_f\,d\nu=\lambda_1(\mu)$. 
We also say that $f$ is \emph{quasi-irreducible} if every  $\mu \in \mathcal{I}(f)$ is quasi-irreducible. Finally, $f$ is \emph{maximal  quasi-irreducible} if it is quasi-irreducible and $\lambda_1(\mu)=\lambda_1(f)$ for all $\mu\in \mathcal{I}(f)$. 
\end{defi}


As a consequence of the Furstenberg-Kifer variational principle, we get that $\mu$ is  quasi-irreducible whenever $\mathcal{I}_\mu(\skew{6}{\hat}{f}\,)$ is a singleton. In particular, $f$ is maximal  quasi-irreducible provided $\skew{6}{\hat}{f}$ is \emph{uniquely ergodic}, that is, if there is only one $\skew{6}{\hat}{f}$-stationary measure (supported on $\hat X$), which also implies that $f$ has only one stationary measure (supported on $X$). Another example of a maximal quasi-irreducible random map \(f\) which may have
several \(\skew{6}{\hat}{f}\)-stationary measures occurs when $f$ is uniquely ergodic (on $X$) and 
quasi-irreducible.

The following theorem characterises this notion in terms of the non-existence of equators. First, a vector bundle $\mathcal{L}$ of $T_XM$ is said to be  \emph{$Df$-invariant over $\mu$} if $Df_\omega(x)\mathcal{L}_x =\mathcal{L}_{f_\omega(x)}$ for $(\mathbb{P}\times \mu)$-a.e.~$(\omega,x)\in \Omega \times X$. We also say that $\mathcal{L}$ is \emph{trivial} if $\dim \mathcal L_x=0$, \emph{non-trivial} otherwise and \emph{proper} if $1\leq \dim \mathcal L_x <\dim M$ for $\mu$-a.e.~$x\in X$. 

\begin{mainthm}
\label{thm:mainB}
Let $f$ be the restriction to an invariant compact set $X$ of a $C^1$ random map satisfying the integrability condition~\eqref{eq:log-integrability} and consider $\mu\in \mathcal{I}(f)$ ergodic. Then, $\mu$ is  quasi-irreducible if and only if there is no proper $Df$-invariant vector bundle $\mathcal{L}$ over $\mu$ such that $\lambda_1(\mu|_{\mathcal{L}}) < \lambda_1(\mu)$ where 
$$
\lambda_1(\mu|_{\mathcal{L}}):=\lim_{n\to\infty} \frac{1}{n}\int \log \|Df^n_\omega(x)|_{\mathcal{L}_x}\|\, d\mathbb{P}d\mu.
$$
Moreover, if $f$ is maximal quasi-irreducible, then it satisfies the \emph{Strong Law of Large Numbers:} for every $x \in X$ and any unit vector $v\in T_xM$, it holds
    \[
    \lambda_1(f)= \lim_{n\to \infty}  \frac{1}{n}\int \log \|{Df^n_\omega(x)v}\| \, d \mathbb{P} =\lim_{n\to\infty} \frac{1}{n} \log \|{Df^n_{\tilde{\omega}}(x)v}\| \quad \text{for $\mathbb P$-a.e.~$\tilde{\omega}\in\Omega$.} 
    \] 
\end{mainthm}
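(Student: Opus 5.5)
Both directions of the equivalence will be proved separately, and then the Strong Law will be deduced from the maximal case.

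\emph{Equator $\Rightarrow$ not quasi-irreducible.} Suppose $\mathcal L$ is a proper $Df$-invariant bundle over $\mu$ with $\lambda_1(\mu|_{\mathcal L})<\lambda_1(\mu)$. Consider the closed convex set of probability measures on $\hat X$ with marginal $\mu$ that are supported on the projectivized bundle $P(\mathcal L)$ (measurably, $\nu(\{(x,\hat v):\hat v\in P(\mathcal L_x)\})=1$). Since $\mathcal L$ is invariant over $\mu$, the annealed operator $\hat P^*$ preserves this set. The set is nonempty and compact in the weak$^*$ topology: compactness must be handled through disintegrations over the fixed marginal $\mu$, i.e.\ the fibered (Young-measure) topology, because $\mathcal L$ is only measurable. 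A Krylov--Bogolyubov average in this set therefore gives a stationary $\nu\in\mathcal I_\mu(\hat f)$ carried by $P(\mathcal L)$. Applying the Furstenberg--Kifer formula (a) to the restricted cocycle $Df|_{\mathcal L}$, or directly Kingman (i) together with $\log\|Df^n_\omega(x)v\|\le\log\|Df^n_\omega(x)|_{\mathcal L_x}\|$, gives $\int\phi_f\,d\nu\le\lambda_1(\mu|_{\mathcal L})<\lambda_1(\mu)$. So $\nu$ is not maximizing.

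\emph{Not quasi-irreducible $\Rightarrow$ equator.} Take $\nu\in\mathcal I_\mu(\hat f)$ with $\int\phi_f\,d\nu<\lambda_1(\mu)$. By ergodic decomposition, which keeps the marginal $\mu$ since $\mu$ is ergodic, we may assume $\nu$ is ergodic. Then (iii) gives $\lambda_1(x,\hat v)=\int\phi_f\,d\nu=:\lambda<\lambda_1(\mu)$ for $\nu$-a.e.\ point.
\begin{itemize}
\item Pass to the two-sided skew product over $\Omega^{\mathbb Z}\times X$ with the invariant measure associated to $\mu$, and use the Oseledets filtration $V^j_{(\omega,x)}$.
\item Along $\mathbb P$-typical forward words, the quenched exponent of $v$ at $x$ is below $\lambda_1(\mu)$ exactly when $v\in V^2_{(\omega,x)}$, the subspace with exponents $\le\lambda_2$. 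The annealed exponent equals $\lambda$ a.s.\ by (ii)--(iii) and Kingman along the Markov chain.
\item Hence for $\nu$-a.e.\ $(x,\hat v)$, $v\in V^{2}_{(\omega,x)}$ for $\mathbb P$-a.e.\ $\omega$.
\item Define $\mathcal L_x:=\bigcap_{\omega}V^2_{(\omega,x)}$, an essential intersection over one-sided words. It depends only on $x$, is measurable, and is nonzero on a set of positive $\mu$-measure because it contains $\supp\nu_x$.
\end{itemize}
Invariance, $Df_{\omega_0}(x)\mathcal L_x\subset\mathcal L_{f_{\omega_0}x}$, follows from the covariance of the Oseledets filtration and the Bernoulli independence of $\omega_0$ from the future word. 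Equality of dimensions, and constancy of $\dim\mathcal L_x$ $\mu$-a.e., follow from ergodicity of $\mu$ since the dimension is a sub-invariant function. Properness holds because $\mathcal L_x\subset V^2\neq T_xM$. Finally $\lambda_1(\mu|_{\mathcal L})\le\lambda_2(\mu)<\lambda_1(\mu)$. If instead $\lambda_1=\lambda_2$, use the first filtration step strictly below the top exponent; that subspace is still proper, and $\lambda<\lambda_1(\mu)$ forces it to be nontrivial.

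The main obstacle is this step: making ``$v$ lies in the slow subspace for almost every word'' rigorous from annealed information alone. I would try a Fubini argument on $\mathbb P\times\nu$ using the quenched limit in (ii) applied to the stationary Markov measure on $\hat X$.

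\emph{Strong Law.} Assume $f$ is maximal quasi-irreducible. Fix $(x,\hat v)$ and take any weak$^*$ limit $\nu$ of the Ces\`aro averages $\frac1n\sum_{k<n}\hat P^{*k}\delta_{(x,\hat v)}$. Then $\nu$ is stationary, so $\int\phi_f\,d\nu=\lambda_1(f)$. Since $\phi_f$ is continuous, the $\hat P$-additivity of $\phi_n$ gives
\[
\tfrac1n\phi_n(x,\hat v)=\tfrac1n\sum_{k<n}\hat P^k\phi_f(x,\hat v)\to\lambda_1(f),
\]
which is the annealed limit. For the quenched limit, write
\[
\log\|Df^n_{\tilde\omega}(x)v\|=\sum_{k<n}\phi_f\bigl(\hat f^k_{\tilde\omega}(x,\hat v)\bigr)+M_n,
\]
where $M_n$ is a martingale with increments bounded in $L^1$ by the integrability condition~\eqref{eq:log-integrability}. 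A martingale strong law gives $M_n/n\to0$ a.s.; if only $L^1$ bounds are available, I would use a truncation argument. The empirical measures of the random orbit accumulate a.s.\ on stationary measures, and all of these are maximizing, so the Birkhoff sum divided by $n$ converges a.s.\ to $\lambda_1(f)$.
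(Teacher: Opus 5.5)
Your proposal is correct. The direction ``equator $\Rightarrow$ not quasi-irreducible'' is the paper's argument. You obtain a stationary lift carried by $P(\mathcal L)$ inside the Young-measure-compact set of lifts over $\mu$; the paper uses Schauder--Tychonoff, but your Krylov--Bogolyubov average works equally well, since Young convergence implies weak$^*$ convergence and $\hat P$ is Feller. You then use the bound $\int\phi_f\,d\nu\le\lambda_1(\mu|_{\mathcal L})$, which is the last part of Lemma~\ref{lem:bundle-generated-by-nu}. Your Strong Law argument spells out what the paper imports from \cite{BM} (Cor.~C.2.4 and C.2.5). For the martingale part, the truncation works because the increments are dominated by the i.i.d.\ integrable sequence $G(\tilde\omega_k)+\int G\,d\mathbb P$, with $G=\log^+\|Df_\omega\|_\infty+\log^+\|(Df_\omega)^{-1}\|_\infty$; being merely bounded in $L^1$ would not suffice.

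The genuine difference is in ``not quasi-irreducible $\Rightarrow$ equator''. The paper takes an ergodic non-maximizing $\nu$ and sets $\mathcal L_x=\operatorname{span}(\supp\nu_x)$. Invariance comes directly from stationarity. The quenched limit \eqref{eq:directional-quenched-limit}, applied along a measurable basis of $\nu$-generic directions, gives the identity $\int\phi_f\,d\nu=\lambda_1(\mu|_{\mathcal L})$, so $\mathcal L$ is automatically proper with a dropped exponent. You instead take the non-random part $\{v: v\in V_{(\omega,x)}\text{ for }\mathbb P\text{-a.e.\ }\omega\}$ of the Oseledets slow subspace. This is essentially the non-random multiplicative ergodic theorem route of Kifer, which the paper mentions and deliberately avoids.

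Your route costs the filtration form of the Oseledets theorem. It also yields only the inequality $\lambda_1(\mu|_{\mathcal L})<\lambda_1(\mu)$, not the exact value of $\int\phi_f\,d\nu$, which the paper reuses, e.g.\ in Proposition~\ref{prop:abstract-expanding-quasi}(i). In exchange, your $\mathcal L$ contains every invariant bundle whose exponent is below $\lambda_1(\mu)$ (apply Kingman on that bundle), so it exhibits the largest equator explicitly. Its invariance also follows cleanly from the independence of $\omega_0$. This is neatest if you define $V_{(\omega,x)}:=\{v:\limsup_n\frac1n\log\|Df^n_\omega(x)v\|<\lambda_1(\mu)\}$, which is a subspace at every point and exactly covariant; then pass from single vectors to $\mathcal L_x$ via a basis.

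The step you flag as the main obstacle is not a real one, but it does not follow from the annealed items (ii)--(iii), which only control expectations over $\omega$. It follows from the same quenched fact the paper uses in \eqref{eq:directional-quenched-limit}. Since $\nu$ is ergodic, $\mathbb P\times\nu$ is ergodic for the skew product $\hat F$. Birkhoff's theorem (the potential is integrable under \eqref{eq:log-integrability}) then gives $\frac1n\log\|Df^n_\omega(x)v\|\to\int\phi_f\,d\nu$ for $(\mathbb P\times\nu)$-a.e.\ point. Oseledets-regular pairs $(\omega,x)$ have full $\mathbb P\times\mu$-measure and $\nu$ projects to $\mu$. Fubini therefore yields $v\in V_{(\omega,x)}$ for $\mathbb P$-a.e.\ $\omega$, for $\nu$-a.e.\ $(x,\hat v)$.
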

    
Following the terminology of linear cocycles (see~\cite[Def.~1.10.1]{BM}), we introduce the notion of the equator of a measure:

\begin{defi} We call the \emph{equator} of an ergodic $f$-stationary measure $\mu$  the largest  $Df$-invariant vector bundle $\mathcal{L}$ of $T_XM$ over $\mu$ where the maximal Lyapunov exponent satisfies $\lambda_1(\mu|_{\mathcal{L}}) < \lambda_1(\mu)$. 
\end{defi} 
From the previous theorem, we get that  $f:\Omega \times X \to X$ is quasi-irreducible if and only if the equator of every ergodic $f$-stationary measure $\mu$ supported on $X$ is the trivial vector bundle over $\mu$.

\subsection{Vertical mostly contracting, contraction on average  and spectral gap} To express contraction along the projective fibres, fix the angular distance
$\theta_x$ on $P(T_xM)$ induced by the Riemannian structure of $T_xM$. We 
introduce the \emph{vertical metric} on $\hat X$ as
\begin{equation}\label{eq:def-d-vert}
  \mathrm d_{\mathrm{vert}}\big((x,\hat v),(y,\hat u)\big)
  :=
  \begin{cases}
    \theta_x(\hat v,\hat u) & \text{if } x=y,\\[0.3em]
    1                       & \text{if } x\neq y.
  \end{cases}
\end{equation}
For $r\in(0,1)$, the ball $B_{\mathrm{vert}}(z,r)$ is contained
in the fibre $\{x\}\times P(T_xM)$ where $z=(x,\hat v)$.
Moreover, the local Lipschitz constant of $\skew{6}{\hat}{f}^n_\omega$ at $z$ with
respect to $\mathrm d_{\mathrm{vert}}$ coincides with the norm of the vertical derivative of the projective random map, i.e.,
\[
  L\skew{6}{\hat}{f}^n_\omega(z):=\limsup_{\substack{(z',z'')\to (z,z) \\ z'\not = z''}}
  \frac{\mathrm d_{\mathrm{vert}}(\skew{6}{\hat}{f}^n_\omega(z'),\skew{6}{\hat}{f}^n_\omega(z''))}
       {\mathrm d_{\mathrm{vert}}(z',z'')} =\bigl\|D^{\mathrm{vert}}\skew{6}{\hat}{f}^n_\omega(z)\bigr\|.
\]
Here $D^{\mathrm{vert}}\skew{6}{\hat}{f}^n_\omega(z)$ is the restriction of the differential of $\skew{6}{\hat}{f}$ to the \emph{vertical tangent space} at $z=(x,\hat v)$, 
\[
  T^{\mathrm{vert}}_{z}\hat X
  :=
  \ker(D\pi(z))
  \;\cong\;
  T_{\hat v}P(T_xM)
\]
where $\pi: P(TM) \to M$ is the canonical projection. 
Accordingly, we define the \emph{vertical maximal Lyapunov exponent} of an $\skew{6}{\hat}{f}$-stationary measure $\nu$ by
\[
  \lambda_{\mathrm{vert}}(\nu)
  \coloneqq
  \lim_{n\to\infty}
    \frac{1}{n}
    \int
      \log L\skew{6}{\hat}{f}_\omega^{n}(z)\,
        d\mathbb Pd\nu.
\]
Following the theory of random maps on metric spaces in~\cite{BM}, 
we say that~$\hat{f}:\Omega \times \hat X \to \hat X$~is
\begin{enumerate}[leftmargin=0.65cm]
    \item   \emph{vertical mostly contracting} if the integrability condition~\eqref{eq:log-integrability} holds and
    \[
    \lambda_{\mathrm{vert}}(\hat{f}) \coloneqq \sup \left\{ \lambda_{\mathrm{vert}}(\nu) : \nu \in \mathcal{I}(\hat{f}) \right\} < 0.
    \]
    \item \emph{vertical contracting on average} if there are $0 < \alpha \leq 1$, $q < 1$, and  $n \ge 1$ such that
    \[
    \int \mathrm{d}_{\mathrm{vert}}(\skew{6}{\hat}{f}^n_\omega(z), \skew{6}{\hat}{f}^n_\omega(z'))^\alpha \, d\mathbb{P} \leq q \, \mathrm{d}_{\mathrm{vert}}(z, z')^\alpha 
    \]
    for all $z, z' \in \hat{X}$  lying in the same fiber, i.e.~$\pi(z)=\pi(z')$.
\end{enumerate}

In Lemma~\ref{lem:projective-vertical-regularity} we prove that the map $\nu  \mapsto \lambda_{\rm vert}(\nu)$ is upper semicontinuous on $\mathcal{I}(\skew{6}{\hat}{f}\,)$ with respect to the weak$^*$. Hence, it follows that \(\lambda_{\rm vert}\) attains its maximum on the compact convex
set \(\mathcal{I}(\skew{6}{\hat}{f}\,)\). That is, there exists
\(\hat\mu_*\in\mathcal{I}(\skew{6}{\hat}{f}\,)\) ergodic such that $      \lambda_{\rm vert}(\skew{6}{\hat}{f}\,)   =
        \lambda_{\rm vert}(\hat\mu_*)$. 
In particular, we have that $\skew{6}{\hat}{f}$ is vertical mostly contracting if and only if $\lambda_{\rm vert}(\hat\mu)<0$
        for every $\hat\mu\in\mathcal{I}(\skew{6}{\hat}{f}\,)$.

As mentioned, the projective random map $f_A: \Omega \times P(\mathbb{R}^m) \to P(\mathbb{R}^m)$ associated with a linear cocycle $A:\Omega \to \mathrm{GL}(m)$ can be identified with the lift $\skew{6}{\hat}{f}$ to $\hat X = P(T_XM)$ where $X= \{p\}$ (i.e, $\mu=\delta_p$ is $f$-stationary). In that case, $\skew{6}{\hat}{f}$ is vertical contracting on average if and only if $f_A$ is (global) contracting on average in the classical sense, c.f.~\cite{BM}.  Also, under the assumption of a gap between the first $\lambda_1(A)=\lambda_1(\delta_p)$ and second $\lambda_2(A)=\lambda_2(\delta_p)$ Lyapunov exponents, quasi-irreducibility is characterised by the unique ergodicity of the projective random map $f_A$,  c.f.~\cite[Prop.~VI]{BM}. Furthermore, this property is equivalent to $f_A$ being mostly contracting, and it is also equivalent to $f_A$ being contracting on average. 
The following theorem establishes that these metric properties are also equivalent in our more general context of non-linear random maps. We also get a description of the spectral properties of the Koopman operator $\hat P$ of $\skew{6}{\hat}{f}$ acting on the following vertical quotient space of the anisotropic  space of $\alpha$-H\"older functions:

Let ${\mathscr C}^{\alpha}(\hat X)$ be the Banach space of bounded Borel functions \(\Phi:\hat X\to\mathbb R\) whose restrictions
to the projective fibres are uniformly \(\alpha\)-H\"older. More precisely,
\[
   [\Phi]_{\alpha,\mathrm{vert}}
   :=
   \sup_{x\in X}
   \sup_{\substack{\hat u,\hat v\in P(T_xM)\\ \hat u\ne\hat v}}
   \frac{|\Phi(x,\hat u)-\Phi(x,\hat v)|}
        {d_{\mathrm{vert}}((x,\hat u),(x,\hat v))^\alpha}
   <\infty  \quad \text{and} \quad \|\Phi\|:=\|\Phi\|_{\infty}+[\Phi]_{\alpha,\mathrm{vert}}
\]
Let $\mathcal B_\pi
   :=
   \pi^*B(X)
   =
   \{\varphi\circ\pi:\varphi\in B(X)\}$ where $B(X)$ denotes the Banach space of real-valued bounded measurable functions with the sup-norm $\|\cdot\|_{\infty}$.  Then \(\mathcal B_\pi\) is precisely the kernel of the seminorm
\([\cdot]_{\alpha,\mathrm{vert}}\). Hence the quotient ${\mathscr C}^{\alpha}(\hat X)/\mathcal B_\pi$ is naturally endowed with the norm  $\|[\Phi]\|_{\alpha,\mathrm{vert}}
   :=
   [\Phi]_{\alpha,\mathrm{vert}}$. Moreover, since the Koopman operator $\hat P$ associated with $\skew{6}{\hat}{f}$ preserves \(\mathcal B_\pi\) because of
$\hat P(\psi\circ\pi)
   =
   (P\psi)\circ\pi$, $\hat P$  induces an operator on the quotient, 
\[
   {\hat P}:
   {\mathscr C}^{\alpha}(\hat X)/\mathcal B_\pi
   \longrightarrow
   {\mathscr C}^{\alpha}(\hat X)/\mathcal B_\pi,
   \qquad
   {\hat P}[\Phi]_{}:=[\hat P\Phi].
\]
We say that \(\hat P\) has a \emph{vertical spectral gap on ${\mathscr C}^{\alpha}(\hat X)$} if the induced operator on the vertical quotient spaces has spectral radius strictly less than 1, or equivalently, there exist \(C>0\) and \(\theta<1\) such that
\[
   \|{\hat P}^{\,n}[\Phi]\|_{\alpha,\mathrm{vert}}
   \le
   C\theta^n\|[\Phi]\|_{\alpha,\mathrm{vert}}
   \qquad
   \text{for every }n\ge0.
\]

\begin{mainthm} \label{mainthm:equivalence-spectral-gap} Let $f$ be the restriction to an invariant compact set $X$ of a $C^1$ random map  satisfying for some $\beta>0$ the integrability conditions 
\begin{equation} \label{eq:moment-condition-thmB}
    \int \log^+\|Df_\omega\|_\infty + \log^+\|(Df_\omega)^{-1}\|_\infty \,d\mathbb P<\infty
\quad \text{and} \quad 
\int \sup_{z\in\hat X} \bigl(L\skew{6}{\hat}{f}_\omega(z)\bigr)^\beta \,d\mathbb P<\infty .
\end{equation}
Assume that $\lambda_1(\mu) >\lambda_2(\mu)$ for every $\mu \in\mathcal{I}(f)$. Then, the following are equivalent:
    \begin{enumerate}
        \item $f$ is  quasi-irreducible;
        \item $\skew{6}{\hat}{f}$ is vertical mostly contracting;
        \item $\skew{6}{\hat}{f}$ is vertical contracting on average; 
        \item $\hat P$ has vertical spectral gap on ${\mathscr C}^{\alpha}(\hat X)$ 
        for any $\alpha>0$ small enough; 
        \item $\mathcal{I}_\mu(\skew{6}{\hat}{f}\,)$ is a singleton for every  $\mu\in \mathcal{I}(f)$.
     \end{enumerate}
\end{mainthm}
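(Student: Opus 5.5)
We prove the cycle of implications $(1)\Rightarrow(2)\Rightarrow(3)\Rightarrow(4)\Rightarrow(5)\Rightarrow(1)$.

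\emph{Step $(5)\Rightarrow(1)$.} This is immediate from the Furstenberg--Kifer variational formula recalled in \S\ref{ss:maximalLE}. If $\mathcal{I}_\mu(\skew{6}{\hat}{f}\,)$ is a singleton, its unique element attains the maximum, so it is maximizing.

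\emph{Step $(1)\Rightarrow(2)$.} This is the step that uses the gap $\lambda_1(\mu)>\lambda_2(\mu)$. Let $\nu\in\mathcal I(\skew{6}{\hat}{f}\,)$ be ergodic with marginal $\mu$; its marginal $\mu$ is an ergodic $f$-stationary measure. By quasi-irreducibility, $\int\phi_f\,d\nu=\lambda_1(\mu)$. Hence, for $\mathbb P\times\nu$-a.e.\ $(\omega,x,\hat v)$, the vector $v$ grows at rate $\lambda_1(\mu)$. By Oseledets' theorem (one-sided, for the cocycle over the skew product), $v$ then lies outside the random hyperplane $E^{\le\lambda_2}_\omega(x)$. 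The derivative of the projective action at $\hat v$ satisfies
\[
\|D^{\mathrm{vert}}\skew{6}{\hat}{f}^n_\omega(x,\hat v)\|\asymp \sup_{w\perp v}\frac{\|Df^n_\omega(x)w\|\,\|v\|}{\|Df^n_\omega(x)v\|}\cdot\frac{\|v\|}{\|w\|},
\]
so its exponential rate is $\lambda_2(\mu)-\lambda_1(\mu)<0$. Applying Kingman's theorem to the subadditive sequence $\log L\skew{6}{\hat}{f}^n_\omega$ gives $\lambda_{\mathrm{vert}}(\nu)=\lambda_2(\mu)-\lambda_1(\mu)<0$. By upper semicontinuity (Lemma~\ref{lem:projective-vertical-regularity}) the supremum is attained at an ergodic measure, which yields $\lambda_{\mathrm{vert}}(\skew{6}{\hat}{f}\,)<0$.

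\emph{Step $(2)\Rightarrow(3)$.} We follow the metric-space argument of \cite{BM}. Since $\lambda_{\mathrm{vert}}(\skew{6}{\hat}{f}\,)<0$, the uniform-limit characterization, applied to the subadditive continuous sequence $\int\log\sup L\skew{6}{\hat}{f}^n_\omega\,d\mathbb P$ (a vertical analogue of (b)), gives an $n$ with
\[
\sup_z\int\log L\skew{6}{\hat}{f}^n_\omega(z)\,d\mathbb P<0.
\]
This step is where a semi-uniform ergodic theorem for the vertical cocycle is needed. The $\beta$-moment condition in~\eqref{eq:moment-condition-thmB} lets us expand $\int (L\skew{6}{\hat}{f}^n_\omega)^\alpha\,d\mathbb P=1+\alpha\int\log L\,d\mathbb P+O(\alpha^2)<1$ for small $\alpha$, uniformly in $z$. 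We then integrate along the geodesic in the fibre joining $z$ and $z'$, using Jensen or Minkowski for $\alpha\le1$, to obtain
\[
\int \mathrm d_{\mathrm{vert}}(\skew{6}{\hat}{f}^n_\omega z,\skew{6}{\hat}{f}^n_\omega z')^\alpha\,d\mathbb P\le q\,\mathrm d_{\mathrm{vert}}(z,z')^\alpha .
\]
Passing from the infinitesimal bound to the global one needs care. We would control the sup of $L$ over a small ball, using continuity in $z$, then cover the geodesic by small balls and use that the fibre has bounded diameter.

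\emph{Step $(3)\Rightarrow(4)$.} Contraction on average gives $[\hat P^n\Phi]_{\alpha,\mathrm{vert}}\le q[\Phi]_{\alpha,\mathrm{vert}}$, because
\[
|\hat P^n\Phi(x,\hat u)-\hat P^n\Phi(x,\hat v)|\le[\Phi]\int \mathrm d_{\mathrm{vert}}(\cdot,\cdot)^\alpha\,d\mathbb P.
\]
Iterating this bound and using submultiplicativity yields $C\theta^k$ decay.

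\emph{Step $(4)\Rightarrow(5)$.} Let $\nu_1,\nu_2\in\mathcal I_\mu(\skew{6}{\hat}{f}\,)$ and take $\Phi$ continuous and vertically Hölder. By the spectral gap, $\hat P^n\Phi=\psi_n\circ\pi+O(\theta^n)$ in sup-norm. Indeed, oscillation on each fibre is bounded by $[\,\cdot\,]_{\alpha,\mathrm{vert}}\cdot\mathrm{diam}^\alpha$, so we may choose $\psi_n(x)=\hat P^n\Phi(x,\hat v_x)$ for a measurable selection $\hat v_x$. Then
\[
\int\Phi\,d\nu_i=\int\hat P^n\Phi\,d\nu_i=\int\psi_n\,d\mu+O(\theta^n),
\]
so $\int\Phi\,d\nu_1=\int\Phi\,d\nu_2$. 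Such $\Phi$ are dense in $C(\hat X)$, hence $\nu_1=\nu_2$.

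I expect the main obstacle to be $(2)\Rightarrow(3)$: the uniformization of the negative vertical exponent into a uniform integrated contraction with a Hölder exponent, given only the moment bound on $L\skew{6}{\hat}{f}_\omega$.
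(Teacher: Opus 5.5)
\emph{Verdict:} there is a genuine gap, in $(2)\Rightarrow(3)$, where the paper's key projective distortion estimate is missing.

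Your cycle of implications matches the paper's. The steps $(5)\Rightarrow(1)$, $(3)\Rightarrow(4)$ and $(4)\Rightarrow(5)$ are fine. Your fibrewise-oscillation plus density argument for $(4)\Rightarrow(5)$ is a legitimate variant of the paper's dual-norm argument. To get $(4)$ for \emph{every} small $\alpha$, you also need Lemma~\ref{lem:contraction-smaller-exponents} to lower the exponent coming from $(3)$.

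The problem is in passing from a pointwise bound $\sup_z\int (L\hat f^n_\omega(z))^\alpha\,d\mathbb P\le\kappa<1$ to contraction on average for an \emph{arbitrary} pair $z,z'$ in a fibre. Integrating along the fibre geodesic $\gamma$ gives $d_{\rm vert}(\hat f^n_\omega z,\hat f^n_\omega z')\le\int_\gamma L\hat f^n_\omega$. For $\alpha<1$, neither tool you name controls $\int(\int_\gamma L\hat f^n_\omega)^\alpha\,d\mathbb P$:
\begin{itemize}
\item Jensen in $\omega$ gives $(\int_\gamma\int L\hat f^n_\omega\,d\mathbb P)^\alpha$, which needs a \emph{first} moment below $1$. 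You only have a $\beta$-moment, and $\int\log L\,d\mathbb P<0$ does not give $\int L\,d\mathbb P<1$.
\item Jensen along $\gamma$, and Minkowski with exponent $\alpha<1$, both go the wrong way.
\item Covering $\gamma$ by $k$ balls of radius $r_0$ with a local estimate, and using $(\sum_i a_i)^\alpha\le\sum_i a_i^\alpha$, only yields $q\,k^{1-\alpha}d_{\rm vert}(z,z')^\alpha$ with $k\approx\mathrm{diam}/r_0$. That factor need not be below $1$.
\item You cannot improve $q$ by iterating. Image pairs need not stay $r_0$-close, and $L_{r_0}\hat f^n_\omega$ does not become small in $\alpha$-mean as $n$ grows, since the ball may meet the strongly expanding region of the projective action of $Df^n_\omega(x)$.
\end{itemize}
In fact this local-to-global step is false for general random maps. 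Take maps of $[0,1]$ fixing $0$ and $1$, with slope $\delta$ except on a ramp of width $2\epsilon$ placed uniformly at random. They satisfy $\sup_z\mathbb E[L^\alpha]\lesssim\delta^\alpha+\epsilon^{1-\alpha}<1$, yet $d(g_\omega(0),g_\omega(1))=1$ for every $\omega$.

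So the projective structure must be used; the missing ingredient is the global distortion principle~\ref{G4}. Take $z=(x,\hat u)$, $z'=(x,\hat v)$ with unit $u,v$, and $B=Df^n_\omega(x)$. Write $v=\cos t\,u+\sin t\,w$ with $w\perp u$, and use $L\hat f^n_\omega(z)=\sup_{w\perp u,\,\|w\|=1}\|Bu\wedge Bw\|/\|Bu\|^2$. Then
\[
\frac{\sin\angle(Bu,Bv)}{\sin\angle(u,v)}=\frac{\|Bu\wedge Bv\|}{\|u\wedge v\|\,\|Bu\|\,\|Bv\|}\le\sqrt{L\hat f^n_\omega(z)\,L\hat f^n_\omega(z')}.
\]
Hence, up to a harmless constant from comparing angle and sine,
\[
d_{\rm vert}(\hat f^n_\omega z,\hat f^n_\omega z')^\alpha\le C\,\bigl[(L\hat f^n_\omega(z))^\alpha+(L\hat f^n_\omega(z'))^\alpha\bigr]\,d_{\rm vert}(z,z')^\alpha,
\]
which is Lemma~\ref{lem:projective-estimates}. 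This controls the pair by the Lipschitz constants at its two \emph{endpoints}, whatever their distance. Integrating, and amplifying via Lemma~\ref{lem:two-metric-amplification} until $2C\kappa<1$, gives $(3)$; this is Theorem~\ref{thm:two-metric-global-contraction}.

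Two further repairs are needed.
\begin{itemize}
\item In $(1)\Rightarrow(2)$ your displayed formula is wrong. The quantity $\|Df^n_\omega(x)|_{v^\perp}\|/\|Df^n_\omega(x)v\|$ typically has exponential rate $0$, because $v^\perp$ is not the slow Oseledets hyperplane. The correct quantity is the wedge expression above. It lies between $\sigma_2(B)/\|Bv\|$ and $\|\wedge^2B\|/\|Bv\|^2$, so it has rate $\lambda_2(\mu)-\lambda_1(\mu)$ along maximizing lifts (Corollary~\ref{cor:vertical-laypunov-exponent}).
\item In $(2)\Rightarrow(3)$, the uniform expansion $1+\alpha\int\log L\,d\mathbb P+O(\alpha^2)$ needs second moments of $\log L$ that you do not have. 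Use instead that $(t^\alpha-1)/\alpha$ decreases to $\log t$ as $\alpha\downarrow0$, with the $\beta$-moment as an upper majorant, continuity in $z$, and compactness.
\end{itemize}
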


We will show that vertical mostly contracting implies the simplicity of the first Lyapunov exponent. Thus, from the above theorem, we also conclude the following result: 

\begin{maincor} \label{maincor:equivalence-mostly} Let $f$ be the restriction to an invariant compact set $X$ of a $C^1$ random map  satisfying the integrability conditions~\eqref{eq:moment-condition-thmB}. Then, $\skew{6}{\hat}{f}$ is vertical mostly contracting  if and only if $f$ is quasi-irreducible and $\lambda_1(\mu)>\lambda_2(\mu)$ for every $\mu \in \mathcal{I} (f)$.     
\end{maincor}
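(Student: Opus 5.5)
The corollary follows by combining Theorem~\ref{mainthm:equivalence-spectral-gap} with one additional fact, namely that vertical mostly contracting forces simplicity of the top exponent. The implication ``$\Leftarrow$'' is immediate: if $f$ is quasi-irreducible and $\lambda_1(\mu)>\lambda_2(\mu)$ for every $\mu\in\mathcal I(f)$, then the hypotheses of Theorem~\ref{mainthm:equivalence-spectral-gap} hold, and the implication (1)$\Rightarrow$(2) there gives that $\skew{6}{\hat}{f}$ is vertical mostly contracting. For ``$\Rightarrow$'' it therefore suffices to show that vertical mostly contracting implies $\lambda_1(\mu)>\lambda_2(\mu)$ for all $\mu\in\mathcal I(f)$. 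Theorem~\ref{mainthm:equivalence-spectral-gap} then applies, and (2)$\Rightarrow$(1) yields quasi-irreducibility.

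To prove the gap, I argue by contradiction. By ergodic decomposition of $\mu$, and since $\lambda_1-\lambda_2$ is affine in the appropriate sense on ergodic components, we may assume $\mu$ is ergodic with $\lambda_1(\mu)=\lambda_2(\mu)$. The central computation is the vertical derivative of the projective action. For $z=(x,\hat v)$ and a unit vector $w\perp v$ in $T_xM$,
\[
\|D^{\mathrm{vert}}\skew{6}{\hat}{f}^n_\omega(z)\,w\| = \frac{\|\pi_{(Av)^\perp}Aw\|}{\|Av\|}=\frac{\|Av\wedge Aw\|}{\|Av\|^2},\qquad A=Df^n_\omega(x).
\]
Hence
\[
\log L\skew{6}{\hat}{f}^n_\omega(z)=\max_{w\perp v}\log\|Av\wedge Aw\|-2\log\|Av\|\ \ge\ \log\|Av\wedge Aw\|-2\log\|Av\|
\]
for any fixed $w$. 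I would then choose an ergodic $\nu\in\mathcal I_\mu(\skew{6}{\hat}{f}\,)$ that is maximizing, so $\int\phi_f\,d\nu=\lambda_1(\mu)$; this exists by the Furstenberg--Kifer formula. Integrating over $\nu$ and applying (i)--(iii) of Section~\ref{ss:maximalLE}, the term $-\frac{2}{n}\int\log\|Av\|$ tends to $-2\lambda_1(\mu)$.

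The main obstacle is bounding the wedge term from below along $\nu$-typical directions. We need
\[
\liminf_n\frac1n\int\max_{w\perp v}\log\|Df^n_\omega(x)v\wedge Df^n_\omega(x)w\|\,d\mathbb P\,d\nu\ \ge\ \lambda_1(\mu)+\lambda_2(\mu),
\]
which would give $\lambda_{\mathrm{vert}}(\nu)\ge\lambda_2(\mu)-\lambda_1(\mu)=0$ and contradict vertical mostly contracting. I would obtain this from the Oseledets theorem for the random cocycle over $\mathbb P\times\mu$. For $\mathbb P\times\nu$-a.e.\ point, $v$ grows at rate $\lambda_1$. Choosing $w$ generic in $v^\perp$ (i.e.\ not in the Oseledets filtration space of exponents $<\lambda_2$ intersected appropriately), $\|Av\wedge Aw\|$ grows at rate $\lambda_1+\lambda_2$, because the exterior square of the cocycle has top exponent $\lambda_1+\lambda_2$ and a generic 2-plane containing a top-growing vector realizes it.

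A cleaner alternative avoids genericity. Use $\max_{w\perp v}\|Av\wedge Aw\|\ge\|Av\|\,\mathfrak s_2(A)$, where $\mathfrak s_2(A)$ is the second singular value; this follows by testing $w$ in the span of the top two singular directions orthogonal to $v$. Then $\log L\ge\log\mathfrak s_2(A)-\log\|Av\|$, and $\int\log\mathfrak s_2\,d\mathbb P\,d\mu\ge\int\log\|\wedge^2A\|-\log\|A\|$. Dividing by $n$ and passing to the limit, the right-hand side tends to $(\lambda_1+\lambda_2)-\lambda_1-\lambda_1=\lambda_2-\lambda_1=0$. This gives $\lambda_{\mathrm{vert}}(\nu)\ge0$ directly, using only the definitions of the exponents and the fact that $\nu$ is maximizing. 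The integrability assumptions~\eqref{eq:moment-condition-thmB} guarantee that all these integrals are finite and that the limits defining $\lambda_{\mathrm{vert}}$ exist. The contradiction completes the proof.
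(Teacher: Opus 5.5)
Your argument is correct and is essentially the paper's. The bound $L\skew{6}{\hat}{f}^n_\omega(z)\ge \mathfrak s_2(A)/\|Av\| = \|\wedge^2A\|/(\|A\|\,\|Av\|)$ in your ``cleaner alternative'' is exactly the lower estimate of Lemma~\ref{lem:projective-estimates}, which the paper proves by Schur-complement interlacing rather than by your test-vector choice. It yields $\lambda_{\mathrm{vert}}(\hat\mu)\ge\lambda_2(\mu)-\lambda_1(\mu)$ as in Corollary~\ref{cor:vertical-laypunov-exponent}, and both directions then follow from Theorem~\ref{mainthm:equivalence-spectral-gap} just as you say.

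The reduction to ergodic $\mu$, the choice of a maximizing lift, and the Oseledets genericity route are all unnecessary. Since $\int\phi_f\,d\hat\mu\le\lambda_1(\mu)$, the bound already holds for every stationary lift of every $\mu\in\mathcal I(f)$.
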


\subsection{Continuity} Let $X$ be a compact subset of $M$. We denote by $\Diff^1_{\rm loc, \mathbb P}(X)$ the set of $C^1$ random maps $f:\Omega\times M \to M$ such that $f_\omega(X)\subset X$ for $\mathbb{P}$-a.e.~$\omega\in\Omega$ and  satisfying~\eqref{eq:log-integrability}. 
Also, for a fixed Borel probability measure $\mu$ on $M$, we consider the subspace
\[
   \Diff^1_{\rm loc, \mathbb P}(X,\mu)
   :=
   \bigl\{\,f\in \Diff^1_{\rm loc,\mathbb P}(X):\ \mu\in\mathcal{I}(f)\,\bigr\}.
\]
We endow $\Diff^1_{\rm loc, \mathbb P}(X)$  with the \emph{average $C^1$ distance}
\begin{equation}\label{eq:avg-C1-main}
  d_{C^1}(f,g)
  :=
  \int \Big(
d_{C^0}(f_\omega,g_\omega)
+
\|Df_\omega-Dg_\omega\|_{\infty} \
+
\|(Df_\omega)^{-1}-(Dg_\omega)^{-1}\|_{\infty} \Big) 
\,d\mathbb{P}
\end{equation}
where as usual we are seeing $f$ and $g$  restricted to $X$ (i.e., the corresponding suprema in the above distances are taken over $X$). Finally, we extend the result of continuity of the maximal Lyapunov exponent for quasi-irreducible linear cocycles~\cite[Prop.~XI, XIV]{BM}  to the non-linear context. 

\begin{mainthm}
\label{main:thmD}
Let $f \in \Diff^1_{\rm loc, \mathbb P}(X,\mu)$ be such that $\mu$ is  quasi-irreducible for $f$. Then the map
\[
   \lambda_{1}(\mu,\cdot): \bigl(\Diff^1_{\rm loc, \mathbb P}(X,\mu), d_{C^1}\bigr) \to \mathbb{R},
   \quad
   g \mapsto \lambda_{1}(\mu,g):= \lim_{n\to \infty} \frac{1}{n} \int \log \|Dg^n_\omega(x)\| \, d\mathbb{P}\, d\mu
\]
is continuous at $f$. Moreover, if $f$ is maximal  quasi-irreducible, the map
$$
  \lambda_1:\bigl(\Diff_{\rm loc, \mathbb P}^1(X),d_{C^1}\bigr)\to\mathbb R, \quad g \mapsto \lambda_1(g):=\sup\big\{\lambda_1(\eta): \eta \in \mathcal{I}(g)\big\}
$$
is continuous at $f$. 
\end{mainthm}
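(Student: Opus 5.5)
Upper semicontinuity in $g$ is already available: $\lambda_1(\mu,g)=\inf_{n}\frac1n\int\log\|Dg^n_\omega(x)\|\,d\mathbb P\,d\mu$ is an infimum of functions continuous for $d_{C^1}$, by dominated convergence and the average $C^1$ distance. So the whole task is lower semicontinuity at $f$. I would argue by contradiction with the Furstenberg--Kifer formula. Take $g_k\to f$ in $\Diff^1_{\rm loc,\mathbb P}(X,\mu)$. For each $k$ choose $\nu_k\in\mathcal I_\mu(\hat g_k)$ with $\int\phi_{g_k}\,d\nu_k=\lambda_1(\mu,g_k)$; such a maximizer exists by (a) applied to $g_k$. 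Since $\hat X$ is compact, pass to a subsequence with $\nu_k\to\nu$ weak$^*$ and $\lambda_1(\mu,g_k)\to\liminf_k\lambda_1(\mu,g_k)$.

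The key steps are three. First, $\nu$ has marginal $\mu$, because the projection is continuous and every $\nu_k$ projects to $\mu$. Second, $\nu$ is $\hat f$-stationary. To see this, for $\varphi\in C(\hat X)$ estimate
\[
\Bigl|\int\varphi\circ\hat g_{k,\omega}\,d\mathbb P-\int\varphi\circ\hat f_\omega\,d\mathbb P\Bigr|_\infty\to0 .
\]
This follows from uniform continuity of $\varphi$ on $\hat X$, from $d_{C^0}(g_{k,\omega},f_\omega)+\|Dg_{k,\omega}-Df_\omega\|_\infty\to0$ in $\mathbb P$-measure (a consequence of $d_{C^1}\to0$), and from boundedness of $\varphi$. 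Convergence in measure is enough because $\varphi$ is bounded, and the projective action is continuous in $(x,A)$ on $\{A\in GL\}$. Combining $\hat P_{g_k}\varphi\to\hat P_f\varphi$ uniformly with $\nu_k\to\nu$ gives $\hat P_f^*\nu=\nu$. Third, $\int\phi_{g_k}\,d\nu_k\to\int\phi_f\,d\nu$. Here I would show $\phi_{g_k}\to\phi_f$ uniformly on $\hat X$ and note that $\phi_f$ is continuous.

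The uniform convergence of $\phi_{g_k}$ is the main obstacle, since $\log$ is unbounded. I would write $\log\|Dg_\omega(x)v\|-\log\|Df_\omega(x)v\|$ and bound it by $\log(1+\|(Df_\omega)^{-1}\|_\infty\|Dg_\omega-Df_\omega\|_\infty)$ and symmetrically with $g$. Uniform integrability of $\log^+$ of these quantities then lets me split $\Omega$ into a set where $\|(Df_\omega)^{-1}\|_\infty\le R$, where the difference is small in $L^1$ by $d_{C^1}$, and its complement, where the tails are controlled by integrability. I expect to truncate and use $\log(1+t)\le t$ together with $\int\|Dg_{k}-Df\|_\infty\to0$. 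The inverse-derivative term in $d_{C^1}$ handles the lower bound symmetrically. If uniform integrability of $\log^+\|(Dg_{k,\omega})^{-1}\|$ turns out not to be automatic, I would use the inequality $\|(Dg)^{-1}\|\le\|(Df)^{-1}\|+\|(Dg)^{-1}-(Df)^{-1}\|$ together with $L^1$ convergence, which gives it.

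With these steps, quasi-irreducibility of $\mu$ for $f$ yields $\lim\lambda_1(\mu,g_k)=\int\phi_f\,d\nu=\lambda_1(\mu,f)$, which is the desired lower semicontinuity. For the second statement, upper semicontinuity of $g\mapsto\lambda_1(g)$ already holds via $b_n$. For lower semicontinuity I would pick $\eta_k\in\mathcal I(g_k)$ and maximizing $\nu_k\in\mathcal I(\hat g_k)$ with $\int\phi_{g_k}\,d\nu_k=\lambda_1(g_k)$. The same limit argument, now without fixing the marginal, gives $\nu\in\mathcal I(\hat f)$ with marginal $\eta\in\mathcal I(f)$. Maximal quasi-irreducibility then gives $\int\phi_f\,d\nu=\lambda_1(\eta)=\lambda_1(f)$, and this completes continuity at $f$.
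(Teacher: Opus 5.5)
Your proof is correct, but the lower-semicontinuity step follows a different route from the paper's.

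\textbf{The paper's route.} The paper applies Proposition~\ref{prop:mainthmD} to $A=Df$. First, Lemma~\ref{lem:uniform-conv-phi} shows that $\phi_n^{g_k}\to\phi_n^{f}$ uniformly for \emph{every} $n$. This uses Claim~\ref{claim:phi}, the uniform convergence $\hat P_{g_k}\psi\to\hat P_f\psi$ that you also establish, and a Markov-operator telescoping. Hence $a_n(\mu,\cdot)$ and $b_n(\mu,\cdot)$ are continuous. The paper then invokes the finite-time representations $\lambda_1(\mu,g)=\inf_n b_n(\mu,g)$ and $\lambda_-(\mu,g)=\sup_n a_n(\mu,g)$ from \cite{BN26}, and \cite[Theorem C]{BM} in the global case. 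This gives upper semicontinuity of $\lambda_1$ and lower semicontinuity of $\lambda_-$ everywhere. Continuity at $f$ then follows from the sandwich $\lambda_-\le\lambda_1$, with equality at a (maximal) quasi-irreducible $f$.

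\textbf{Your route.} You take maximizing lifts $\nu_k$ for $g_k$ and pass to a weak$^*$ limit $\nu$. You check that $\nu$ is an $\hat f$-stationary lift of $\mu$ (resp.\ of some $\eta\in\mathcal I(f)$). Quasi-irreducibility then forces $\int\phi_f\,d\nu=\lambda_1(\mu,f)$. This is the classical Furstenberg--Kifer compactness argument, and it needs only three inputs:
\begin{itemize}
\item the $n=1$ uniform convergence of the potential;
\item convergence of the projective Markov operators, together with continuity of $\hat P_f\varphi$, which you should state explicitly;
\item existence of maximizers in the upper variational formula.
\end{itemize}
In particular, it bypasses the lower variational formula for a fixed, possibly non-ergodic, base measure, which is the deeper input the paper draws from \cite{BN26}.

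\textbf{Your separate upper-semicontinuity step is superfluous.} The limit $\nu$ is always a stationary lift for $f$, so $\int\phi_f\,d\nu\le\lambda_1(\mu,f)$. The same compactness argument therefore already yields $\limsup_k\lambda_1(\mu,g_k)\le\lambda_1(\mu,f)$. This is convenient, because continuity of $g\mapsto\int\log\|Dg^n_\omega(x)\|\,d\mathbb P\,d\mu$ is not given by plain dominated convergence. There is no single dominating function as $g$ varies, so it needs a uniform-integrability argument as in Claim~\ref{claim:phi}.

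\textbf{What each approach buys.} The paper's approach gives lower semicontinuity of $g\mapsto\lambda_-(\mu,g)$ and $g\mapsto\lambda_-(g)$ at every point. This is reused later, for instance for the openness of expansion and coexpansion. Your argument gives continuity only at quasi-irreducible points and says nothing about $\lambda_-$ elsewhere. In exchange, it is more elementary and self-contained.
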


Notice that the above theorem only requires a logarithm-moment condition improving the continuity result for linear cocycle in~\cite[Prop.~XI, XIV]{BM} which assume an exponential moment condition. 
Moreover, although the above result is stated for random maps realized
on a fixed probability space $(\Omega,\mathscr F,\mathbb P)$, it has an
equivalent distributional formulation. Namely, by the usual coupling
realization argument, continuity with respect to the average $C^1$
distance corresponds to continuity of the one-step distribution laws
with respect to the associated Wasserstein distance; cf.\
\cite[Rem.~7.6.3]{BM}.

We also study the continuity of $\mu\mapsto \lambda_1(\mu)$ for a fixed random map. It is well known that this map is upper semicontinuous with respect to the weak$^*$ topology; see~\cite[Claim~2.2.7]{BM}. The following shows that under quasi-irreducibility, we can get the continuity: 
\begin{mainthm}
\label{prop:continuity-measure-lyapunov}
Let $f$ be the restriction to an invariant compact set $X$ of a $C^1$ random map satisfying the integrability conditions~\eqref{eq:log-integrability}.
 If \(\mu_0\in\mathcal{I}(f)\) is quasi-irreducible, then the map $\mu\in\mathcal{I}(f)\mapsto \lambda_1(\mu)$  is continuous at $\mu_0$ with respect to the weak$^*$ topology.  
In particular, if \(f\) is quasi-irreducible, then $       \mu \in \mathcal{I}(f) \mapsto \lambda_1(\mu)$ is continuous. 
\end{mainthm}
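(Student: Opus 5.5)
Upper semicontinuity of $\mu\mapsto\lambda_1(\mu)$ is already known (\cite[Claim~2.2.7]{BM}). So it suffices to prove lower semicontinuity at $\mu_0$: if $\mu_k\to\mu_0$ weak$^*$ in $\mathcal{I}(f)$, then $\liminf_k\lambda_1(\mu_k)\ge\lambda_1(\mu_0)$.

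The plan is to use the Furstenberg--Kifer variational formula (a) to realize each $\lambda_1(\mu_k)$ by a stationary lift. For each $k$ choose $\nu_k\in\mathcal{I}_{\mu_k}(\hat f\,)$, for instance a maximizing one, so that $\int\phi_f\,d\nu_k\le\lambda_1(\mu_k)$ (equality if maximizing). Pass to a subsequence realizing the $\liminf$. Since $\hat X$ is compact, we may assume $\nu_k\to\nu$ weak$^*$. The first marginal of $\nu$ is $\lim\pi_*\nu_k=\mu_0$.

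The first key step is to show that $\nu$ is $\hat f$-stationary, i.e.\ $\nu\in\mathcal{I}_{\mu_0}(\hat f\,)$. For continuous $\Phi$ on $\hat X$, $\hat P\Phi(z)=\int\Phi(\hat f_\omega(z))\,d\mathbb P$ is continuous by dominated convergence, since each $\hat f_\omega$ is continuous and $\Phi$ is bounded. Then $\int\hat P\Phi\,d\nu_k=\int\Phi\,d\nu_k$ passes to the limit. Quasi-irreducibility of $\mu_0$ then gives $\int\phi_f\,d\nu=\lambda_1(\mu_0)$.

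The second key step, which I expect to be the main obstacle, is continuity of $\nu\mapsto\int\phi_f\,d\nu$. The function $\phi_f(x,\hat v)=\int\log\|Df_\omega(x)v\|\,d\mathbb P$ is continuous on $\hat X$: for each $\omega$ the integrand is continuous in $(x,\hat v)$, and it is dominated by $\log^+\|Df_\omega\|_\infty+\log^+\|(Df_\omega)^{-1}\|_\infty$, which is integrable by~\eqref{eq:log-integrability}. Dominated convergence then gives the claim. This requires only measurability of $\omega\mapsto Df_\omega$ and continuity of $x\mapsto Df_\omega(x)$ on $X$, which hold since $f_\omega$ is $C^1$. Hence
\[
\lambda_1(\mu_0)=\int\phi_f\,d\nu=\lim_k\int\phi_f\,d\nu_k\le\liminf_k\lambda_1(\mu_k).
\]
Combined with upper semicontinuity, this yields continuity at $\mu_0$.

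The "in particular" statement follows by applying this at every $\mu_0\in\mathcal{I}(f)$, all of which are quasi-irreducible by definition. Note that the argument never uses ergodicity of $\mu_0$ or $\mu_k$; it uses only the definition of quasi-irreducibility and formula (a).
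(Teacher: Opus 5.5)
Your proof is correct. It differs from the paper's mainly in how the lower bound is obtained.

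The paper (Proposition~\ref{prop:thm-prop}) uses a sandwich. On $\mathcal{I}(f)$ one has $\lambda_-(\mu)\le\lambda_1(\mu)$, with $\mu\mapsto\lambda_1(\mu)$ upper semicontinuous and $\mu\mapsto\lambda_-(\mu)$ lower semicontinuous; both semicontinuity facts are simply imported from \cite[Claim~2.2.7]{BM} ``and its dual''. Quasi-irreducibility of $\mu_0$ is exactly $\lambda_-(\mu_0)=\lambda_1(\mu_0)$, so continuity at $\mu_0$ is immediate.

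You never introduce $\lambda_-$. Instead you prove the needed inequality directly from:
\begin{itemize}
\item weak$^*$ compactness of $\mathrm{Prob}(\hat X)$;
\item closedness of stationarity under weak$^*$ limits (the Feller property of $\hat P$);
\item continuity of $\phi_f$ by dominated convergence under~\eqref{eq:log-integrability};
\item the definition of quasi-irreducibility applied to the limit lift.
\end{itemize}

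Run with minimizing lifts, your compactness argument is precisely a proof of the lower semicontinuity of $\lambda_-$ that the paper cites. Run with maximizing lifts, it gives $\limsup_k\lambda_1(\mu_k)=\int\phi_f\,d\nu\le\lambda_1(\mu_0)$. So you could also drop the citation for upper semicontinuity and obtain a fully self-contained proof, using only the inequality $\int\phi_f\,d\nu\le\lambda_1(\mu)$ for lifts and no finite-time characterizations.

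The paper's packaging, in turn, isolates the global lower semicontinuity of $\mu\mapsto\lambda_-(\mu)$ as a standalone fact, which it reuses elsewhere. For example, Proposition~\ref{prop:abstract-expanding-quasi}(iii) relies on it to conclude that $\lambda_-(\mu,A)$ attains a positive minimum over $\mathcal{I}(f)$.
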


\subsection{Expanding on average} Given an $f$-stationary measure $\mu$, define 
$$
 \lambda_-(\mu):= \inf_{\hat \mu \in \mathcal{I}_\mu(\skew{6}{\hat}{f}\,)} \int \phi_f \, d\hat\mu  \quad \text{and} \quad \lambda_-(f):= \inf_{\mu\in \mathcal{I}(f)} \lambda_-(\mu). 
$$
We also recall the elementary inequality
\[
        \lambda_m(\mu)\leq \lambda_-(\mu)\leq \lambda_1(\mu).
\]
Indeed, the second inequality follows immediately from the Furstenberg-Kifer variational formula
since $\lambda_1(\mu) =
        \lambda_+(\mu)$ where 
        $$ \lambda_+(\mu):=
        \sup_{\hat\mu\in\mathcal{I}_\mu(\skew{6}{\hat}{f})}
        \int\phi_f\,d\hat\mu.$$
On the other hand, the first inequality follows by combining (i) in~\S\ref{ss:maximalLE} with $\nu \in \mathcal{I}_\mu(\skew{6}{\hat}{f}\,)$, the fact that 
$\|Df_\omega^n(x)v\| \geq
        \mathfrak m(Df_\omega^n(x))$ 
for every unit vector \(v\in T_xM\) where $        \mathfrak m(B):=\|B^{-1}\|^{-1}$ is the co-norm of a linear map $B$ and 
$$
        \lambda_m(\mu)=\lim_{n\to\infty}
        \frac1n
        \int
        \log\mathfrak m(Df_\omega^n(x))
        \,d\mathbb Pd\mu.  
$$        
Moreover, according to~\cite[Thm.~3.2.1 and Cor.~2.2.1]{BM}
and~\cite[Cor.~D.3 and~\S D.4]{BN26},  
\begin{align*}
    \lambda_-(\mu) &= \min\left\{ \int \phi_f \, d\hat\mu : \hat\mu \in \mathcal{I}_\mu(\skew{6}{\hat}{f}\,) \ \text{ergodic} \right\} \\ &= \lim_{n\to\infty} \int \big(\min_{\hat{v} \in P(T_xM)} \frac{1}{n}\int \log \|Df^n_\omega(x)v\|\, d\mathbb{P}\big)\,d\mu \\[0.25cm]
    \lambda_-(f) &=  \min \left\{ \int \phi_f \, d\hat\mu:  \hat \mu\in \mathcal{I}(\skew{6}{\hat}{f}\,) \ \text{ergodic}\right\} =\lim_{n\to\infty}  \min_{(x,\hat{v}) \in\hat X} \frac{1}{n} \int \log \|{Df^n_\omega(x)v}\| \, d \mathbb{P} \\ &=\inf_{(x,\hat v)\in \hat X} \liminf_{n\to \infty} \frac{1}{n}\int \log \|Df_\omega^n(x)v\|\, d\mathbb{P}.
    \end{align*}
 The "$\lim_{n\to\infty}$" above can be replaced by "$\sup_{n\geq 1}$".
In view of this, the following definition agrees with the usual finite-time formulation of the notion of expansion that appears in the literature under several different names: weakly expanding~\cite{Liu17}, uniformly expanding~\cite{Chu20,Pot22,BEFR25}, uniformly expanding on average in the future in~\cite{BLOPR25} and~expanding on average in~\cite{DD24}. 

\begin{defi}
We say that   $f$ is  \emph{expanding on average} (or \emph{expanding} for short) if $\lambda_-(f)>0$. 
\end{defi}

As a first immediate consequence of the representation of $\lambda_-(f)$ as the supremum of the sequence of continuous functions $a_n(f)=\min_{(x,\hat v)\in \hat X} \frac{1}{n} \int \log \|Df_\omega^n(x)v\|\, d\mathbb{P}$ we get that $f \mapsto \lambda_-(f)$ is lower-semicontinuous. Thus, the set of expanding on average random maps is an open set.   
From this definition, we have the following result:

\begin{maincor} \label{maincor:expanding-quasi} Let $f$ be the restriction to an invariant compact set $X$ of a $C^1$ random map satisfying the integrability condition~\eqref{eq:log-integrability}. The following statement holds: 
\begin{enumerate}[label=(\roman*),itemsep=0.1cm,leftmargin=0.75cm]
    \item 
$f$ is expanding if and only if 
every non-trivial $Df$-invariant bundle $\mathcal L$ over an $f$-stationary measure $\mu$ satisfies $\lambda_1(\mu|_{\mathcal L})>0$;
\item if $f$ is expanding and $\lambda_2(\mu)\leq 0$ for every $\mu \in \mathcal{I}(f)$, then $f$ is quasi-irreducible;
\item if $f$ is quasi-irreducible, then
$$\text{$f$ is expanding if and only if
$\lambda_1(\mu)>0$ for every $\mu \in \mathcal{I}(f)$;}$$
\item if $\lambda_2(\mu)\leq 0<\lambda_1(\mu)$ for all $\mu\in\mathcal{I}(f)$, then 
$$\text{$f$ is expanding if and only if it is quasi-irreducible}.$$ 
\end{enumerate}
\end{maincor}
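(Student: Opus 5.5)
The plan is to treat item~(i) as the core statement, prove it with the Furstenberg--Kifer formula (a) and the mechanism behind Theorem~\ref{thm:mainB}, and then obtain (ii)--(iv) as short consequences.

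\emph{Item (i), ``$\Rightarrow$''.} Assume $\lambda_-(f)>0$ and let $\mathcal L$ be a non-trivial $Df$-invariant bundle over $\mu\in\mathcal I(f)$. I apply the variational formula (a) to the restricted cocycle $Df|_{\mathcal L}$; this is the formalism of \cite{BN26} on the measurable sub-bundle $P(\mathcal L)\subset\hat X$. It produces $\nu\in\mathcal I_\mu(\hat f)$ supported on $P(\mathcal L)$ with $\int\phi_f\,d\nu=\lambda_1(\mu|_{\mathcal L})$. Hence
\[
\lambda_1(\mu|_{\mathcal L})=\int\phi_f\,d\nu\ge\lambda_-(\mu)\ge\lambda_-(f)>0.
\]

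\emph{Item (i), ``$\Leftarrow$''.} This is the contrapositive and the main obstacle. Suppose $\lambda_-(f)\le 0$. By the min-characterisation of $\lambda_-(f)$ there is an ergodic $\hat\mu\in\mathcal I(\hat f)$ with $\int\phi_f\,d\hat\mu\le 0$; let $\mu$ be its (ergodic) marginal. I must produce a non-random invariant bundle $\mathcal L$ over $\mu$ with $\lambda_1(\mu|_{\mathcal L})\le 0$.
\begin{enumerate}
\item Among all $Df$-invariant bundles over $\mu$ that carry an ergodic stationary lift with $\int\phi_f\le0$, choose $\mathcal L$ of minimal dimension. Such bundles exist: the linear span of the supports of the fibre measures of $\hat\mu$ is invariant, by stationarity and ergodicity.
\item Let $\nu$ be the corresponding lift on $P(\mathcal L)$. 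If $\nu$ is maximizing for $\mu|_{\mathcal L}$, then $\lambda_1(\mu|_{\mathcal L})=\int\phi_f\,d\nu\le0$ and we are done.
\item Otherwise I apply Theorem~\ref{thm:mainB} to the restricted cocycle. More precisely, I expect its proof to show that a non-maximizing ergodic lift is carried by a proper non-random invariant sub-bundle $\mathcal L'\subsetneq\mathcal L$, namely the equator, with strictly smaller top exponent. Then $\nu$ lives on $P(\mathcal L')$, which contradicts minimality.
\end{enumerate}
The delicate point is exactly this localisation: a non-maximizing lift must be supported inside the equator. If it is not directly available from Theorem~\ref{thm:mainB}, I would prove it via Oseledets. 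The value $\int\phi_f\,d\nu$ is some Lyapunov exponent $\lambda_i<\lambda_1$. Then $\nu$-a.e.\ direction lies in the random slow space $V^{i}_x(\omega)$. Stationarity then forces the fibre measures onto $\bigcap_\omega V^{i}_x(\omega)$, whose non-random span gives the equator.

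\emph{Item (ii).} Let $f$ be expanding with $\lambda_2\le0$, and suppose some $\mu$ is not quasi-irreducible. Lifts over an ergodic component of $\mu$ are lifts over $\mu$ after averaging, so I may assume $\mu$ is ergodic. Theorem~\ref{thm:mainB} then gives a proper invariant $\mathcal L$ with $\lambda_1(\mu|_{\mathcal L})<\lambda_1(\mu)$. This value is an Oseledets exponent of $\mu$ strictly below $\lambda_1(\mu)$, so it is at most $\lambda_2(\mu)\le0$. That contradicts item~(i).

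\emph{Item (iii).} Quasi-irreducibility gives $\lambda_-(\mu)=\lambda_1(\mu)$ for every $\mu\in\mathcal I(f)$, and therefore $\lambda_-(f)=\inf_\mu\lambda_1(\mu)$.
\begin{itemize}
\item If $f$ is expanding, then $\lambda_1(\mu)\ge\lambda_-(f)>0$ for every $\mu$.
\item Conversely, suppose $\lambda_1(\mu)>0$ for all $\mu$. The infimum defining $\lambda_-(f)$ is attained by some ergodic $\hat\mu$. Its marginal $\mu$ satisfies $\int\phi_f\,d\hat\mu\ge\lambda_-(\mu)=\lambda_1(\mu)>0$, so $\lambda_-(f)>0$.
\end{itemize}

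\emph{Item (iv).} If $f$ is expanding, it is quasi-irreducible by (ii). If $f$ is quasi-irreducible and $\lambda_1(\mu)>0$ for all $\mu$, it is expanding by (iii).
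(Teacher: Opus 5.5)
Your proof is correct and has the same architecture as the paper's, which proves the statement for general random linear bundle isomorphisms in Proposition~\ref{prop:abstract-expanding-quasi}: (ii) combines the bound $\lambda_1(\mu|_{\mathcal L})\le\lambda_2(\mu)$ for an equator with (i), and (iv) is (ii) plus (iii). The differences are minor.

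In (i)~``$\Leftarrow$'' the paper needs no minimal-dimension argument. Lemma~\ref{lem:bundle-generated-by-nu} shows that for \emph{every} ergodic lift $\nu$ the bundle $\mathcal L_\nu$ spanned by $\operatorname{supp}\nu_x$ is invariant and satisfies $\int\phi_f\,d\nu=\lambda_1(\mu|_{\mathcal L_\nu})$. Applied to the minimizing ergodic lift, this gives $\lambda_-(f)=\lambda_1(\mu|_{\mathcal L_\nu})$ at once. Your minimality wrapper ultimately reduces to this same fact, which is the ``delicate point'' you isolate. The paper proves it by applying Birkhoff along a measurable basis of $\nu_x$-generic directions and comparing $\|Df^n_\omega(x)|_{\mathcal L_x}\|$ with $\max_i\|Df^n_\omega(x)v_i(x)\|$. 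Your Oseledets route also works. Note, though, that what places $\operatorname{supp}\nu_x$ inside the slow subspace for a.e.\ $\omega$ is Fubini, together with the $\omega$-independence of $\nu_x$ and the closedness of the subspaces. Stationarity only makes the resulting span invariant.

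In (i)~``$\Rightarrow$'' you only need the easy inequality $\int\phi_f\,d\nu\le\lambda_1(\mu|_{\mathcal L})$ for some lift carried by $P(\mathcal L)$. Existence of such a lift comes from the Schauder--Tychonoff argument in Proposition~\ref{prop:non-linear-quasi-irreducibility}, and this is exactly what the paper uses.

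In (iii) you get positivity from the attainment of $\lambda_-(f)$ by an ergodic lift, whereas the paper uses lower semicontinuity of $\mu\mapsto\lambda_-(\mu)$ on the compact set $\mathcal I(f)$; both work.

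Finally, your sentence reducing (ii) to ergodic $\mu$ argues the wrong direction. What you need is that a non-maximizing lift of $\mu$ has a non-maximizing ergodic component. This follows from the ergodic decomposition and the affinity of $\lambda_1$ over it; it is routine, and the paper leaves it implicit as well.
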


Observe that by taking $\mathcal{L}=T_XM$ in item (i), we have $\lambda_1(\mu|_{\mathcal L})=\lambda_1(\mu)$. In fact, item~(i) in the above corollary was attributed in~\cite[Theorem~1.1]{Pot22} to Chung~\cite{Chu20} and Liu-Xiaochuan~\cite{Liu17}. However, these references only deal with the area-preserving case in dimension $m=2$.  


\subsection{Coexpanding on average and coquasi-irreducibility}
\label{subsec:coexpanding-on-average}

We now introduce the cotangent analogue of expanding on average. We will assume in the following that the $C^1$ random map $f$ satisfies the integrability condition~\eqref{eq:log-integrability}. Let
$\tilde X:=P(T_{\smash{X}}^*M)$
be the projectivized cotangent bundle over \(X\). Since each
\(f_\omega\) is a \(C^1\) local diffeomorphism, we can consider the  projective inverse cotangent random  map
\[
        \skew{4}{\tilde}{f}_\omega(x,[\xi])
        =
        \bigl(f_\omega(x),
        [( Df_\omega(x)^{-1})^*\xi]\bigr),
        \qquad
        (x,[\xi])\in\tilde X,
\]
where \(\xi\in T_x^*M\setminus\{0\}\) is a unit representative of the
projective covector \([\xi]\), and $(Df_\omega(x)^{-1})^*$ denotes the dual map of $Df_\omega(x)^{-1}$, that is $(Df_\omega(x)^{-1})^*\xi =\xi \circ Df_\omega(x)^{-1}$. 

We denote by \(\mathcal{I}_\mu(\skew{4}{\tilde}{f}\,)\) the set of
\(\skew{4}{\tilde}{f}\)-stationary probability measures projecting to a given
\(f\)-stationary measure \(\mu\). The associated cotangent potential is
\[
        \tilde\phi_f(x,[\xi])
        :=
        \int
        \log
        \|(Df_\omega(x)^{-1})^*\xi\|
        \,d\mathbb P.
\]
The Lyapunov exponents of the random linear cocycle $(f,(Df^{-1})^*)$ are exactly given by
$-\lambda_1(\mu) \leq \dots \leq -\lambda_m(\mu)$. By the variational principle in~\cite{BN26} we have that 
\[
        -\lambda_m(\mu)=\sup_{\tilde\mu\in\mathcal{I}_\mu(\skew{4}{\tilde}{f}\,)}
        \int\tilde\phi_f\,d\tilde\mu = \max \big\{\int \tilde{\phi}_f \, d\tilde\mu:  \tilde\mu \in \mathcal{I}_\mu(\skew{4}{\tilde}{f}) \, \text{ergodic}\big\}.
\]
We shall also use the cotangent version of quasi-irreducibility.
\begin{dualdefi}
\label{def:cotangent-quasi-irreducible}
We say that \(\mu\in\mathcal{I}(f)\) is  \emph{coquasi-irreducible}, if every
\(\tilde\mu\in\mathcal{I}_\mu(\skew{4}{\tilde}{f}\,)\) is maximizing for the
cotangent potential, that is,
$    \int\tilde\phi_f\,d\tilde\mu
        =
        -\lambda_m(\mu)$.
We say that \(f\) is coquasi-irreducible if every
\(\mu\in\mathcal{I}(f)\) is coquasi-irreducible. Finally, $f$ is maximal coquasi-irreducible if it is coquasi-irreducible and $\lambda_m(\mu)=\lambda_m(f)$ for all $\mu \in \mathcal{I}(f)$ where
$$
\lambda_m(f):=\inf \{\lambda_m(\eta):\eta \in \mathcal{I}(f)\} = \min \{\lambda_m(\eta):\eta \in \mathcal{I}(f) \ \text{ergodic}\}.
$$
\end{dualdefi}

Analogously to Theorem~\ref{thm:mainB}, we can characterise coquasi-irreducibility as the absence of a proper \((Df^{-1})^*\)-invariant cotangent
subbundle over \(\mu\) on which the maximal cotangent exponent drops. To this, we first introduce the cotangent analogue of the equator. We say that a measurable cotangent subbundle
\(\mathcal{L}\subset T_X^*M\)  is said to be \emph{$(Df^{-1})^*$-invariant over \(\mu\)} if $$(Df_\omega(x)^{-1})^*\mathcal{L}_{x}=\mathcal{L}_{f_\omega(x)} \quad \text{for \(\mathbb P\times\mu\)-a.e.~\((\omega,x)\).}$$ 
If \(\mathcal{L}\) is such a subbundle, we define
\[
        \tilde{\lambda}_1(\mu|_{\mathcal{L}})
        :=
        \lim_{n\to\infty}
        \frac1n
        \int
        \log
        \|
        (Df_\omega^n(x)^{-1})^*
        |_{\mathcal{L}_x}
        \|
        \,d\mathbb P\,d\mu .
\]

\begin{dualdefi}
    We call the \emph{coequator} of an ergodic $f$-stationary measure $\mu$ to the largest $(Df^{-1})^*$-invariant vector bundle $\tilde{\mathcal{L}}$ of $T^*_{X}M$ over $\mu$ where $\tilde{\lambda}_1(\mu|_{\tilde{\mathcal L}})<-\lambda_m(\mu)$. 
\end{dualdefi}  

Finally, we get the following analogously result to Theorem~\ref{thm:mainB}. 
\begin{dualmainthm}
\label{prop:cotangent-equator-criterion}
Let $f$ be the restriction to an invariant compact set $X$ of a $C^1$ random map satisfying the integrability condition~\eqref{eq:log-integrability} and consider an ergodic measure \(\mu\in\mathcal{I}(f)\). Then \(\mu\) is coquasi-irreducible if and only if its coequator is trivial.

Moreover, if $f$ is maximal coquasi-irreducible, then for every \(x\in X\) and unit covector \(\xi\in T_x^*M\),
$$
 -\lambda_m(f) = \lim_{n\to\infty} \frac1n \int \log \|(Df_\omega^n(x)^{-1})^*\xi\| \,d\mathbb P = \lim_{n\to\infty} \frac1n \log \|(Df_{\tilde\omega}^n(x)^{-1})^*\xi\| \quad \text{for \(\mathbb P\)-a.e.~\(\tilde\omega\in\Omega\)}.
$$
\end{dualmainthm}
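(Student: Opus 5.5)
The plan is to deduce the cotangent statement from Theorem~\ref{thm:mainB} by applying it to a second random map: the inverse-dual cotangent cocycle, viewed as a linear cocycle over the same base dynamics $f$ on $X$. Theorem~\ref{thm:mainB} only uses four ingredients: the base random map $f$ on the compact invariant set $X$, a continuous measurable family of invertible linear maps over it, the log-integrability condition, and the Furstenberg--Kifer variational formula with the associated Kingman statements (i)--(iii). So I will first isolate the proof of Theorem~\ref{thm:mainB} as a statement about an abstract continuous linear cocycle $A_\omega(x):E_x\to E_{f_\omega(x)}$ on a continuous vector bundle $E\to X$. For the present theorem I take $E=T^*_XM$ and $A_\omega(x)=(Df_\omega(x)^{-1})^*$.

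The required hypotheses transfer directly. We have $\|A_\omega(x)\|=\|Df_\omega(x)^{-1}\|$ and $\|A_\omega(x)^{-1}\|=\|Df_\omega(x)\|$, so \eqref{eq:log-integrability} is symmetric under this exchange. The projectivized lift of $A$ is exactly $\skew{4}{\tilde}{f}$, and its growth potential is $\tilde\phi_f$. The top exponent of $A$ over $\mu$ is $-\lambda_m(\mu)$, by the identity $(Df^n_\omega(x)^{-1})^*=(Df^n_\omega(x)^*)^{-1}$ together with the fact that adjoints preserve singular values. The variational formula displayed before Definition~\ref{def:cotangent-quasi-irreducible} is the Furstenberg--Kifer formula for $A$, which is taken from~\cite{BN26}. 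Under this dictionary, coquasi-irreducibility of $\mu$ is quasi-irreducibility of $\mu$ for the cocycle $A$, and the coequator is the equator of $A$.

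The first part then follows from the argument for Theorem~\ref{thm:mainB}, which I recall in order to check that it is cocycle-agnostic. Suppose first that there is a nontrivial invariant $\tilde{\mathcal L}$ with $\tilde\lambda_1(\mu|_{\tilde{\mathcal L}})<-\lambda_m(\mu)$. Restricting the cocycle to $P(\tilde{\mathcal L})$ and applying the variational formula there produces a stationary lift $\tilde\mu$ supported on $P(\tilde{\mathcal L})$ with $\int\tilde\phi_f\,d\tilde\mu=\tilde\lambda_1(\mu|_{\tilde{\mathcal L}})<-\lambda_m(\mu)$, so $\mu$ is not coquasi-irreducible.

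Conversely, suppose there is an ergodic non-maximizing $\tilde\mu$. By statement (iii) of~\S\ref{ss:maximalLE} for $A$, together with the Oseledets filtration of $A$, $\tilde\mu$-a.e.\ covector has exponent strictly below $-\lambda_m(\mu)$. Such covectors therefore lie in the slow Oseledets subspace $\mathcal V_x(\omega)$, which depends on the future $\omega$. The crux, and the main obstacle, is to produce a non-random subbundle. I would take $\tilde{\mathcal L}_x$ to be the linear span of the support of the disintegration $\tilde\mu_x$. Stationarity gives $\tilde\mu_{f_\omega x}=A_\omega(x)_*\tilde\mu_x$ almost surely, which I would justify via the Bernoulli structure as in~\cite{BM}, and this makes $\tilde{\mathcal L}$ invariant. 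The support of $\tilde\mu_x$ lies in $P(\mathcal V_x(\omega))$ for a.e.\ $\omega$, so $\tilde{\mathcal L}_x\subset\mathcal V_x(\omega)$, and hence $\tilde\lambda_1(\mu|_{\tilde{\mathcal L}})<-\lambda_m(\mu)$. Taking the sum of all such subbundles, equivalently a subbundle of maximal dimension using ergodicity, gives the largest one, so the coequator is nontrivial.

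For the second part, maximal coquasi-irreducibility for $f$ is maximal quasi-irreducibility for $A$, since $\lambda_1$ of $A$ over $\mu$ is $-\lambda_m(\mu)$ and the maximum of $-\lambda_m$ is $-\lambda_m(f)$. The strong law then comes from the same argument as in Theorem~\ref{thm:mainB}. The annealed statement holds because every stationary lift on $\tilde X$ has integral $-\lambda_m(f)$, so the uniform-limit characterization (b) (upper bound) and its $\min$ analogue (lower bound), both applied to $A$, squeeze $\frac1n\int\log\|A^n_\omega(x)\xi\|\,d\mathbb P$ to $-\lambda_m(f)$ uniformly in $(x,[\xi])$. The almost sure statement then follows from the Bernoulli martingale/Kingman argument of~\cite[Thm.~D]{BM}.
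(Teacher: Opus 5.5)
\textbf{Assessment.} Your architecture matches the paper's. The paper proves Theorem~\ref{thm:mainB} for an abstract random linear bundle isomorphism (Propositions~\ref{prop:non-linear-quasi-irreducibility} and~\ref{prop:SLLN}) and then applies it to $A=(Df^{-1})^*$ on $T^*_XM$. It uses, as you do, the span of the supports of the disintegration, and the $\max$/$\min$ squeeze for the annealed strong law. Your Oseledets-containment argument ($\supp\tilde\mu_x\subset P(\mathcal V_x(\omega))$ for a.e.\ $\omega$, hence a drop on $\tilde{\mathcal L}$) is a valid substitute for the paper's frame argument in Lemma~\ref{lem:bundle-generated-by-nu}. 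One small correction there: the a.s.\ quenched exponent comes from Birkhoff's theorem for the ergodic skew product over $\mathbb P\times\tilde\mu$, as in~\eqref{eq:directional-quenched-limit}, not from the annealed statement (iii).

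\textbf{The crux step rests on a false identity.} For a \emph{stationary} lift, $\tilde\mu_{f_\omega(x)}=A_\omega(x)_*\tilde\mu_x$ does not hold $\mathbb P\times\mu$-a.s. Take $X=\{p\}$ a common fixed point. Then $\tilde\mu_x=\tilde\mu$ is a stationary measure of an i.i.d.\ matrix product, and your identity would make $\tilde\mu$ invariant under almost every matrix. This fails, for instance, for the atomless Furstenberg measure of a product generating a strongly irreducible non-compact subgroup of $\mathrm{SL}(2,\mathbb R)$, by Furstenberg's lemma. Stationarity only gives an averaged relation, and the correct argument runs as follows.
\begin{enumerate}
\item From $\tilde\mu(P(\tilde{\mathcal L}))=1$ and $\tilde\mu=\int(\skew{4}{\tilde}{f}_\omega)_*\tilde\mu\,d\mathbb P$ you get $\tilde\mu\bigl((\skew{4}{\tilde}{f}_\omega)^{-1}P(\tilde{\mathcal L})\bigr)=1$ for a.e.\ $\omega$.
\item Since $P(\tilde{\mathcal L}_{f_\omega(x)})$ is closed, this gives $A_\omega(x)\supp\tilde\mu_x\subset P(\tilde{\mathcal L}_{f_\omega(x)})$. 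That is only the inclusion $A_\omega(x)\tilde{\mathcal L}_x\subset\tilde{\mathcal L}_{f_\omega(x)}$.
\item The missing idea is a dimension count. Put $d(x)=\dim\tilde{\mathcal L}_x$, so $d(f_\omega(x))\ge d(x)$. Since $\mathbb P\times\mu$ is invariant under the skew product, the non-negative function $d(f_\omega(x))-d(x)$ has zero integral. Hence the inclusion is an equality a.e., and $d$ is constant by ergodicity.
\end{enumerate}
This is exactly the first part of Lemma~\ref{lem:bundle-generated-by-nu}.

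\textbf{A lesser gap in the other direction.} $\tilde{\mathcal L}$ is only a measurable subbundle over $\mu$. The variational formula from the introduction concerns the continuous cocycle on the compact space $\tilde X$, so it cannot simply be ``applied on $P(\tilde{\mathcal L})$''. What you need is some $\tilde\mu\in\mathcal I_\mu(\skew{4}{\tilde}{f}\,)$ with $\tilde\mu(P(\tilde{\mathcal L}))=1$. The paper obtains it by applying Schauder--Tychonoff to the dual Koopman operator on the convex set of lifts of $\mu$ carried by $P(\tilde{\mathcal L})$, which is compact in the Young-measure topology \cite[Prop.~B.2]{BN26}. After passing to an ergodic component, you only need the inequality $\int\tilde\phi_f\,d\tilde\mu\le\tilde\lambda_1(\mu|_{\tilde{\mathcal L}})$, not the equality you claim. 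This inequality follows from Birkhoff along $\tilde\mu$-typical covectors, which lie in $\tilde{\mathcal L}_x$, together with Kingman for the restricted norm.

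With these two repairs the argument goes through. The strong-law part is fine as proposed.
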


Moreover, we can analogously introduce a vertical metric $\mathrm{d}_{\mathrm{vert}}$ in $\tilde{X}$. Then, we similarly can define the vertical local Lipschitz constant $L\skew{4}{\tilde}{f}_\omega(z)=\|D^{\rm vert}\skew{4}{\tilde}{f}_\omega(z)\|$ for $z=(x,[\xi])\in \tilde{X}$ and the vertical maximal Lyapunov exponent $\lambda_{\rm vert}(\tilde \nu)$ of an $\skew{4}{\tilde}{f}$-stationary measure $\tilde{\nu}$.  Similarly, we say \emph{vertical mostly contracting} and \emph{vertical contracting on average} for $\skew{4}{\tilde}{f}$  and \emph{vertical spectral gap} on $\mathscr{C}^\alpha(\tilde X)$ for the annealed Koopman operator $\tilde P$  associated with $\skew{4}{\tilde}{f}$. 
Hence, we get the following analogous result to Theorem~\ref{mainthm:equivalence-spectral-gap} and Corollary~\ref{maincor:equivalence-mostly}.

\begin{dualmainthm}
\label{thm:cotangent-projective-criterion}
Let \(f\) be the restriction to an invariant compact set \(X\) of a
  \(C^1\) random map on an \(m\)-dimensional
manifold satisfying for some $\beta>0$ the integrability conditions
\begin{equation}\label{eq:moment-condition-thmBbis}
        \int
        \log^+\|Df_\omega\|_\infty
        +
        \log^+\|(Df_\omega)^{-1}\|_\infty
        \,d\mathbb P<\infty 
\quad \text{and}  \quad 
\int
       \bigl( \sup_{\tilde z\in\tilde X}
        L\skew{4}{\tilde}{f}_\omega(\tilde z)\bigr)^\beta
        \,d\mathbb P<\infty.
\end{equation}
 Assume that $\lambda_{m-1}(\mu)>\lambda_m(\mu)$ for every $\mu\in\mathcal{I}(f)$. 
Then the following conditions are equivalent:
\begin{enumerate}
    \item \(f\) is coquasi-irreducible;
    \item $\skew{4}{\tilde}{f}$ is vertical mostly contracting;
    \item \(\skew{4}{\tilde}{f}\) is vertical contracting on average;
    \item $\tilde{P}$  has vertical spectral gap on $\mathscr{C}^\alpha(\tilde X)$
    for every sufficiently small \(\alpha>0\);
    \item $\mathcal{I}_\mu(\skew{4}{\tilde}{f}\,)$  is a singleton for every \(\mu\in\mathcal{I}(f)\). 
\end{enumerate}
\end{dualmainthm}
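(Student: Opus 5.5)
Theorem~\ref{thm:cotangent-projective-criterion} follows from Theorem~\ref{mainthm:equivalence-spectral-gap} applied to a new random map, the inverse-dual cotangent cocycle. The idea is to recast $\skew{4}{\tilde}{f}$ as the projectivized derivative action of a linear cocycle over the same base $(f,\mu)$. Then we check that all hypotheses and all five conditions transfer verbatim.

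\textbf{Step 1 (the cocycle and its exponents).} Consider the vector bundle $E=T^*_XM$ with fibrewise linear maps $B_\omega(x):=(Df_\omega(x)^{-1})^*:T^*_xM\to T^*_{f_\omega(x)}M$. The Riemannian metric identifies $T^*_xM$ with $T_xM$ isometrically. Under this identification $B_\omega(x)$ becomes $(Df_\omega(x)^{-1})^{T}$, so $\|B_\omega\|_\infty=\|(Df_\omega)^{-1}\|_\infty$ and $\|B_\omega^{-1}\|_\infty=\|Df_\omega\|_\infty$. Hence the first condition in~\eqref{eq:moment-condition-thmBbis} is exactly the log-integrability~\eqref{eq:log-integrability} for this cocycle. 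The second condition in~\eqref{eq:moment-condition-thmBbis} is the $\beta$-moment condition of~\eqref{eq:moment-condition-thmB} for the projectivization $\skew{4}{\tilde}{f}$.

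The exponents of $B$ over $\mu$ are $-\lambda_m(\mu)\ge\dots\ge-\lambda_1(\mu)$, as recalled before Definition~\ref{def:cotangent-quasi-irreducible}. The top exponent of $B$ is therefore $-\lambda_m(\mu)$ and the second is $-\lambda_{m-1}(\mu)$. Thus the spectral-gap hypothesis $\lambda_{m-1}(\mu)>\lambda_m(\mu)$ is precisely the hypothesis ``$\lambda_1>\lambda_2$'' of Theorem~\ref{mainthm:equivalence-spectral-gap} for the cocycle $B$. Likewise, coquasi-irreducibility of $\mu$ (Definition~\ref{def:cotangent-quasi-irreducible}) is quasi-irreducibility in the sense of Definition~\ref{def:proj-QI} for $B$, with potential $\tilde\phi_f$ playing the role of $\phi_f$.

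\textbf{Step 2 (checking the abstract framework).} This is the main point to verify. The proof of Theorem~\ref{mainthm:equivalence-spectral-gap} is stated for $Df$ acting on $P(T_XM)$. I will argue that it only uses the following ingredients:
\begin{itemize}[leftmargin=0.65cm]
\item a continuous (in $x$) linear cocycle over $f|_X$ on a finite-dimensional vector bundle with continuous metric;
\item the Furstenberg--Kifer variational formula and the uniform limit characterizations from~\cite{BN26}, which are formulated there for general linear cocycles over random maps;
\item Kingman-type statements (i)--(iii) for $\hat P$-additive sequences;
\item the upper semicontinuity of $\lambda_{\rm vert}$ (Lemma~\ref{lem:projective-vertical-regularity}), whose proof uses only the fibrewise projective action.
\end{itemize}
The vertical metric, the local Lipschitz constant, and the anisotropic space $\mathscr C^\alpha(\tilde X)$ with quotient by $\pi^*B(X)$ are defined identically for $\tilde X$. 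The key relation between the vertical exponent and the gap — that $\lambda_{\rm vert}(\nu)$ equals the second exponent minus the first along the lifted direction — holds for any linear cocycle, since it is computed from $\|\wedge^2 B\|/\|Bv\|^2$. Continuity of $x\mapsto B_\omega(x)$ holds because $f_\omega$ is $C^1$.

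\textbf{Step 3 (conclusion).} Once the framework is verified, I apply Theorem~\ref{mainthm:equivalence-spectral-gap} to $B$. The resulting equivalences among quasi-irreducibility, vertical mostly contracting, vertical contracting on average, vertical spectral gap of $\tilde P$, and uniqueness of lifts in $\mathcal I_\mu(\skew{4}{\tilde}{f})$ are exactly items (1)--(5).

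The only obstacle is bookkeeping: confirming that no step of the proof of Theorem~\ref{mainthm:equivalence-spectral-gap} uses that the cocycle is a derivative (for instance, via the chain rule on $M$) rather than an abstract continuous cocycle. If some step does, I would redo it using only the cocycle identity $B^{n+k}_\omega(x)=B^k_{\sigma^n\omega}(f^n_\omega x)B^n_\omega(x)$, which is all it should require.
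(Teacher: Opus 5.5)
Your proposal matches the paper's proof. The paper carries out your Step 2 verification by proving Theorem~\ref{mainthm:equivalence-spectral-gap-linear} for an abstract continuous random linear bundle isomorphism, then applies it to $A=(Df^{-1})^*$ on $T^*_XM$ using exactly your Step 1 identifications (norm symmetry of the integrability condition, exponents $-\lambda_m\ge\dots\ge-\lambda_1$, the gap hypothesis, coquasi-irreducibility as quasi-irreducibility of $A$). One small correction: in general only $\lambda_{\rm vert}(\nu)\ge\lambda_2-\lambda_1$ holds, with equality for maximizing lifts (Corollary~\ref{cor:vertical-laypunov-exponent}), and that is all that (1)$\Rightarrow$(2) requires.
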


\begin{dualmaincor}\label{maincor:equivalence-mostlybis} Let $f$ be the restriction to an invariant compact set $X$ of a $C^1$ random map of an $m$-dimensional manifold satisfying~\eqref{eq:moment-condition-thmBbis}. Then, $\skew{4}{\tilde}{f}$ is vertical mostly contracting if and only if $f$ is coquasi-irreducible and $\lambda_{m-1}(\mu)>\lambda_m(\mu)$ for every $\mu\in \mathcal{I}(f)$.
\end{dualmaincor}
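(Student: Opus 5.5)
The plan is to reduce Corollary~\ref{maincor:equivalence-mostlybis} to Corollary~\ref{maincor:equivalence-mostly} and Theorem~\ref{thm:cotangent-projective-criterion}, viewing $\skew{4}{\tilde}{f}$ as the projective lift of the linear cocycle $(f,(Df^{-1})^*)$ over $f$. Every argument behind Theorem~\ref{mainthm:equivalence-spectral-gap} and Corollary~\ref{maincor:equivalence-mostly} uses only the linear cocycle structure over the base map $f$ together with the integrability conditions. It never uses that the cocycle is the actual derivative of $f$. So the whole formalism transfers once we have a dictionary. Its top exponent is $-\lambda_m(\mu)$ and its second exponent is $-\lambda_{m-1}(\mu)$, because the exponents of $(Df^{-1})^*$ are $-\lambda_m(\mu)\ge -\lambda_{m-1}(\mu)\ge\dots\ge-\lambda_1(\mu)$. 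The gap condition $\lambda_{m-1}(\mu)>\lambda_m(\mu)$ is therefore exactly simplicity of the top exponent of the dual cocycle. Coquasi-irreducibility is exactly quasi-irreducibility of this cocycle, as Definition~\ref{def:cotangent-quasi-irreducible} shows. Conditions~\eqref{eq:moment-condition-thmBbis} are the analogues of~\eqref{eq:moment-condition-thmB}, since $\|(Df^{-1})^*\|=\|Df^{-1}\|$ and $\|((Df^{-1})^*)^{-1}\|=\|Df\|$.

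For the direction ($\Leftarrow$), assume $f$ is coquasi-irreducible and $\lambda_{m-1}(\mu)>\lambda_m(\mu)$ for all $\mu\in\mathcal I(f)$. Then all hypotheses of Theorem~\ref{thm:cotangent-projective-criterion} hold, and the implication (1)$\Rightarrow$(2) of that theorem gives that $\skew{4}{\tilde}{f}$ is vertical mostly contracting.

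For the direction ($\Rightarrow$), the key step is to show that vertical mostly contracting forces the spectral gap. The paper announces this fact for the tangent case in the sentence before Corollary~\ref{maincor:equivalence-mostly}, and I will redo it for the cotangent cocycle. The computation is as follows. For $z=(x,[\xi])$ with $\xi$ a unit covector and $A=(Df^n_\omega(x)^{-1})^*$, the vertical derivative of the projective action has norm
\[
L\skew{4}{\tilde}{f}^n_\omega(z)=\sup_{\eta\perp\xi,\ \|\eta\|=1}\frac{\|A\eta\wedge A\xi\|}{\|A\xi\|^2}\ \ge\ \frac{\mathfrak m_2(A)}{\|A\xi\|^2}\cdot\|A\xi\|,
\]
where $\mathfrak m_2$ denotes the second singular value in the appropriate sense. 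The cleaner route is to pass directly to exponents along an ergodic lift.

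Let $\mu$ be ergodic and suppose $\lambda_{m-1}(\mu)=\lambda_m(\mu)$. Take an ergodic maximizing lift $\tilde\nu\in\mathcal I_\mu(\skew{4}{\tilde}{f}\,)$ with $\int\tilde\phi_f\,d\tilde\nu=-\lambda_m(\mu)$, which exists by the variational principle. By Oseledets' theorem, for typical points the vertical exponent is
\[
\lambda_{\rm vert}(\tilde\nu)=\chi_2-\chi_1,
\]
where $\chi_1=-\lambda_m(\mu)$ is the exponent realized by $\xi$, and $\chi_2$ is the largest exponent of the cocycle induced on the quotient by the line. This $\chi_2$ is at least $-\lambda_{m-1}(\mu)$. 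The lower bound follows from the determinant identity $\|A\eta\wedge A\xi\|/\|A\xi\|\cdot\|A\xi\|^{-1}$ applied to the exterior square, whose top exponent is $-\lambda_m(\mu)-\lambda_{m-1}(\mu)$, using the additivity of the $\wedge^2$ growth. Hence
\[
\lambda_{\rm vert}(\tilde\nu)\ge \lambda_m(\mu)-\lambda_{m-1}(\mu)=0,
\]
which contradicts vertical mostly contracting. Non-ergodic $\mu$ are handled by ergodic decomposition, since the exponents $\lambda_{m-1}$ and $\lambda_m$ of $\mu$ are integrals over its ergodic components. With the gap established, the implication (2)$\Rightarrow$(1) of Theorem~\ref{thm:cotangent-projective-criterion} yields coquasi-irreducibility.

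The main obstacle is the inequality $\lambda_{\rm vert}(\tilde\nu)\ge\chi_2-\chi_1$ with $\chi_2\ge-\lambda_{m-1}(\mu)$. This requires justifying that the $\wedge^2$ growth along the typical flag containing $\xi$ realizes the top exponent of $\wedge^2$. It should follow from the same subadditive and Oseledets argument as in the tangent proof of Corollary~\ref{maincor:equivalence-mostly}, applied verbatim to the dual cocycle.
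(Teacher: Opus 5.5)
Your reduction is the right one and matches the paper's. You view $\tilde f$ as the projective action of the linear cocycle $(Df^{-1})^*$, whose top two exponents are $-\lambda_m(\mu)\ge-\lambda_{m-1}(\mu)$. Direction ($\Leftarrow$) is then (1)$\Rightarrow$(2) of Theorem~\ref{thm:cotangent-projective-criterion}. Direction ($\Rightarrow$) reduces to (2)$\Rightarrow$(1) once you show that vertical mostly contraction forces $\lambda_{m-1}(\mu)>\lambda_m(\mu)$. The paper packages exactly this as the ``Moreover'' part of Theorem~\ref{mainthm:equivalence-spectral-gap-linear}, applied to $A=(Df^{-1})^*$.

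\textbf{The gap.} The last step, the only non-formal one, is never proved. You reduce it to $\chi_2\ge-\lambda_{m-1}(\mu)$, i.e.\ to the claim that the $\wedge^2$-growth on $2$-planes containing the random line $\xi$ attains the top exponent of $\wedge^2$. The ``determinant identity'' you invoke is only an expression, not an argument for this. You then defer to ``the tangent proof of Corollary~\ref{maincor:equivalence-mostly}'', which is precisely the argument you set out to redo. As written, the ($\Rightarrow$) direction has a hole at its crux.

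\textbf{The fix.} Your first display already contains it, once $\mathfrak m_2$ is taken to be $\sigma_2$ and the inequality is justified. Write $A=(Df^n_\omega(x)^{-1})^*$ and $\Pi=\Pi_{(A\xi)^\perp}$. Then $L\tilde f^n_\omega(z)=\|\Pi A|_{\xi^\perp}\|/\|A\xi\|$, and
\[
\|\Pi A|_{\xi^\perp}\|=\|\Pi A\|=\|A^*|_{(A\xi)^\perp}\|\ge\sigma_2(A)=\|\wedge^2A\|/\|A\|,
\]
by Courant--Fischer on the hyperplane $(A\xi)^\perp$ (this is the interlacing step of Lemma~\ref{lem:projective-estimates}). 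Take logarithms and integrate against $\mathbb P\times\tilde\nu$ for an arbitrary $\tilde\nu\in\mathcal I_\mu(\tilde f)$, then divide by $n$. By additivity, $\frac1n\int\log\|A\xi\|\,d\mathbb P\,d\tilde\nu=\int\tilde\phi_f\,d\tilde\nu$, and the variational principle gives $\int\tilde\phi_f\,d\tilde\nu\le-\lambda_m(\mu)$. Hence
\[
\lambda_{\rm vert}(\tilde\nu)\ \ge\ -\lambda_{m-1}(\mu)-\int\tilde\phi_f\,d\tilde\nu\ \ge\ \lambda_m(\mu)-\lambda_{m-1}(\mu)
\]
for every $\mu\in\mathcal I(f)$ and every lift (lifts exist by the variational principle). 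This is the paper's Corollary~\ref{cor:vertical-laypunov-exponent}. Vertical mostly contraction then yields the gap directly, with no need for ergodicity, a maximizing lift, Oseledets' theorem or ergodic decomposition.

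\textbf{If you keep the Oseledets route.} You must invoke the fact that the spectrum of a cocycle with an invariant subbundle is the union of the spectra on the subbundle and on the quotient. Apply it over the projective skew product, with the tautological line as the subbundle. For a maximizing lift the line carries $-\lambda_m(\mu)$, so the top quotient exponent is exactly $-\lambda_{m-1}(\mu)$.
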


For \(\mu\in\mathcal{I}(f)\), define
\[
        \tilde{\lambda}_-(\mu)
        :=
        \inf_{\tilde\mu\in\mathcal{I}_\mu(\skew{4}{\tilde}{f}\,)}
        \int \tilde\phi_f\,d\tilde\mu
\quad \text{and} \quad 
        \tilde{\lambda}_-(f)
        :=
        \inf_{\mu\in\mathcal{I}(f)}\tilde{\lambda}_-(\mu)
        =
        \inf_{\tilde\mu\in\mathcal{I}(\skew{4}{\tilde}{f}\,)}
        \int_{\tilde X}\tilde\phi_f\,d\tilde\mu .
\]
Moreover, again from~\cite{BM,BN26}, if $\mu$ is ergodic,
\begin{align*}
    \tilde{\lambda}_-(\mu) &= \min\left\{ \int \tilde{\phi}_f \, d\tilde\mu : \tilde\mu \in \mathcal{I}_\mu(\skew{6}{\hat}{f}\,) \ \text{ergodic} \right\}  \\ &= \lim_{n\to\infty} \int \big(\min_{\hat{\xi} \in P(T_x^*M)} \frac{1}{n}\int \log \|(Df^n_\omega(x)^{-1})^*\xi\|\, d\mathbb{P}\big)\,d\mu \\[0.25cm]
    \tilde{\lambda}_-(f) &=  \min \left\{ \int \tilde\phi_f \, d\tilde\mu:  \tilde \mu\in \mathcal{I}(\skew{4}{\tilde}{f}\,) \ \text{ergodic}\right\} =\lim_{n\to\infty}  \min_{(x,[\xi]) \in\tilde X} \frac{1}{n} \int \log \|(Df^n_\omega(x)^{-1})^*\xi\| \, d \mathbb{P} \\
    &=\inf_{(x,[\xi]) \in\tilde X} \liminf_{n\to\infty} \frac{1}{n} \int \log \|(Df^n_\omega(x)^{-1})^*\xi \| \, d \mathbb{P}
    \end{align*}
 The  "$\lim_{n\to\infty}$" above can be replaced by "$\sup_{n\geq 1}$". In view of this, the following definition agrees
with the usual finite-time formulation of coexpansion on average.

\stepcounter{dualdefi}
\begin{dualdefi}
\label{def:coexpanding-on-average}
We say that \(f\) is \emph{coexpanding on average} (or \emph{coexpanding} for short) if $\tilde{\lambda}_-(f)>0$.
\end{dualdefi}

Similarly to before, we have that $\tilde{\lambda}_-(f)$ is lower semicontinuous and thus the set of coexpanding maps is an open set of $C^1$ random maps.  We also find the following dual corollary from the definition of coexpanding and coquasi-irreducibility. 

\begin{dualmaincor}
\label{cor:coexpanding-equator-positive}
Let $f$ be the restriction to an invariant compact set $X$ of a $C^1$ random map satisfying the integrability condition~\eqref{eq:log-integrability}. Then, 
\begin{enumerate}[label=(\roman*),itemsep=0.1cm,leftmargin=0.75cm]
    \item 
 $f$ is coexpanding if and only if every non-trivial $(Df^{-1})^*$-invariant bundle $\mathcal L$ over an $f$-stationary measure $\mu$ satisfies $\tilde{\lambda}_1(\mu|_{\mathcal L})>0$. In particular   $\lambda_m(\mu)<0$ for every $\mu\in\mathcal{I}(f)$;
\item if $f$ is coexpanding and $\lambda_{m-1}(\mu)\geq 0$ for every $\mu\in\mathcal{I}(f)$, then $f$ is coquasi-irreducible;
\item if $f$ is coquasi-irreducible,  $$\text{$f$ is coexpanding if and only if $\lambda_m(\mu)<0$ for every $\mu \in \mathcal{I}(f)$;}$$
\item if $\lambda_m(\mu)<0\leq \lambda_{m-1}(\mu)$ for all $\mu\in\mathcal{I}(f)$, then 
$$\text{$f$ is coexpanding if and only if it is coquasi-irreducible}.$$ 
\end{enumerate}
\end{dualmaincor}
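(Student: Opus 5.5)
The plan is to reduce every item to its tangent counterpart in Corollary~\ref{maincor:expanding-quasi}, applied to the inverse-dual cotangent cocycle $(f,(Df^{-1})^*)$, using Theorem~\ref{prop:cotangent-equator-criterion} in place of Theorem~\ref{thm:mainB}. The proof of Corollary~\ref{maincor:expanding-quasi} only uses the cocycle structure over $f$: the variational formulas for $\lambda_\pm$ (including the $\min$ representations over ergodic lifts), the equator characterization, and the relation between the second exponent and invariant subbundles. Each of these has a cotangent version already stated in \S\ref{subsec:coexpanding-on-average}. The dictionary is that the exponents of $(Df^{-1})^*$ are $-\lambda_m\ge\dots\ge-\lambda_1$, so ``top exponent'' becomes $-\lambda_m$ and ``second exponent'' becomes $-\lambda_{m-1}$. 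Thus $\lambda_2\le0$ translates to $-\lambda_{m-1}\le 0$, i.e.\ $\lambda_{m-1}\ge0$, and $\lambda_1>0$ translates to $-\lambda_m>0$.

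For (i), I take a non-trivial invariant cotangent bundle $\mathcal L$ over an ergodic $\mu$ and consider the restricted projective cocycle on $P(\mathcal L)\subset\tilde X$. By the Furstenberg--Kifer formula for this restriction, there is an ergodic $\tilde f$-stationary lift supported on $P(\mathcal L)$ whose integral of $\tilde\phi_f$ equals $\tilde\lambda_1(\mu|_{\mathcal L})$. Hence $\tilde\lambda_-(f)\le\tilde\lambda_1(\mu|_{\mathcal L})$, which gives the forward direction. For the converse, the minimum $\tilde\lambda_-(f)$ is attained at an ergodic lift $\tilde\mu$ over an ergodic $\mu$. I then use the Oseledets filtration of the cotangent cocycle: $\tilde\mu$-a.e.\ covector grows at some Oseledets exponent $\chi$, and $\chi$ equals $\int\tilde\phi_f\,d\tilde\mu$. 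The Oseledets subspace $\mathcal L$ associated with $\chi$, meaning the filtration piece of vectors with growth rate at most $\chi$, is a non-random invariant bundle over $\mu$. Non-randomness holds because with one-sided Bernoulli $\omega$ the slow filtration depends only on the future, which is independent of the past; this is the same argument used for Theorem~\ref{thm:mainB}. On this bundle $\tilde\lambda_1(\mu|_{\mathcal L})=\chi$, so positivity on all such bundles gives $\tilde\lambda_-(f)>0$. Taking $\mathcal L=T^*_XM$ shows that $-\lambda_m(\mu)>0$.

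For (ii), suppose $\lambda_{m-1}\ge0$. Any proper invariant cotangent bundle $\mathcal L$ on which the cotangent exponent drops has top exponent at most $-\lambda_{m-1}(\mu)\le0$, because the top direction $-\lambda_m$ is simple there and must be excluded from $\mathcal L$. This contradicts (i), so the coequator is trivial, and Theorem~\ref{prop:cotangent-equator-criterion} gives coquasi-irreducibility. For (iii), coquasi-irreducibility means $\tilde\lambda_-(\mu)=-\lambda_m(\mu)$ for each ergodic $\mu$. Hence $\tilde\lambda_-(f)=\inf_\mu(-\lambda_m(\mu))$, which is a minimum over ergodic $\mu$ by upper semicontinuity. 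It is positive if and only if $\lambda_m(\mu)<0$ for all $\mu$. Item (iv) then follows by combining (ii) and (iii).

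The main obstacle is the converse direction in (i): producing the non-random invariant bundle from a minimizing ergodic lift. I would handle it exactly as in the proof of the tangent Corollary~\ref{maincor:expanding-quasi}(i), checking only that the Bernoulli/future-measurability argument there does not use anything specific to $Df$ rather than $(Df^{-1})^*$. It uses only the cocycle property and the log-integrability of \eqref{eq:log-integrability}, which is symmetric under inversion and duality.
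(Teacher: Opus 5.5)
Your high-level reduction is the paper's own: apply the tangent result to the inverse-dual cocycle $(Df^{-1})^*$, whose exponents are $-\lambda_m\ge\dots\ge-\lambda_1$, which the paper does via Proposition~\ref{prop:abstract-expanding-quasi} and Corollary~\ref{cor:abstract-coexpanding-coquasi}. Your handling of (ii) and (iv) follows the same lines. The gap is in the converse of (i), the step you flag yourself. The Oseledets piece $V^\chi_{(\omega,x)}=\{\xi:\lim_n\frac1n\log\|(Df^n_\omega(x)^{-1})^*\xi\|\le\chi\}$ is in general \emph{random}. It depends on the future $\omega$, and the independence of future from past does not make it constant in $\omega$.

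A linear i.i.d.\ cocycle at a common fixed point already shows this. Take $\mathbb R e_1\oplus\mathbb R^2$, acting by $e^{c}$ on $e_1$ and by a strongly irreducible product with exponents $\beta_1>c>\beta_2$ on $\mathbb R^2$. The ergodic stationary lifts are $\delta_{[e_1]}$ (integral $c$) and the stationary measure on $P(\mathbb R^2)$ (integral $\beta_1$). So the minimizer is $\delta_{[e_1]}$ with $\chi=c$. Yet $V^c(\omega)=\mathbb R e_1\oplus W(\omega)$, where $W(\omega)$ is the random contracting line of the $\mathbb R^2$-block, which equals any fixed line with probability zero. The bundle you construct is therefore not an invariant bundle over $\mu$ in the sense of the statement, so the hypothesis cannot be applied to it. Nor is this how Theorem~\ref{thm:mainB} is proved: no future-measurability argument appears there.

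The paper closes this step with Lemma~\ref{lem:bundle-generated-by-nu} applied to $A=(Df^{-1})^*$. Take $\mathcal L_x:=\operatorname{span}(\operatorname{supp}\tilde\mu_x)$, where $d\tilde\mu=\tilde\mu_x\,d\mu$ is the disintegration of the minimizing ergodic lift. This bundle is non-random because $\tilde\mu$ does not depend on $\omega$. It is invariant by stationarity plus an ergodic dimension count, and it satisfies $\int\tilde\phi_f\,d\tilde\mu=\tilde\lambda_1(\mu|_{\mathcal L})$; in the example above it is $\mathbb R e_1$. Kifer's non-random filtration, defined by growth rates holding for $\mathbb P$-a.e.\ $\omega$, would also work, but it is a different object from the Oseledets piece.

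The same lemma simplifies your forward direction. You do not need a Furstenberg--Kifer formula for a merely measurable bundle. It suffices to have some stationary lift carried by $P(\mathcal L)$, together with the inequality $\int\tilde\phi_f\,d\tilde\nu\le\tilde\lambda_1(\mu|_{\mathcal L})$. The paper obtains such a lift by the Young-measure Schauder--Tychonoff argument of Proposition~\ref{prop:non-linear-quasi-irreducibility}; compactness of $\tilde X$ alone does not suffice, since $P(\mathcal L)$ is not closed.

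Two smaller slips:
\begin{enumerate}
\item In (ii), simplicity of $-\lambda_m$ is neither available nor needed. The exponents on $\mathcal L$ form a sub-multiset of $\{-\lambda_m,\dots,-\lambda_1\}$ whose top is $<-\lambda_m$, hence $\le-\lambda_{m-1}$.
\item In (iii), upper semicontinuity of $\mu\mapsto-\lambda_m(\mu)$ does not give attainment of an infimum. Use instead that, under coquasi-irreducibility, $-\lambda_m(\mu)=\tilde\lambda_-(\mu)$ is lower semicontinuous, or that $\tilde\lambda_-(f)$ is attained at an ergodic lift.
\end{enumerate}
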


We also get, as in Theorem~\ref{main:thmD}, the continuity of the bottom exponent under coquasi-irreducibility. 

\begin{dualmainthm}
\label{main:thmDbis}
Let $f \in \Diff^1_{\mathbb P}(M,\mu)$ such that $\mu$ is  coquasi-irreducible for $f$. Then the map
\[
   \lambda_m(\mu,\cdot): \bigl(\Diff^1_{\mathbb P}(M,\mu), d_{C^1}\bigr) \to \mathbb{R},
   \quad
   g \mapsto \lambda_m(\mu,g):= \lim_{n\to \infty} \frac{1}{n} \int \log m(Dg^n_\omega(x)) \, d\mathbb{P}\, d\mu
\]
is continuous at $f$. Moreover, if $f$ is maximal  coquasi-irreducible, the map
$$
  \lambda_m:\bigl(\Diff_{\mathbb P}^1(M),d_{C^1}\bigr)\to\mathbb R, \quad g \mapsto \lambda_m(g):=\inf\big\{\lambda_m(\eta): \eta \in \mathcal{I}(g)\big\}
$$
is continuous at $f$. 
\end{dualmainthm}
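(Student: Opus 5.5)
The strategy is to reduce the statement to Theorem~\ref{main:thmD} applied to the inverse-dual cotangent cocycle, instead of redoing the continuity argument. Consider the bundle map $F_g:(x,\xi)\mapsto (g_\omega(x),(Dg_\omega(x)^{-1})^*\xi)$ on $T^*_XM$. Its norm satisfies $\|(Dg^n_\omega(x)^{-1})^*\|=\|Dg^n_\omega(x)^{-1}\|=\mathfrak m(Dg^n_\omega(x))^{-1}$. Hence its top exponent over $\mu$ is $-\lambda_m(\mu,g)$.

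The proof of Theorem~\ref{main:thmD} only uses four ingredients of the projective tangent lift:
\begin{enumerate}[label=(\alph*)]
\item the Furstenberg--Kifer formula for the top exponent;
\item compactness of the fibres $P(T_xM)$;
\item the log-integrability condition~\eqref{eq:log-integrability};
\item continuity of $\hat g_\omega$ and of the potential $\phi_g$ in $(g,(x,\hat v))$ with respect to $d_{C^1}$.
\end{enumerate}
All four transfer to $\tilde X=P(T^*_XM)$ and $\tilde\phi_g$, so I would rerun that proof with $\hat{}$ replaced by $\tilde{}$.

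\textbf{Key steps.}

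First, verify that~\eqref{eq:log-integrability} is symmetric under $A\mapsto (A^{-1})^*$. This holds because $\|(A^{-1})^*\|=\|A^{-1}\|$ and $\|A^*\|=\|A\|$.

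Second, verify that $d_{C^1}$ controls the dual cocycle. We have $\|(Df_\omega^{-1})^*-(Dg_\omega^{-1})^*\|_\infty=\|Df_\omega^{-1}-Dg_\omega^{-1}\|_\infty$, which is exactly the third term of~\eqref{eq:avg-C1-main}. So convergence $g_k\to f$ in $d_{C^1}$ gives convergence of the cotangent lifts $\tilde g_{k,\omega}\to\tilde f_\omega$ in the average sense, together with $L^1(\mathbb P)$-convergence of the potentials $\tilde\phi_{g_k}\to\tilde\phi_f$. The latter needs uniform integrability of $\log\|(Dg_{k,\omega}^{-1})^*\xi\|$. I would obtain this as in Theorem~\ref{main:thmD}, bounding $\log^{\pm}$ by $\log^+\|Dg_{k,\omega}\|_\infty+\log^+\|Dg_{k,\omega}^{-1}\|_\infty$ and using the $d_{C^1}$-closeness to $f$.

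Third, upper semicontinuity. I would invoke the dual variational formula $-\lambda_m(\mu,g)=\max\int\tilde\phi_g\,d\tilde\mu$ from~\cite{BN26}, or equivalently the $\inf_n$ representation. Together with the continuity in the second step, this gives that $g\mapsto-\lambda_m(\mu,g)$ is upper semicontinuous, i.e.\ $\liminf_k\lambda_m(\mu,g_k)\ge\lambda_m(\mu,f)$.

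Fourth, lower semicontinuity, which is the converse inequality. Choose ergodic maximizers $\tilde\mu_k\in\mathcal I_\mu(\tilde g_k)$ attaining $-\lambda_m(\mu,g_k)$. Their projections are all $\mu$, and the fibres are compact, so a subsequence converges weak$^*$ to some $\tilde\mu$. The second step shows that $\tilde\mu$ is $\tilde f$-stationary with marginal $\mu$. This needs a standard Lusin/continuity-in-$\omega$ argument to pass to the limit in $\int\varphi\circ\tilde g_{k,\omega}\,d\mathbb P\,d\tilde\mu_k$ for continuous $\varphi$, since $\tilde X$ is compact. We then get
\[
\lim_k\int\tilde\phi_{g_k}\,d\tilde\mu_k=\int\tilde\phi_f\,d\tilde\mu .
\]
Coquasi-irreducibility of $\mu$ says every element of $\mathcal I_\mu(\tilde f)$ is maximizing, so this limit equals $-\lambda_m(\mu,f)$, and continuity at $f$ follows.

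\textbf{The maximal case.} Let $\eta_k\in\mathcal I(g_k)$ be ergodic minimizers of $\lambda_m$, with lifts $\tilde\eta_k$. Pass to a limit $\tilde\eta$, whose projection $\eta$ lies in $\mathcal I(f)$. Maximal coquasi-irreducibility gives $\int\tilde\phi_f\,d\tilde\eta=-\lambda_m(\eta)=-\lambda_m(f)$. Lower semicontinuity of $\lambda_m(g)$ comes from the $\sup$-representation of $-\lambda_m(g)$ as an infimum over $n$ of continuous functions, dual to the $b_n$ in~\eqref{eq:lambdaf}.

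\textbf{Main obstacle.} I expect the hard step to be the uniform integrability and limit passage for $\int\tilde\phi_{g_k}\,d\tilde\mu_k$ under only a log-moment hypothesis. The potentials are unbounded in $\omega$ and converge only in average. I would first truncate $\tilde\phi$ at a level $R$, apply weak$^*$ convergence to the truncated continuous functions, and control the tails by the uniform integrability coming from the $d_{C^1}$-closeness. This mirrors what Theorem~\ref{main:thmD} must already do, and I would cite it directly if its proof is stated for an abstract cocycle over $f$.
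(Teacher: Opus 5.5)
Your proposal is correct. At the top level it follows the paper's reduction: the paper proves an abstract statement for random linear bundle isomorphisms (Proposition~\ref{prop:mainthmD}) and applies it to $A=(Df^{-1})^*$, using $\lambda_1(\mu,(Df^{-1})^*)=-\lambda_m(\mu)$.

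The real difference is how you obtain the nontrivial inequality $\liminf_k\bigl(-\lambda_m(\mu,g_k)\bigr)\ge-\lambda_m(\mu,f)$.

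The paper never takes limits of stationary lifts. It shows $\|\phi_n^{A_k}-\phi_n^{A}\|_\infty\to0$ for every fixed $n$ (Lemma~\ref{lem:uniform-conv-phi}). That lemma combines the one-step estimate of Claim~\ref{claim:phi}, the uniform convergence $\hat P_k\psi\to\hat P\psi$ of Claim~\ref{claim:P}, and a telescoping sum. Hence the finite-time quantities $a_n(\mu,\cdot)$ and $b_n(\mu,\cdot)$ (and $\bar a_n,\bar b_n$ in the global case) are continuous. The representations $\lambda_1(\mu,A)=\inf_n b_n(\mu,A)$ and $\lambda_-(\mu,A)=\sup_n a_n(\mu,A)$ from \cite{BN26} then make $\lambda_1$ upper and $\lambda_-$ lower semicontinuous. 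Since $\lambda_-\le\lambda_1$, with equality at a (co)quasi-irreducible point, continuity follows by sandwiching.

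Your Step~4 is instead the classical Furstenberg--Kifer compactness argument. Weak$^*$ limits of maximizing lifts for $g_k$ are $\tilde f$-stationary lifts of $\mu$; this is exactly what Claim~\ref{claim:P} provides. The one-step potentials converge uniformly, and coquasi-irreducibility forces the limit value to be $-\lambda_m(\mu,f)$. The maximal case works the same way with varying base measures.

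Your route needs less input. It uses only the upper variational formula and convergence of the one-step potential, not the finite-time $\sup_n a_n$ representation of $\lambda_-$, nor convergence of $\phi_n$ for all $n$. It also yields both inequalities at once, so your Step~3 is redundant. The paper's route gives, as a by-product, semicontinuity of $\lambda_1$ and $\lambda_-$ at every cocycle, which it reuses, e.g.\ for openness of (co)expansion.

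Two small corrections.

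First, do not require the $\tilde\mu_k$ to be ergodic. The statement does not assume $\mu$ ergodic, and a non-ergodic $\mu$ has no ergodic stationary lifts. Maximizers exist anyway, because $\mathcal I_\mu(\tilde g_k)$ is weak$^*$ compact and $\tilde\phi_{g_k}$ is continuous; alternatively, take $1/k$-maximizers.

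Second, the obstacle you anticipate does not arise. The annealed potentials are bounded continuous functions on the compact space $\tilde X$. The $\omega$-wise bound
$\bigl|\log\|(Dg_{k,\omega}(x)^{-1})^*\xi\|-\log\|(Df_\omega(x)^{-1})^*\xi\|\bigr|\le 2\log(1+UE_k)+R_k+E_k$
holds uniformly in $(x,[\xi])$, where $U=\|Df_\omega\|_\infty$ and $E_k,R_k$ are the two derivative terms of $d_{C^1}(g_k,f)$. It gives $\|\tilde\phi_{g_k}-\tilde\phi_f\|_\infty\to0$ directly, so $\int\tilde\phi_{g_k}\,d\tilde\mu_k\to\int\tilde\phi_f\,d\tilde\mu$ with no truncation. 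Note that both derivative terms are needed, not only the third one, because the inverse of $(Df^{-1})^*$ is $Df^*$.
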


We also study the continuity of $\mu\mapsto \lambda_m(\mu)$ for a fixed random map $f$ in $\mathrm{Diff}^1_{\mathbb P}(M)$. As before, this map is lower semicontinuous with respect to the weak$^*$ topology. The following shows that under coquasi-irreducibility, we can get the continuity: 
\begin{dualmainthm}
\label{prop:continuity-measure-lyapunov-bis}
Let $f$ be the restriction to an invariant compact set $X$ of a $C^1$ random map satisfying the integrability conditions~\eqref{eq:log-integrability}.  If \(\mu_0\in\mathcal{I}(f)\) is coquasi-irreducible, then  $\mu\in\mathcal{I}(f)\mapsto \lambda_m(\mu)$  is continuous at $\mu_0$ with respect to the weak$^*$ topology.  
In particular, if \(f\) is coquasi-irreducible, then $       \mu \in \mathcal{I}(f) \mapsto \lambda_m(\mu)$ is continuous. 
\end{dualmainthm}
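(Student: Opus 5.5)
The plan is to reduce the statement to Theorem~\ref{prop:continuity-measure-lyapunov}, applied to the inverse-dual cotangent cocycle, in the same way the cotangent results above were obtained from their tangent counterparts.

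\emph{Step 1: lower semicontinuity.} The bottom exponent is a supremum of continuous functionals:
\[
\lambda_m(\mu)=\sup_{n\ge1}\frac1n\int\log\mathfrak m(Df^n_\omega(x))\,d\mathbb P\,d\mu ,
\]
by superadditivity of $\log\mathfrak m$ and Fekete's lemma. Each integrand $x\mapsto \int \log \mathfrak m(Df^n_\omega(x))\,d\mathbb P$ is continuous on $X$. This uses dominated convergence, justified by \eqref{eq:log-integrability} and the bound $|\log\mathfrak m(Df^n_\omega(x))|\le \sum_{i<n}\big(\log^+\|Df_{\omega_i}\|_\infty+\log^+\|(Df_{\omega_i})^{-1}\|_\infty\big)$. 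Hence $\mu\mapsto\lambda_m(\mu)$ is weak$^*$ lower semicontinuous, so it remains to prove upper semicontinuity at $\mu_0$.

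\emph{Step 2: translation to the cotangent side.} We have $-\lambda_m(\mu)=\sup_{\tilde\mu\in\mathcal I_\mu(\tilde f)}\int\tilde\phi_f\,d\tilde\mu$. So it suffices to show the following: if $\mu_k\to\mu_0$ in $\mathcal I(f)$, then $\liminf_k(-\lambda_m(\mu_k))\ge -\lambda_m(\mu_0)$.

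Pass to a subsequence realizing the $\liminf$ and choose $\tilde\mu_k\in\mathcal I_{\mu_k}(\tilde f)$. The fibres of $\tilde X$ are compact projective spaces, so $\tilde X$ is compact and we may assume $\tilde\mu_k\to\tilde\mu$ weak$^*$. The limit $\tilde\mu$ is $\tilde f$-stationary: $\tilde P$ maps $C(\tilde X)$ into itself by continuity of $(x,[\xi])\mapsto (Df_\omega(x)^{-1})^*\xi$ and dominated convergence. Its first marginal is $\mu_0$, so $\tilde\mu\in\mathcal I_{\mu_0}(\tilde f)$.

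The potential $\tilde\phi_f$ is continuous on $\tilde X$, since $|\log\|(Df_\omega(x)^{-1})^*\xi\||\le \log^+\|Df_\omega\|_\infty+\log^+\|(Df_\omega)^{-1}\|_\infty$ is integrable. Therefore
\[
\int\tilde\phi_f\,d\tilde\mu_k\to\int\tilde\phi_f\,d\tilde\mu .
\]

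\emph{Step 3: use coquasi-irreducibility.} Since $\mu_0$ is coquasi-irreducible, every element of $\mathcal I_{\mu_0}(\tilde f)$ is maximizing, so $\int\tilde\phi_f\,d\tilde\mu=-\lambda_m(\mu_0)$. On the other hand, the variational formula gives $-\lambda_m(\mu_k)\ge\int\tilde\phi_f\,d\tilde\mu_k$. Combining,
\[
\liminf_k\big(-\lambda_m(\mu_k)\big)\ge -\lambda_m(\mu_0),
\]
which is exactly $\limsup_k\lambda_m(\mu_k)\le\lambda_m(\mu_0)$. Together with Step~1 this gives continuity at $\mu_0$. The global statement follows by applying this at every $\mu_0\in\mathcal I(f)$.

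\emph{Main obstacle.} The only delicate point is Step~2: checking that weak$^*$ limits of cotangent lifts remain stationary lifts of $\mu_0$. The potential difficulty is that $\tilde f_\omega$ is only measurable in $\omega$, and here I would rely on the Feller property of $\tilde P$ under \eqref{eq:log-integrability}. Note that I would apply Step~3 without needing $\mu_0$ to be ergodic, since coquasi-irreducibility is defined for all lifts, exactly as in the proof of Theorem~\ref{prop:continuity-measure-lyapunov}.
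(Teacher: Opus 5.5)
Your proof is correct and takes essentially the paper's route. The paper applies Proposition~\ref{prop:thm-prop} to $A=(Df^{-1})^*$: it sandwiches $-\lambda_m(\mu)=\lambda_1(\mu,A)$ between the upper semicontinuous map $\mu\mapsto\lambda_1(\mu,A)$ and the lower semicontinuous map $\mu\mapsto\lambda_-(\mu,A)$, both cited from \cite[Claim~2.2.7]{BM} and its dual, and these two maps coincide at the coquasi-irreducible $\mu_0$. Your Step~1 and Steps~2--3 are self-contained proofs of exactly these two semicontinuity facts at $\mu_0$, the second via weak$^*$ compactness of stationary cotangent lifts and continuity of $\tilde\phi_f$. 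One small point: the reduction you announce at the start would have to go through this linear-bundle version, since Theorem~\ref{prop:continuity-measure-lyapunov} is stated only for $Df$, but your direct argument makes that moot.
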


\subsection{Interplay between expanding and coexpanding}
\label{subsec:interplay-expanding-coexpanding}

We first record a simple geometric relation between equators and
coequators. This relation is automatic in dimension two, but in higher
dimension it only gives information for extremal-dimensional
subbundles.

\begin{mainprop}
\label{prop:equator-coequator-extremal-dimension}
Let \(f\) be the restriction to an invariant compact set \(X\) of a
\(C^1\) random map of an \(m\)-dimensional manifold satisfying
\eqref{eq:log-integrability}. Let \(\mu\in\mathcal{I}(f)\) be ergodic.

\begin{enumerate}[label=(\roman*),itemsep=0.1cm,leftmargin=1.25cm]
    \item If
    $\lambda_1(\mu)=\lambda_m(\mu)$,
    then \(\mu\) is quasi-irreducible and coquasi-irreducible.

    \item If \(\mu\) is quasi-irreducible, then \(\mu\) has no
    coequator  of dimension \(m-1\).

    \item If \(\mu\) is coquasi-irreducible, then \(\mu\) has no
     equator  of dimension \(m-1\).

    \item If \(m=2\), then \(\mu\) is quasi-irreducible if and only if
    \(\mu\) is coquasi-irreducible.
\end{enumerate}
\end{mainprop}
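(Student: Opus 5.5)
The plan is to reduce every item to Theorem~\ref{thm:mainB} and Theorem~\ref{prop:cotangent-equator-criterion}: quasi-irreducibility of an ergodic \(\mu\) is the triviality of its equator, and coquasi-irreducibility is the triviality of its coequator. Everything then follows from one duality statement relating invariant subbundles of \(T_XM\) to their annihilators in \(T_X^*M\).

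For (i), take any \(\nu\in\mathcal{I}_\mu(\skew{6}{\hat}{f}\,)\). The elementary inequality \(\lambda_m(\mu)\le\int\phi_f\,d\nu\le\lambda_1(\mu)\) from the discussion of \(\lambda_-\) forces \(\int\phi_f\,d\nu=\lambda_1(\mu)\), so \(\mu\) is quasi-irreducible. The dual argument applies to the cocycle \((Df^{-1})^*\), whose exponents are \(-\lambda_1(\mu)\le\dots\le-\lambda_m(\mu)\). For any \(\tilde\nu\in\mathcal{I}_\mu(\skew{4}{\tilde}{f}\,)\) we get \(-\lambda_1(\mu)\le\int\tilde\phi_f\,d\tilde\nu\le-\lambda_m(\mu)\), and the two ends coincide, so \(\mu\) is coquasi-irreducible.

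For (ii) and (iii), given a measurable \(Df\)-invariant subbundle \(\mathcal L\) over \(\mu\), set \(\mathcal L^\perp_x:=\{\xi\in T_x^*M:\xi|_{\mathcal L_x}=0\}\). Since \((Df_\omega(x)^{-1})^*\xi=\xi\circ Df_\omega(x)^{-1}\) and \(Df_\omega(x)\mathcal L_x=\mathcal L_{f_\omega(x)}\), the bundle \(\mathcal L^\perp\) is \((Df^{-1})^*\)-invariant, and the correspondence is a bijection that reverses dimensions, \(k\leftrightarrow m-k\). The key spectral fact is the following. The cocycle on \(\mathcal L^\perp\) is the inverse-dual of the quotient cocycle on \(T_XM/\mathcal L\), since \(\mathcal L^\perp\cong (T_XM/\mathcal L)^*\). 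Hence the exponents of \((Df^{-1})^*|_{\mathcal L^\perp}\) are the negatives of the quotient exponents.

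Now take the case \(\dim\mathcal L=1\). The quotient exponents together with the exponent of \(\mathcal L\) make up the full Oseledets spectrum (multiset additivity from the block triangular form, applied with Oseledets at \(\mathbb P\times\mu\)-a.e.\ point). So if \(\lambda_{\mathcal L}\) is the exponent on \(\mathcal L\), the top exponent on \(\mathcal L^\perp\) equals \(-\min(\text{spectrum with one copy of }\lambda_{\mathcal L}\text{ removed})\).

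This gives (ii). Suppose \(\tilde{\mathcal L}\) is an \((m-1)\)-dimensional coequator. Write \(\tilde{\mathcal L}=\mathcal L^\perp\) with \(\dim\mathcal L=1\). The condition \(\tilde\lambda_1(\mu|_{\tilde{\mathcal L}})<-\lambda_m(\mu)\) means that removing \(\lambda_{\mathcal L}\) deletes the only copy of \(\lambda_m\), so \(\lambda_{\mathcal L}=\lambda_m(\mu)<\lambda_{m-1}(\mu)\le\lambda_1(\mu)\). Then \(\mathcal L\) is a line bundle with \(\lambda_1(\mu|_{\mathcal L})<\lambda_1(\mu)\). In particular \(\mathcal L\) is a proper invariant bundle with a drop, since \(m\ge2\) here: an \((m-1)\)-dimensional coequator is nontrivial. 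This contradicts Theorem~\ref{thm:mainB}.

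Item (iii) is symmetric, with the roles of \(T_XM\) and \(T_X^*M\) exchanged. An \((m-1)\)-dimensional equator \(\mathcal L\) has top exponent below \(\lambda_1(\mu)\), so the quotient line carries \(\lambda_1(\mu)\) as a simple exponent. Its annihilator \(\mathcal L^\perp\) is a line with exponent \(-\lambda_1(\mu)<-\lambda_m(\mu)\), which is a nontrivial coequator. This contradicts Theorem~\ref{prop:cotangent-equator-criterion}.

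For (iv), in dimension \(m=2\) every proper subbundle is \(1=m-1\) dimensional. Quasi-irreducibility excludes all nontrivial coequators by (ii), hence gives coquasi-irreducibility by Theorem~\ref{prop:cotangent-equator-criterion}. The converse follows from (iii) in the same way.

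The main obstacle is the spectral bookkeeping for the subbundle–quotient decomposition in the random measurable setting. One must justify that the exponents of the restriction and of the quotient cocycle together give the full Lyapunov spectrum, and that \(\tilde\lambda_1(\mu|_{\mathcal L^\perp})\) equals minus the bottom quotient exponent. I would obtain both by applying the Oseledets theorem to the skew product over \(\mathbb P\times\mu\), which is ergodic since \(\mu\) is, and using determinants: \(\log|\det|\) is additive over \(\mathcal L\) and the quotient. The one-dimensional case needs only this determinant identity plus the top/bottom exponent comparison.
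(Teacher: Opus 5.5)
Your proposal is correct and follows essentially the paper's route: (i) comes from the sandwich $\lambda_m(\mu)\le\lambda_-(\mu)\le\lambda_1(\mu)$ and its cotangent analogue, and (ii)--(iv) come from the annihilator of a coequator being a $Df$-invariant bundle on which the cotangent cocycle is the inverse-dual of the quotient cocycle, combined with Theorems~\ref{thm:mainB} and~\ref{prop:cotangent-equator-criterion}. The differences are minor: the paper uses only $\lambda_m(\mu)=\min\{\lambda_m(\mu|_{\mathcal K}),\lambda_{\min}(\mathcal Q)\}$ rather than full multiset additivity of the spectrum (which, as you note, is more than needed), and it obtains (iii) by applying (ii) to the inverse-dual cocycle instead of rerunning the argument.
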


We now combine the previous geometric relation with the sign conditions
appearing in the definitions of expanding and coexpanding.

\begin{maincor}
\label{cor:interplay-expanding-coexpanding}
Let \(f\) be the restriction to an invariant compact set \(X\) of a
\(C^1\) random map of a compact \(m\)-dimensional manifold satisfying
\eqref{eq:log-integrability}. Then the following statements hold.
    \begin{enumerate} 
     \item if $f$ is expanding,  has no non-trivial coequators of dimension less than $m-1$ and $\lambda_2(\mu)< 0$ for every $\mu\in \mathcal{I}(f)$, then 
    $f$ is coquasi-irreducible and coexpanding.  
\item if $f$ is coexpanding, has no non-trivial equators of dimension less than $m-1$, and $\lambda_{m-1}(\mu)>0$ for every $\mu\in \mathcal{I}(f)$, then  $f$ is quasi-irreducible and expanding. 
    \item if $m=2$ and $\lambda_2(\mu)<0<\lambda_1(\mu)$ for every $\mu\in\mathcal{I}(f)$, then 
    $$ 
    \text{expanding $\iff$ coexpanding $\iff$  quasi-irreducible  $\iff$  coquasi-irreducible}
    $$
    \end{enumerate}
\end{maincor}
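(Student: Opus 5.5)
The plan is to derive each item from Proposition~\ref{prop:equator-coequator-extremal-dimension} together with Corollaries~\ref{maincor:expanding-quasi} and~\ref{cor:coexpanding-equator-positive}, passing to ergodic components when necessary. Quasi-irreducibility and coquasi-irreducibility of $f$ reduce to the ergodic case: by the Furstenberg--Kifer formula and the ergodic decomposition, a non-maximizing lift of a stationary $\mu$ produces a non-maximizing lift of some ergodic component. So it suffices to show that every ergodic $\mu$ has trivial equator (respectively trivial coequator), and then apply Theorem~\ref{thm:mainB} or Theorem~\ref{prop:cotangent-equator-criterion}.

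\textbf{Item (1).} First, $f$ is expanding and $\lambda_2(\mu)<0\le 0$ for every $\mu$, so Corollary~\ref{maincor:expanding-quasi}(ii) shows that $f$ is quasi-irreducible. Next, for each ergodic $\mu$, Proposition~\ref{prop:equator-coequator-extremal-dimension}(ii) rules out a coequator of dimension $m-1$. The hypothesis rules out non-trivial coequators of dimension less than $m-1$. A coequator of dimension $m$ would be the whole cotangent bundle, and on it $\tilde\lambda_1=-\lambda_m(\mu)$, so there is no strict drop and it is not a coequator. Hence the coequator is trivial, and Theorem~\ref{prop:cotangent-equator-criterion} gives that $f$ is coquasi-irreducible.

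For coexpansion I will use Corollary~\ref{cor:coexpanding-equator-positive}(iii), so I need $\lambda_m(\mu)<0$ for every $\mu$. This follows from $\lambda_m(\mu)\le\lambda_2(\mu)<0$ when $m\ge2$. The case $m=1$ is degenerate: there $\lambda_2$ does not exist, and I would read the hypothesis as vacuous or trivially handled.

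\textbf{Item (2).} This is exactly dual to item (1). Corollary~\ref{cor:coexpanding-equator-positive}(ii) with $\lambda_{m-1}>0\ge 0$ gives that $f$ is coquasi-irreducible. Proposition~\ref{prop:equator-coequator-extremal-dimension}(iii), together with the hypothesis on equators of dimension less than $m-1$ and the fact that the full bundle has no drop, shows that every ergodic equator is trivial. Theorem~\ref{thm:mainB} then gives quasi-irreducibility. Finally, $\lambda_1(\mu)\ge\lambda_{m-1}(\mu)>0$, so Corollary~\ref{maincor:expanding-quasi}(iii) yields that $f$ is expanding.

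\textbf{Item (3).} When $m=2$, the hypothesis $\lambda_2<0<\lambda_1$ satisfies the assumptions of Corollary~\ref{maincor:expanding-quasi}(iv), so expanding is equivalent to quasi-irreducible. Since $m-1=1$, the condition $\lambda_{m-1}(\mu)=\lambda_1(\mu)\ge 0$ together with $\lambda_m(\mu)<0$ satisfies Corollary~\ref{cor:coexpanding-equator-positive}(iv), so coexpanding is equivalent to coquasi-irreducible. Proposition~\ref{prop:equator-coequator-extremal-dimension}(iv), applied to ergodic components and then lifted to all of $\mathcal{I}(f)$ by the ergodic decomposition reduction above, links quasi-irreducible and coquasi-irreducible.

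\textbf{Main obstacle.} The only delicate point is this ergodic decomposition reduction, namely that "quasi-irreducible for every ergodic $\mu$" implies "quasi-irreducible for every $\mu$". I would handle it by decomposing a lift $\nu\in\mathcal{I}_\mu(\hat f)$ into ergodic $\hat f$-stationary components. Their projections are ergodic components of $\mu$. Then $\lambda_1(\mu)=\int\lambda_1(\mu_\xi)$ combined with the maximality on each component gives the claim.
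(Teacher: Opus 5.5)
Your argument is correct and follows the paper's proof of items (1) and (2) essentially step by step: Corollary~\ref{maincor:expanding-quasi}(ii) or its dual (ii) gives (co)quasi-irreducibility, Proposition~\ref{prop:equator-coequator-extremal-dimension}(ii)/(iii) together with the dimension hypothesis rules out the remaining (co)equators, and part (iii) of the relevant corollary gives (co)expansion; you also usefully make explicit the ergodic-decomposition step (via affinity of the extremal exponents in $\mu$) that the paper uses tacitly when passing from ergodic to arbitrary stationary measures. For item (3), the paper deduces the equivalences from (1) and (2), whose dimension hypotheses are vacuous when $m=2$, whereas you chain Corollary~\ref{maincor:expanding-quasi}(iv), Corollary~\ref{cor:coexpanding-equator-positive}(iv) and Proposition~\ref{prop:equator-coequator-extremal-dimension}(iv); both routes are valid.
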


\subsection{Quasi-irreducibility and expansion in intermediate dimensions}
\label{subsec:higher-dimensional-expansion}

The notions introduced above concern the projective tangent bundle and therefore
control the top Lyapunov exponent. It is natural to ask what happens for the
intermediate Lyapunov exponents. We now explain that the same formalism applies
to Grassmannian bundles and gives higher-dimensional versions of
quasi-irreducibility, expansion, and their cotangent duals.

Let \(1\le k\le m\). For \(\mu\in\mathcal{I}(f)\), write
\[
        \Sigma_k(\mu)
        :=
        \lambda_1(\mu)+\cdots+\lambda_k(\mu),
        \qquad
        \Sigma_0(\mu):=0.
\]
Let $\hat X_k
        :=
        \bigsqcup_{x\in X}\mathrm{G}_k(T_xM)$
be the Grassmannian bundle of \(k\)-planes. The derivative cocycle induces a
random map
\[
        f_\omega^{[k]}(x,E)
        :=
        \bigl(f_\omega(x),Df_\omega(x)E\bigr) \qquad  \text{for $(x,E)\in \hat X_k$.}
\]
We consider the \(k\)-Jacobian potential
\[
        \phi_f^{[k]}(x,E)
        :=
        \int
        \log \mathrm{Jac}_k\bigl(Df_\omega(x)|_E\bigr)
        \,d\mathbb P \quad \text{where} \ \ \mathrm{Jac}_k\bigl(Df_\omega(x)|_E\bigr):=\|\wedge^k (Df_\omega(x)|_E)\|.
\]
For \(\mu\in\mathcal{I}(f)\), define
\[
        \lambda_-^{[k]}(\mu)
        :=
        \inf_{\hat\mu\in\mathcal{I}_\mu(f^{[k]})}
        \int \phi_f^{[k]}\,d\hat\mu
\quad \text{and} \quad
        \lambda_+^{[k]}(\mu)
        :=
        \sup_{\hat\mu\in\mathcal{I}_\mu(f^{[k]})}
        \int \phi_f^{[k]}\,d\hat\mu 
\]
where \(\mathcal{I}_\mu(f^{[k]})\) denotes the set of \(f^{[k]}\)-stationary
probability measures projecting to \(\mu\). Similarly we denote by \(\mathcal{I}^{\rm erg}_\mu(f^{[k]})\) the subset of \(\mathcal{I}_\mu(f^{[k]})\) of ergodic \(f^{[k]}\)-stationary measures. 
According to~\cite{BN26}, the variational formula gives
\[
        \Sigma_k(\mu)=\lambda_+^{[k]}(\mu)=\max_{\hat\mu\in\mathcal{I}^{\rm erg}_\mu(f^{[k]})}
        \int \phi_f^{[k]}\,d\hat\mu.
\]
Moreover, 
\begin{equation} \label{eq:limit}
    \begin{aligned}
        \lambda_-^{[k]}(\mu)  &= \min_{\hat\mu\in\mathcal{I}^{\rm erg}_\mu(f^{[k]})}
        \int \phi_f^{[k]}\,d\hat\mu =
        \lim_{n\to\infty}
        \int
        \min_{E\in\mathrm{G}_k(T_xM)}
        \frac1n
        \int
        \log\mathrm{Jac}_k\bigl(Df_\omega^n(x)|_E\bigr)
        \,d\mathbb P
        \,d\mu.
\end{aligned}
\end{equation}
We also define the uniform
quantities
\[
        \lambda_-^{[k]}(f)
        :=
        \inf_{\mu\in\mathcal{I}(f)}\lambda_-^{[k]}(\mu) \quad \text{and} \quad \Sigma_k(f)=\sup_{\mu\in\mathcal{I}(f)} \Sigma_k(\mu) = \max_{\mu\in\mathcal{I}^{\rm erg}(f)} \Sigma_k(\mu) .
\]
Equivalently
\[
        \lambda_-^{[k]}(f)
        =
        \lim_{n\to \infty}  
        \min_{(x,E)\in\hat X_k} \frac1n
        \int
        \log\mathrm{Jac}_k\bigl(Df_\omega^n(x)|_E\bigr)
        \,d\mathbb P =  \inf_{(x,E)\in \hat X_k} \liminf_{n\to\infty} \frac{1}{n} \int \log\mathrm{Jac}_k\bigl(Df_\omega^n(x)|_E\bigr) \,d\mathbb{P}.
\]
The limit in the above and in~\eqref{eq:limit} can be replaced by \(\sup_{n\ge1}\).

\begin{defi} A measure $\mu \in \mathcal{I}(f)$ is \emph{$k$-quasi-irreducible} if $\lambda^{[k]}_-(\mu)=\Sigma_k(\mu)$. We also say  that $f$ is 
\begin{itemize}
    \item \emph{$k$-quasi-irreducible} if every $\mu\in \mathcal{I}(f)$ is $k$-quasi-irreducible;
    \item \emph{maximal $k$-quasi-irreducible} if $\lambda^{[k]}_-(f)=\Sigma_k(f)$; 
    \item \emph{$k$-expanding} if $\lambda^{[k]}_-(\mu)>0$ for every $\mu\in\mathcal{I}(f)$.
\end{itemize}
\end{defi}
Thus \(k\)-expansion is precisely uniform expansion on \(k\)-planes or in dimension $k$ in the
finite-time sense used in \cite{Ell23,BEFR25}.\footnote{The expansion gap condition of~\cite{BEFR25} can also be written in the same variational language $\bar{\lambda}^{[k],\delta}_{-}(f)>0$ coming from~\cite{BN26} using the induced random map on the flag space $\mathcal{F}_k$ of subspaces $E\subset F$ of dimension $k$ and $k+1$ respectively and the flag potential 
the annealed logarithmic expansion of the one-dimensional quotient \(F/E\), with the sign prescribed by \(\delta\in \{\pm 1\}\).} Expansion in all dimensions means
\(k\)-expansion for every \(1\le k\le m-1\). For \(k=1\), this is exactly the quasi-irreducibility, maximal quasi-irredicibillity and expansion on average previously defined.

In particular, $k$-expansion immediately implies $\Sigma_k(f)>0$.  The converse is not true in general. The missing condition is precisely $k$-quasi-irreducibility: if \(f\) is \(k\)-quasi-irreducible,~then
\[
        f\text{ is \(k\)-expanding}
        \quad\Longleftrightarrow\quad
        \Sigma_k(\mu)>0
        \text{ for every \ } \mu\in\mathcal{I}(f).
\]
As in the projective case, expansion gives quasi-irreducibility under a spectral
sign condition. Let $\Gamma_k(\mu)
        := 
        \lambda_1(\mu)+\cdots+\lambda_{k-1}(\mu)+\lambda_{k+1}(\mu)$.
Then
\[
        k\text{-expanding}
        \;+\;
        \Gamma_k(\mu)\le0
        \text{ for every}\mu\in\mathcal{I}(f)
        \quad\Longrightarrow\quad
        k\text{-quasi-irreducibility}.
\]
Therefore, if $\Gamma_k(\mu)\le0<\Sigma_k(\mu)$  for every $\mu\in\mathcal{I}(f)$, then
\[
        k\text{-expanding}
        \quad\Longleftrightarrow\quad
        k\text{-quasi-irreducibility}.
\]

We now introduce the cotangent analogues. Define on $\tilde X_k:=\mathrm{G}_k(T^*_XM)$, 
\[
        \skew{4}{\tilde}{f}_\omega^{[k]}(x,F)
        :=
        \bigl(f_\omega(x),(Df_\omega(x)^{-1})^*F\bigr)
        \quad \text{and} \quad \tilde\phi_f^{[k]}(x,F)
        :=
        \int
        \log
        \mathrm{Jac}_k\bigl((Df_\omega(x)^{-1})^*|_F\bigr)
        \,d\mathbb P
\]        
For \(\mu\in\mathcal{I}(f)\), we define $\tilde{\lambda}_\pm^{[k]}(\mu)$ as above replacing  $\phi_f^{[k]}$ by $\tilde{\phi}_{f}^{[k]}$. Analogously, we introduce  $\tilde{\lambda}_\pm^{[k]}(f)$ and get similar asymptotic and variational representation substituting $Df^n_\omega(x)$ by $(Df^n_\omega(x)^{-1})^*$, and $\Sigma_k(\mu)$ and $\Sigma_k(f)$ by 
$$ \tilde{\Sigma}_k(\mu):= -\lambda_m(\mu)-\dots-\lambda_{m-k+1}(\mu) \quad \text{and} \quad \tilde\Sigma_k(f)=\sup_{\mu\in\mathcal{I}(f)} \tilde{\Sigma}_k(\mu).$$
\begin{defi} A measure $\mu \in \mathcal{I}(f)$ is \emph{$k$-coquasi-irreducible} if $\tilde{\lambda}^{[k]}_-(\mu)=\tilde{\Sigma}_k(\mu)$ and $f$ is 
\begin{itemize}
    \item \emph{$k$-coquasi-irreducible} if every $\mu\in \mathcal{I}(f)$ is $k$-coquasi-irreducible;
    \item \emph{maximal $k$-coquasi-irreducible} if $\tilde{\lambda}^{[k]}_-(f)=\tilde{\Sigma}_k(f)$; 
    \item \emph{$k$-coexpanding} if $\tilde{\lambda}^{[k]}_-(\mu)>0$ for every $\mu\in\mathcal{I}(f)$.
\end{itemize}
\end{defi}
Again, for \(k=1\), these are the notions of
coexpansion and coquasi-irreducibility previously introduced.
The relation between the tangent and cotangent quantities is governed by the
Jacobian determinant. If \(E\in\mathrm{G}_{m-k}(T_xM)\) and
\(E^\perp\in\mathrm{G}_k(T_x^*M)\) is its annihilator, then
\[
        \mathrm{Jac}_k\bigl((Df_\omega(x)^{-1})^*|_{E^\perp}\bigr)
        =
        \frac{\mathrm{Jac}_{m-k}\bigl(Df_\omega(x)|_E\bigr)}
             {|\det Df_\omega(x)|}.
\]
Therefore, for every \(\mu\in\mathcal{I}(f)\),
\[
        \widetilde\lambda_\pm^{[k]}(\mu)
        =
        \lambda_\pm^{[m-k]}(\mu)-\Sigma_m(\mu),
\]
where
\[
        \Sigma_m(\mu)
        =
        \lambda_1(\mu)+\cdots+\lambda_m(\mu)
        =
        \int
        \log|\det Df_\omega(x)|
        \,d\mathbb P\,d\mu.
\]
Consequently,
\[
        \widetilde\lambda_-^{[k]}(f)
        =
        \inf_{\mu\in\mathcal{I}(f)}
        \bigl(
        \lambda_-^{[m-k]}(\mu)-\Sigma_m(\mu)
        \bigr) = \min_{\mu\in\mathcal{I}^{\rm erg}(f)}
        \bigl(
        \lambda_-^{[m-k]}(\mu)-\Sigma_m(\mu)
        \bigr).
 \]
Indeed, by the global lower variational formula for the cotangent
Grassmannian cocycle, \(\widetilde\lambda^{[k]}_-(f)\) is attained by an
ergodic \(\widetilde f^{[k]}\)-stationary probability measure.
The projection of an ergodic stationary lift is an ergodic
\(f\)-stationary measure, which proves the second equality.

\begin{maincor}
$f$ is \(k\)-coexpanding if and only if
$\lambda_-^{[m-k]}(\mu)>\Sigma_m(\mu)$ for every $\mu\in\mathcal{I}(f)$.
\end{maincor}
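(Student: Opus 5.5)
The corollary should follow directly from the definition of $k$-coexpansion combined with the identity $\widetilde\lambda_-^{[k]}(\mu)=\lambda_-^{[m-k]}(\mu)-\Sigma_m(\mu)$ stated just before it. The plan is to establish that identity, or at least the "$-$" case, carefully at the level of stationary measures, and then read off the equivalence.

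\emph{Step 1 (conjugacy of the Grassmannian maps).} The annihilator map $\Psi:(x,E)\mapsto(x,E^\perp)$ is a homeomorphism from $\hat X_{m-k}$ onto $\tilde X_k$. Since $(Df_\omega(x)^{-1})^*(E^\perp)=(Df_\omega(x)E)^\perp$, it conjugates $f^{[m-k]}_\omega$ to $\skew{4}{\tilde}{f}^{[k]}_\omega$. Consequently $\Psi_*$ is an affine bijection $\mathcal{I}_\mu(f^{[m-k]})\to\mathcal{I}_\mu(\skew{4}{\tilde}{f}^{[k]})$ that commutes with projection to $X$ and preserves ergodicity.

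\emph{Step 2 (potentials).} I would verify the Jacobian identity
\[
\mathrm{Jac}_k\bigl((Df_\omega(x)^{-1})^*|_{E^\perp}\bigr)=\mathrm{Jac}_{m-k}(Df_\omega(x)|_E)/|\det Df_\omega(x)|.
\]
To do this, choose orthonormal bases adapted to $E\oplus E^\perp$ at $x$ and to $Df_\omega(x)E\oplus(Df_\omega(x)E)^\perp$ at $f_\omega(x)$. In these bases $Df_\omega(x)$ is block triangular, so $|\det|$ equals the product of the $E$-block Jacobian and the quotient-block Jacobian. The inverse adjoint restricted to the annihilator is the inverse transpose of the quotient block, which gives the formula. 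Integrating over $\mathbb P$ yields $\tilde\phi^{[k]}_f\circ\Psi=\phi^{[m-k]}_f-\psi$ with $\psi(x)=\int\log|\det Df_\omega(x)|\,d\mathbb P$. Integrability of all three terms follows from~\eqref{eq:log-integrability}.

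\emph{Step 3 (integrate).} For $\hat\mu\in\mathcal{I}_\mu(f^{[m-k]})$ we have $\int\psi\,d\hat\mu=\int\psi\,d\mu=\Sigma_m(\mu)$, because $\psi$ depends only on $x$ and $\log|\det Df^n_\omega|$ is additive, so the $n$-step limit reduces to the one-step integral. Taking the infimum over lifts gives $\widetilde\lambda_-^{[k]}(\mu)=\lambda_-^{[m-k]}(\mu)-\Sigma_m(\mu)$.

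\emph{Step 4 (conclude).} By definition, $f$ is $k$-coexpanding exactly when $\widetilde\lambda_-^{[k]}(\mu)>0$ for every $\mu\in\mathcal{I}(f)$. By Step 3 this is equivalent to $\lambda_-^{[m-k]}(\mu)>\Sigma_m(\mu)$ for every $\mu$.

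The only genuinely delicate point is the linear-algebra identity in Step 2, specifically the normalization of $\mathrm{Jac}_k$ on the dual via the dual inner product. I would handle it with adapted orthonormal bases as described. If the uniform version $\widetilde\lambda_-^{[k]}(f)>0$ were intended instead, one would also need the minimum over ergodic $\mu$ to be attained, which is the display preceding the corollary. For the stated per-measure condition, however, that is not needed.
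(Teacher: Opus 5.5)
Your proposal is correct and follows the paper's own route. The paper also uses the annihilator Jacobian identity to get $\tilde\lambda_-^{[k]}(\mu)=\lambda_-^{[m-k]}(\mu)-\Sigma_m(\mu)$ for each $\mu\in\mathcal{I}(f)$, and then reads the corollary off the per-measure definition of $k$-coexpansion. Your conjugacy $\Psi$ between the Grassmannian maps and your check that the base potential $\int\log|\det Df_\omega(x)|\,d\mathbb P$ integrates to $\Sigma_m(\mu)$ over any lift spell out steps the paper leaves implicit.
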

If the determinant contribution is non-positive, we get that
$$
\text{$\Sigma_m(f) \leq 0$  \;+\;  $(m-k)$-expanding \quad $\implies$ \quad   $k$-coexpanding.}
$$
In the volume-preserving case, where $|\det Df_\omega(x)|=1$ 
for \(\mathbb P\)-a.e.~\(\omega\) and every~\(x\), we have 
$\Sigma_m(f)=0$ and hence
$\tilde\lambda_\pm^{[k]}(\mu)
        =
        \lambda_\pm^{[m-k]}(\mu)$. Therefore, in this volume preserving case, we get
\[
        k\text{-coexpanding}
        \quad\Longleftrightarrow\quad
        (m-k)\text{-expanding}
\]
and
\[
        k\text{-coquasi-irreducibility}
        \quad\Longleftrightarrow\quad
        (m-k)\text{-quasi-irreducibility}.
\]
These higher-dimensional notions clarify the relation between equators and
coequators: 
\begin{enumerate}
    \item If \(\mu\) is \(k\)-quasi-irreducible, then \(\mu\) has no coequator
of dimension \(m-k\). Dually,
\item if \(\mu\) is \(k\)-coquasi-irreducible, then
\(\mu\) has no equator of dimension \(m-k\). Hence,
\item  if \(\mu\) is
\(k\)-quasi-irreducible for every \(1\le k\le m-1\), then \(\mu\) has no
non-trivial coequators and is coquasi-irreducible.  Similarly,
\item if \(\mu\) is
\(k\)-coquasi-irreducible for every \(1\le k\le m-1\), then \(\mu\)  is quasi-irreducible.
\end{enumerate}
For \(m=2\), the only non-trivial
proper dimension is \(1\), so quasi-irreducibility and coquasi-irreducibility are
equivalent as we have concluded before. 
 Finally, \(k\)-quasi-irreducibility gives the expected continuity criterion for
intermediate exponents. 
\begin{maincor}
If \(\mu\) is \(k\)-quasi-irreducible for $f_0\in \mathrm{Diff}_{\rm loc,\mathbb{P}}^1(X)$,  then the partial sum $\Sigma_k(\mu)=\Sigma_k(\mu,f)$  
is continuous at \(f_0\) in the fixed-\(\mu\) space of random \(C^1\) local
diffeomorphisms $f$ with $\|Df_\omega\|_\infty$ and $\|(Df_\omega)^{-1}\|_\infty$ bounded by some fixed constant $C>0$ for $\mathbb{P}$-a.e~$\omega\in \Omega$. 
\end{maincor}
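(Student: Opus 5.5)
The natural route is to transport the proof of Theorem~\ref{main:thmD} from the projective bundle $\hat X$ to the Grassmannian bundle $\hat X_k$. The variational formula of \cite{BN26} for $f^{[k]}$ gives
\[
\Sigma_k(\mu,f)=\lambda_+^{[k]}(\mu,f)=\sup_{\hat\mu\in\mathcal I_\mu(f^{[k]})}\int\phi_f^{[k]}\,d\hat\mu .
\]
The hypothesis of $k$-quasi-irreducibility says $\lambda^{[k]}_-(\mu,f_0)=\Sigma_k(\mu,f_0)$, so every $\hat\mu\in\mathcal I_\mu(f_0^{[k]})$ satisfies $\int\phi^{[k]}_{f_0}\,d\hat\mu=\Sigma_k(\mu,f_0)$. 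Continuity will follow from two separate inequalities.

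\emph{Upper semicontinuity.} By subadditivity, $\Sigma_k(\mu,f)=\inf_n \frac1n\int\log\|\wedge^k Df^n_\omega(x)\|\,d\mathbb P\,d\mu$, and $\mu$ is fixed. Each $n$-th term is continuous in $f$ for $d_{C^1}$. This uses two facts. First, $\wedge^k$ of the composed derivative is Lipschitz in the factors on the bounded set $\|Df_\omega\|_\infty,\|(Df_\omega)^{-1}\|_\infty\le C$. Second, $\log$ is uniformly continuous away from $0$, since the norms are bounded below by $C^{-kn}$. One point needs care: the composition $Df^n_\omega(x)=Df_{\omega_{n-1}}(f^{n-1}_\omega x)\cdots$ involves base points that move with $f$. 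In the $C^1$ topology, $Df_\omega$ is uniformly continuous on $X$ for each $\omega$, and dominated convergence over $\omega$ handles the averaging. Taking an infimum of continuous functions gives $\limsup_{g\to f_0}\Sigma_k(\mu,g)\le\Sigma_k(\mu,f_0)$.

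\emph{Lower semicontinuity.} This is where $k$-quasi-irreducibility is used. Take $g_j\to f_0$ in $d_{C^1}$ with $g_j$ in the bounded class, and choose $\hat\mu_j\in\mathcal I_\mu(g_j^{[k]})$ attaining $\Sigma_k(\mu,g_j)$; such a maximizer exists by \cite{BN26}. Since $\hat X_k$ is compact, pass to a weak$^*$ limit $\hat\mu_j\to\hat\mu$. Two steps then remain.
\begin{enumerate}
\item Show $\hat\mu\in\mathcal I_\mu(f_0^{[k]})$. The marginal is $\mu$ for every $j$, so only stationarity must pass to the limit. For continuous $\Phi$ on $\hat X_k$,
\[
\int\Phi\circ g^{[k]}_{j,\omega}\,d\mathbb P\to\int\Phi\circ f^{[k]}_{0,\omega}\,d\mathbb P
\]
uniformly on $\hat X_k$. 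This holds because $E\mapsto Dg_\omega(x)E$ is uniformly continuous on the Grassmannian, uniformly over maps with bounded $\|Dg\|$ and $\|Dg^{-1}\|$, and it converges in $\mathbb P$-measure after averaging.
\item Show $\int\phi^{[k]}_{g_j}\,d\hat\mu_j\to\int\phi^{[k]}_{f_0}\,d\hat\mu$. The potentials $\phi^{[k]}_{g_j}$ converge uniformly to $\phi^{[k]}_{f_0}$. Indeed, $\log\mathrm{Jac}_k$ is bounded by $k\log C$, and $\mathrm{Jac}_k(Dg_\omega(x)|_E)$ is close to $\mathrm{Jac}_k(Df_\omega(x)|_E)$ when $\|Dg_\omega-Df_\omega\|_\infty$ is small; the averaged $d_{C^1}$ distance controls this by dominated convergence. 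Moreover $\phi^{[k]}_{f_0}$ is continuous on $\hat X_k$.
\end{enumerate}
Combining these with the hypothesis gives
\[
\liminf_j\Sigma_k(\mu,g_j)=\int\phi^{[k]}_{f_0}\,d\hat\mu=\Sigma_k(\mu,f_0).
\]
The last equality is exactly $k$-quasi-irreducibility, since every lift of $\mu$ attains the maximum. Along a subsequence this gives lower semicontinuity, and hence continuity.

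The main obstacle is step~(i): passing stationarity to the limit under mere average $C^1$ convergence. The map $\omega\mapsto g_{j,\omega}$ converges only in $L^1(\mathbb P)$, not uniformly. The plan is to write
\[
\Bigl|\int\Phi\circ g^{[k]}_{j,\omega}-\Phi\circ f^{[k]}_{0,\omega}\,d\mathbb P\Bigr|\le \int \varpi_\Phi\bigl(d_{C^0}(g_{j,\omega},f_{0,\omega})+c\,\|Dg_{j,\omega}-Df_{0,\omega}\|_\infty\bigr)\,d\mathbb P,
\]
where $\varpi_\Phi$ is a bounded modulus of continuity of $\Phi$. The integral tends to $0$ because the integrand is bounded and the argument tends to $0$ in $L^1$, hence in measure. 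Here the uniform bound $C$ is what makes the Grassmannian action equicontinuous. The same estimate also handles the moving base points in the upper semicontinuity step.
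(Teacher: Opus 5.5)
Your argument is correct, but it takes a different route from the paper. The paper applies Proposition~\ref{prop:mainthmD} to the exterior-power cocycle $\wedge^k Df$. There, continuity comes from the sandwich $\lambda_-(\mu,\cdot)=\sup_n a_n\le\lambda_1(\mu,\cdot)=\inf_n b_n$, with $a_n,b_n$ continuous by Lemma~\ref{lem:uniform-conv-phi}. The bound $C$ enters only through Lemma~\ref{lem:exterior-power-continuity}. The hypothesis is linked to this via the asserted identity $\lambda_-(\mu,\wedge^kA)=\lambda_-^{[k]}(\mu,A)$.

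You instead stay on $\hat X_k$. You keep the subadditive upper bound, which is the analogue of $\inf_n b_n$. You replace the lower uniform-limit formula by weak$^*$ compactness of stationary lifts with fixed marginal $\mu$; your step (i) is the Grassmannian counterpart of Claim~\ref{claim:P}. This needs very little input:
\begin{itemize}
\item For the lower bound, any $\hat\mu_j\in\mathcal I_\mu(g_j^{[k]})$ works, by the elementary inequality $\int\phi^{[k]}_{g_j}\,d\hat\mu_j\le\Sigma_k(\mu,g_j)$.
\item If you do take maximizers, the same compactness already gives $\limsup_j\Sigma_k(\mu,g_j)\le\Sigma_k(\mu,f_0)$. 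So your subadditivity step is redundant.
\end{itemize}

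The paper's sandwich additionally gives lower semicontinuity of the lower quantity, and hence openness of expansion. Your route never leaves the Grassmannian, and that is a real advantage, not a cosmetic one. The single-matrix identity $\min_{[v]}\|\wedge^kLv\|/\|v\|=\min_{W}\mathrm{Jac}_k(L|_W)$ does not pass to the annealed minimum, which is taken after integrating in $\mathbb P$. In general only $\lambda_-(\mu,\wedge^kA)\le\lambda_-^{[k]}(\mu,A)$ holds, because stationary measures on $P(\wedge^k\mathcal E)$ can sit on non-decomposable directions.

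For example, take $X=\{p\}$, $k=2$, and $A_\omega$ i.i.d.\ with finite support generating a Zariski-dense subgroup of $\mathrm{Sp}(4,\mathbb R)$.
\begin{itemize}
\item The non-decomposable bivector $\Omega$ dual to the symplectic form is fixed by every $\wedge^2A_\omega$. Hence $\delta_{[\Omega]}$ gives $\lambda_-(\delta_p,\wedge^2A)\le 0<\lambda_1+\lambda_2$.
\item On the other hand, by Lemma~\ref{lem:bundle-generated-by-nu}, every ergodic stationary measure on $\mathrm G_2(\mathbb R^4)$ spans an invariant subspace spanned by decomposable vectors. That subspace is either the $5$-dimensional complement of $\mathbb R\Omega$ or all of $\wedge^2\mathbb R^4$. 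Either way the measure gives $\lambda_1+\lambda_2$.
\end{itemize}
So $\delta_p$ is $2$-quasi-irreducible while $\wedge^2A$ is not quasi-irreducible, and Proposition~\ref{prop:mainthmD} cannot be invoked for $\wedge^2A$ as the paper does. Your Grassmannian-native argument is what covers such cases, as would the paper's sandwich run directly with the Grassmannian quantities in \eqref{eq:limit}.
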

Since $\lambda_i(\mu)=\Sigma_i(\mu)-\Sigma_{i-1}(\mu)$ the exponents $      \lambda_1(\mu),\dots,\lambda_k(\mu)$
are continuous at \(f_0\) provided $f_0$ is $i$-quasi-irreducible for every
\(1\le i\le k\). Dually, \(i\)-coquasi-irreducibility for
\(1\le i\le k\) gives continuity of the bottom \(k\) Lyapunov exponents. Also, maximal $k$-quasi-irreducibility and maximal $k$-coquasi-irreducibility give continuity of $\Sigma_k(f)$ and $\tilde{\Sigma}_k(f)$ respectively.

\subsection{Organization of the paper}
\label{subsec:organization}

The remainder of the paper is organized as follows. In
Section~\ref{sec:proof-equators} we prove Theorem~\ref{thm:mainB} and its
cotangent analogue, Theorem~\ref{prop:cotangent-equator-criterion}. We first work
in the general setting of random linear bundle isomorphisms and then apply the
result to the derivative cocycle and to the inverse-dual cotangent cocycle.

In Section~\ref{sec:proof-contraction-spectral-gap} we prove
Theorem~\ref{mainthm:equivalence-spectral-gap},
Theorem~\ref{thm:cotangent-projective-criterion}, and their corollaries. The proof
is again developed in a general linear-bundle framework: we first establish a
two-metric criterion for contraction on average, then prove the required projective
estimates and the vertical spectral gap, and finally apply these results to the
projective tangent and cotangent dynamics.

In Section~\ref{sec:proof-continuity} we prove the continuity results for the
extremal Lyapunov exponents, namely Theorem~\ref{main:thmD} and its cotangent
counterpart Theorem~\ref{main:thmDbis}. Finally, in
Section~\ref{sec:proof-corollaries} we prove the remaining propositions and
corollaries concerning expansion, coexpansion, their interplay, and the
higher-dimensional Grassmannian versions announced in
Subsection~\ref{subsec:higher-dimensional-expansion}.


\section{Proof of Theorem~\ref{thm:mainB} and~\ref{prop:cotangent-equator-criterion}} \label{sec:proof-equators}

 We prove a slightly more general version of such theorems for a random linear bundle isomorphism.

 \subsection{Random linear bundle isomorphisms}
\label{s:setting-linear}
Let $X$ be a compact metric space. Let $\pi : \mathcal E \to X$ be a continuous rank-$m$ vector bundle with a continuous Euclidean norm $\|\cdot\|$. Hence, since \(X\) is compact and \(\pi:\mathcal E\to X\) is a continuous finite-dimensional vector bundle, the projective total space $\hat{\mathcal{E}}=P({\mathcal{E}})$ is compact and metrizable.

Consider a continuous random map $f:\Omega \times X \to X$. Let $A:\Omega \times \mathcal E \to \mathcal E$ be a random linear bundle isomorphism covering $f$. That is, a measurable map $A_\omega :\mathcal{E}\to \mathcal{E}$ where $A_{\omega}(x) : \mathcal E_x \to \mathcal E_{f_{\omega}(x)}$ is an invertible linear map for $x\in X$ and 
$\pi\circ A_\omega = f_\omega \circ \pi$ for $\mathbb{P}$-a.e.~$\omega \in \Omega$. We have 
$$
  A^0_\omega(x)=\mathrm{id} \quad \text{and} \quad A^{n+1}_{\omega}(x):= A_{\sigma^n \omega}(f^n_\omega(x))\circ A^{n}_\omega(x) \quad \text{for } n\geq 0. 
$$
In what follows we assume the integrability condition 
$$
  \int \log^+ \|A_\omega\|_\infty \, d\mathbb P<\infty.
$$
Under this condition, we define the Lyapunov exponents as in the introduction, just replacing $Df^n_\omega(x)$ by $A^n_\omega(x)$. We also denote by $\hat A:\Omega \times \hat{\mathcal{E}} \to \hat{\mathcal{E}}$ the projective random map associated with $A$ and by $\phi_A:\hat{\mathcal E}\to\mathbb [-\infty,\infty)$ the potential given by $$ \phi_A (x,\hat v):=\int \log \|A_\omega(x)v\|\, d\mathbb{P}$$ 
where $\hat v \in P(\mathcal{E}_x)$ and $v$ is a unit representative.  

\subsection{Quasi-irreducibility, equators and strong law of large numbers} The first part of Theorem~\ref{thm:mainB} could be followed easily from the non-random multiplicative ergodic theorem (see~\cite[Thm.~1.1 Chap.~III]{Kif:86}). However, we will provide a different, direct proof that follows the ideas of~\cite[Prop.~5.3.2]{BM}. To do this, we first show the following lemma, where we get an $A$-invariant bundle from an ergodic projective stationary measure. Recall that a bundle $\mathcal L$ is said to be $A$-invariant if $A_\omega(x)\mathcal{L}_x = \mathcal L_{f_\omega(x)}$ for $\mathbb P$-a.e.~$\omega \in \Omega$.  Also, for a subset \(S\subset P(V)\) of the projective space of a finite-dimensional vector space \(V\), we write $\operatorname{span}(S)$ for the smallest linear subspace \(L\subset V\) such that \(S\subset P(L)\).

\begin{lem} 
\label{lem:bundle-generated-by-nu}
Let $\nu\in\mathcal{I}(\hat A)$ be ergodic and set $\mu=\pi\nu$ and $d\nu=\nu_x \, d\mu(x)$. Define
\[
   {S}_x:=\supp(\nu_x)\subset P(\mathcal E_x),
   \qquad 
   \mathcal L_x:=\mathrm{span}(S_x)\subset \mathcal E_x .
\]
Then $\mathcal L=\sqcup_{x\in X} \mathcal{L}_x$ is an $A$-invariant bundle over $\mu$ with $\nu(\hat{\mathcal{L}})=1$  where
$\hat{\mathcal L}:=P(\mathcal L)$ and
\begin{equation}\label{eq:nu-realizes-restricted-exponent}
   \int \phi_A\, d\nu
   =
   \lambda_1(\mu|_{\mathcal L}).
\end{equation}
Moreover, if
\(\mathcal F\) is an \(A\)-invariant subbundle over \(\mu\)
such that \(\nu(\hat{\mathcal F})=1\), then
$\int \phi_A\,d\nu
        \leq
        \lambda_1(\mu|_{\mathcal F})$. In particular, 
        $$\lambda_-(\mu):=\inf_{\hat \mu \in \mathcal{I}_\mu(\hat A)} \int \phi_A \, d\hat\mu \leq \lambda_1(\mu|_{\mathcal{F}}) \quad \text{for every $A$-invariant bundle $\mathcal F$.}
        $$
\end{lem}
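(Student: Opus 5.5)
The plan is to prove the four assertions in order: invariance of $\mathcal L$, the inclusion $\nu(\hat{\mathcal L})=1$, the identity \eqref{eq:nu-realizes-restricted-exponent}, and then the comparison with an arbitrary invariant $\mathcal F$ together with the bound on $\lambda_-(\mu)$.

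\emph{Invariance of $\mathcal L$.} First I will exploit the stationarity of $\nu$ at the level of the disintegration. Write $\nu=\int \hat A_\omega\nu\,d\mathbb P$ and disintegrate both sides over $\mu=\int f_\omega\mu\,d\mathbb P$. This should give, for $\mathbb P\times\mu$-a.e.\ $(\omega,x)$, the inclusion
\[
A_\omega(x)S_x\subset S_{f_\omega(x)}.
\]
The reason is that $\nu_y$ is an average of the push-forwards $A_\omega(x)_*\nu_x$ over the pairs $(\omega,x)$ with $f_\omega(x)=y$, and each of these push-forwards is carried by $\supp\nu_y$. Taking spans, $A_\omega(x)\mathcal L_x\subset\mathcal L_{f_\omega(x)}$. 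Hence $d(x):=\dim\mathcal L_x$ satisfies $d(f_\omega(x))\ge d(x)$ almost surely, i.e.\ $Pd\ge d$ $\mu$-a.e. Since $\int Pd\,d\mu=\int d\,d\mu$ by stationarity, we get $Pd=d$, and ergodicity of $\mu$ forces $d$ to be constant $\mu$-a.e. Injectivity of $A_\omega(x)$ then upgrades the inclusion to the equality $A_\omega(x)\mathcal L_x=\mathcal L_{f_\omega(x)}$. Measurability of $x\mapsto\mathcal L_x$ comes from measurability of $x\mapsto\nu_x$ and hence of $x\mapsto\supp\nu_x$. The inclusion $\nu(\hat{\mathcal L})=1$ is immediate, since $\nu_x(S_x)=1$ and $S_x\subset P(\mathcal L_x)$. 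I expect this disintegration-of-supports step to be the main obstacle, because $f_\omega$ is non-invertible and several preimages contribute to $\nu_y$. I would first try to prove it by testing stationarity against functions of the form $\mathbf 1_B(y)\psi(y,\hat w)$ with $\psi$ vanishing on $S_y$; stationarity makes the integral of such a function zero, which should give the inclusion almost surely.

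\emph{The inequality $\int\phi_A\,d\nu\le\lambda_1(\mu|_{\mathcal F})$.} Let $\mathcal F$ be any $A$-invariant bundle with $\nu(\hat{\mathcal F})=1$. For $\hat v\in P(\mathcal F_x)$ we have $\|A^n_\omega(x)v\|\le\|A^n_\omega(x)|_{\mathcal F_x}\|$. Integrating against $d\mathbb P\,d\nu$ and using item~(i) of \S\ref{ss:maximalLE}, now applied to $\hat A$, gives
\[
\int\phi_A\,d\nu=\lim_n\tfrac1n\int\log\|A^n_\omega v\|\,d\mathbb P\,d\nu\le\lambda_1(\mu|_{\mathcal F}).
\]
This proves the ``moreover'' part, and in particular yields one inequality in \eqref{eq:nu-realizes-restricted-exponent} by taking $\mathcal F=\mathcal L$.

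\emph{The reverse inequality for $\mathcal L$.} Set $c:=\int\phi_A\,d\nu$. By ergodicity and Kingman's theorem for the skew product on $\Omega\times\hat{\mathcal E}$ (the pointwise analogue of items~(ii)--(iii)), for $\nu$-a.e.\ $(x,\hat v)$ we have $\frac1n\log\|A^n_{\omega}(x)v\|\to c$ for $\mathbb P$-a.e.\ $\omega$. So for $\mu$-a.e.\ $x$ this convergence holds on a $\nu_x$-full set $G_x$. The set $G_x$ is dense in $S_x$, so it still spans $\mathcal L_x$, and we can choose a basis $v_1,\dots,v_d\in G_x$. There is a constant $C(x)$, depending only on this basis, with
\[
\|A^n_\omega(x)|_{\mathcal L_x}\|\le C(x)\max_i\|A^n_\omega(x)v_i\|.
\]
Hence $\limsup_n\frac1n\log\|A^n_\omega(x)|_{\mathcal L_x}\|\le c$ almost surely. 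On the other hand, Kingman's theorem applied to the subadditive sequence $\log\|A^n|_{\mathcal L}\|$ shows that this $\limsup$ is an almost sure limit equal to $\lambda_1(\mu|_{\mathcal L})$. Therefore $\lambda_1(\mu|_{\mathcal L})\le c$, which closes \eqref{eq:nu-realizes-restricted-exponent}.

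\emph{The bound on $\lambda_-(\mu)$.} Given an $A$-invariant bundle $\mathcal F$, I will produce some $\hat\mu\in\mathcal I_\mu(\hat A)$ with $\hat\mu(\hat{\mathcal F})=1$. This is a stationary lift of the restricted cocycle on $P(\mathcal F)$, and its existence should follow from the measurable-bundle existence results of \cite{BN26} (the variational formula applied to $A|_{\mathcal F}$). The inequality just proved then gives $\lambda_-(\mu)\le\int\phi_A\,d\hat\mu\le\lambda_1(\mu|_{\mathcal F})$.
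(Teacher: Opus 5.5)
Your proposal is correct and follows essentially the same route as the paper. Invariance comes from stationarity plus closedness of supports; $\dim\mathcal L_x$ is made constant by monotonicity plus ergodicity; and the identity comes from Birkhoff on the projective skew product, a basis drawn from a $\nu_x$-full set, and Kingman's theorem for $\log\|A^n_\omega(x)|_{\mathcal L_x}\|$. The differences are small improvements, not a new strategy. You get the ``moreover'' inequality directly by integrating $\|A^n_\omega(x)v\|\le\|A^n_\omega(x)|_{\mathcal F_x}\|$ against $d\mathbb P\,d\nu$, so it needs neither ergodicity nor the passage through $\mathcal L\subset\mathcal F$. That same step gives the lower half of \eqref{eq:nu-realizes-restricted-exponent} without a measurable choice of basis. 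You also make explicit the existence of a stationary lift carried by $\hat{\mathcal F}$. The paper's ``in particular'' uses this tacitly and only constructs it later, via Schauder--Tychonoff, in the proof of Proposition~\ref{prop:non-linear-quasi-irreducibility}.
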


\begin{proof}
By construction, $\nu(\hat{\mathcal L})=1$.
Since $\nu$ is $\hat A$-stationary, 
\[
1=\nu(\hat{\mathcal L})
=\int (\hat A_\omega)_*\nu(\hat{\mathcal L})\, d\mathbb P
=\int \nu( (\hat A_\omega)^{-1}\hat{\mathcal L})\, d\mathbb P,
\]
hence $\nu((\hat A_\omega)^{-1}\hat{\mathcal L})=1$ for $\mathbb P$-a.e.\ $\omega$. Using the disintegration $d\nu(x,\hat v) = \nu_x\, d\mu(x)$, this implies that for $\mu$-a.e.~$x$, the measure $\nu_x$ gives full mass to the set $\{\hat v \in P(\mathcal E_x) : A_\omega(x)\hat v \in P(\mathcal L_{f_\omega(x)}) \}$. As this set is closed and $S_x = \supp(\nu_x)$, it must contain $S_x$. Hence $A_\omega(x)S_x \subset P(\mathcal L_{f_\omega(x)})$ for \(\mathbb P\times\mu\)-a.e.~\((\omega,x)\) and consequently,  since $\mathcal L_x = \mathrm{span}(S_x)$ and the map $A_\omega(x)$ is linear, we obtain
\[
   A_\omega(x)\mathcal L_x
   = A_\omega(x)\bigl(\mathrm{span}(S_x)\bigr)
   = \mathrm{span}( A_\omega(x) S_x)
   \subset \mathcal L_{f_\omega(x)}.
\]
Let $d(x) := \dim(\mathcal L_x)$. Since $A_\omega(x)$ is invertible and $A_\omega(x)\mathcal L_x \subset \mathcal L_{f_\omega(x)}$, we have $d(f_\omega(x)) \ge d(x)$ for $\mathbb{P} \times \mu$-a.e.\ $(\omega,x)$. Because $\mu$ is $f$-stationary, the measure $\bar{\mu}=\mathbb{P} \times \mu$ is invariant, which implies that the non-negative function $\Delta(\omega, x) = d(f_\omega(x)) - d(x)$ has zero integral. Thus, $d(f_\omega(x)) = d(x)$ $\bar{\mu}$-a.e., meaning $d$ is $F$-invariant. Since $\nu$ is ergodic, $\bar{\mu}$ is ergodic, and therefore $d(x)$ must be constant $\mu$-a.e. Consequently, the inclusion $A_\omega(x)\mathcal L_x \subset \mathcal L_{f_\omega(x)}$ becomes an equality of dimensions, proving that $\mathcal L$ is an $A$-invariant subbundle.

To obtain \eqref{eq:nu-realizes-restricted-exponent}, let $\phi(\omega,(x,\hat v))= \log \|A_\omega(x)v\|$.  Notice that
$\phi^+(\omega,(x,\hat v)) \leq
    \log^+\|A_\omega\|_\infty$,
and hence $\phi^+\in L^1(\mathbb P\times\nu)$. 
Therefore the extended Birkhoff theorem for quasi-integrable
observables, cf.\ \cite[Rem.~2.3.2]{BM}, applies to the skew-product $\hat F$ induced by $\hat f$ 
and the observable $\phi$  concluding
\begin{equation}\label{eq:directional-quenched-limit}
   \lim_{n\to\infty}
   \frac1n\log\|A^n_\omega(x)v\|
   = \lim_{n\to \infty}  \frac{1}{n} \sum_{i=0}^{n-1} \phi(\hat F^i(\omega,z))=
   \int \phi_A\, d\nu
   \quad
   \text{for $(\mathbb{P}\times\nu)$-a.e.~$(\omega,(x,\hat v))$.}
\end{equation}
Since $\nu(\hat{\mathcal L})=1$, the above limit holds for
 $(\mathbb P\times \nu)$-a.e.~$(\omega,(x,\hat v))$ with $\hat v\in P(\mathcal L_x)$.
Let $k:=\dim\mathcal L_x$, which is constant $\mu$-a.e.
{By Fubini's theorem, there exists a measurable set $G\subset\hat{\mathcal L}$ with $\nu(G)=1$ such that for every $(x,\hat v)\in G$, the limit in \eqref{eq:directional-quenched-limit} holds for $\mathbb P$-a.e.\ $\omega$. Since $\nu_x(G_x)=1$ where $G_x:=G\cap P(\mathcal L_x)$, and $S_x=\mathrm{supp}(\nu_x)$, we have $S_x\subset\overline{G_x}$ and consequently $\mathrm{span}(G_x)=\mathrm{span}(S_x)=\mathcal L_x$ for $\mu$-a.e.\ $x$. Choose a measurable family of unit vectors $v_1(x),\dots,v_k(x)\in\mathcal L_x$ with $\hat v_i(x)\in G_x$ such that $\{v_1(x),\dots,v_k(x)\}$ is a basis of $\mathcal L_x$ for $\mu$-a.e.\ $x$ (possible by a measurable selection argument since $\mathrm{span}(G_x)=\mathcal L_x$). By construction, for each $i=1,\dots,k$,}
\begin{equation}\label{eq:frame-limit}
   \lim_{n\to\infty}
   \frac1n\log\|A^n_\omega(x){v_i(x)}\|
   =
   \int \phi_A\, d\nu
   \quad
   \text{for $\mathbb P\times\mu$-a.e.~$(\omega,x)$.}
\end{equation}
{Since $\{v_1(x),\dots,v_k(x)\}$ is a basis of $\mathcal L_x$, there exists a measurable $C(x)>0$ such that}
\[
{C(x)^{-1}\|A^n_\omega(x)|_{\mathcal L_x}\|
   \le
   \max_{1\le i\le k}\|A^n_\omega(x)v_i(x)\|
   \le
   \|A^n_\omega(x)|_{\mathcal L_x}\| \quad \text{for every $n\ge1$}.}
\]
Taking $\frac{1}{n} \log$ and letting $n\to\infty$ {(noting that $\frac1n\log C(x)\to 0$)}, we obtain from
\eqref{eq:frame-limit} that
\begin{equation}\label{eq:operator-quenched-limit}
   \lim_{n\to\infty}
   \frac1n\log\|A^n_\omega(x)|_{\mathcal L_x}\|
   =
   \int \phi_A\, d\nu
   \quad
   \text{for $\mathbb P\times\mu$-a.e.\ $(\omega,x)$.}
\end{equation}
Finally, since
\(
   \log\|A^n_\omega(x)|_{\mathcal L_x}\|
\)
satisfies the usual subadditive inequality (with respect to the skew-product $F$ induced by $f$) and 
\(
   \log^+\|A_\omega\|_\infty
\)
is $\mathbb P$-integrable, Kingman's subadditive ergodic theorem gives 
\[
   \lambda_1(\mu|_{\mathcal L})
   :=
   \lim_{n\to\infty}\frac1n
   \int \log \|A^n_\omega(x)|_{\mathcal L_x}\|
   \, d\mathbb P d\mu
   =
      \lim_{n\to\infty}\frac1n
      \log \|A^n_\omega(x)|_{\mathcal L_x}\|
\]
for $(\mathbb{P}\times \mu)$-a.e.~$(\omega,x)$. 
Combining this with \eqref{eq:operator-quenched-limit} yields~\eqref{eq:nu-realizes-restricted-exponent}.

Finally, if $\mathcal F \subset \mathcal E$ is a \(A\)-invariant subbundle over \(\mu\) such that \(\nu(\hat{\mathcal F})=1\), one has $       \supp(\nu_x)\subset P(\mathcal F_x)$ for \(\mu\)-a.e.~$x$.
Hence $\mathcal L_x=\operatorname{span}(\supp(\nu_x)) \subset
        \mathcal F_x$  for \(\mu\)-a.e.~$x$. 
Therefore, 
 $ \|A_\omega^n(x)|_{\mathcal L_x}\|
        \leq
        \|A_\omega^n(x)|_{\mathcal F_x}\|$ for every \(n\geq1\). 
Taking logarithms, integrating, dividing by \(n\), and passing to the limit gives $\lambda_1(\mu|_{\mathcal L})
        \leq
        \lambda_1(\mu|_{\mathcal F})$. 
By~\eqref{eq:nu-realizes-restricted-exponent}, we have $     \int\phi_A\,d\nu
        \leq
        \lambda_1(\mu|_{\mathcal F})$ concluding the proof.
\end{proof}

\begin{prop}
\label{prop:non-linear-quasi-irreducibility}
Let  $A:\Omega \times \mathcal E \to \mathcal E$ be a random linear bundle isomorphism covering a continuous random map $f:\Omega \times X \to X$ of a compact metric space $X$ satisfying that $\log^+\|A_\omega\|_\infty$ is $\mathbb{P}$-integrable. Given $\mu\in\mathcal{I}(f)$ ergodic, the following conditions are equivalent:
\begin{enumerate}[label=(\arabic*), itemsep=0.05cm]
    \item {quasi-irreducibility:} 
     $\int \phi_A \, d\nu = \lambda_1(\mu)$ for every  $\nu\in \mathcal{I}_\mu(\hat A)$. 
    \item {absence of equators:} there is no proper $A$-invariant subbundle $\mathcal L$ over $\mu$ with $\lambda_1(\mu|_{\mathcal{L}})<\lambda_1(\mu)$.  
\end{enumerate}
\end{prop}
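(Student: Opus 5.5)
Both directions go through Lemma~\ref{lem:bundle-generated-by-nu}, which ties ergodic projective stationary measures to invariant subbundles and their restricted exponents. We also use the linear-bundle version of the Furstenberg--Kifer formula, $\lambda_1(\mu)=\sup_{\nu\in\mathcal I_\mu(\hat A)}\int\phi_A\,d\nu$, with the supremum attained on ergodic measures.

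\emph{(1)$\Rightarrow$(2).} Argue by contraposition. Suppose there is a proper $A$-invariant subbundle $\mathcal L$ over $\mu$ with $\lambda_1(\mu|_{\mathcal L})<\lambda_1(\mu)$. The restriction $A|_{\mathcal L}$ is again a random linear bundle isomorphism over $\mu$, so we produce a stationary measure supported on $\hat{\mathcal L}$. Apply the Furstenberg--Kifer formula (or a Krylov--Bogolyubov/Markov--Kakutani argument on the closed convex set of measures projecting to $\mu$ and charging $\hat{\mathcal L}$) to $A|_{\mathcal L}$. This gives $\nu\in\mathcal I_\mu(\hat A)$ with $\nu(\hat{\mathcal L})=1$.

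The technical point is that $\mathcal L$ is only measurable, so $\hat{\mathcal L}$ is not closed. We therefore work with measures on $\hat{\mathcal E}$ with fixed marginal $\mu$, in the measurable-bundle framework of \cite{BN26}. There the set of such measures that are stationary and give full mass to $\hat{\mathcal L}$ is nonempty, because $\hat{\mathcal L}$ is an invariant measurable subset of fibers. By ergodic decomposition we may take $\nu$ ergodic, since almost every component still projects to the ergodic $\mu$ and charges $\hat{\mathcal L}$. The last assertion of Lemma~\ref{lem:bundle-generated-by-nu} then gives $\int\phi_A\,d\nu\le\lambda_1(\mu|_{\mathcal L})<\lambda_1(\mu)$, contradicting (1).

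\emph{(2)$\Rightarrow$(1).} Let $\nu\in\mathcal I_\mu(\hat A)$ be arbitrary. Since $\mu$ is ergodic, almost every ergodic component of $\nu$ lies in $\mathcal I_\mu(\hat A)$, and $\int\phi_A\,d\nu$ is the average over components. So it suffices to prove $\int\phi_A\,d\nu=\lambda_1(\mu)$ for ergodic $\nu$. The upper bound $\le$ is the Furstenberg--Kifer formula.

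For the lower bound, let $\mathcal L$ be the bundle from Lemma~\ref{lem:bundle-generated-by-nu}. It is $A$-invariant with constant dimension $k\ge1$, and $\int\phi_A\,d\nu=\lambda_1(\mu|_{\mathcal L})$.
\begin{itemize}
\item If $k=m$, then $\mathcal L=\mathcal E$ and $\lambda_1(\mu|_{\mathcal L})=\lambda_1(\mu)$.
\item If $k<m$, then $\mathcal L$ is proper, so (2) gives $\lambda_1(\mu|_{\mathcal L})\ge\lambda_1(\mu)$.
\end{itemize}
In either case $\int\phi_A\,d\nu=\lambda_1(\mu)$.

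The main obstacle is the existence step in (1)$\Rightarrow$(2): building a stationary lift carried by a merely measurable invariant subbundle. My first attempt is the following construction. Take $\tilde\nu_0=\int \mathrm{Leb}_{P(\mathcal L_x)}\,d\mu(x)$, or any measurable choice of fiber measures on $P(\mathcal L_x)$. Form the Cesàro averages $\frac1n\sum_{j<n}\hat P_*^j\tilde\nu_0$. Invariance of $\mathcal L$ keeps every average carried by $\hat{\mathcal L}$, and every average has marginal $\mu$.

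Passing to a limit needs care, since weak$^*$ limits may leak mass off $\hat{\mathcal L}$. To avoid this, we use the topology of convergence of fiber measures in the fixed-marginal sense, i.e.\ narrow convergence of $\mu$-disintegrations as in \cite{BN26}. Since $P(\mathcal L_x)$ is compact in each fiber, the fiber-wise closed sets $\hat{\mathcal L}_x$ are preserved in the limit.
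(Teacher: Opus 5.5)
Your proposal is correct and is essentially the paper's proof. For (2)$\Rightarrow$(1) you use the bundle spanned by an ergodic lift exactly as in Lemma~\ref{lem:bundle-generated-by-nu}. For (1)$\Rightarrow$(2) you produce a stationary lift carried by $\hat{\mathcal L}$ in the Young-measure topology on measures with marginal $\mu$, pass to an ergodic component, and invoke the last assertion of that lemma; the only cosmetic difference is that you take Ces\`aro averages where the paper applies Schauder--Tychonoff to $\hat P^*$ on that compact convex set. The one point you should state explicitly is that $\hat P^*$ is continuous for the Young-measure topology ($\hat P$ maps bounded measurable functions that are continuous on fibres to functions of the same kind), since this is what makes your Ces\`aro limit stationary.
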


\begin{proof}
\noindent\textbf{$(1)\Rightarrow(2)$.}
Let $\mathcal L$ be a proper $A$-invariant bundle.
Let $K$ be the set of probability measures $\eta$ of $\hat{\mathcal{E}}$ such that
$\pi\eta=\mu$ and $\eta(\hat{\mathcal L})=1$.
Every $\eta\in K$ is canonically identified with an element of
$\operatorname{Prob}_\mu(\hat{\mathcal L})
   :=\{\eta\in\operatorname{Prob}(\hat{\mathcal L}):\ \pi\eta=\mu\}$,
which we endow with the {Young-measure topology}: $\eta_n\to\eta$ if and only if
$\int\psi\,d\eta_n\to\int\psi\,d\eta$ for every bounded measurable
$\psi:\hat{\mathcal L}\to\mathbb R$ which is continuous on each fiber $P(\mathcal L_x)$.
The set $K$ is non-empty: if $s:X\to\hat{\mathcal L}$ is a measurable section, 
then $\eta_0:=\int\delta_{s(x)}\,d\mu(x)\in K$. It is clearly convex, and by
\cite[Prop.~B.2]{BN26} it is compact and metrizable in the Young-measure topology. 
Since $\hat{\mathcal L}$ is $\hat A$-invariant, we have $\hat P^*(K)\subset K$ where $\hat P$
is the Koopman operator associated with $\hat A$. Since 
the map $\hat P^*:K\to K$ is affine, and it is continuous in the
 Young-measure topology,
by Schauder--Tychonoff, $\hat P^*$ admits a fixed point
$\nu\in K$. Hence $\nu\in\mathcal{I}_\mu(\hat A)$ and
$\nu(\hat{\mathcal L})=1$. From the ergodic decomposition and since $\mu$ is ergodic, we can
assume that $\nu$ is an ergodic $\hat A$-stationary measure. Hence, by~(1), we have
{$\lambda_1(\mu)=\int\phi_A\,d\nu\leq\lambda_1(\mu|_{\mathcal L})
\leq\lambda_1(\mu)$, where the first inequality is Lemma~\ref{lem:bundle-generated-by-nu}
applied with $\mathcal F=\mathcal L$ and the second one follows from
$\mathcal L\subset\mathcal E$; consequently,}
$\lambda_1(\mu|_{\mathcal{L}})=\int \phi_A \, d\nu =\lambda_1(\mu)$.  This shows (2).


\noindent\textbf{$(2)\Rightarrow(1)$.}
Let $\nu\in\mathcal{I}(\hat A)$ be ergodic with $\mu=\pi\nu$.
Let {$\mathcal L_\nu$} be the bundle generated by $\nu$ as in
Lemma~\ref{lem:bundle-generated-by-nu}. By \eqref{eq:nu-realizes-restricted-exponent}, $\int\phi_A\, d\nu = \lambda_1(\mu|_{{\mathcal L_\nu}})$. If {$\mathcal L_\nu$} is proper, then (2) implies $\lambda_1(\mu|_{\mathcal L_\nu})$ cannot be strictly smaller than $\lambda_1(\mu)$. {Since $\mathcal L_\nu \subset \mathcal E$, we always have $\lambda_1(\mu|_{\mathcal L_\nu}) \leq \lambda_1(\mu)$,} hence $\lambda_1(\mu|_{{\mathcal L_\nu}})=\lambda_1(\mu)$.
If {$\mathcal L_\nu=\mathcal{E}$}, the same equality is trivial. In both cases, $ \int\phi_A\, d\nu = \lambda_1(\mu)$. 
Thus, every ergodic $\hat A$-stationary measure projecting on $\mu$ is maximizing.
The general case follows from ergodic decomposition.
\end{proof}

By the upper semicontinuity of $\mu \in \mathcal{I}(f) \mapsto \lambda_1(\mu)$ and the Bauer maximum principle (see~\cite[Claim~2.2.7]{BM} for more details), we have that 
$$\lambda_1(A):= \sup  \{\lambda_1(\mu): \mu \in \mathcal{I}(f)\}=\max  \{\lambda_1(\mu): \mu \in \mathcal{I}(f) \ \text{ergodic}\}.$$  
Now we prove the Strong Law of Large Numbers:

\begin{prop} \label{prop:SLLN} Under the hypothesis of Proposition~\ref{prop:non-linear-quasi-irreducibility} and assuming aditionally that  $\log^+ \|A_\omega^{-1}\|_{\infty}$ is also $\mathbb P$-integrable, if $\lambda_1(A)=\int \phi_A \, d\nu$ for every $\nu \in \mathcal{I}(\hat A)$, i.e., if $A$ is maximal quasi-irreducible, then for every $x\in X$ and  unit vector $v \in \mathcal{E}_x$,
\begin{equation} \label{eq:SLLN1}
      \lambda_1(A)= \lim_{n\to \infty}  \frac{1}{n}\int \log \|{A^n_\omega(x)v}\| \, d \mathbb{P}
\end{equation}
and 
\begin{equation} \label{eq:SLLN2}
\lambda_1(A) =\lim_{n\to\infty} \frac{1}{n} \log \|{A^n_{\omega}(x)v}\| \quad \text{for $\mathbb{P}$-a.e.~$\omega \in \Omega$.}
\end{equation}
\end{prop}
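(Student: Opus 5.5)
We prove the annealed statement \eqref{eq:SLLN1} first, by a compactness argument on empirical measures, and then deduce the quenched statement \eqref{eq:SLLN2} from it via a martingale decomposition.

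\textbf{Annealed limit.} Fix $x\in X$ and a unit vector $v\in\mathcal E_x$, and set $z=(x,\hat v)$. The cocycle identity
\[
\log\|A^n_\omega(x)v\|=\sum_{i=0}^{n-1}\log\|A_{\sigma^i\omega}(f^i_\omega x)\,u_i\|,
\]
where $u_i$ is the unit vector along $A^i_\omega(x)v$, together with the independence of the coordinates of $\omega$, gives
\[
\frac1n\int\log\|A^n_\omega(x)v\|\,d\mathbb P=\int\phi_A\,d\eta_n,\qquad \eta_n:=\frac1n\sum_{i=0}^{n-1}(\hat P^*)^i\delta_z .
\]
The upper bound $\limsup_n\int\phi_A\,d\eta_n\le\lambda_1(A)$ is the uniform limit characterization (b) of \S\ref{ss:maximalLE}, stated there for $Df$ and valid verbatim for $A$ by \cite{BN26}.

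For the lower bound, take a subsequence along which $\eta_n\to\eta$ weakly$^*$. Since $\hat P^*$ is continuous on measures (by continuity in $z$ under the integral), the limit $\eta$ is $\hat A$-stationary, so by hypothesis $\int\phi_A\,d\eta=\lambda_1(A)$. The obstacle is that $\phi_A$ may equal $-\infty$ and need not be continuous. Integrability of $\log^+\|A_\omega^{-1}\|_\infty$ gives $\phi_A\ge-\int\log^+\|A^{-1}_\omega\|_\infty\,d\mathbb P>-\infty$, so $\phi_A$ is bounded. If $A$ is continuous in the point, $\phi_A$ is continuous by dominated convergence, and then $\int\phi_A\,d\eta_n\to\lambda_1(A)$ along the subsequence. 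Since every subsequence has a further subsequence with this limit, the full sequence converges, which proves \eqref{eq:SLLN1}.

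If $A$ is only measurable, we instead run the argument on the measures $\eta_n$ with the Young topology relative to the compact set $\mathcal I(f)$. I expect this passage to the limit to be the main technical step. The argument also works uniformly in $z$: by contradiction with sequences $z_n$, we get $\min_z\frac1n\int\log\|A^n_\omega v\|\,d\mathbb P\to\lambda_1(A)$.

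\textbf{Quenched limit.} Write
\[
\log\|A^n_\omega(x)v\|=\sum_{i<n}\phi_A(\hat A^i_\omega z)+M_n,
\]
where $M_n$ is a martingale whose increments $\log\|A_{\omega_i}u_i\|-\phi_A(\hat A^i_\omega z)$ are bounded in $L^1$ by the log-moments. The increments are in fact identically distributed up to domination by $\log^+\|A_\omega\|_\infty+\log^+\|A_\omega^{-1}\|_\infty$, so a martingale SLLN for dominated increments (truncation plus Kronecker) gives $M_n/n\to0$ almost surely.

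It then remains to show that $\frac1n\sum_{i<n}\phi_A(\hat A^i_\omega z)\to\lambda_1(A)$ almost surely. Rather than treat this Birkhoff sum directly, we apply the same martingale argument to blocks of length $N$, replacing $\phi_A$ by $\frac1N\phi_N$ with $\phi_N(z):=\int\log\|A^N_\omega v\|\,d\mathbb P$. By the uniform annealed result, $\frac1N\phi_N\ge\lambda_1(A)-\epsilon$ everywhere once $N$ is large. This gives almost surely
\[
\liminf_n\frac1n\log\|A^n_\omega(x)v\|\ge\lambda_1(A)-\epsilon,
\]
after averaging over the $N$ residues modulo $N$. The matching upper bound $\limsup_n\frac1n\log\|A^n_\omega(x)v\|\le\lambda_1(A)$ comes from the same block martingale argument with the uniform upper bound $\max_z\frac1N\phi_N\le\lambda_1(A)+\epsilon$. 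Letting $\epsilon\to0$ yields \eqref{eq:SLLN2}.
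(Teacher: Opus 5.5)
Your argument is correct in the setting the paper actually uses, namely when $A_\omega$ is continuous for $\mathbb P$-a.e.\ $\omega$. The paper's own proof relies on this explicitly, since it asserts $\phi(\omega,\cdot)\in C(\hat{\mathcal E})$. The logic is also the same. Maximal quasi-irreducibility says $\int\phi_A\,d\nu$ takes the same value for every $\nu\in\mathcal{I}(\hat A)$. This forces convergence of the annealed averages $\frac1n\sum_{k<n}\hat P^k\phi_A$ at every starting point, and almost sure convergence of the quenched Birkhoff sums along $\hat F$.

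The difference is in how these two conclusions are obtained. The paper gets both in one line from the abstract results \cite[Cor.~C.2.4 and C.2.5]{BM}, applied to the continuous observable $\phi(\omega,\cdot)$ with $\int\|\phi(\omega,\cdot)\|_\infty\,d\mathbb P<\infty$. You re-prove them directly:
\begin{itemize}
\item weak$^*$ limits of the empirical measures $\eta_n$ are stationary, which gives \eqref{eq:SLLN1} and its uniform version;
\item a block martingale law of large numbers, with increments dominated by i.i.d.\ integrable variables and combined with $\lambda_1(A)-\epsilon\le\frac1N\phi_N\le\lambda_1(A)+\epsilon$, gives \eqref{eq:SLLN2}.
\end{itemize}
Your version is self-contained and shows exactly where the integrability of $\log^+\|A_\omega^{-1}\|_\infty$ enters: boundedness and continuity of $\phi_A$, and domination of the increments. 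The paper's version is shorter and covers arbitrary observables of this type.

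Two cosmetic points. The first martingale decomposition with $\phi_A$ is unnecessary once you run the block argument directly on $\log\|A^n_\omega(x)v\|$. Also, no averaging over residues is needed, since each residue class mod $N$ gives the bound on its own.

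You should, however, drop the paragraph on merely measurable $A$. The Young-measure topology of \cite{BN26} lives on measures with a fixed base marginal, which the $\eta_n$ do not have. More decisively, no argument can work there, because the statement is false without continuity. Take $X=\{0\}\cup\{2^{-k}:k\ge0\}$, $f(x)=x/2$, the trivial bundle, and the deterministic cocycle $A(0)=I$, $A(x)=2I$ for $x\neq0$. Then $\mathcal{I}(f)=\{\delta_0\}$, and every $\nu\in\mathcal{I}(\hat A)$ satisfies $\int\phi_A\,d\nu=0=\lambda_1(A)$. Yet $\frac1n\log\|A^n(1)v\|=\log2$ for all $n$. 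So continuity of $A_\omega$ must be a standing hypothesis. It holds automatically for $Df$ and $(Df^{-1})^*$, which is all the paper needs.
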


\begin{proof} Set
$\phi(\omega,(x,\hat v))
   :=\log\|A_\omega(x)v\|$. This function only depends on the
zero-coordinate of $\omega$ and $\phi(\omega,\cdot)$ belongs to \(C(\hat{\mathcal E})\). For every \(\omega\), since  $\|\phi(\omega,\cdot)\|_\infty
   \leq
   \log^+\|A_\omega\|_\infty
   +
   \log^+\|(A_\omega)^{-1}\|_\infty$, the integrability conditions ensure that 
$
   \int\|\phi(\omega,\cdot)\|_\infty\,d\mathbb P<\infty$.
Moreover, its annealed version is $\phi_A(x,\hat v)
   =
   \int\phi(\omega,(x,\hat v))\,d\mathbb P$ and 
$$
 \frac{1}{n} \log \|{A^n_\omega(x)v}\|= \frac{1}{n} \sum_{i=0}^{n-1} \phi\big(\hatF^i(\omega, (x,\hat v))\big)
$$
where $\hat F$ is the skew-product associated with $\hat A$.  Hence, since $A$ is  maximal quasi-irreducibility, $\max \int \phi_A \, d\nu = \min \int \phi_A \, d\nu =\lambda_1(A)$, and therefore by~\cite[Cor.~C.2.5]{BM}, it holds~\eqref{eq:SLLN2}.

Similarly, denoting by $\hat P$ the annealed Koopman operator associated with $\hat A$, we have 
\[
\frac{1}{n}  \int \log \|{A^n_\omega(x)v}\| \, d\mathbb{P}= \frac{1}{n} \sum_{k=0}^{n-1} \hat P^k\phi_A(x,\hat v). 
\]
Hence, again as $A$ is  maximal quasi-irreducibility, by~\cite[Cor.~C.2.4]{BM}, it holds~\eqref{eq:SLLN1}.  
\end{proof}

\begin{proof}[Proof of Theorem~\ref{thm:mainB} and \ref{prop:cotangent-equator-criterion}] This follows by combine Propositions~\ref{prop:non-linear-quasi-irreducibility} and~\ref{prop:SLLN} for the random linear bundle isomorphisms $A=Df$ on $\mathcal{E}=T_XM$ and $A=(Df^{-1})^{*}$ on $\mathcal{E}=T^*_XM$ respesctively and noting that $\lambda_1(Df)=\lambda_1(f)$ and $\lambda_1((Df^{-1})^*)=-\lambda_m(f)$. 
\end{proof}

\section{Proof of 
Theorem~\ref{mainthm:equivalence-spectral-gap} and~\ref{thm:cotangent-projective-criterion} and Corollary~\ref{maincor:equivalence-mostly} and~\ref{maincor:equivalence-mostlybis}.
}
\label{sec:proof-contraction-spectral-gap}

First, we develop an abstract criterion for contraction on average in a metric space
which is not necessarily compact. The setting is the following.

\subsection{A two-metric criterion for contraction on average}
\label{subsec:two-metric-contraction}

Let \((X,d)\) be a compact metric space, and let \(\rho\) be another metric on \(X\). The metric \(d\) is used for compactness and semicontinuity, whereas \(\rho\) is used to measure contraction of a Bernouilli random map $f :\Omega\times X\to X$.  
For \(x\in X\) and \(n\ge1\), define the pointwise local
\(\rho\)-Lipschitz constant by
\[
   L^\rho f_\omega^n(x)
   :=
   \lim_{r\downarrow0} L_r^\rho f_\omega^n(x),
\]
where
\[
   L_r^\rho f_\omega^n(x)
   :=
   \sup
   \left\{
      \frac{\rho(f_\omega^n(y),f_\omega^n(z))}{\rho(y,z)}
      :
      y,z\in B_\rho(x,r),\ y\ne z
   \right\}.
\]
Since \(r\mapsto L_r^\rho f_\omega^n(x)\) is non-increasing, the limit
exists in \([0,\infty]\). We use the convention \(\log 0=-\infty\). 

\begin{definition}
\label{def:global-distortion-principle}  
We say that \(f\) satisfies the \emph{global distortion principle} in the metric
\(\rho\) on a subset $R \subset X\times X$, if there exists \(C\ge1/2\) such that, for
every \(n\ge1\), \(0<\alpha\le1\) and 
\((x,y)\in R\),
\[
   \rho(f_\omega^n(x),f_\omega^n(y))^\alpha
   \le
   C
   \left[
      \bigl(L^\rho f_\omega^n(x)\bigr)^\alpha
      +
      \bigl(L^\rho f_\omega^n(y)\bigr)^\alpha
   \right]
   \rho(x,y)^\alpha \quad \text{for \(\mathbb P\)-a.e.~$\omega$.}
   \]
\end{definition}

Now, we establish the main result of this subsection on the two-metric global contraction criterion.

\begin{theorem}
\label{thm:two-metric-global-contraction}
Consider $(X,d,\rho,f)$ as above and assume the following conditions:

\begin{enumerate}[itemsep=0.2cm, label=(\textup{G\arabic*)}]
\item \label{G1}  $f$ is mostly contracting with respect to $\rho$, that is,  $\lambda(f;\rho)<0$;
\item \label{G2} Finite exponential moment: There exists \(\beta>0\) such that
$ \int
   \left(
      \sup_{x\in X}L^\rho f_\omega(x)
   \right)^\beta
   d\mathbb P<\infty$;
\item  For every \(n\ge1\) and for \(\mathbb P\)-a.e.~\(\omega\), the map $ x\mapsto L^\rho f_\omega^n(x)$ \label{G3}
is upper semicontinuous on \((X,d)\);
\item \label{G4}  \(f\) satisfies the global
distortion principle in the metric \(\rho\) on a set \( R \subset X\times X\).
\end{enumerate}
Then there exist \(N_0\ge1\) and \(0<\alpha_0\le\beta\) such that for every $0<\alpha \leq \alpha_0$ there is \(q<1\) satisfying
\[
   \int
      \rho(f_\omega^{N_0}(x),f_\omega^{N_0}(y))^\alpha
      d\mathbb P
   \le
   q\,\rho(x,y)^\alpha \quad \text{for every \((x,y)\in R\)}.
\]
\end{theorem}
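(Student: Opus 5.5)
The plan is to reduce the two-point estimate, via the global distortion principle \ref{G4}, to a one-point estimate on the local Lipschitz constants, and then to obtain the one-point estimate from mostly contraction \ref{G1} by a compactness and semicontinuity argument. By \ref{G4}, for every \((x,y)\in R\), \(n\ge1\) and \(0<\alpha\le1\),
\[
   \int \rho(f_\omega^{n}(x),f_\omega^{n}(y))^\alpha\,d\mathbb P
   \le C\Bigl[\int (L^\rho f^n_\omega(x))^\alpha d\mathbb P+\int (L^\rho f^n_\omega(y))^\alpha d\mathbb P\Bigr]\rho(x,y)^\alpha .
\]
It therefore suffices to find \(N_0\) and \(\alpha_0\) such that for every \(0<\alpha\le\alpha_0\) there is \(q'<1/(2C)\) with \(\sup_{x\in X}\int (L^\rho f^{N_0}_\omega(x))^\alpha\,d\mathbb P\le q'\). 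We then take \(q=2Cq'\).

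\textbf{Step 1: a uniform negative logarithmic rate.} Here \(\lambda(f;\rho)\) is understood as the supremum over stationary measures of the limits of \(\frac1n\int\log L^\rho f^n_\omega\,d\mathbb P\,d\mu\). The sequence \(\log L^\rho f^n_\omega(x)\) is subadditive, because local Lipschitz constants are submultiplicative along compositions. By \ref{G3} and Fatou's lemma (the positive parts are dominated thanks to \ref{G2}), the functions \(x\mapsto \int\log L^\rho f^n_\omega(x)\,d\mathbb P\) are upper semicontinuous on the compact space \((X,d)\). The uniform subadditive argument used in \cite{BM} for \eqref{eq:lambdaf} then gives
\[
   \lim_{n\to\infty}\max_{x\in X}\frac1n\int\log L^\rho f^n_\omega(x)\,d\mathbb P=\lambda(f;\rho)<0 .
\]
Concretely, I argue by contradiction. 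Suppose there are points \(x_n\) with \(\frac1n\int\log L^\rho f^n_\omega(x_n)\,d\mathbb P\ge -\epsilon\). Build the empirical measures \(\frac1n\sum_{k<n} P^{*k}\delta_{x_n}\) and take a limit point in the \(d\)-topology. This limit is stationary, and upper semicontinuity shows it has exponent at least \(-\epsilon\), contradicting \ref{G1}.

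I expect Step 1 to be the main obstacle. It requires checking that upper semicontinuity in \(d\) (rather than continuity) still lets the limit pass through the averaged Birkhoff sums. It also requires handling the value \(-\infty\) and the possibility that \(\rho\) is not compatible with \(d\). I would first try to reproduce the proof of \cite[Thm.~3.2.1]{BM} verbatim, replacing continuous potentials by upper semicontinuous ones bounded above by integrable functions.

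\textbf{Step 2: passing from the logarithm to small powers.} Step 1 gives \(N_0\) and \(\gamma>0\) with \(\int\log L^\rho f^{N_0}_\omega(x)\,d\mathbb P\le-\gamma N_0\) for all \(x\). Put \(g_x(\omega)=L^\rho f^{N_0}_\omega(x)\). By \ref{G2} and submultiplicativity, \(\sup_x g_x\) has a finite \(\beta/N_0\)-moment. Shrinking \(\beta\), I may assume \(\sup_x g_x\in L^{\beta}\) with the new \(\beta\). Now use
\[
   t^\alpha\le 1+\alpha\log t+\tfrac{\alpha^2}{2}(\log t)^2 t^{\alpha}
\]
for \(t>0\). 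For \(\alpha\le\beta/2\), the last term is dominated uniformly in \(x\) by \(\alpha^2 K\), where \(K\) comes from the moment bound. On \(\{\log t<0\}\) one has \((\log t)^2t^\alpha\le c/\alpha^2\) only, so this region needs a separate cutoff; alternatively I can use \(t^\alpha\le 1+\alpha\log t\cdot\mathbf 1+\dots\) via convexity of \(s\mapsto e^{\alpha s}\) truncated below at \(-M\). This yields
\[
   \int g_x^\alpha\,d\mathbb P\le 1-\alpha\gamma N_0+O(\alpha^2)
\]
uniformly in \(x\), which is less than \(1\) for all small \(\alpha\).

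\textbf{Step 3: absorbing the constant \(2C\).} Step 2 only gives a bound below \(1\), not below \(1/(2C)\). To fix this, replace \(N_0\) by \(kN_0\). Submultiplicativity together with the Markov property (conditioning on the first \(N_0\) coordinates) gives
\[
   \sup_x\int (L^\rho f^{kN_0}_\omega(x))^\alpha\,d\mathbb P\le\Bigl(\sup_x\int g_x^\alpha\,d\mathbb P\Bigr)^k ,
\]
and this is below \(1/(2C)\) for \(k\) large. Finally, set \(\alpha_0\) to be the threshold found in Step 2, capped by \(\beta\), and the statement follows.
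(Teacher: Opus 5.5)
Your overall architecture matches the paper's: reduce via \ref{G4} to a uniform one-point moment bound, get finite-time negativity from \ref{G1} by the uniform subadditive argument, pass from $\log$ to small powers, then amplify over blocks to beat $2C$. Steps~1 and~3 are fine. The gap is in Step~2: the bound $\int g_x^\alpha\,d\mathbb P\le 1-\alpha\gamma N_0+O(\alpha^2)$ uniformly in $x$ is not justified.

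As you partly noticed, $t^\alpha\le 1+\alpha\log t+\frac{\alpha^2}{2}(\log t)^2t^\alpha$ fails for small $t$ (e.g.\ when $\alpha\log t=-3$). More seriously, no repair can work using only $\sup_x\int\log g_x\,d\mathbb P\le-\gamma N_0$ plus a uniform upper moment, because $\log^- g_x$ has no uniform control. It need not even be integrable, since $L^\rho f^{N_0}_\omega(x)$ may vanish. For a concrete obstruction, let $g_x$ equal $e$ with probability $1-p_x$ and $e^{-(1+\gamma)/p_x}$ with probability $p_x$, where $p_x\to0$. These satisfy $\int\log g_x\,d\mathbb P\le-\gamma$ and $g_x\le e$, yet $\sup_x\int g_x^\alpha\,d\mathbb P\ge e^\alpha>1$ for every $\alpha>0$.

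Your truncation fix hits the same wall. With $s=\max(\log g_x,-M)$ you do get $\int g_x^\alpha\,d\mathbb P\le 1+\alpha h_M(x)+K_M\alpha^2$ uniformly in $x$, where $h_M(x):=\int s\,d\mathbb P$. But truncation increases the integral, so you still need a single $M$ with $\sup_x h_M(x)<0$, and Step~1 does not give that.

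The missing ingredient is \ref{G3} together with compactness of $(X,d)$. Your Step~2 never uses them, and they are exactly what excludes the example above. With them the truncation route closes:
\begin{itemize}
\item each $h_M$ is real-valued and upper semicontinuous on $(X,d)$, by \ref{G3}, the integrable upper majorant supplied by \ref{G2}, and reverse Fatou;
\item $h_M(x)\downarrow\int\log g_x\,d\mathbb P\le-\gamma N_0$ as $M\to\infty$, by monotone convergence;
\item hence the $d$-open sets $\{h_M<-\gamma N_0/2\}$ increase to $X$, and compactness yields a single $M$.
\end{itemize}
The paper argues pointwise instead (Proposition~\ref{prop:uniform-moment-contraction}). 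For each $x$ it finds $\alpha(x)$ with $\int g_x^{\alpha(x)}\,d\mathbb P<q_x<1$, via reverse Fatou applied to $(t^\alpha-1)/\alpha\to\log t$. It spreads this to a $d$-neighbourhood by upper semicontinuity of $y\mapsto\int g_y^{\alpha(x)}\,d\mathbb P$, takes a finite subcover, and passes to a common exponent by H\"older. Your repaired version buys an explicit rate in $\alpha$, at the cost of the same compactness step.

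Finally, in Step~3 your $k$ depends on $\alpha$ and must blow up as $\alpha\to0$, while the statement asks for one $N_0$ for all $\alpha\le\alpha_0$. Run Step~3 once at $\alpha=\alpha_0$ and obtain the smaller exponents by Jensen, as in Lemma~\ref{lem:contraction-smaller-exponents}. The paper's written proof also fixes a single $\alpha$ at this point.
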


We split the proof into several lemmas.

\begin{lemma}
\label{lem:sufficient-exponential-moment}
Assume that there exists \(\beta>0\) such that
$\int
   \left(
      \sup_{x\in X}L^\rho f_\omega(x)
   \right)^\beta\,
   d\mathbb P
   <\infty$.
Then, 
\[
   \int   \left(
      \sup_{x\in X}L^\rho f_\omega^n(x)
   \right)^\beta \,
   d\mathbb P
   <\infty \quad \text{for every \(n\ge1\)}.
\]
In particular,
\[
   \sup_{x\in X}
   \int
      \bigl(L^\rho f_\omega^n(x)\bigr)^\beta
      \,d\mathbb P
   <\infty \quad \text{for every \(n\ge1\)}.
\]
\end{lemma}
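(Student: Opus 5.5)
The plan is to use submultiplicativity of local Lipschitz constants along compositions together with the Bernoulli (product) structure of $\mathbb P$. Write $K(\omega):=\sup_{x\in X}L^\rho f_\omega(x)$. Since $f_\omega=f_{\omega_0}$, this quantity depends only on $\omega_0$. The first step is the chain rule for local Lipschitz constants:
\[
L^\rho (g\circ h)(x)\le L^\rho g(h(x))\,L^\rho h(x).
\]
To prove it, I would fix $\varepsilon>0$ and choose $r'>0$ with $L^\rho_{r'}g(h(x))\le L^\rho g(h(x))+\varepsilon$. If $L^\rho h(x)<\infty$, then for small $r$ the image $h(B_\rho(x,r))$ lies in $B_\rho(h(x),r')$, because $\rho(h(y),h(x))\le L_r^\rho h(x)\,\rho(y,x)$. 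For $y\ne z$ in $B_\rho(x,r)$, either $h(y)=h(z)$, in which case the ratio is $0$, or we can factor the ratio through $h(y),h(z)$. Letting $r\downarrow0$ and then $\varepsilon\downarrow 0$ gives the inequality. If one of the factors is infinite there is nothing to prove, using the convention $0\cdot\infty$ handled by the case split. Iterating along $f^n_\omega=f_{\sigma^{n-1}\omega}\circ\cdots\circ f_\omega$ then yields
\[
\sup_{x\in X}L^\rho f^n_\omega(x)\le\prod_{i=0}^{n-1}\sup_{y\in X}L^\rho f_{\sigma^i\omega}(y)=\prod_{i=0}^{n-1}K(\sigma^i\omega),
\]
since $f^i_\omega(x)\in X$.

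The second step raises this bound to the power $\beta$ and integrates. Because $K(\sigma^i\omega)$ depends only on $\omega_i$ and $\mathbb P=p^{\mathbb N}$, these factors are independent and identically distributed. Hence
\[
\int\Bigl(\sup_x L^\rho f^n_\omega(x)\Bigr)^\beta d\mathbb P\le\Bigl(\int K^\beta\,d\mathbb P\Bigr)^n<\infty.
\]
This computation needs measurability of $\omega\mapsto\sup_x L^\rho f_\omega^n(x)$. If that is in doubt, I would state the bound with an upper integral, which is all that is used later. The ``in particular'' clause then follows immediately, since $\int (L^\rho f^n_\omega(x))^\beta d\mathbb P\le\int(\sup_y L^\rho f^n_\omega(y))^\beta d\mathbb P$ for every $x$.

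The main obstacle I expect is the chain-rule step in this purely metric setting, namely making sure $h$ maps small $\rho$-balls into small $\rho$-balls so the local constant of $g$ at $h(x)$ controls the outer ratio. This is exactly what finiteness of $L^\rho h(x)$ provides, and the infinite case is trivial.
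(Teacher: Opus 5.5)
Your proposal is correct and follows the paper's proof: you use submultiplicativity of local $\rho$-Lipschitz constants along the composition, bound the supremum of the product by the product of suprema, and then use independence of the Bernoulli coordinates to get $\bigl(\int K^\beta\,d\mathbb P\bigr)^n$. The paper cites the chain rule without proof, so your verification of it is extra detail. One small correction there: the cases where one factor is $0$ and the other is $\infty$ need the convention $0\cdot\infty=\infty$, since the inequality can fail with $0\cdot\infty=0$; alternatively, you can simply note that the moment hypothesis makes every factor finite $\mathbb P$-a.s.
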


\begin{proof}
The chain rule for local \(\rho\)-Lipschitz constants gives
\[
   L^\rho f_\omega^n(x)
   \le
   \prod_{j=0}^{n-1}
   L^\rho f_{\omega_j}
      \bigl(f_{\omega}^{j}(x)\bigr).
\]
Consequently,
\[
   \sup_{x\in X}L^\rho f_\omega^n(x)
   \le
   \prod_{j=0}^{n-1}
   \sup_{x\in X}L^\rho f_{\omega_j}(x).
\]
Raising to the power \(\beta\), integrating, and using the independence of
the Bernoulli coordinates, we get
\[
   \int
   \left(
      \sup_{x\in X}L^\rho f_\omega^n(x)
   \right)^\beta\,
   d\mathbb P
   \le
   \left[
      \int
      \left(
         \sup_{x\in X}L^\rho f_\omega(x)
      \right)^\beta \,
      d\mathbb P
   \right]^n
   <\infty.
\]
This completes the proof.
\end{proof}

\begin{lemma}
\label{lem:usc-after-integration}
Fix \(n\ge1\). Assume that, for \(\mathbb P\)-a.e.~\(\omega\), the map
$x\mapsto L^\rho f_\omega^n(x)$ is upper semicontinuous on \((X,d)\). Assume also that there exists
\(\beta>0\) such that
$
   \int
  (
      \sup_{x\in X} L^\rho f_\omega^n(x)
  )^\beta \,
   d\mathbb P<\infty$.  
Then the following maps are upper semicontinuous on \((X,d)\):

\begin{enumerate}
\item for every \(0<\alpha\le\beta\),
\[
   x\longmapsto
   \int
      \bigl(L^\rho f_\omega^n(x)\bigr)^\alpha
      \, d\mathbb P;
\]

\item the extended real-valued map
\begin{equation} \label{eq:phin}
       \phi_n(x)
   :=
   \int
      \log L^\rho f_\omega^n(x)\,d\mathbb P,
\end{equation}
where we use the convention \(\log 0=-\infty\).
\end{enumerate}
\end{lemma}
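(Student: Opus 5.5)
Both claims will be proved by a reverse Fatou argument. The upper semicontinuity of $L^\rho f^n_\omega(\cdot)$ for a.e.~$\omega$ gives pointwise control at each fixed $\omega$. The moment hypothesis supplies an integrable dominating function that bounds the integrands from above, uniformly in $x$.

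For item (1), fix $0<\alpha\le\beta$ and a sequence $x_k\to x$ in $(X,d)$. Put $G(\omega):=\sup_{y\in X}L^\rho f^n_\omega(y)$. For every $k$ we have $(L^\rho f^n_\omega(x_k))^\alpha\le G(\omega)^\alpha\le 1+G(\omega)^\beta$, using $t^\alpha\le 1+t^\beta$ for $t\ge0$ and $\alpha\le\beta$. The right-hand side is $\mathbb P$-integrable by hypothesis, so it is an integrable majorant independent of $k$. Since $t\mapsto t^\alpha$ is continuous and non-decreasing on $[0,\infty]$, upper semicontinuity at fixed $\omega$ gives
\[
\limsup_{k}\,(L^\rho f^n_\omega(x_k))^\alpha\le (L^\rho f^n_\omega(x))^\alpha \quad\text{for }\mathbb P\text{-a.e.~}\omega .
\]
The reverse Fatou lemma, which applies because of the integrable majorant, then yields
\[
\limsup_k\int (L^\rho f^n_\omega(x_k))^\alpha\,d\mathbb P\le\int\limsup_k (L^\rho f^n_\omega(x_k))^\alpha\,d\mathbb P\le \int (L^\rho f^n_\omega(x))^\alpha\,d\mathbb P .
\]
This is upper semicontinuity on $(X,d)$. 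Sequences suffice because $X$ is metric.

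For item (2), the same scheme is used with $\log$ in place of $t\mapsto t^\alpha$. The integrand takes values in $[-\infty,\infty)$, so the first task is to check that $\phi_n$ is well defined in $[-\infty,\infty)$. This holds because $\log^+ L^\rho f^n_\omega(x)\le \log^+G(\omega)\le \beta^{-1}G(\omega)^\beta$, which is integrable. The positive part is therefore integrable uniformly in $x$, and the integral makes sense with the value $-\infty$ allowed.

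Take $x_k\to x$. Because $\log$ is non-decreasing and continuous on $[0,\infty)$ with $\log 0=-\infty$, we get $\limsup_k\log L^\rho f^n_\omega(x_k)\le \log L^\rho f^n_\omega(x)$ a.e. The functions $\log L^\rho f^n_\omega(x_k)$ are bounded above by the integrable function $\beta^{-1}G(\omega)^\beta$. Reverse Fatou remains valid for sequences bounded above by a common integrable function even when values $-\infty$ occur: apply ordinary Fatou to the non-negative functions $\beta^{-1}G^\beta-\log L^\rho f^n_\omega(x_k)\in[0,\infty]$. This gives $\limsup_k\phi_n(x_k)\le \phi_n(x)$, including the case $\phi_n(x)=-\infty$.

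The only delicate point is the bookkeeping with $-\infty$ in item (2): the subtraction trick must not be applied to non-integrable functions. This is handled by working with the non-negative functions $\beta^{-1}G^\beta-\log L^\rho f^n_\omega(x_k)$ and subtracting only the finite integral of $\beta^{-1}G^\beta$ at the end. Measurability of $\omega\mapsto L^\rho f^n_\omega(x)$ is implicitly assumed in the statement (otherwise the integrals are not defined), so I would note it and not reprove it.
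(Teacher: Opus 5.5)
Your proposal is correct and follows the paper's proof: you combine pointwise upper semicontinuity at a.e.\ fixed $\omega$ with the reverse Fatou lemma, using the integrable majorants $1+(\sup_{y}L^\rho f^n_\omega(y))^\beta$ for item (1) and $\beta^{-1}(\sup_{y}L^\rho f^n_\omega(y))^\beta$ for item (2). Your additional checks, namely that $\phi_n$ is well defined in $[-\infty,\infty)$ and that reverse Fatou still applies when the integrands take the value $-\infty$ (via Fatou applied to the non-negative differences), are details the paper uses implicitly.
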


\begin{proof}
Let \(x_k\to x\) in \((X,d)\). We prove the two assertions separately.

First fix \(0<\alpha\le\beta\). Since \ the map
$x\mapsto L^\rho f_\omega^n(x)$ 
is upper semicontinuous for \(\mathbb P\)-a.e.~\(\omega\), and \(t\mapsto t^\alpha\) is increasing and
continuous on \([0,\infty)\), we have
\[
   \limsup_{k\to\infty}
   \bigl(L^\rho f_\omega^n(x_k)\bigr)^\alpha
   \le
   \bigl(L^\rho f_\omega^n(x)\bigr)^\alpha \quad \text{for \(\mathbb P\)-a.e.~\(\omega\).}
\]
Moreover,
\[
   \bigl(L^\rho f_\omega^n(x_k)\bigr)^\alpha
   \le
   1+
   \left(
      \sup_{y\in X}L^\rho f_\omega^n(y)
   \right)^\beta 
\]
and the right-hand side is integrable by assumption. Therefore, by the
reverse Fatou lemma,
\[
\begin{aligned}
   \limsup_{k\to\infty}
   \int
      \bigl(L^\rho f_\omega^n(x_k)\bigr)^\alpha
      \, d\mathbb P
   &\le
   \int
      \limsup_{k\to\infty}
      \bigl(L^\rho f_\omega^n(x_k)\bigr)^\alpha
      \, d\mathbb P                                    \le
   \int
      \bigl(L^\rho f_\omega^n(x)\bigr)^\alpha
      \, d\mathbb P.
\end{aligned}
\]
This proves upper semicontinuity of the averaged \(\alpha\)-moment.

We now prove the logarithmic statement. Since \(t\in [0,\infty)\mapsto \log t\in [-\infty,\infty)\) is
increasing and continuous as a map, 
the upper semicontinuity of \(x\mapsto L^\rho f_\omega^n(x)\) gives
\[
   \limsup_{k\to\infty}
   \log L^\rho f_\omega^n(x_k)
   \le
   \log L^\rho f_\omega^n(x) \quad \text{
for \(\mathbb P\)-a.e.~\(\omega\).}
\]
To apply the reverse Fatou lemma,  observe that for every \(t\ge0\),
$\log^+ t\le \frac{1}{\beta}t^\beta$, and hence
\[
   \log L^\rho f_\omega^n(x_k)
   \le
   \log^+ L^\rho f_\omega^n(x_k)
   \le
   \frac1\beta
   \left(
      \sup_{y\in X}L^\rho f_\omega^n(y)
   \right)^\beta.
\]
Since the right-hand side is integrable by assumption, the reverse
Fatou lemma gives
\[
\begin{aligned}
   \limsup_{k\to\infty}
   \phi_n(x_k)
   &=
   \limsup_{k\to\infty}
   \int
      \log L^\rho f_\omega^n(x_k)
      \,d\mathbb P                                    \\
   &\le
   \int
      \limsup_{k\to\infty}
      \log L^\rho f_\omega^n(x_k)
      \,d\mathbb P                                    \le
   \int
      \log L^\rho f_\omega^n(x) \,
      d\mathbb P
   =
   \phi_n(x).
\end{aligned}
\]
Thus \(\phi_n\) is upper semicontinuous as an extended real-valued
function. 
\end{proof}

We use the following finite-time logarithmic growth quantity:
\[
   \Lambda^\rho_{\max}
   := \lim_{n\to \infty}
   \frac1n
   \max_{x\in X}
\phi_n(x)
=
   \inf_{n\ge1}
   \frac1n
   \max_{x\in X}
   \int
      \log L^\rho f_\omega^n(x)\,d\mathbb P.
\]
By assumptions~\ref{G2} and \ref{G3} and the previous lemmas,  $(\phi_n)_{n\geq 1}$ are upper semicontinuous functions on $(X,d)$ and thus the maximum above is justified. The existence of the limit and its expression as an infimum follows from Fekete's lemma. 

Following~\cite{BM}, we also introduce the metric maximal Lyapunov exponents
$$ 
\lambda(\mu;\rho):=\lim_{n\to \infty} \frac{1}{n} \int \log L^\rho f_\omega^n(x) \, d\mathbb P d\mu
$$
and
$$
\lambda(f;\rho):=\sup \{ \lambda(\mu;\rho) : \mu\in \mathcal{I}(f)\} = \max \{ \lambda(\mu;\rho) : \mu\in \mathcal{I}(f) \ \text{ergodic}\}. 
$$

\begin{lemma}
\label{lem:mostly-to-finite-time-negativity}
Under the assumptions~\ref{G2} and \ref{G3}, we have that 
$\Lambda_{\max}^{\rho} = \lambda(f;\rho)$.  
\end{lemma}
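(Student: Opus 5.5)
The plan is to prove the two inequalities separately, using the subadditive structure of $\phi_n$. First I will check that $(\phi_n)$ is $P$-subadditive, where $P$ is the annealed Koopman operator $P\varphi(x)=\int\varphi(f_\omega(x))\,d\mathbb P$. By the chain rule for local Lipschitz constants (used in Lemma~\ref{lem:sufficient-exponential-moment}),
\[
\log L^\rho f^{n+k}_\omega(x)\le \log L^\rho f^{n}_{\sigma^k\omega}(f^k_\omega(x))+\log L^\rho f^k_\omega(x).
\]
Integrating in $\omega$ and using the Bernoulli independence of the coordinates gives $\phi_{n+k}\le P^k\phi_n+\phi_k$. Lemma~\ref{lem:usc-after-integration} makes each $\phi_n$ upper semicontinuous and bounded above. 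With Lemma~\ref{lem:sufficient-exponential-moment}, this justifies $\max_X\phi_n$ and Fekete's lemma, so $\Lambda^\rho_{\max}$ is well defined.

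\textbf{Inequality $\lambda(f;\rho)\le\Lambda^\rho_{\max}$.} This is the easy direction. For any $\mu\in\mathcal I(f)$ and any $n$, stationarity and the subadditive identity give
\[
\lambda(\mu;\rho)=\inf_n \tfrac1n\int\phi_n\,d\mu\le \tfrac1n\max_X\phi_n.
\]
The first equality holds because $a_n=\int\phi_n\,d\mu$ is subadditive, since $\int P^k\phi_n\,d\mu=\int\phi_n\,d\mu$. Taking the infimum over $n$ and then the supremum over $\mu$ gives the inequality.

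\textbf{Inequality $\Lambda^\rho_{\max}\le\lambda(f;\rho)$.} This is the main step. I would follow the standard semi-uniform subadditive argument of \cite[Thm.~3.2.1]{BM}, in the Krylov--Bogolyubov style.
\begin{enumerate}
\item For each $n$, pick $x_n$ attaining $\max_X\phi_n$ (possible by upper semicontinuity on compact $(X,d)$).
\item Fix $k\ge 1$ and use subadditivity to write, for $n=qk+r$,
\[
\phi_n(x_n)\le \sum_{j=0}^{q-1}P^{jk+i}\phi_k(x_n)+(\text{boundary terms}),
\]
averaged over the shifts $i=0,\dots,k-1$. This yields
\[
\tfrac1n\phi_n(x_n)\le \tfrac1k\int\phi_k\,d\mu_n+O\!\left(\tfrac{k}{n}\,C_k\right),
\quad\text{where } \mu_n=\tfrac1n\sum_{j<n}(P^*)^j\delta_{x_n}.
\]
Here $C_k$ is an upper bound for $\sup\phi_k^+$, which is finite by Lemma~\ref{lem:sufficient-exponential-moment}. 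Note that the boundary terms can only help when $\phi$ is very negative, since only $\phi^+$ enters the upper bounds.
\item Pass to a weak$^*$ limit point $\mu$ of $\mu_n$. The limit is $f$-stationary because $P$ is Feller; I assume $f_\omega$ is continuous and use a Krylov--Bogolyubov argument.
\item Since $\phi_k$ is upper semicontinuous and bounded above, $\limsup_n\int\phi_k\,d\mu_n\le\int\phi_k\,d\mu$. Hence
\[
\Lambda^\rho_{\max}\le\tfrac1k\int\phi_k\,d\mu\quad\text{for every } k.
\]
\end{enumerate}

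The delicate point is that $\mu$ may depend on $k$. I would avoid this by extracting the limit $\mu$ once, independent of $k$, using a diagonal subsequence, and then letting $k\to\infty$ to get $\Lambda^\rho_{\max}\le\lambda(\mu;\rho)\le\lambda(f;\rho)$. Two further technical points need care. First, the values $-\infty$ of $\log 0$ are harmless because only upper bounds are used. Second, the Feller property and weak$^*$ compactness of $\mathcal I(f)$ are needed; if $f_\omega$ is only measurable, one would instead work with the closed convex set of limit measures directly, as in \cite{BM}.

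I expect the main obstacle to be this upper semicontinuity/limit step, with the unbounded-below observables and the uniformity in $k$.
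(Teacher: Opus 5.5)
Your proposal is correct once one step is repaired, and it follows the paper's skeleton up to the point where the paper cites a theorem and you reprove it.

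\textbf{Comparison with the paper.} The paper also derives the $P$-subadditivity $\phi_{n+m}\le\phi_n+P^n\phi_m$ from the chain rule and Bernoulli independence. It then simply invokes \cite[Thm~C]{BM} for $P$-subadditive sequences of upper semicontinuous extended-real-valued functions, with $P:C(X)\to C(X)$ Markov; this is the same continuity of $f_\omega$ you assume for the Feller property. You instead reprove that theorem. You get $\lambda(f;\rho)\le\Lambda^\rho_{\max}$ from stationarity and Fekete, and the reverse inequality by a Krylov--Bogolyubov argument along maximizers $x_n$. Your route is self-contained and shows exactly which inputs are used: upper semicontinuity, $\sup_X\phi_k<\infty$ from Lemma~\ref{lem:sufficient-exponential-moment}, and the Feller property. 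The paper's route is shorter but relies on the cited theorem accepting functions that take the value $-\infty$.

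\textbf{The step that fails as written.} Averaging the block decompositions over the shifts $i=0,\dots,k-1$ only produces
\[
k\,\phi_n(x)\le\sum_{l=0}^{n-k}P^l\phi_k(x)+(\text{front and tail terms}),
\]
because each block $[l,l+k)$ must fit inside $[0,n)$. To reach $\int\phi_k\,d\mu_n=\frac1n\sum_{l=0}^{n-1}P^l\phi_k(x_n)$, you must add and subtract the missing terms $P^l\phi_k(x_n)$ for $n-k<l<n$. These enter the upper bound with a minus sign. Since $\phi_k$ is only bounded above, they can be arbitrarily negative, or even $-\infty$ when $L^\rho f^k_\omega$ vanishes with positive probability along $f^{l}_\omega(x_n)$. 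Meanwhile $\phi_n(x_n)$ only sees the first $n$ steps and can stay finite. So the claimed $O(kC_k/n)$ error is not justified. Your remark that only $\phi^+$ enters is true for the front and tail terms, but false for these missing terms.

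There are two easy fixes.
\begin{itemize}
\item Replace $\phi_k$ by $\phi_k^{(M)}:=\max(\phi_k,-M)$. This is still upper semicontinuous, bounded, and $\ge\phi_k$, so the decomposition still holds. The missing terms now cost at most $(k-1)M/(nk)$, which gives $\Lambda^\rho_{\max}\le\frac1k\int\phi_k^{(M)}\,d\mu$. Then let $M\to\infty$ by monotone convergence.
\item Alternatively, use the truncated averages $\frac1n\sum_{l=0}^{n-k}(P^*)^l\delta_{x_n}$. They differ from $\mu_n$ by mass at most $k/n$, so they have the same weak$^*$ limit points, and upper semicontinuity still gives the $\limsup$ bound.
\end{itemize}

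Finally, you do not need a diagonal extraction. The measures $\mu_n$ do not depend on $k$, so one convergent subsequence $\mu_{n_j}\to\mu$ serves every $k$ at once. This yields $\Lambda^\rho_{\max}\le\inf_k\frac1k\int\phi_k\,d\mu=\lambda(\mu;\rho)\le\lambda(f;\rho)$.
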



\begin{proof}
We first prove that the sequence of functions \((\phi_n)_{n\ge1}\) given in~\eqref{eq:phin} is \(P\)-subadditive where $P$ is the Koopman operator associated with $f$. That is,
$$
P^n\varphi = \int \varphi(f^n_\omega(x)) \, d\mathbb{P}, \quad \varphi\in C(X)  \ \text{and } n\geq 1.  
$$
The chain
rule for local \(\rho\)-Lipschitz constants gives
\[
   L^\rho f_\omega^{n+m}(x)
   \le
   L^\rho f_{\sigma^n\omega}^{m}
      \bigl(f_\omega^n(x)\bigr)
   \cdot
   L^\rho f_\omega^n(x).
\]
Taking logarithms,
\[
   \log L^\rho f_\omega^{n+m}(x)
   \le
   \log L^\rho f_\omega^n(x)
   +
   \log L^\rho f_{\sigma^n\omega}^{m}
      \bigl(f_\omega^n(x)\bigr).
\]
Integrating in \(\omega\), and using that the future
\(\sigma^n\omega\) is independent of the first \(n\) coordinates in the
Bernoulli setting, we obtain
\[
\begin{aligned}
   \phi_{n+m}(x)
   \le
   \phi_n(x)
   +
   \int_\Omega
      \phi_m(f_\omega^n(x))\,d\mathbb P(\omega)   =
   \phi_n(x)+P^n\phi_m(x).
\end{aligned}
\]
Thus \((\phi_n)\) is \(P\)-subadditive.

Since \((X,d)\) is compact, \(P:C(X)\to C(X)\) is Markov, \cite[Thm~C]{BM} applies to the \(P\)-subadditive sequence of extended-real valued functions
\((\phi_n)_{n\ge 1}\) which are upper semicontinuous  on $(X,d)$  by Lemmas~\ref{lem:sufficient-exponential-moment} and~\ref{lem:usc-after-integration} and assumptions~\ref{G2} and~\ref{G3}. Hence
\begin{equation*}
\lambda(f;\rho)=\sup_{\mu\in\mathcal{I}(f)}\lambda(\mu;\rho)
   =\lim_{n\to \infty}
   \frac1n\max_{x\in X}\phi_n(x)=
   \inf_{n\ge1}
   \frac1n\max_{x\in X}\phi_n(x) = \Lambda_{\max}^{\rho}. \qedhere
\end{equation*}
\end{proof}

The main local criterion below assumes directly that  \(\Lambda^\rho_{\max}<0\). 

\begin{proposition}
\label{prop:uniform-moment-contraction}
Let \(N\ge1\) and  \(\beta>0\). Assume that the following conditions hold:
\begin{enumerate}[itemsep=0.2cm, leftmargin=1.5cm, label=\textup{(H\arabic*)}] 
\item \label{H1} Negative $\rho$-maximal Lyapunov exponent: $
   \sup_{x\in X}
   \int
      \log L^\rho f_\omega^N(x)\,d\mathbb P <0$.

\item  \label{H2} Finite exponential moment:
$
   \sup_{x\in X}
   \int
      \bigl(L^\rho f_\omega^N(x)\bigr)^\beta
      \,d\mathbb P
   <\infty$. 

\item \label{H3}  For every \(0<\alpha\le\beta\), the map
$
   x \mapsto
   \int
      \bigl(L^\rho f_\omega^N(x)\bigr)^\alpha
      \,d\mathbb P
$
is upper semicontinuous on $(X,d)$. \\[-0.4cm]
\end{enumerate}
Then there exist \(0<\alpha_0\le\beta\) such that for every $0<\alpha \leq \alpha_0$, there is \(\kappa<1\) satisfying
\[
   \sup_{x\in X}
   \int
      \bigl(L^\rho f_\omega^N(x)\bigr)^\alpha
      \,d\mathbb P
   \le \kappa .
\]
\end{proposition}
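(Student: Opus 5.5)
The proof is a standard Taylor expansion of $t\mapsto t^\alpha$ at $\alpha=0$, made uniform in $x$ by a compactness argument using \ref{H3}. Write $L_\omega(x):=L^\rho f^N_\omega(x)$ and set $g_x(\alpha):=\int L_\omega(x)^\alpha\,d\mathbb P$ for $0<\alpha\le\beta$. The pointwise identity
\[
\frac{t^\alpha-1}{\alpha}\ \xrightarrow[\alpha\downarrow0]{}\ \log t \qquad (t\ge0,\ \text{with }\log 0=-\infty)
\]
is monotone: $\alpha\mapsto (t^\alpha-1)/\alpha$ is non-decreasing, by convexity of $\alpha\mapsto t^\alpha$. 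Hence, for each fixed $x$,
\[
\frac{g_x(\alpha)-1}{\alpha}=\int\frac{L_\omega(x)^\alpha-1}{\alpha}\,d\mathbb P
\]
decreases to $\int\log L_\omega(x)\,d\mathbb P$ as $\alpha\downarrow0$. This follows from monotone convergence, since the integrands are dominated from above by $(L_\omega(x)^\beta-1)/\beta$, which is integrable by \ref{H2}. By \ref{H1}, this limit is at most $-2\delta$ for some $\delta>0$ uniform in $x$.

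\emph{Step 1: local choice of $\alpha$.} For each $x\in X$, the monotone limit gives $\alpha_x\in(0,\beta]$ with $g_x(\alpha_x)<1-\delta\alpha_x$. The same inequality holds for every $0<\alpha\le\alpha_x$, again by the monotonicity of $(g_x(\alpha)-1)/\alpha$ in $\alpha$: smaller $\alpha$ gives a smaller quotient.

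\emph{Step 2: uniformity via compactness (the main obstacle).} The difficulty is that $\alpha_x$ depends on $x$, and $x\mapsto\int\log L_\omega(x)\,d\mathbb P$ is only upper semicontinuous. I would resolve this with \ref{H3}. For fixed $\alpha_x$, the function $y\mapsto g_y(\alpha_x)$ is upper semicontinuous on $(X,d)$, so $U_x:=\{y: g_y(\alpha_x)<1-\delta\alpha_x\}$ is an open neighbourhood of $x$. By compactness of $(X,d)$, finitely many $U_{x_1},\dots,U_{x_k}$ cover $X$. Set $\alpha_0:=\min_i\alpha_{x_i}$.

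\emph{Step 3: transfer to all small $\alpha$.} For $y\in U_{x_i}$ we have $(g_y(\alpha_{x_i})-1)/\alpha_{x_i}<-\delta$. Monotonicity in $\alpha$, applied at the point $y$, then gives $(g_y(\alpha)-1)/\alpha<-\delta$ for every $0<\alpha\le\alpha_{x_i}$, and in particular for every $\alpha\le\alpha_0$. Consequently
\[
\sup_{y\in X}g_y(\alpha)\le 1-\delta\alpha=:\kappa<1 \qquad \text{for all } 0<\alpha\le\alpha_0 .
\]
The only technical checks are the monotonicity of $(t^\alpha-1)/\alpha$ (including $t=0$, where it equals $-1/\alpha$, which is increasing in $\alpha$) and the integrable upper bound needed for monotone convergence. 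The latter uses \ref{H2} at each fixed point, which is enough since the uniformity comes from Step 2 rather than from domination.
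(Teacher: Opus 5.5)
Your proof is correct and has the same skeleton as the paper's: a pointwise small exponent obtained from $(t^\alpha-1)/\alpha\to\log t$, open neighbourhoods from (H3), a finite subcover with $\alpha_0$ the minimal exponent, and a transfer to all smaller exponents. The differences are only technical. You use the monotonicity of $\alpha\mapsto(t^\alpha-1)/\alpha$ in two places: first, to replace the paper's reverse Fatou argument (majorant $\tfrac{2}{\beta}t^\beta$, $\alpha\le\beta/2$) by monotone convergence; second, to replace the H\"older step $\int Y^\alpha\le(\int Y^{\alpha(x_i)})^{\alpha/\alpha(x_i)}$ by the slightly weaker but sufficient bound $g_y(\alpha)\le 1+\tfrac{\alpha}{\alpha_{x_i}}(g_y(\alpha_{x_i})-1)$, which gives the explicit constant $\kappa=1-\delta\alpha$.
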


\begin{proof}
Fix \(x\in X\) and put $Y_x(\omega):=L^\rho f_\omega^N(x)$.
For \(0<\alpha\le\beta/2\), define
$
   \Psi_{x,\alpha}(\omega)
   :=
   \frac{Y_x(\omega)^\alpha-1}{\alpha}$. 

For every \(t>0\), we have that  $(t^\alpha-1)/{\alpha}\to \log t$  as $\alpha \to 0^+$.  We need an upper integrable majorant. For \(t\geq 1\), since 
\[
   \frac{t^\alpha-1}{\alpha}
   =
   \int_0^1 t^{s\alpha}\log t\,ds
   \le
   t^\alpha\log t \quad \text{and}  \quad 
   \log t\le \frac{2}{\beta}t^{\beta/2}
\]
we get that for \(0<\alpha\le\beta/2\),
\begin{equation}  \label{eq:beta-t}
       \frac{t^\alpha-1}{\alpha}
   \le
   \frac{2}{\beta}t^\beta. 
\end{equation}
Moreover, since  for \(0<t\le 1\), we have trivially that $(t^\alpha-1)/{\alpha}\le0  \leq \frac{2}{\beta}t^\beta$, and thus~\eqref{eq:beta-t} holds for every $t >0$.  Applying this to \(t=Y_x(\omega)\), assumption \ref{H2} gives an
integrable upper majorant for \(\Psi_{x,\alpha}\). Therefore, by the
reverse Fatou lemma, and \ref{H1}, 
\[
\begin{aligned}
   \limsup_{\alpha \to 0^+}
   \int
      \Psi_{x,\alpha}\,d\mathbb P
   \le
   \int
      \limsup_{\alpha \to 0^+}
      \Psi_{x,\alpha}\,d\mathbb P    =
   \int
      \log Y_x\,d\mathbb P <0.
\end{aligned}
\]
Thus, since $\int \Psi_{x,\alpha} \, d\mathbb{P} =\frac{1}{\alpha} (\int Y^\alpha_x \, d\mathbb{P} -1)$, for each \(x\in X\), there exists
\(0<\alpha(x)\le\beta/2\) such that $\int
      Y_x^{\alpha(x)}\,d\mathbb P  <1$. 
Choose \(q_x<1\) such that
\[
   \int 
         \bigl(L^\rho f_\omega^N(x)\bigr)^{\alpha(x)}
         \,d\mathbb P=\int
      Y_x^{\alpha(x)}\,d\mathbb P
   < q_x <1.
\]
By \ref{H3}, the set
\[
   U_x
   :=
   \left\{
      y\in X:
      \int 
         \bigl(L^\rho f_\omega^N(y)\bigr)^{\alpha(x)}
         \,d\mathbb P
      < q_x
   \right\}
\]
is an open neighbourhood of \(x\) in the topology of \(d\). Since
\((X,d)\) is compact, choose \(x_1,\dots,x_\ell\in X\) such that $X=U_{x_1} \cup \dots \cup U_{x_\ell}$ and set
\[
   \alpha_0:=\min_{1\le i\le\ell}\alpha(x_i)\in (0,\beta]. 
\]
Now, for every $0<\alpha\leq \alpha_0$, we define $ p_i:=\frac{\alpha(x_i)}{\alpha}\ge1$  for $i=1,\dots,\ell$. 
If \(y\in U_{x_i}\), then H\"older's inequality gives
\[
\begin{aligned}
   \int
      \bigl(L^\rho f_\omega^N(y)\bigr)^\alpha
      \,d\mathbb P
   \le
   \left(
      \int
         \bigl(L^\rho f_\omega^N(y)\bigr)^{\alpha p_i}
         \,d\mathbb P
   \right)^{1/p_i}                                      
   =
   \left(
      \int
         \bigl(L^\rho f_\omega^N(y)\bigr)^{\alpha(x_i)}
         \,d\mathbb P
   \right)^{1/p_i}                                      
   <
   q_{x_i}^{1/p_i}.
\end{aligned}
\]
Therefore
\[
   \sup_{y\in X}
   \int_\Omega
      \bigl(L^\rho f_\omega^N(y)\bigr)^\alpha
      \,d\mathbb P
   \le
   \max_{1\le i\le\ell}q_{x_i}^{1/p_i}
   =:\kappa<1.
\]
This proves the proposition.
\end{proof}

The last preliminary result that we need is the following amplification result:

\begin{lemma}
\label{lem:two-metric-amplification}
Let  \(N\ge1\), \(0<\alpha\le1\), and \(\kappa<1\) be such that $\sup_{x\in X}
   \int
      \bigl(L^\rho f_\omega^N(x)\bigr)^\alpha
      \,d\mathbb P      
   \le\kappa$.
Then, 
\[
   \sup_{x\in X}
   \int
      \bigl(L^\rho f_\omega^{mN}(x)\bigr)^\alpha
      \, d\mathbb P
   \le \kappa^m \quad \text{for every \(m\ge1\).}
\]
\end{lemma}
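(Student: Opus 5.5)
The plan is an induction on \(m\), based on the chain rule for local \(\rho\)-Lipschitz constants (already used in the proofs of Lemma~\ref{lem:sufficient-exponential-moment} and Lemma~\ref{lem:mostly-to-finite-time-negativity}) and on the Bernoulli independence of the coordinates of \(\omega\). The case \(m=1\) is exactly the hypothesis. For the inductive step, split \(f^{(m+1)N}_\omega = f^{N}_{\sigma^{mN}\omega}\circ f^{mN}_\omega\). The chain rule then gives
\[
   L^\rho f_\omega^{(m+1)N}(x)
   \le
   L^\rho f^{N}_{\sigma^{mN}\omega}\bigl(f^{mN}_\omega(x)\bigr)\cdot L^\rho f^{mN}_\omega(x).
\]
Raise this to the power \(\alpha\). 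Since \(t\mapsto t^\alpha\) is increasing on \([0,\infty)\), the inequality is preserved.

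Next, integrate with respect to \(\mathbb P\). The factor \(L^\rho f^{mN}_\omega(x)\) and the point \(y=f^{mN}_\omega(x)\) depend only on the coordinates \(\omega_0,\dots,\omega_{mN-1}\). The factor \(L^\rho f^{N}_{\sigma^{mN}\omega}(\cdot)\) depends only on \(\omega_{mN},\dots,\omega_{(m+1)N-1}\). By independence and Fubini's theorem, the integral over the future coordinates can be taken first with \(y\) frozen. This gives
\[
   \int \bigl(L^\rho f^{(m+1)N}_\omega(x)\bigr)^\alpha d\mathbb P
   \le
   \int \Bigl(\int \bigl(L^\rho f^N_{\omega'}(y)\bigr)^\alpha d\mathbb P(\omega')\Bigr)\Big|_{y=f^{mN}_\omega(x)} \bigl(L^\rho f^{mN}_\omega(x)\bigr)^\alpha d\mathbb P(\omega).
\]
The inner integral is at most \(\kappa\) uniformly in \(y\) by hypothesis. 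The remaining integral is at most \(\kappa^m\) by the inductive hypothesis. Taking the supremum over \(x\) yields \(\kappa^{m+1}\).

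The only delicate point is measurability. We need the joint measurability of \((\omega,y)\mapsto L^\rho f^N_\omega(y)\), so that Fubini applies to the composition with the random point \(f^{mN}_\omega(x)\). I would handle this as in Lemma~\ref{lem:mostly-to-finite-time-negativity}: in the Bernoulli setting the same conditioning was already used to derive \(P\)-subadditivity. Alternatively, one can note that the inequality only needs an upper bound for the integrand, and apply Tonelli to nonnegative functions. If \(L^\rho\) is not jointly measurable, I would replace it by a measurable majorant built from the finite-\(r\) quantities \(L^\rho_r\), restricted to rational \(r\), and then pass to the limit \(r\downarrow 0\) by monotone convergence. No other obstacle is expected.
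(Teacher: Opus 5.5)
Your proposal is correct and follows the paper's proof essentially verbatim. Both apply the chain rule across $f^{(m+1)N}_\omega=f^N_{\sigma^{mN}\omega}\circ f^{mN}_\omega$, raise to the power $\alpha$, integrate out the independent future block to extract the factor $\kappa$ uniformly in the intermediate point $f^{mN}_\omega(x)$, and induct on $m$.

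The paper leaves the joint measurability you flag implicit. In its application this is harmless: $z\mapsto L^{\mathrm{vert}}\hat A^n_\omega(z)$ is continuous for a.e.\ $\omega$ (Lemma~\ref{lem:projective-vertical-regularity}), so no majorant construction is needed.
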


\begin{proof}
The chain rule gives
$L^\rho f_\omega^{(m+1)N}(x)
   \le
   L^\rho f_{\sigma^{mN}\omega}^{N}
      \bigl(f_\omega^{mN}(x)\bigr) \cdot 
   L^\rho f_\omega^{mN}(x)$.
Raising to the power \(\alpha\), integrating, and using independence of
the future block, we obtain
\[
\begin{aligned}
   \int
      \bigl(L^\rho f_\omega^{(m+1)N}(x)\bigr)^\alpha
      d\mathbb P      &\le
   \int
      \bigl(L^\rho f_\omega^{mN}(x)\bigr)^\alpha
      \left[
         \int
            \bigl(
               L^\rho f_{\omega'}^N(f_\omega^{mN}(x))
            \bigr)^\alpha 
            d\mathbb P(\omega')
      \right]
      d\mathbb P                                   \le
   \kappa
   \int
      \bigl(L^\rho f_\omega^{mN}(x)\bigr)^\alpha
      d\mathbb P.
\end{aligned}
\]
The result follows by induction.
\end{proof}

Finally, we are ready to prove the theorem:

\begin{proof}[Proof of Theorem~\ref{thm:two-metric-global-contraction}]
By \ref{G1} and Lemma~\ref{lem:mostly-to-finite-time-negativity},
there exist \(N\ge1\) and \(\eta>0\) such that
\begin{equation} \label{eq:eta}
   \sup_{x\in X}
   \int
      \log L^\rho f_\omega^N(x)\,d\mathbb P
   \le -\eta .
\end{equation}
By \ref{G2} and Lemma~\ref{lem:sufficient-exponential-moment},
\[
   \int   \left(
      \sup_{x\in X}L^\rho f_\omega^N(x)
   \right)^\beta \,
   d\mathbb P
   <\infty \quad \text{and} \quad 
   \sup_{x\in X}
   \int
      \bigl(L^\rho f_\omega^N(x)\bigr)^\beta
      \,d\mathbb P
   <\infty.
\]
Hence, by \ref{G3} and Lemma~\ref{lem:usc-after-integration}, for every
\(0<\alpha\le\beta\), the map
$x\mapsto
   \int
      \bigl(L^\rho f_\omega^N(x)\bigr)^\alpha
      \, d\mathbb P$
is upper semicontinuous on $(X,d)$. Thus the assumptions of
Proposition~\ref{prop:uniform-moment-contraction} are satisfied. Consequently,
there exist \(0<\alpha\le\beta\) and \(\kappa<1\) such that
\[
   \sup_{x\in X}
   \int
      \bigl(L^\rho f_\omega^N(x)\bigr)^\alpha
      \,d\mathbb P
   \le \kappa.
\]
Let \(C\ge1/2\) be the constant in the global distortion principle. By
Lemma~\ref{lem:two-metric-amplification}, after replacing \(N\) by
\(N_0=mN\) for some \(m\ge1\), we may assume that
\[
   2C
   \sup_{x\in X}
   \int
      \bigl(L^\rho f_\omega^{N_0}(x)\bigr)^\alpha
      \,d\mathbb P
   <1.
\]
Set
\[
   \kappa_0
   :=
   \sup_{x\in X}
   \int
      \bigl(L^\rho f_\omega^{N_0}(x)\bigr)^\alpha
      d\mathbb P,
   \qquad
   q:=2C\kappa_0<1.
\]
Let \((x,y)\in R\) be a point where~\ref{G4} holds. By the global distortion principle,
\[
   \rho(f_\omega^{N_0}(x),f_\omega^{N_0}(y))^\alpha
   \le
   C
   \left[
      \bigl(L^\rho f_\omega^{N_0}(x)\bigr)^\alpha
      +
      \bigl(L^\rho f_\omega^{N_0}(y)\bigr)^\alpha
   \right]
   \rho(x,y)^\alpha .
\]
Integrating gives
\[
   \int
      \rho(f_\omega^{N_0}(x),f_\omega^{N_0}(y))^\alpha
      \, d\mathbb P
   \le
   C(\kappa_0+\kappa_0)\rho(x,y)^\alpha    =
   q\,\rho(x,y)^\alpha \quad \text{for every $(x,y)\in R$}.
\]
This proves the theorem.
\end{proof}

\subsubsection{Two-metric local contraction criterion for general cocycles}
\label{subsubsec:two-metric-local-general-cocycles}

The previous global criterion used the Bernoulli structure in two places:
to pass from mostly contracting to finite-time negativity through the
annealed Markov operator, and to amplify a finite-time moment contraction.
The local contraction statement below is more elementary. It only uses a
single finite time \(N\), and therefore applies to general cocycles.

Let \((\Omega,\mathscr F,\mathbb P,T)\) be a probability space with a
measure-preserving transformation. Consider a measurable skew-product 
$$F(\omega,x)=(T(\omega),f(\omega,x))$$
and denote by $f^n_\omega(x)$ the second coordinate of $F^n(\omega,x)$ for $n \geq 1$. That is, the map
$f:\Omega \times X\to X$ is a measurable cocycle over~\(T:\Omega\to \Omega\), 
\[
   f_\omega^0=\mathrm{id},
   \qquad
   f_\omega^{n+m}
   =
   f_{T^m\omega}^{n}\circ f_\omega^m
   \quad\text{for all }m,n\ge0.
\]
For \(x\in X\), \(r>0\), and \(n\ge1\), define
\[
   L_r^\rho f_\omega^n(x)
   :=
   \sup
   \left\{
      \frac{\rho(f_\omega^n(y),f_\omega^n(z))}{\rho(y,z)}
      :
      y,z\in B_\rho(x,r),\ y\ne z
   \right\},
\]
and
\[
   L^\rho f_\omega^n(x)
   :=
   \lim_{r\downarrow0}L_r^\rho f_\omega^n(x) = \limsup_{\substack{(y,z) \to (x, x) \\ y\not = z} } \frac{\rho(f_\omega^n(y),f_\omega^n(z))}{\rho(y,z)}.
\]

\begin{proposition}
\label{prop:two-metric-local-contraction} Let \((X,d)\) be compact metric space and consider another metric \(\rho\)  on~\(X\).  
Let $f:\Omega \times X \to X$ be a measurable cocycle over a mesure-preserving transformation $T$ of probability space  $(\Omega,\mathscr{F},\mathbb{P})$.  
Assume that there exist \(N\ge1\) and \(\beta>0\) such that:

\begin{enumerate}[itemsep=0.2cm, leftmargin=1.25cm, label=\textup{(H\arabic*')}] \stepcounter{enumi}
\item[\textup{(H1)}]  Negative $\rho$-maximal Lyapunov exponent: $\sup_{x\in X}
   \int
      \log L^\rho f_\omega^N(x)\,d\mathbb P <0$;

\item \label{H2'}   
For every \(x\in X\), there is \(\bar r_x>0\) such that
 $
   \int
      \bigl(L_{\bar r_x}^\rho f_\omega^N(x)\bigr)^\beta
      d\mathbb P
   <\infty$;

\item \label{H3'} For every \(r>0\) and \(0<\alpha\le\beta\),
$x\mapsto
   \int
      \bigl(L_r^\rho f_\omega^N(x)\bigr)^\alpha
      d\mathbb P$
is upper semicontinuous on \((X,d)\). \\[-0.4cm]
\end{enumerate}
Then there exist \(r_0>0\) and $\alpha_0\in (0,1]$, such that for every \(0<\alpha\le\alpha_0\), there exist \(q<1\) sastisfying
\[
   \int
      \rho(f_\omega^N(y),f_\omega^N(z))^\alpha
      d\mathbb P
   \le
   q\,\rho(y,z)^\alpha
\]
whenever \(\rho(y,z)<r_0\).
\end{proposition}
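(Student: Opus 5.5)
The plan is to mimic the proof of Proposition~\ref{prop:uniform-moment-contraction}, replacing the pointwise Lipschitz constant $L^\rho f^N_\omega(x)$ by the scale-$r$ constants $L_r^\rho f^N_\omega(x)$. These directly control $\rho(f^N_\omega(y),f^N_\omega(z))/\rho(y,z)$ for $y,z$ in a ball $B_\rho(x,r)$. No Bernoulli structure or amplification is needed, since we work at the single time $N$.

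\textbf{Step 1 (pointwise choice of exponent and scale).} Fix $x\in X$ and set $Y_{x,r}(\omega):=L^\rho_r f^N_\omega(x)$ for $0<r\le\bar r_x$. Since $r\mapsto Y_{x,r}$ is non-increasing with limit $Y_x=L^\rho f^N_\omega(x)$, monotone convergence gives $\int\log Y_{x,r}\,d\mathbb P\downarrow\int\log Y_x\,d\mathbb P<0$ as $r\downarrow0$. Monotone convergence applies because $\log^+ Y_{x,r}\le\frac1\beta Y_{x,\bar r_x}^\beta$ is integrable by (H2'). Choose $r_x\le\bar r_x$ with $\int\log Y_{x,r_x}\,d\mathbb P<0$.

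Next, exactly as in Proposition~\ref{prop:uniform-moment-contraction}, use $(t^\alpha-1)/\alpha\le\frac2\beta t^\beta$ for $\alpha\le\beta/2$ and the reverse Fatou lemma. This gives $\alpha(x)\in(0,\min\{1,\beta/2\}]$ and $q_x<1$ with $\int Y_{x,r_x}^{\alpha(x)}\,d\mathbb P<q_x$.

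\textbf{Step 2 (compactness).} Here is the main subtlety. Hypothesis (H3') gives upper semicontinuity in $x$ for fixed $r$, so the set
\[
U_x:=\Bigl\{y:\int (L^\rho_{r_x}f^N_\omega(y))^{\alpha(x)}\,d\mathbb P<q_x\Bigr\}
\]
is $d$-open and contains $x$. Cover $X$ by $U_{x_1},\dots,U_{x_\ell}$. Set $\alpha_0:=\min_i\alpha(x_i)$ and $r_*:=\min_i r_{x_i}$. For $\alpha\le\alpha_0$ and $y\in U_{x_i}$, monotonicity in $r$ and Hölder's inequality give
\[
\int (L^\rho_{r_*}f^N_\omega(y))^{\alpha}\,d\mathbb P\le q_{x_i}^{\alpha/\alpha(x_i)}.
\]
Put $\kappa:=\max_i q_{x_i}^{\alpha/\alpha(x_i)}<1$, so that $\sup_y\int(L^\rho_{r_*}f^N_\omega(y))^\alpha\,d\mathbb P\le\kappa$.

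\textbf{Step 3 (from local constants to pairs).} Take $r_0:=r_*/2$ and let $\rho(y,z)<r_0$ with $y\neq z$. Then $y,z\in B_\rho(y,r_*)$, so by definition
\[
\rho(f^N_\omega(y),f^N_\omega(z))\le L^\rho_{r_*}f^N_\omega(y)\,\rho(y,z)
\]
for $\mathbb P$-a.e.\ $\omega$. Raising this to the power $\alpha$ and integrating gives the claim with $q=\kappa$. The case $y=z$ is trivial.

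\textbf{Expected obstacle.} The step I expect to need the most care is making Step 1 compatible with Step 2. The scale $r_x$ is chosen pointwise, and openness of $U_x$ must be taken for that fixed scale $r_x$. Only afterwards do we pass to the common scale $r_*=\min_i r_{x_i}$, using that $L_r^\rho$ is non-increasing in $r$. This is what lets (H3') at fixed $r$ suffice. Measurability of $\omega\mapsto L^\rho_r f^N_\omega(y)$ is implicitly assumed in (H2')--(H3').
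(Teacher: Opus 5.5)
Your proposal is correct and takes essentially the same route as the paper. Both arguments choose a pointwise exponent via the $(t^\alpha-1)/\alpha$ reverse-Fatou trick, get $d$-open sets $U_x$ from (H3'), take a finite cover, pass to smaller exponents with H\"older's inequality, and bound pairs directly from the definition of $L^\rho_r$. The differences are cosmetic: you fix the scale $r_x$ first (by monotone convergence of $\int\log L^\rho_r f^N_\omega(x)\,d\mathbb P$) and then the exponent, whereas the paper fixes $\alpha(x)$ for $L^\rho f^N_\omega(x)$ first and then shrinks $r$ by dominated convergence; also, your cap $\alpha(x)\le 1$ explicitly secures $\alpha_0\in(0,1]$, which the paper leaves implicit.
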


\begin{proof}
By~\ref{H1}, fix \(\eta>0\) as in~\eqref{eq:eta}. For \(x\in X\), put 
$Y_x(\omega):=L^\rho f_\omega^N(x)$.
Exactly as in the proof of
Proposition~\ref{prop:uniform-moment-contraction}, using
\ref{H1} and the pointwise integrability supplied by
\ref{H2'}, for each \(x\in X\) there exists
\(0<\alpha(x)\le\beta/2\) and $0<q_x<1$ such that
\[
   \int
      \bigl(L^\rho f_\omega^N(x)\bigr)^{\alpha(x)}
      d\mathbb P
   <q_x.
\]
By \ref{H2'}, for the fixed point \(x\in X\) there exists
\(\bar r_x>0\) such that
$\int
      \bigl(L_{\bar r_x}^\rho f_\omega^N(x)\bigr)^\beta
      d\mathbb P<\infty$.  
For \(0<r\le \bar r_x\), the monotonicity 
gives $L^\rho f_\omega^N(x)
   \le
   L_r^\rho f_\omega^N(x)
   \le
   L_{\bar r_x}^\rho f_\omega^N(x)$. 
Since \(0<\alpha(x)\le\beta\), we have
\[
   \bigl(L_r^\rho f_\omega^N(x)\bigr)^{\alpha(x)}
   \le
   1+
   \bigl(L_{\bar r_x}^\rho f_\omega^N(x)\bigr)^\beta ,
\]
and the right-hand side is integrable. Hence, by dominated convergence
(or equivalently by continuity of the integral from above),
\[
   \lim_{r\downarrow0}
   \int
      \bigl(L_r^\rho f_\omega^N(x)\bigr)^{\alpha(x)}
      d\mathbb P
   =
   \int
      \bigl(L^\rho f_\omega^N(x)\bigr)^{\alpha(x)}
      d\mathbb P .
\]
Henve, we may choose \(0<r(x)\le \bar r_x\) such that
\[
   \int
      \bigl(L_{r(x)}^\rho f_\omega^N(x)\bigr)^{\alpha(x)}
      d\mathbb P<q_x .
\]
By \ref{H3'}, the set
\[
   U_x
   :=
   \left\{
      y\in X:
      \int_\Omega
         \bigl(L_{r(x)}^\rho f_\omega^N(y)\bigr)^{\alpha(x)}
         d\mathbb P(\omega)
      <q_x
   \right\}
\]
is an open neighbourhood of \(x\) in the topology of \(d\). By compactness, choose \(x_1,\dots,x_\ell\in X\) such that
$X=U_{x_1}\cup \dots \cup U_{x_\ell}$.
Set
\[
   \alpha_0:=\min_{1\le i\le\ell}\alpha(x_i)>0,
   \qquad
   r_0:=\min_{1\le i\le\ell}r(x_i)>0,
\]
and for every $0<\alpha\leq \alpha_0$, 
\[
   p_i:=\frac{\alpha(x_i)}{\alpha}\ge1,
   \qquad
   q:=\max_{1\le i\le\ell}q_{x_i}^{1/p_i}<1.
\]
Let \(y,z\in X\) satisfy \(\rho(y,z)<r_0\). Choose \(i\) such that
\(y\in U_{x_i}\). Since \(r_0\le r(x_i)\), we have
\(z\in B_\rho(y,r(x_i))\). Therefore,
\[
   \rho(f_\omega^N(y),f_\omega^N(z))
   \le
   L_{r(x_i)}^\rho f_\omega^N(y)\cdot\rho(y,z).
\]
Raising to the power \(\alpha\), integrating, and using H\"older's
inequality,
\[
\begin{aligned}
   \int
      \rho(f_\omega^N(y),f_\omega^N(z))^\alpha
      \,d\mathbb P         &\le
   \rho(y,z)^\alpha
   \int
      \bigl(L_{r(x_i)}^\rho f_\omega^N(y)\bigr)^\alpha
      \,d\mathbb P         \\
   &\le
   \rho(y,z)^\alpha
   \left(
      \int
         \bigl(L_{r(x_i)}^\rho f_\omega^N(y)\bigr)^{\alpha(x_i)}
        \, d\mathbb P
   \right)^{1/p_i}                                         <
   q_{x_i}^{1/p_i}\rho(y,z)^\alpha
   \le
   q\,\rho(y,z)^\alpha .
\end{aligned}
\]
This proves the proposition.
\end{proof}

\subsection{Projective estimates}
\label{subsec:projective-estimates} 
Assume the setting and notations in~\S\ref{s:setting-linear}. Denote by $\hat\pi:\hat{\mathcal E}\to X$ the projection of the projective bundle. Given $z=(x,\hat v)\in \hat{\mathcal E}$, we also consider 
vertical tangent space
$T_z^{\mathrm{vert}}\hat{\mathcal E}
   :=
   \ker D\hat\pi(z)
   \cong
   T_{\hat v}P(\mathcal E_x)$. 
We denote by \(d\) any fixed metric compatible with the compact topology of \(\hat{\mathcal E}\). On the other hand, we also consider the
vertical metric
\begin{equation}
\label{eq:def-d-fib}
   d_{\rm vert}((x,\hat v),(y,\hat u))
   :=
   \begin{cases}
      \theta_x(\hat v,\hat u) & \text{if }x=y,\\[0.3em]
      1                 & \text{if }x\ne y,
   \end{cases}
\end{equation}
where \(\theta_x\) is the angular distance on \(P(\mathcal{E}_x)\) induced by the Euclidean norm on \(\mathcal E_x\). Thus \(0\le d_{\rm vert}\le 1\), and
for every \(0<r<1\), the ball
$B_{{\rm vert}}(z,r)$ is contained in the fibre \(P(\mathcal{E}_x)\) whenever \(z=(x,\hat v)\). We recall that $v$ denotes a unitary vector representing $\hat v$. 

Let us additionally assume that the  random bundle isomorphism $A:\Omega \times \mathcal{E}\to\mathcal{E}$ covering the continuous random map $f:\Omega\times X\to X$ also satisfies that the map $A_\omega:\mathcal E\to\mathcal E$ is continuous for $\mathbb{P}$-a.e.~$\omega\in\Omega$. That is, $A:\Omega \times \mathcal{E}\to \mathcal{E}$ is a continuous random map.  
Let  $\hat A: \Omega \times \hat{\mathcal E} \to \hat{\mathcal E}$ be the associated projective random map. Notice that  for \(n\ge1\), the restriction of
\(\hat A_\omega^n= \hat A_{\omega_{n-1}} \circ \dots \circ \hat A_{\omega_0}\) to the fibre \(\hat{\mathcal E}_x=P(\mathcal{E}_x)\) is the projective map induced
by the linear isomorphism
$A_\omega^n(x):\mathcal{E}_x\to \mathcal{E}_{f_\omega^n(x)}$. 
We define the fibrewise local Lipschitz constant by
\[
   L^{\rm vert}\hat A_\omega^n(z)
   :=
   \lim_{r\downarrow0}
   \sup_{\substack{z',z''\in B_{{\rm vert}}(z,r)\\ z'\ne z''}}
   \frac{
      d_{\rm vert}
      (\hat A_\omega^n(z'),\hat A_\omega^n(z''))
   }{
      d_{\rm vert}(z',z'')
   } .
\]
Since \(B_{{\rm vert}}(z,r)\) is contained in the single projective fibre at $z=(x,\hat v)$ for \(0<r<1\), this is exactly the local Lipschitz constant of the projective
fibre map $$\widehat{A_\omega^n(x)}:P(\mathcal{E}_x)\to P(\mathcal{E}_{f_\omega^n(x)})$$ at \(\hat v\). Hence
\[
   L^{\rm vert}\hat A_\omega^n(z) = \|D \widehat{A_\omega^n(x)}(\hat v) \|
   \qquad \text{for $z=(x,\hat v) \in \hat{\mathcal E}$.}
\]   

In what follows we shall use the following standard estimates for projective maps. 

\begin{lemma}
\label{lem:projective-estimates} 
For any \(n\ge1\), it holds that
\[
  \frac{\|\wedge^2 A_\omega^n(x)\|}{\|A_\omega^n(x)\| \cdot \|A_\omega^n(x)v\|}\leq L^{\rm vert}\hat A_\omega^n(z)
   \le
   \frac{\|\wedge^2 A_\omega^n(x)\|}
        {\|A_\omega^n(x)v\|^2} \quad \text{for every \(z=(x,\hat v)\in\hat{\mathcal E}\)}.
\]
Moreover, for every \(z,z'\in\hat{\mathcal E}\) with \(\pi(z)=\pi(z')\),
\[
   d_{\rm vert}
      \bigl(\hat A_\omega^n(z),\hat A_\omega^n(z')\bigr)^\alpha
   \le
   \frac{1}{2}
   \left[
      \bigl(L^{\rm vert}\hat A_\omega^n(z)\bigr)^\alpha
      +
      \bigl(L^{\rm vert}\hat A_\omega^n(z')\bigr)^\alpha
   \right]
   d_{\rm vert}(z,z')^\alpha .
\]
\end{lemma}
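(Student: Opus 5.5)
Throughout, fix $\omega$, $n$ and $x$, and write $B:=A^n_\omega(x):\mathcal E_x\to\mathcal E_{f^n_\omega(x)}$. Since $L^{\rm vert}\hat A^n_\omega(z)=\|D\hat B(\hat v)\|$, both claims are statements about the projective map $\hat B$ of one invertible linear map between Euclidean spaces. The plan is to compute $D\hat B(\hat v)$ explicitly and then use exterior-algebra identities.

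\emph{Derivative formula.} Identify $T_{\hat v}P(\mathcal E_x)$ with $v^\perp$ via the curve $t\mapsto[v+tw]$ for a unit $w\perp v$. I would differentiate $Bv+tBw$ normalized. This gives $D\hat B(\hat v)w=Q_v w/\|Bv\|$, where $Q_v:=P_{(Bv)^\perp}B|_{v^\perp}$ and $P_{(Bv)^\perp}$ is the orthogonal projection onto $(Bv)^\perp$. Moreover $\|Bv\wedge Bw\|=\|Bv\|\,\|Q_vw\|$, hence
\[
L^{\rm vert}\hat A^n_\omega(z)=\sup_{w\perp v,\ \|w\|=1}\frac{\|Bv\wedge Bw\|}{\|Bv\|^2}.
\]
\textbf{Upper bound.} This is immediate from $\|Bv\wedge Bw\|\le\|\wedge^2B\|\,\|v\wedge w\|=\|\wedge^2B\|$.

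\textbf{Lower bound.} With singular values $\sigma_1\ge\sigma_2$ of $B$, we have $\|\wedge^2B\|=\sigma_1\sigma_2$ and $\|B\|=\sigma_1$. So the lower bound is equivalent to $\|Q_v\|\ge\sigma_2$. Equivalently, there should exist a unit $w\perp v$ with $\mathrm{dist}(Bw,\mathbb R Bv)\ge\sigma_2$.

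My first attempt: let $E$ be the plane spanned by the two top right singular vectors. Pick a unit $y\in E\cap v^\perp$, which exists for dimension reasons when $m\ge2$. If this is not enough, restrict $B$ to the at most $3$-dimensional space $U=\mathrm{span}(v)+E$. On $U$ the compression $Q_v$ satisfies $|\det B|_U|=\|Bv\|\,|\det Q_v|_{U\cap v^\perp}|$ by a block-triangular decomposition. I would then combine this with the min--max characterization of singular values on $U$. In dimension $2$ this gives directly $\|Q_v\|=|\det B|/\|Bv\|=\sigma_1\sigma_2/\|Bv\|\ge\sigma_2$. I expect this step, controlling the compression of $B$ off the specific direction $Bv$ by $\sigma_2$ in higher dimensions, to be the main obstacle. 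If the interlacing route fails, I would fall back on proving it via the dual map $(B^{-1})^*$, which swaps the roles of top and bottom singular values.

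\textbf{Distortion inequality.} Take $z=(x,\hat v)$ and $z'=(x,\hat v')$ in the same fibre, with angle $\theta$. I would use $\sin\theta(B\hat v,B\hat v')=\|Bv\wedge Bv'\|/(\|Bv\|\,\|Bv'\|)$. Write $v'=\cos\theta\,v+\sin\theta\,w$ with $w\perp v$ a unit vector. Then $\|Bv\wedge Bv'\|=\sin\theta\,\|Bv\wedge Bw\|$, and the $L^{\rm vert}$ formula gives $\|Bv\wedge Bw\|\le\|Bv\|^2L(z)$; symmetrically $\|Bv\wedge Bv'\|\le\|Bv'\|^2L(z')$. Multiplying these two bounds and taking square roots yields
\[
\sin\theta(\hat B\hat v,\hat B\hat v')\le\sqrt{L(z)L(z')}\,\sin\theta.
\]
Raising to the power $\alpha$ and applying AM--GM, $\sqrt{ab}\le\tfrac12(a+b)$ with $a=L(z)^\alpha$ and $b=L(z')^\alpha$, gives the constant $\tfrac12$. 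Finally, I would pass from $\sin\theta$ to the angle $\theta$. If the angular distance is taken as the chordal/sine distance on $P(\mathcal E_x)$, this step is exact. Otherwise I would either adjust the constant using $\tfrac{2}{\pi}\theta\le\sin\theta\le\theta$ on $[0,\pi/2]$, or integrate $L^{\rm vert}$ along the fibre geodesic from $\hat v$ to $\hat v'$ and use the convexity of the same bilinear bound.
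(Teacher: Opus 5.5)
\textbf{The gap: the lower bound for $m\ge 3$.} Your derivative formula $D\hat B(\hat v)w=Q_vw/\|Bv\|$ is the one the paper uses. Your upper bound is correct. Your wedge/AM--GM proof of the distortion inequality is also correct, and it is more self-contained than the paper's citation of [BM]; note that the second bound should read $\|Bv\wedge Bv'\|\le\sin\theta\,\|Bv'\|^2L(z')$.

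The gap is the lower bound in dimension $m\ge 3$. You correctly reduce it to $\|Q_v\|\ge\sigma_2(B)$, but you never prove this. You leave the candidate $y\in E\cap v^\perp$ unchecked, and the three-dimensional determinant route cannot close the argument by itself. On $U$, min--max only tells you that the singular values of $B|_U$ are $\sigma_1,\sigma_2,\tau_3$. The block identity gives $\sigma_1\sigma_2\tau_3=\|Bv\|\,s_1s_2$, where $s_1\ge s_2$ are the singular values of the $2\times 2$ compression. So the determinant controls only the product $s_1s_2$. A lower bound on $s_1$ needs an upper bound on $s_2$, and that is exactly the missing interlacing information.

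The paper supplies it via Schur complements. In orthonormal bases starting with $v$ and $Bv/\|Bv\|$, one has $B=\begin{pmatrix}a&b\\0&C\end{pmatrix}$ with $C$ the matrix of $Q_v$. Then $C^*C$ is the Schur complement of $a^2$ in $B^*B$, and Smith's interlacing theorem gives $\sigma_2(B)\le\|C\|$.

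\textbf{Two ways to complete your own ideas.} Either of them works in a few lines.

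\emph{Dual route.} $Q_v$ is invertible with $Q_v^{-1}=\Pi_{v^\perp}B^{-1}|_{(Bv)^\perp}$; indeed $\Pi_{v^\perp}B^{-1}Q_vw=w$ for $w\perp v$. This is a compression of $B^{-1}$, so Courant--Fischer gives $1/\|Q_v\|=\sigma_{m-1}(Q_v^{-1})\le\sigma_{m-1}(B^{-1})=1/\sigma_2(B)$.

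\emph{Direct route: your $y$ actually works.} Let $v_E$ be the orthogonal projection of $v$ onto $E$.
If $v_E=0$, then $By\perp Bv$ and $\|By\|\ge\sigma_2$, so you are done.
Otherwise $(v_E/|v_E|,\,y)$ is an orthonormal basis of $E$. Write $B^*B|_E=\begin{pmatrix}p&q\\ q&s\end{pmatrix}$ in this basis, so that $ps-q^2=\sigma_1^2\sigma_2^2$ and $p\le\sigma_1^2$.
Since $B^*B$ preserves both $E$ and $E^\perp$, you get $\langle Bv,By\rangle=|v_E|\,q$ and $\|Bv\|^2\ge p|v_E|^2$. Hence
\[
\|Bv\wedge By\|^2=\|Bv\|^2 s-|v_E|^2q^2\ge\|Bv\|^2\Bigl(s-\frac{q^2}{p}\Bigr)=\frac{\sigma_1^2\sigma_2^2}{p}\,\|Bv\|^2\ge\sigma_2^2\,\|Bv\|^2 .
\]

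\textbf{On the metric.} Keep the sine-of-angle reading of $\theta_x$, under which your distortion argument is exact. Your geodesic-integration fallback cannot give the constant $\tfrac12$ for the true angle, because the stated inequality is false there. Take $B=\mathrm{diag}(1,\lambda)$, $v=e_1$ and $\tan\theta=1/\lambda$. The image angle is $\pi/4$, whereas $\tfrac12\bigl(L(z)+L(z')\bigr)\theta\to\tfrac34$ as $\lambda\to\infty$. Passing through $\tfrac{2}{\pi}\theta\le\sin\theta$ turns the constant into $\tfrac12(\pi/2)^\alpha$. That is harmless for the global distortion principle, but it does not prove the lemma as stated.
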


\begin{proof}
Let $B:V\to V$ be an invertible linear map of an $n$-dimensional real vector space~$V$. By \cite[Prop.~2.28]{DK16}, the differential of the projective map \(\hat B\) at \(\hat v\) is given, under the identification \(T_{\hat v} P(V)\cong v^\perp\), by
\[
    D\hat B(\hat v)w
    =
    \frac{1}{\lVert Bv \rVert}
    \Pi_{(Bv)^\perp}Bw,
    \qquad w\in v^\perp 
\]
where $\Pi_{(Bv)^\perp}$ is the orthogonal projection onto $(Bv)^\perp$, the subspace orthogonal to $Bv$. Choose orthonormal bases with the first vector $v$ in the domain and $Bv/\lVert Bv \rVert$ in the codomain. Then
\[
    B = \begin{pmatrix} a &  b \\ 0 & C \end{pmatrix} \quad \text{with $a = \lVert Bv \rVert > 0$, $b^*\in\mathbb{R}^{n-1}$  and $C$  the matrix of $\Pi_{(Bv)^{\perp}} B |_{v^{\perp}}$.}
\]
Now
\[
    B^*B = \begin{pmatrix} a^2 & ab \\ (ab)^* & b^*b + C^*C \end{pmatrix}
\]
and the Schur complement of the $1 \times 1$ block $a^2$ is
$(b^*b + C^*C) - (ab)^*(a^2)^{-1}(ab) = C^*C$.
Smith's Schur-complement interlacing theorem~\cite[Theorem 5]{Smith1992Schur} gives that the eigenvalues of $B^*B$ and $C^*C$ satisfy that $\lambda_j(B^*B) \ge \lambda_j(C^*C) \ge \lambda_{j+1}(B^*B)$, $j=1,\dots,n-1$. Taking square roots, the singular values are interlacing as  $$\sigma_j(B) \ge \sigma_j(\Pi_{(Bv)^\perp} B |_{v^\perp}) \ge \sigma_{j+1}(B) \quad j=1,\dots,n-1.
$$
In particular, for $j=1$, this gives 
$\sigma_2(B) \le \lVert \Pi_{(Bv)^\perp} B |_{v^\perp} \rVert$. Since $\lVert D\hat B({\hat v}) \rVert 
    = \lVert Bv \rVert^{-1} \lVert \Pi_{(Bv)^\perp} B |_{v^\perp} \rVert$, 
we obtain 
$ \lVert Bv \rVert^{-1}\sigma_2(B) \le \lVert D(\hat B)_{\hat v} \rVert$. 
Taking into account that $\sigma_2(B)= \|B\|^{-1}\|\wedge^2B\|$ and applying this to $B=A_\omega^n(x)$, we get the first inequality in the statement. The remaining inequalities are standard angular estimates for the
projective action of a linear isomorphism, see
\cite[Lem.~5.6.1 and Lem.~5.6.2]{BM}. 
\end{proof}

\begin{corollary} \label{cor:vertical-laypunov-exponent}
Let $\mu \in \mathcal{I}(f)$. For every $\hat A$-stationary measure $\hat \mu$ projecting on $\mu$,  
$$\lambda_{\rm vert}(\hat\mu):=\lambda(\hat \mu; d_{\rm vert}) 
\geq \lambda_2(\mu)-\lambda_1(\mu).$$
Moreover, if $\hat\mu$ is maximizing, then $\lambda_{\rm vert}(\hat\mu)= \lambda_2(\mu)-\lambda_1(\mu)$.
\end{corollary}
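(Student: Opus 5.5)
The plan is to integrate the two-sided bounds of Lemma~\ref{lem:projective-estimates} against $\mathbb P\times\hat\mu$, divide by $n$, and let $n\to\infty$.

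Recall that $\lambda_{\rm vert}(\hat\mu)=\lim_n \frac1n\int\log L^{\rm vert}\hat A^n_\omega(z)\,d\mathbb P\,d\hat\mu$. This limit exists by subadditivity, since the chain rule gives $\log L^{\rm vert}\hat A^{n+m}_\omega(z)\le \log L^{\rm vert}\hat A^n_\omega(z)+\log L^{\rm vert}\hat A^m_{\sigma^n\omega}(\hat A^n_\omega z)$. Each term is quasi-integrable, because $\log L^{\rm vert}\hat A_\omega\le \log(\|A_\omega\|^2\|A_\omega^{-1}\|^2)$ by the upper bound in the lemma.

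\emph{Lower bound.} The first inequality of Lemma~\ref{lem:projective-estimates} gives
\[
\log L^{\rm vert}\hat A^n_\omega(z)\ \ge\ \log\|\wedge^2A^n_\omega(x)\|-\log\|A^n_\omega(x)\|-\log\|A^n_\omega(x)v\|.
\]
I integrate with respect to $\mathbb P\times\hat\mu$ and divide by $n$. The first term depends only on $x$, and its average converges to $\lambda_1(\mu)+\lambda_2(\mu)$ by the definition of the exponents in \S\ref{ss:maximalLE}; the second converges to $\lambda_1(\mu)$. For the third I use (i) of \S\ref{ss:maximalLE}, applied to the stationary measure $\hat\mu$:
\[
\tfrac1n\int\log\|A^n_\omega(x)v\|\,d\mathbb P\,d\hat\mu\ \longrightarrow\ \int\phi_A\,d\hat\mu\ \le\ \lambda_1(\mu),
\]
where the inequality is the Furstenberg--Kifer formula. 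Combining the three limits,
\[
\lambda_{\rm vert}(\hat\mu)\ \ge\ \lambda_1+\lambda_2-\lambda_1-\int\phi_A\,d\hat\mu\ \ge\ \lambda_2(\mu)-\lambda_1(\mu).
\]
All quantities involved are finite thanks to the two-sided log-integrability of $\|A_\omega\|$ and $\|A_\omega^{-1}\|$.

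\emph{Upper bound in the maximizing case.} The second inequality of the lemma gives
\[
\log L^{\rm vert}\hat A^n_\omega(z)\ \le\ \log\|\wedge^2A^n_\omega(x)\|-2\log\|A^n_\omega(x)v\|.
\]
Averaging as before yields
\[
\lambda_{\rm vert}(\hat\mu)\ \le\ \lambda_1(\mu)+\lambda_2(\mu)-2\int\phi_A\,d\hat\mu.
\]
If $\hat\mu$ is maximizing, then $\int\phi_A\,d\hat\mu=\lambda_1(\mu)$, so this upper bound equals $\lambda_2(\mu)-\lambda_1(\mu)$ and matches the lower bound, giving equality.

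The only delicate point is justifying that the integrals and limits commute. Specifically, we need that $\frac1n\int\log\|A^n_\omega(x)v\|\,d\mathbb P\,d\hat\mu\to\int\phi_A\,d\hat\mu$ for a non-ergodic $\hat\mu$, which is exactly item (i), together with the finiteness of every term, which follows from the integrability of $\log^+\|A_\omega\|_\infty$ and $\log^+\|A_\omega^{-1}\|_\infty$. Everything else is direct.
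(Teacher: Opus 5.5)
Your proposal is correct and follows the paper's proof essentially verbatim: integrate the two-sided bounds of Lemma~\ref{lem:projective-estimates} against $\mathbb P\times\hat\mu$, divide by $n$, pass to the limit, and use $\int\phi_A\,d\hat\mu\le\lambda_1(\mu)$, with equality in the maximizing case. The only cosmetic difference is that the paper uses stationarity of $\hat\mu$ and $\hat P$-additivity to get $\frac1n\int\log\|A^n_\omega(x)v\|\,d\mathbb P\,d\hat\mu=\int\phi_A\,d\hat\mu$ exactly for every $n$, whereas you invoke item~(i), which only gives this in the limit.
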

\begin{proof}
    By Lemma~\ref{lem:projective-estimates},  for
every \(n\ge1\),
\begin{align*}
      \log\|\wedge^2 A_\omega^n(x)\|
   &- \log \|A_\omega^n(x)\| - 
   \log\|A_\omega^n(x)v\| \leq \\ &\leq \log L^{\rm vert}\hat A_\omega^n(x,\hat v)
   \le
   \log\|\wedge^2 A_\omega^n(x)\|
   -
   2\log\|A_\omega^n(x)v\|.
\end{align*}
Integrating with respect to \(d\mathbb P(\omega)d\hat\mu(x,\hat v)\),
dividing by \(n\), and taking the limit, we obtain
\[
\begin{aligned}
  \lambda_2(\mu)  &- \lim_{n\to\infty}
   \frac1n \int
      \log\|A_\omega^n(x)v\|
      \,d\mathbb P \,d\hat\mu \leq \\ &\leq \lambda_{\rm vert}(\hat\mu)
   \le
   \lambda_1(\mu)+\lambda_2(\mu) -
   2
   \lim_{n\to\infty}
   \frac1n
   \int
      \log\|A_\omega^n(x)v\|
      \,d\mathbb P \,d\hat\mu.
\end{aligned}
\]
Since the process $(\phi_n)_{n\geq 1}$ where $\phi_n(x,\hat v)=\int \log\|A_\omega^n(x)v\|
      \,d\mathbb P$ is additive with respect to the the annealed Koopman operator $\hat P$ associated with~$\hat A$, we have that 
\[
   \frac1n
   \int
      \log\|A_\omega^n(x)v\|
      \,d\mathbb P\,d\hat\mu
   =
   \int \phi_A\,d\hat\mu   \quad \text{where} \quad \phi_A=\phi_1.
\]
Since $\int \phi_A\,d\hat\mu \leq  \lambda_1(\mu)$, then we get that $\lambda_2(\mu)-\lambda_1(\mu) \leq \lambda_{\rm vert}(\hat\mu)$. Moreover, if $\hat \mu$ is maximizing, that is $\int \phi_A\,d\mu =\lambda_1(\mu)$, then we get the equality concluding the proof of the corollary. 
\end{proof}

\begin{lemma}
\label{lem:projective-vertical-regularity}
For every \(n\ge1\) and for \(\mathbb P\)-a.e.~\(\omega\), the
    map $z\mapsto L^{\rm vert}\hat A_\omega^n(z)$   is continuous on \((\hat{\mathcal E},d)\). Moreover, under condition 
\[
        \int
        \big(\log^+\|A_\omega\|_\infty
        +
        \log^+\|(A_\omega)^{-1}\|_\infty\big)
        \,d\mathbb P<\infty, 
\]
the function $\hat\phi_n(z)
        :=
        \int
        \log L^{\rm vert}\hat A_\omega^n(z)
        \,d\mathbb P$
    is continuous on \((\hat{\mathcal{E}},d)\) and the map $\hat\mu   \mapsto \lambda_{\rm vert}(\hat\mu)$ 
is upper semicontinuous on \(\mathcal{I}(\hat A)\) for the weak$^*$ topology. 
\end{lemma}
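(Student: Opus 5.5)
The plan is to use the explicit formula for the differential of a projective map already recalled in the proof of Lemma~\ref{lem:projective-estimates}. For $z=(x,\hat v)$ and $B=A^n_\omega(x)$ we have
\[
L^{\rm vert}\hat A^n_\omega(z)=\|D\hat B(\hat v)\|=\|Bv\|^{-1}\,\bigl\|\Pi_{(Bv)^\perp}B|_{v^\perp}\bigr\|.
\]
Fix $\omega$ in the full-measure set where each $A_{\omega_j}$ is a continuous bundle map. Then $A^n_\omega$ is continuous, being a composition of continuous bundle maps covering continuous $f_{\omega_j}$.

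\textbf{Continuity of $z\mapsto L^{\rm vert}\hat A^n_\omega(z)$.} Work in local continuous orthonormal frames of $\mathcal E$, which exist near any point by Gram--Schmidt applied to a local trivialization. In such frames, $(x,v)\mapsto B=A^n_\omega(x)$ is a continuous matrix-valued map. The projections $\Pi_{v^\perp}$ and $\Pi_{(Bv)^\perp}$ vary continuously, and so does $\|Bv\|>0$. The quantity
\[
\bigl\|\Pi_{(Bv)^\perp}B\,\Pi_{v^\perp}\bigr\|
\]
equals the restricted norm and is manifestly continuous in $(x,v)$. It is also invariant under $v\mapsto -v$, so it descends to $\hat{\mathcal E}$. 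This gives continuity on $(\hat{\mathcal E},d)$.

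\textbf{Continuity of $\hat\phi_n$.} Use Lemma~\ref{lem:projective-estimates}, or directly the singular value bounds, to obtain
\[
\|B^{-1}\|^{-2}\|B\|^{-1}\cdot\|B\|^{-1}\cdots
\]
More cleanly, $\|D\hat B(\hat v)\|\le \|B\|\,\|B^{-1}\|$ and $\|D\hat B(\hat v)\|\ge (\|B\|\,\|B^{-1}\|)^{-1}$. The lower bound holds because $\hat B^{-1}=\widehat{B^{-1}}$, so $D\hat B(\hat v)$ has inverse $D\widehat{B^{-1}}(\hat B\hat v)$, whose norm is at most $\|B^{-1}\|\,\|B\|$. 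Hence
\[
\bigl|\log L^{\rm vert}\hat A^n_\omega(z)\bigr|\le \log\|A^n_\omega\|_\infty+\log\|(A^n_\omega)^{-1}\|_\infty\le \sum_{j<n}\Bigl(\log^+\|A_{\omega_j}\|_\infty+\log^+\|A_{\omega_j}^{-1}\|_\infty\Bigr),
\]
which is $\mathbb P$-integrable by the hypothesis. Dominated convergence, combined with the pointwise continuity above, gives continuity of $\hat\phi_n$.

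\textbf{Upper semicontinuity of $\lambda_{\rm vert}$.} The chain rule gives $L^{\rm vert}\hat A^{n+m}_\omega(z)\le L^{\rm vert}\hat A^m_{\sigma^n\omega}(\hat A^n_\omega z)\,L^{\rm vert}\hat A^n_\omega(z)$. Using Bernoulli independence as in Lemma~\ref{lem:mostly-to-finite-time-negativity}, $(\hat\phi_n)$ is $\hat P$-subadditive, so for every stationary $\hat\mu$ the sequence $a_n(\hat\mu)=\int\hat\phi_n\,d\hat\mu$ is subadditive. Fekete's lemma then gives $\lambda_{\rm vert}(\hat\mu)=\inf_n \frac1n\int\hat\phi_n\,d\hat\mu$. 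Each map $\hat\mu\mapsto\int\hat\phi_n\,d\hat\mu$ is weak$^*$ continuous, since $\hat\phi_n$ is continuous on the compact space $\hat{\mathcal E}$. An infimum of continuous functions is upper semicontinuous, which finishes the argument.

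\textbf{Main obstacle.} The step I expect to be most delicate is the first one: ensuring genuine joint continuity in the base point, not merely in the fibre. This is handled by local orthonormal frames together with the projection formula. Everything else is routine domination and subadditivity.
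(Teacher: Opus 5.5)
Your proof is correct and follows the paper's route: continuity of the fibrewise projective derivative in $(x,\hat v)$, a two-sided bound on $L^{\rm vert}\hat A^n_\omega$ in terms of $\|A^n_\omega\|_\infty$ and $\|(A^n_\omega)^{-1}\|_\infty$ that feeds dominated convergence, and upper semicontinuity of $\lambda_{\rm vert}$ as an infimum of the weak$^*$-continuous maps $\hat\mu\mapsto\frac1n\int\hat\phi_n\,d\hat\mu$. Your version is slightly sharper and more explicit than the paper's: you use the bound $\|B\|\,\|B^{-1}\|$ instead of $(\|B\|\,\|B^{-1}\|)^2$, and you justify $\lim=\inf$ by $\hat P$-subadditivity and Fekete, which the paper only asserts; you should also delete the stray unfinished display just before ``More cleanly''.
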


\begin{proof}
Fix \(n\ge1\) and \(\omega\) such that \(A^n_\omega\) is a continuous linear bundle map.  For \(z=(x,\hat v)\), the restriction of
\(\hat A_\omega^n\) to the fibre \(\hat{\mathcal{E}}_x=P(\mathcal{E}_x)\) is the projective action of $A_\omega^n(x)$. 
Therefore
\[
        L^{\rm vert}\hat A_\omega^n(x,\hat v)
        =
        \|D\widehat{A_\omega^n(x)}(\hat v)\|.
\]
The derivative of the projective action depends continuously on the linear isomorphism and on the projective direction. Since \(x\mapsto A_\omega^n(x)\) is continuous, it follows that $ z\mapsto L^{\rm vert}\hat A_\omega^n(z)$ is also continuous on \((\hat{\mathcal E},d)\).

It remains to prove the continuity of \(\hat\phi_n\). By Lemma~\ref{lem:projective-estimates}, for \(z=(x,\hat v)\),
\[
   \big(\|A_\omega^n(x)\|\,
   \|(A_\omega^n(x))^{-1}\|\big)^{-1} \leq L^{\rm vert}\hat A_\omega^n(z)
   \le
   \big(\|A_\omega^n(x)\| \,
   \|(A_\omega^n(x))^{-1}\|\big)^2 .
\]
Therefore
\[
   \left|
   \log L^{\rm vert}\hat A_\omega^n(z)
   \right|
   \le
   2\Bigl(
   \log^+\|A_\omega^n(x)\|
   +
   \log^+\|(A_\omega^n(x))^{-1}\|
   \Bigr) \leq 
   2\Bigl(
   \log^+\|A_\omega^n\|_\infty
   +
   \log^+\|(A_\omega^n)^{-1}\|_\infty
   \Bigr)
\]
which is integrable under the logarithmic moment assumption. 
Hence, since the map
\(z\mapsto \log L^{\rm vert}\hat A_\omega^n(z)\) is continuous for
\(\mathbb P\)-a.e.~\(\omega\), dominated convergence implies that $\hat \phi_n$ is also continuous on  $(\hat{\mathcal E},d)$. 

On the other hand,  for every
\(n\ge1\), the map $F_n(\hat\mu)
        :=
        \frac1n
        \int \hat\phi_n\,d\hat\mu$
is continuous on \(\mathcal{I}(\hat A)\) for the weak$^*$  topology. Thus, 
$\lambda_{\rm vert}(\hat\mu)
        =
        \lim_{n\to\infty}F_n(\hat\mu)
        =
        \inf_{n\ge1}F_n(\hat\mu)$
is also upper semicontinuous, being the infimum of a countable family of upper semicontinuous functions. 
\end{proof}

\begin{lemma}
\label{lem:verify-two-metric-projective}
Assume that for some \(\beta>0\),
\[
   \int
   \bigl(\sup_{z\in\hat{\mathcal E}}
   L^{\rm vert}\hat A_\omega(z)\bigr)^\beta
   \,d\mathbb P<\infty .
\]
Then $(\hat{\mathcal E}, d, d_{\rm vert},\hat A)$  satisfies conditions
\textup{\ref{G2}}, \textup{\ref{G3}}, and \textup{\ref{G4}} of
Theorem~\ref{thm:two-metric-global-contraction} on \[ R=\hat{\mathcal E}\times_{\hat\pi} \hat{\mathcal E} :=\{(z,z')\in \hat{\mathcal E} \times \hat{\mathcal E}: \hat{\pi}(z)=\hat{\pi}(z')\}.\]
\end{lemma}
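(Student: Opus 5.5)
We verify the three conditions one at a time, working on the fibred set $R=\hat{\mathcal E}\times_{\hat\pi}\hat{\mathcal E}$ and relying on the projective estimates of Lemma~\ref{lem:projective-estimates} together with the regularity in Lemma~\ref{lem:projective-vertical-regularity}.

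\textbf{Condition \ref{G2}.} This is exactly the hypothesis once we identify $L^\rho$ with $L^{\rm vert}$ for $\rho=d_{\rm vert}$. The local $d_{\rm vert}$-Lipschitz constant $L^{d_{\rm vert}}\hat A_\omega(z)$ is computed on vertical balls of radius $r<1$. Such balls lie inside the single fibre $P(\mathcal E_x)$, so this constant coincides with $L^{\rm vert}\hat A_\omega(z)=\|D\widehat{A_\omega(x)}(\hat v)\|$, as observed before Lemma~\ref{lem:projective-estimates}. Hence the assumed integrability of $\bigl(\sup_z L^{\rm vert}\hat A_\omega(z)\bigr)^\beta$ is literally \ref{G2}.

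\textbf{Condition \ref{G3}.} We apply the first assertion of Lemma~\ref{lem:projective-vertical-regularity}. For every $n\ge1$ and $\mathbb P$-a.e.\ $\omega$, the map $z\mapsto L^{\rm vert}\hat A^n_\omega(z)$ is continuous on $(\hat{\mathcal E},d)$. In particular it is upper semicontinuous, and this uses only the continuity of $A_\omega$, not the moment condition. The one point to check is that the a.e.\ set of $\omega$ can be taken independent of $n$. Since it is a countable intersection of full-measure sets (those $\omega$ for which each $A_{\omega_j}$ is continuous), this is fine.

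\textbf{Condition \ref{G4}.} We take $C=1/2$. For $(z,z')\in R$ the second inequality of Lemma~\ref{lem:projective-estimates} gives, for every $n\ge1$ and $0<\alpha\le1$,
\[
d_{\rm vert}\bigl(\hat A^n_\omega(z),\hat A^n_\omega(z')\bigr)^\alpha\le \tfrac12\bigl[(L^{\rm vert}\hat A^n_\omega(z))^\alpha+(L^{\rm vert}\hat A^n_\omega(z'))^\alpha\bigr]d_{\rm vert}(z,z')^\alpha .
\]
This is exactly the global distortion principle on $R$ with $C=1/2\ge 1/2$.

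The main obstacle is justifying the second inequality of Lemma~\ref{lem:projective-estimates} for \emph{all} $\alpha\in(0,1]$ and non-close pairs in the same fibre, since that lemma cites \cite[Lem.~5.6.1, 5.6.2]{BM}. If this needs support, we would argue as follows. For a linear map $B$, the fibre map satisfies
\[
\sin\theta(B\hat u,B\hat v)=\frac{\|\wedge^2 B\|\,\sin\theta(\hat u,\hat v)}{\|Bu\|\,\|Bv\|},
\]
interpreting $\|\wedge^2B\|$ on the plane spanned by $u,v$. This is at most the geometric mean $\sqrt{L(\hat u)L(\hat v)}\,\sin\theta(\hat u,\hat v)$, using the upper bound $L\le\|\wedge^2B\|/\|Bv\|^2$. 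The AM--GM inequality applied to the $\alpha$-powers then gives the bound with constant $\tfrac12$. Finally, passing from $\sin\theta$ to the angular distance $\theta$ is handled by the cited BM lemmas.
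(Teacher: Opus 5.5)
Your verification of \textup{(G2)}, \textup{(G3)} and \textup{(G4)} is the paper's proof. You identify $L^{d_{\rm vert}}\hat A_\omega$ with $L^{\rm vert}\hat A_\omega$ because vertical balls of radius $<1$ lie in one fibre. You get upper semicontinuity from the continuity statement of Lemma~\ref{lem:projective-vertical-regularity}. You read off the global distortion principle with $C=1/2$ from the second estimate of Lemma~\ref{lem:projective-estimates}. Relative to that lemma, which the paper also just cites, your argument is complete.

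The optional fallback in your last paragraph is, however, wrong in two places.

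\emph{Direction of the bound.} The upper bound $L(\hat v)\le\|\wedge^2B\|/\|Bv\|^2$ points the wrong way: it gives $\sqrt{L(\hat u)L(\hat v)}\le\|\wedge^2B\|/(\|Bu\|\,\|Bv\|)$, which is the reverse of what you need. What is needed is a lower bound on $L$ in the direction of $\mathrm{span}(u,v)$. Write $u=\cos\theta\,v+\sin\theta\,w$ with $w\perp v$ a unit vector. Since $L(\hat v)=\sup_{w'\perp v,\,\|w'\|=1}\|Bv\wedge Bw'\|/\|Bv\|^2$, you get $\|Bu\wedge Bv\|=\sin\theta\,\|Bw\wedge Bv\|\le\sin\theta\,\|Bv\|^2L(\hat v)$. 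Hence $\sin\theta'\le L(\hat v)\frac{\|Bv\|}{\|Bu\|}\sin\theta$. Multiplying by the symmetric bound gives $\sin\theta'\le\sqrt{L(\hat u)L(\hat v)}\,\sin\theta$, and AM--GM then gives the constant $\tfrac12$.

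\emph{Passage from $\sin\theta$ to $\theta$.} This step cannot be done. Take $B=\mathrm{diag}(\lambda,1)$ on $\mathbb R^2$ and the directions $(\pm\sin s,\cos s)$ with $\lambda\tan s=1$; these are at angle $2s$. Both have $L=(1+\lambda^2)/(2\lambda)$. Their images are orthogonal, so they are at angle $\pi/2$. On the other hand, $L\cdot 2s=\frac{1+\lambda^2}{\lambda}\arctan(1/\lambda)\to1<\pi/2$ as $\lambda\to\infty$. So the estimate with $C=\tfrac12$ fails for every $\alpha\in(0,1]$ if $\theta_x$ is the geodesic angle.

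The estimate does hold for the chordal distance $\|u\wedge v\|$ (unit representatives), which is a metric on $P(\mathcal E_x)$, and $d_{\rm vert}$ must be understood that way. Since $\sin\theta/\theta\to1$, the local Lipschitz constants do not change, so your treatment of \textup{(G2)} and \textup{(G3)} is unaffected.
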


\begin{proof} Let $\rho=d_{\rm vert}$. 
Condition \textup{\ref{G2}} is exactly the assumed vertical exponential
moment, because the \(\rho\)-local Lipschitz constant of \(\hat A_\omega\)
is $L^\rho\hat A_\omega(z)
   =
   L^{\rm vert}\hat A_\omega(z)$. Also, condition \textup{\ref{G3}} follows from  Lemma~\ref{lem:projective-vertical-regularity}.  Finally, for every \(n\ge1\), 
\(0<\alpha\le1\), and pair $(z,z')\in R$,
one has that $\hat A$ satisfies the global distortion principle in the metric $\rho$ on $R$ from Lemma~\ref{lem:projective-estimates}. Thus, condition~\ref{G4} holds. 
\end{proof}

We shall also use the following elementary observation, which will be applied fibrewise to the projective distance \(d_{\rm vert}\): a contraction estimate in one H\"older exponent automatically yields contraction for every smaller exponent.

\begin{lemma}
\label{lem:contraction-smaller-exponents}
Let \(g: \Omega \times Z\to Z\) be a
random map on a compact metric space $(Z,\delta)$. Assume that, for some \(0<\alpha_0\le1\) and \(q_0<1\),
\[
   \int \delta(g_\omega (z),g_\omega (z'))^{\alpha_0}\,d\mathbb P
   \le
   q_0\,\delta(z,z')^{\alpha_0}
   \qquad\text{for all }z,z'.
\]
Then, for every \(0<\alpha\le\alpha_0\),
\[
   \int \delta(g_\omega(z),g_\omega(z'))^{\alpha}\,d\mathbb P
   \le
   q_0^{\alpha/\alpha_0}\,\delta(z,z')^\alpha. 
\]
\end{lemma}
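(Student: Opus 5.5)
The natural tool here is Jensen's inequality for the concave function $t\mapsto t^{s}$ with $s=\alpha/\alpha_0\in(0,1]$. Fix $z,z'\in Z$ and $0<\alpha\le\alpha_0$, and set $Y(\omega):=\delta(g_\omega(z),g_\omega(z'))^{\alpha_0}\ge0$. Then the integrand of interest is exactly $Y^{s}$, since $\delta(g_\omega(z),g_\omega(z'))^{\alpha}=Y(\omega)^{\alpha/\alpha_0}$.

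For $Y\in L^1(\mathbb P)$, Jensen (or H\"older with exponents $1/s$ and $1/(1-s)$ against the constant $1$) gives
\[
   \int Y^{s}\,d\mathbb P\le\Bigl(\int Y\,d\mathbb P\Bigr)^{s}.
\]
Integrability of $Y$ is automatic, because $Z$ is compact and so $\delta$ is bounded. Alternatively, it follows directly from the hypothesis, since the right-hand side $q_0\delta(z,z')^{\alpha_0}$ is finite.

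Inserting the hypothesis and using that $t\mapsto t^{s}$ is nondecreasing on $[0,\infty)$, we get
\[
   \int \delta(g_\omega(z),g_\omega(z'))^{\alpha}\,d\mathbb P
   \le\bigl(q_0\,\delta(z,z')^{\alpha_0}\bigr)^{\alpha/\alpha_0}
   =q_0^{\alpha/\alpha_0}\,\delta(z,z')^{\alpha}.
\]

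The only case needing care is $q_0\le 0$. If $q_0<0$, the hypothesis forces $\delta(z,z')=0$ for all pairs, so $Z$ is a single point and the statement is trivially satisfied. If $q_0=0$, then $Y=0$ almost surely and both sides vanish.

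Two remarks matter for how the lemma will be used. It applies verbatim when the inequality is only assumed on a subset $R\subset Z\times Z$, such as pairs lying in a common fibre, since the argument is pointwise in $(z,z')$. In addition, $q_0^{\alpha/\alpha_0}<1$ whenever $0\le q_0<1$, so the contraction on average persists for all smaller exponents. There is no real obstacle beyond noticing that concavity of $t^s$ is the right inequality to invoke.
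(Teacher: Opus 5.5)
Your proposal is correct and follows the paper's proof: you set $Y=\delta(g_\omega(z),g_\omega(z'))^{\alpha_0}$, apply Jensen's inequality for the concave map $t\mapsto t^{\alpha/\alpha_0}$, and then insert the hypothesis. Your remark that the argument is pointwise in $(z,z')$, and so works on the fibrewise set of pairs with $\hat\pi(z)=\hat\pi(z')$, is exactly how the paper uses the lemma in the proof of \textup{(iii)}$\Rightarrow$\textup{(iv)}.
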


\begin{proof}
Put \(Y(\omega)=\delta(g_\omega(z),g_\omega (z)')^{\alpha_0}\). Since $0<\alpha/\alpha_0\leq 1$,
\(t\in [0,\infty)\mapsto t^{\alpha/\alpha_0}\) is concave, and thus, Jensen's inequality gives
\[
   \int \delta(g_\omega(z),g_\omega(z'))^{\alpha}\,d\mathbb P=\int Y^{\alpha/\alpha_0}\,d\mathbb P
   \le
   \left(\int Y\,d\mathbb P\right)^{\alpha/\alpha_0}
   \le
   q_0^{\alpha/\alpha_0}\rho(z,z')^\alpha . \qedhere
\]
\end{proof}

\subsection{Vertical spectral gap for continuous random linear bundle isomorphisms}  With the previous notations, the
quotient ${\mathscr C}^{\alpha}(\hat{\mathcal E})/\mathcal B_{\hat{\pi}}$ where $\mathcal B_{\hat\pi}=\hat{\pi}^*(B(X))$, 
removes the observables which depend only on the base. Therefore its dual
does not act on arbitrary signed measures on \(\hat{\mathcal E}\), but naturally on
signed measures whose projection to \(X\) is zero. 
Let $\hat\zeta$ be such a finite signed measure on $\hat{\mathcal E}$ with $\pi_*\hat\zeta=0$. Define 
\[
   \|\hat\zeta\|_{\alpha,{\rm vert} }^*
   :=
   \sup
   \left\{
      \left|
         \int\Phi\,d\zeta
      \right|:
      \Phi\in{\mathscr C}^{\alpha}(\hat{\mathcal E}),\
      [\Phi]_{\alpha,{\rm vert} }\le1
   \right\}.
\] 
This is the dual norm associated with the vertical quotient
${\mathscr C}^{\alpha}(\hat{\mathcal{E}})/\mathcal B_{\hat\pi}$. We also define
\[
\operatorname{Lip}_{\mathrm{vert}}(\hat A_\omega)
:=
\sup_{\substack{z\neq z'\\ \hat\pi z=\hat\pi z'}}
\frac{
d_{\mathrm{vert}}(\hat A_\omega z,\hat A_\omega z')
}{
d_{\mathrm{vert}}(z,z')
}
=
\sup_{z\in\hat{\mathcal E}}
L^{\mathrm{vert}}\hat A_\omega(z)
\]
and denote by $\hat P$ the annealed Koopman operator associated with $\hat A$.

\begin{thm} \label{mainthm:equivalence-spectral-gap-linear} Let  $A:\Omega \times \mathcal E \to \mathcal E$ be a continuous random linear bundle isomorphism covering a continuous random map $f:\Omega \times X \to X$ of a compact metric space $X$ satisfying for some $\beta>0$ the integrability conditions 
\begin{equation} \label{eq:moment-condition-thmB-linear}
    \int \log^+\|A_\omega\|_\infty + \log^+\|(A_\omega)^{-1}\|_\infty \,d\mathbb P<\infty
\quad \text{and} \quad 
\int \bigl(\mathrm{Lip}_{\rm vert}(\hat A_\omega)\bigr)^\beta \,d\mathbb P<\infty.
\end{equation}
Assume that $\lambda_1(\mu) >\lambda_2(\mu)$ for every $\mu \in\mathcal{I}(f)$. Then, the following are equivalent:
    \begin{enumerate}
        \item $A$ is  quasi-irreducible: $\lambda_-(\mu)=\lambda_1(\mu)$ for every $\mu \in \mathcal{I}(f)$.
        \item $\hat A$ is vertical mostly contracting: $\lambda_{\rm vert}(\nu) <0$ for every $\nu \in \mathcal{I}(\hat A)$.
        \item $\hat A$ is vertical contracting on average: there are $0<\alpha\leq 1$, $q<1$ and $n\geq 1$ such that
        $$\int d_{\rm vert}(\hat A^n_\omega(z),\hat A^n_\omega(z'))^\alpha \, d\mathbb{P} \leq q \, d_{\rm vert}(z,z')^\alpha \quad \text{with \ $\hat\pi(z)=\hat\pi(z')$.}$$
        \item $\hat P$ has vertical spectral gap on ${\mathscr C}^{\alpha}(\hat{\mathcal E})$ 
        for any $\alpha>0$ small enough:  there exist \(C>0\) and \(\theta<1\) such that for every $\Phi \in {\mathscr C}^{\alpha}(\hat{\mathcal E})$,
\[
   \|{\hat P}^{\,n}[\Phi]\|_{\alpha,\mathrm{vert}}
   \le
   C\theta^n\|[\Phi]\|_{\alpha,\mathrm{vert}}
   \qquad
   \text{for } n\ge0.
\]
        \item $\hat P$ has vertical spectral gap on the dual of  ${\mathscr C}^{\alpha}(\hat{\mathcal E})$ 
        for any $\alpha>0$ small enough: there
exist \(C>0\) and \(\theta<1\) such that, for every
 finite signed measure \(\hat\zeta\) on $\hat X$ with $\pi_*\hat\zeta =0$,
\[
   \|(\hat P^*)^n\zeta\|_{\alpha,{\rm vert} }^*
   \le
   C\theta^n\|\zeta\|_{\alpha,{\rm vert} }^* \quad \text{for  \(n\ge0\)}.
\]
        \item $\mathcal{I}_\mu(\hat A)$ is a singleton for every  $\mu\in \mathcal{I}(f)$.
     \end{enumerate}
Moreover,   $\hat{A}$ is vertical mostly contracting if and only if $A$ is quasi-irreducible and $\lambda_{1}(\mu)>\lambda_2(\mu)$ for every $\mu\in \mathcal{I}(f)$.
     
\end{thm}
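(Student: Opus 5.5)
I will prove the cycle of implications $(1)\Rightarrow(2)\Rightarrow(3)\Rightarrow(4)\Leftrightarrow(5)\Rightarrow(6)\Rightarrow(1)$, together with the final ``moreover'' statement.

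\emph{Step $(1)\Rightarrow(2)$.} By (1), every $\nu\in\mathcal I(\hat A)$ with $\pi\nu=\mu$ is maximizing. Corollary~\ref{cor:vertical-laypunov-exponent} then gives
\[
\lambda_{\rm vert}(\nu)=\lambda_2(\mu)-\lambda_1(\mu)<0
\]
by the gap hypothesis. This holds for every $\nu\in\mathcal I(\hat A)$ because every stationary measure projects to some $\mu\in\mathcal I(f)$.

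\emph{Step $(2)\Rightarrow(3)$.} I apply Theorem~\ref{thm:two-metric-global-contraction} to $(\hat{\mathcal E},d,d_{\rm vert},\hat A)$ with $R$ the fibred product. Conditions \ref{G2}--\ref{G4} are Lemma~\ref{lem:verify-two-metric-projective}. For \ref{G1}, Lemma~\ref{lem:projective-vertical-regularity} gives upper semicontinuity of $\lambda_{\rm vert}$, so the supremum is attained and is negative. This yields (3) for all $\alpha\le\alpha_0$; Lemma~\ref{lem:contraction-smaller-exponents} lowers the exponent if needed.

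\emph{Step $(3)\Rightarrow(4)$.} Take $\Phi\in\mathscr C^\alpha$ and $z,z'$ in the same fibre. Then
\[
|\hat P^n\Phi(z)-\hat P^n\Phi(z')|\le[\Phi]_{\alpha,\rm vert}\int d_{\rm vert}(\hat A^n_\omega z,\hat A^n_\omega z')^\alpha d\mathbb P\le q[\Phi]\,d_{\rm vert}(z,z')^\alpha .
\]
Iterating in blocks of $n$, and bounding the intermediate times by $\int\mathrm{Lip}_{\rm vert}^\alpha\le C$ (via Jensen and the moment bound for small $\alpha$), gives $[\hat P^k\Phi]\le C\theta^k[\Phi]$.

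\emph{Step $(4)\Leftrightarrow(5)$.} This is duality. Pairing $\hat\zeta$ with $\pi_*\hat\zeta=0$ against $\hat P^n\Phi$ depends only on the class $[\hat P^n\Phi]$, since $\hat\zeta$ annihilates $\mathcal B_{\hat\pi}$. Taking suprema gives $(4)\Rightarrow(5)$. The converse uses that $\|[\Phi]\|_{\alpha,\rm vert}$ is recovered by testing against dipoles $\delta_z-\delta_{z'}$ in the same fibre, normalized by $d_{\rm vert}(z,z')^\alpha$, which have dual norm at most $1$.

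\emph{Step $(5)\Rightarrow(6)$.} If $\nu_1,\nu_2\in\mathcal I_\mu(\hat A)$, then $\zeta=\nu_1-\nu_2$ has zero projection and is $\hat P^*$-fixed. So $\|\zeta\|^*\le C\theta^n\|\zeta\|^*\to0$. Since $\mathscr C^\alpha$ contains all $\alpha$-Hölder continuous functions, which separate measures, $\zeta=0$.

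\emph{Step $(6)\Rightarrow(1)$.} This follows from the Furstenberg--Kifer formula: the unique lift attains the maximum.

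\emph{Moreover part.} The ``if'' direction is $(1)\Rightarrow(2)$. For the converse, vertical mostly contracting gives, by Corollary~\ref{cor:vertical-laypunov-exponent} applied to a maximizing lift, $0>\lambda_{\rm vert}=\lambda_2-\lambda_1$, so there is a gap. The equivalences then give (1).

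\emph{Main obstacle.} I expect the delicate points to be $(3)\Rightarrow(4)$ and the reverse dual implication. In $(3)\Rightarrow(4)$ one must control non-multiples of $n$ and ensure that $\alpha$ is small enough for the moment bound. In the dual implication, one must verify that the dipole sup actually realizes the quotient seminorm and that Hölder functions separate stationary lifts. For the first issue I would first try Lemma~\ref{lem:sufficient-exponential-moment} with Jensen, bounding $\int\mathrm{Lip}^\alpha(\hat A^j_\omega)\,d\mathbb P$ for $j<n$.
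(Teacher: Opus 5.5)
Your proposal is correct and follows essentially the same cycle as the paper, with the same ingredients: Corollary~\ref{cor:vertical-laypunov-exponent}, Theorem~\ref{thm:two-metric-global-contraction} with Lemmas~\ref{lem:projective-vertical-regularity} and~\ref{lem:verify-two-metric-projective}, duality against measures with zero projection, and the Furstenberg--Kifer formula. Your bound $[\hat P^j\Phi]_{\alpha,\mathrm{vert}}\le\bigl(\int \mathrm{Lip}_{\rm vert}(\hat A_\omega)^\alpha\,d\mathbb P\bigr)^j[\Phi]_{\alpha,\mathrm{vert}}$ for $j<n$ in $(3)\Rightarrow(4)$ supplies the boundedness of $\hat P$ on the quotient that the paper uses implicitly when reading off the spectral radius, and your extra $(5)\Rightarrow(4)$ is correct but not needed for the cycle.
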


\begin{proof} We prove the implications: 
\[
   \textup{(i)}
   \Longrightarrow
   \textup{(ii)}
   \Longrightarrow
   \textup{(iii)}
   \Longrightarrow
   \textup{(iv)}
   \Longrightarrow
   \textup{(v)}
   \Longrightarrow
   \textup{(vi)}
   \Longrightarrow
   \textup{(i)}.
\]
\noindent
\textup{(i)} \(\Longrightarrow\) \textup{(ii)}.
Let \(\hat\mu\) be an  $A$-stationary measure and put $\mu:=\pi_*\hat\mu$. 
By \textup(i), i.e., quasi-irreducibility,
$\int\phi_A\,d\hat\mu =\lambda_1(\mu)$. That is, $\hat\mu$ is maximizing. Hence Corollary~\ref{cor:vertical-laypunov-exponent} we have $\lambda_{\rm vert}(\hat \mu)=\lambda_2(\mu)-\lambda_1(\mu)<0$ because of $\lambda_1(\mu)>\lambda_2(\mu)$.  Thus \textup{(ii)} holds.

\medskip
\noindent
\medskip
\noindent
\textup{(ii)} \(\Longrightarrow\) \textup{(iii)}.
We apply Theorem~\ref{thm:two-metric-global-contraction} to $(\hat{\mathcal E}, d, d_{\rm vert},\hat A)$ and $R=\{(z,z'):\pi(z)=\pi(z')\}$.

By Lemma~\ref{lem:projective-vertical-regularity}, $\nu \mapsto \lambda_{\rm vert}(\nu)$ is upper semicontious on the compact set $\mathcal{I}(\hat A)$ with respect to the weak$^*$ topology. Hence the suppremum is attained in a measure $\nu_*\in \mathcal{I}(\hat A)$. This implies, 
$$\lambda_{\rm vert}(\hat A):= \sup_{\nu \in \mathcal{I}(\hat A)}\lambda_{\rm vert}(\nu) = \lambda_{\rm vert}(\nu_*)<0.
$$
This is exactly Condition~\ref{G1}. Conditions \ref{G2}, \ref{G3}, and \ref{G4} are verified in
Lemma~\ref{lem:verify-two-metric-projective}. Hence there exist
\(N\ge1\), \(0<\alpha_0\le\beta\), and, for every
\(0<\alpha\le\alpha_0\), a constant \(q<1\) such that
\begin{equation} \label{eq:itemiii}
       \int
   d_{\rm vert}
      (\hat A_\omega^N(z),\hat A_\omega^N(z'))^\alpha
   \,d\mathbb P
   \le
   q\,d_{\rm vert}(z,z')^\alpha
\end{equation}
whenever \(\pi(z)=\pi(z')\). This is \textup{(iii)}.

\medskip
\noindent
\textup{(iii)} \(\Longrightarrow\) \textup{(iv)}. Assume that the fiberwise contraction on average holds for some \(N\ge1\), \(0<\alpha_0\), and
\(q_0<1\). By Lemma~\ref{lem:contraction-smaller-exponents}, we have that~\eqref{eq:itemiii} holds for any $0<\alpha \leq \alpha_0$ with $q=q_0^{\alpha/\alpha_0}<1$. 
Let \(\Phi\in{\mathscr C}^{\alpha}(\hat{\mathcal E})\), and take
\(z,z'\in\hat{\mathcal E}\) with \(\pi(z)=\pi(z')\). Then
\[
\begin{aligned}
   \big|\hat P^N\Phi(z)-\hat P^N\Phi(z')\big|
   &\le
   \int
      \big|\Phi(\hat A_\omega^N(z))
        -\Phi(\hat A_\omega^N(z'))\big|
      \,d\mathbb P \\ &\le
   [\Phi]_{\alpha,{\rm vert} }
   \int
      d_{\rm vert}
         (\hat A_\omega^N(z),\hat A_\omega^N(z'))^\alpha
      \,d\mathbb P                           
   \le
   q \cdot [\Phi]_{\alpha,{\rm vert} }\, 
   d_{\rm vert}(z,z')^\alpha.
\end{aligned}
\]
Therefore $[\hat P^N\Phi]_{\alpha,{\rm vert} }
   \le
   q[\Phi]_{\alpha,{\rm vert} }$. 
Passing to the quotient, this gives
$
   \|{\hat P}_\alpha^N[\Phi]\|_{\alpha,{\rm vert} }
   \le
   q\|[\Phi]\|_{\alpha,{\rm vert} }$.
Hence, the spectral radius of $\hat P: {\mathscr C}^{\alpha}(\hat{\mathcal E})/\mathcal{B}_{\hat\pi} \to {\mathscr C}^{\alpha}(\hat{\mathcal E})/\mathcal{B}_{\hat\pi}$ is less than or equal to $q^{1/N}<1$.
Thus \textup{(iv)} holds.

\medskip
\noindent
\textup{(iv)} \(\Longrightarrow\) \textup{(v)}.
Fix \(\alpha>0\) for which $\hat P$ has vertical spectral radius on $\mathscr{C}^\alpha(\hat{\mathcal E})\setminus \mathcal{B}_{\hat\pi}$.  Then there exist \(C>0\) and \(\theta<1\)
such that
$\|\hat P^n[\Phi]\|_{\alpha,{\rm vert} }
   \le
   C\theta^n\|[\Phi]\|_{\alpha,{\rm vert} }
   =
   C\theta^n[\Phi]_{\alpha,{\rm vert} }$ 
for every \(\Phi\in{\mathscr C}^{\alpha}(\hat{\mathcal E})\) and every \(n\ge0\). Let \(\hat\zeta\) be a finite signed measure on $\hat{\mathcal E}$ with $\pi_*\hat  \zeta =0$. Hence, for every
\(\Phi\in{\mathscr C}^{\alpha}(\hat{\mathcal E})\) with
\([\Phi]_{\alpha,{\rm vert} }\le1\),
\[
\begin{aligned}
   \left|
      \int \Phi\,d(\hat P^*)^n\hat\zeta
   \right|
   =
   \left|
      \int\hat P^n\Phi\,d\hat\zeta
   \right|                                             
   \le
   \|\hat P^n[\Phi]\|_{\alpha,{\rm vert} }\,
   \|\hat\zeta\|_{\alpha,{\rm vert} }^*                  
   \le
   C\theta^n\|\hat\zeta\|_{\alpha,{\rm vert} }^* .
\end{aligned}
\]
Since $\pi_*(\hat P^*)^n\hat\zeta=(P^*)^n(\pi_*\hat\zeta)=0$, 
we also have that \((\hat P^*)^n\hat\zeta\) is a finite signed measure projecting on zero and thus, 
taking the supremum over all such \(\Phi\) gives
\[
   \|(\hat P^*)^n\zeta\|_{\alpha,{\rm vert} }^*
   \le
   C\theta^n\|\zeta\|_{\alpha,{\rm vert} }^* .
\]
This concludes \textup{(v)}.

\medskip
\noindent
\textup{(v)} \(\Longrightarrow\) \textup{(vi)}. Let \(\mu\in\mathcal{I}(f)\), and let $\hat\nu,\hat\eta\in\mathcal{I}_\mu(\hat A)$. Set $\hat\zeta:=\hat\nu-\hat\eta$. 
Then \(\pi_*\zeta=0\). Moreover,
since both measures are \(\hat P^*\)-invariant, $(\hat P^*)^n\hat\zeta=\hat\zeta$  for every $n\ge0$.
By \textup{(v)}, 
$$\|\zeta\|_{\alpha,{\rm vert} }^*
   =
   \|(\hat P^*)^n\zeta\|_{\alpha,{\rm vert} }^*
   \le
   C\theta^n\|\zeta\|_{\alpha,{\rm vert} }^* \quad \text{for every $n\ge0$.}
$$
 Letting \(n\to\infty\), we get $ \|\zeta\|_{\alpha,{\rm vert} }^*=0$ and hence \(\zeta=0\). Hence $\hat\nu=\hat\eta$ and therefore \(\mathcal{I}_\mu(\hat A)\) is a singleton.

\medskip
\noindent
\textup{(vi)} \(\Longrightarrow\) \textup{(i)}.
Let \(\mu\in\mathcal{I}(f)\). By the variational formula for
the top Lyapunov exponent,
\[
   \lambda(\mu)
   =
   \sup
   \left\{
      \int \phi_A\,d\hat\nu:
      \hat\nu\in\mathcal{I}_\mu(\hat A)
   \right\},
\]
and the supremum is attained on an ergodic lift. By \textup{(vi)}, the set
\(\mathcal{I}_\mu(\hat A)\) consists of a single measure. Therefore this
unique lift must attain the supremum. Hence every \(\hat A\)-stationary
lift over \(\mu\) is maximizing. Since \(\mu\) was arbitrary, \(A\) is quasi-irreducible.

\medskip
Finally, to conclude the proof of the theorem, notice that by Corollary~\ref{cor:vertical-laypunov-exponent} for every  $\hat A$-stationary measure $\hat \mu$ projecting on $\mu$, one has that $\lambda_{\rm vert}(\hat\mu) \geq \lambda_{2}(\mu)-\lambda_1(\mu)$. Hence, if $\hat A$ is vertical mostly contracting,  we have 
$\lambda_{\rm vert}(\hat \mu)<0$ and thus $ \lambda_2(\mu)<\lambda_1(\mu)$. In particular, from the equivalence between \textup{(i)} and \textup{(ii)} above we conclude that $\hat A$ is vertical mostly contracting if and only if $A$ is quasi-irreducible and $\lambda_1(\mu)>\lambda_2(\mu)$ for every $\mu\in\mathcal{I}(f)$.   
\end{proof}

\begin{remark}[Meaning of the vertical spectral gap]
\label{rem:meaning-vertical-spectral-gap}
Condition \textup{(iv)} says that there exist \(C>0\) and \(\theta<1\) such that  $\|{\hat P}^{\,n}[\Phi]\|_{\alpha,{\rm vert} }
   \le
   C\theta^n\|[\Phi]\|_{\alpha,{\rm vert} }$ for every $n\ge0$. 
If $\hat P\Phi=\lambda\Phi$ with $|\lambda|=1$,
then passing to the quotient gives
${\hat P}[\Phi]=\lambda[\Phi]$. Hence, above inequality forces that $[\Phi]=0$. Therefore \(\Phi\in\mathcal B_{\hat\pi}\), that is, $\Phi=\varphi\circ \hat\pi$
for some bounded Borel function \(\varphi\) on \(X\), and then
$P\varphi=\lambda\varphi$ where $P$ is the annealed Koopman operator of $f$. 
Thus the peripheral eigenfunctions of \(\hat P\) come entirely from the
base operator \(P\). The quotient estimate proves that there are no new
peripheral eigenfunctions in the projective fibres.
Also, condition \textup{(v)} implies that, over each stationary base
measure \(\mu\), there is at most one projective stationary lift. In
other words, the projection map $\mathcal{I}(\hat A)\longrightarrow\mathcal{I}(f)$, $ \hat\nu\longmapsto\pi_*\hat\nu$,
is injective. 

When \(X\) is a singleton, this vertical statement reduces to the usual
spectral gap for projective linear cocycles. Indeed, if \(X=\{\ast\}\),
then
\[
   \hat{\mathcal{E}}=P(\mathbb R^m),
   \qquad
   \mathcal B_{\hat\pi}=\{\text{constant functions}\},
\]
and therefore
\[
   {\mathscr C}^{\alpha}(\hat{\mathcal{E}})/\mathcal B_{\hat\pi}
   =
   C^\alpha(P(\mathbb R^m))/\mathbb R.
\]
The vertical spectral radius says exactly that the projective
Markov operator contracts H\"older observables modulo constants. Together
with the existence of a stationary projective measure \(\hat\nu\), this
is equivalent to the usual spectral gap decomposition
\[
   C^\alpha(P(\mathbb R^m))
   =
   \mathbb R 1_{P(\mathbb{R}^m)}
   \oplus
   \big\{
      \Phi\in C^\alpha(P(\mathbb R^m)):
      \int \Phi\,d\hat\nu=0
   \big\},
\]
with exponential contraction on the zero-average subspace.
\end{remark}

\subsection{Proof of Theorem~\ref{mainthm:equivalence-spectral-gap} and~\ref{thm:cotangent-projective-criterion} and Corollary~\ref{maincor:equivalence-mostly} and~\ref{maincor:equivalence-mostlybis}} 
Let $f$ be the restriction to an invariant compact set $X$ of a $C^1$ random map. Hence, $f:\Omega \times X \to X$ is a continuous random map.  Moreover, we also have continuous random linear bundle isomorphisms $A=Df$ on $\mathcal{E}=T_XM$ and $A=(Df^{-1})^{*}$ on $\mathcal{E}=T^*_XM$. The integrability conditions~\eqref{eq:moment-condition-thmB-linear} correspond exactly with~\eqref{eq:moment-condition-thmB} for the tangent cocycle and with~\eqref{eq:moment-condition-thmBbis} for the inverse cotangent cocycle. Therefore, Theorem~\ref{mainthm:equivalence-spectral-gap} and~\ref{thm:cotangent-projective-criterion} and  Corollary~\ref{maincor:equivalence-mostly} and~\ref{maincor:equivalence-mostlybis} follow by  applying Theorem~\ref{mainthm:equivalence-spectral-gap-linear} to these cocycles.

\section{Continuity of Lyapunov Exponents: Proof of Theorem~\ref{main:thmD} and Theorem~\ref{main:thmDbis}}
\label{sec:proof-continuity}

We again assume the setting and notations in~\S\ref{s:setting-linear}. Thus, $(X,d)$ is a compact metric space and $\pi:\mathcal E\to X$ is a continuous rank-$m$ vector bundle with a continuous Euclidean norm $\|\cdot\|$. We fix once and for all an auxiliary continuous fibrewise isometric embedding
$\iota:\mathcal E\hookrightarrow X\times\mathbb R^N$. This auxiliary embedding is used only to compare linear maps with different
target fibres. More precisely, let $F:\mathcal E\to\mathcal E$ and $G:\mathcal E\to\mathcal E$ be continuous linear bundle isomorphisms covering continuous maps $f:X\to X$ and $g:X\to X$ respectively. 
We then define 
$$
\|F-G\|_\infty := \sup_{x\in X} \| \iota_{f(x)}\circ F_x\circ \iota_x^{-1}\circ P_x - \iota_{g(x)}\circ G_x\circ \iota_x^{-1}\circ P_x\| 
$$
where \(\iota_x^{-1}\) is understood only on the subspace \(\iota_x(\mathcal E_x)\), and the orthogonal projection onto the embedded fibre \(P_x : \mathbb R^N \to \mathcal E_x \) makes the composition defined on all of \(\mathbb R^N\).

Let $\mathrm{L}_{\mathbb{P}}(\pi)$ denote the space of random linear bundle isomorphisms $A:\Omega \times \mathcal{E} \to \mathcal E$  covering some continuous random map which we denote as $A^\pi:\Omega \times X \to X$ and satisfying 
$$
  \int \big(\log^+ \|A_\omega\|_\infty + \log^+ \|(A_\omega)^{-1}\|_\infty\big)\, d\mathbb{P}<\infty.
$$
We endow this space with the metric
$$
 {\rm d}(A,B)=\int \big(d_{C^0}((A^\pi)_\omega,(B^\pi)_\omega) + \|A_\omega-B_\omega\|_{\infty} + \|(A_\omega)^{-1}-(B_\omega)^{-1}\|_{\infty} \big)\, d\mathbb P. 
$$
The topology defined in this way does not depend on the chosen auxiliary
embedding. Indeed, two different embeddings give two continuous metrics on $\mathrm{L}_{\mathbb{P}}(\pi)$, and these metrics induce the same
compact-open topology. On every compact subset of the space of bundle
isomorphisms, they are uniformly equivalent. Consequently, convergence with respect to
one auxiliary embedding is equivalent to convergence with respect to any other.

For a fixed Borel probability measure $\mu$ on $X$, we condier the subspace 
$$
\mathrm{L}_{\mathbb P}(\pi,\mu):=\left \{A\in \mathrm{L}_{\mathbb P}(\pi): \mu \in \mathcal{I}(A^\pi) \right\}.
$$
We study the continuity of the maximal Lyapunov exponent $$\lambda_1(\mu,A):=\lim_{n\to \infty} \frac{1}{n}\int \log \|A^n_\omega(x)\|\,d\mathbb{P}d\mu \quad  \text{and} \quad  \lambda_1(A):=\sup_{\nu \in \mathcal{I}(A^\pi)} \lambda_1(\nu,A)
$$ with respect to $A$ in  $\mathrm{L}_{\mathbb P}(\pi,\mu)$ and $\mathrm{L}_{\mathbb P}(\pi)$ respectively.  To treat both the global case and the fixed base measure case, we first establish a crucial lemma regarding the stability under perturbation of the annealed potential functions 
$$
\phi_n^A(x, \hat v) := \int \log \|A^n_\omega(x)v\| \, d\mathbb{P},  \quad (x,\hat v)\in \hat{\mathcal E}  \ \ \text{and} \ \ n\geq 1.
$$
Notice  that 
$\phi_1^A= \phi_A$ in the notation of~\S\ref{s:setting-linear}.

\begin{lem}
\label{lem:uniform-conv-phi}
Let $A, (A_k)_{k\geq1}$ be in $\mathrm{L}_{\mathbb P}(\pi)$ such that $\mathrm{d}(A_k,A)\to 0$ as $k\to \infty$.  Then, for every $n\geq 1$, 
 \[
 \|\phi_n^{A_{k}} - \phi_n^{A}\|_\infty
\longrightarrow  0 \quad \text{as $k\to \infty$}.
 \]
\end{lem}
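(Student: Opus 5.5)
We fix $n\ge1$ and compare $\log\|A^n_\omega(x)v\|$ with $\log\|(A_k)^n_\omega(x)v\|$ pathwise, uniformly in $(x,\hat v)$. Then we integrate in $\omega$ using a domination coming from the log-moment condition. The basic elementary inequality is: for invertible linear maps $B,C:\mathbb R^N\supset V\to\mathbb R^N$ and a unit vector $v$,
\[
\bigl|\log\|Bv\|-\log\|Cv\|\bigr|\le \frac{\|B-C\|}{\min(\mathfrak m(B),\mathfrak m(C))}\le \|B-C\|\max\bigl(\|B^{-1}\|,\|C^{-1}\|\bigr),
\]
because $|\log a-\log b|\le |a-b|/\min(a,b)$ and $\|Bv\|\ge\|B^{-1}\|^{-1}$. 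Here the norms are read through the embedding $\iota$. The mismatch of base points is absorbed by the continuity of $\iota$ and the fact that $d_{C^0}$ is part of the metric ${\rm d}$.

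\emph{Step 1 (one step).} For $n=1$, the inequality gives
\[
\sup_{(x,\hat v)}|\log\|(A_k)_\omega(x)v\|-\log\|A_\omega(x)v\||\le \|(A_k)_\omega-A_\omega\|_\infty\max\bigl(\|(A_k)_\omega^{-1}\|_\infty,\|A_\omega^{-1}\|_\infty\bigr)=:\Delta_k(\omega).
\]
We also have the trivial bound
\[
\Delta_k(\omega)\le 2\bigl(\log^+\|(A_k)_\omega\|_\infty+\log^+\|(A_k)_\omega^{-1}\|_\infty+\log^+\|A_\omega\|_\infty+\log^+\|A_\omega^{-1}\|_\infty\bigr),
\]
obtained by bounding each $|\log\|Bv\||$ by $\log^+\|B\|+\log^+\|B^{-1}\|$. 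We therefore use $\min$ of the two bounds. Since ${\rm d}(A_k,A)\to0$, after passing to a subsequence we get $\|(A_k)_\omega-A_\omega\|_\infty\to0$ and $\|(A_k)_\omega^{-1}-A_\omega^{-1}\|_\infty\to0$ for $\mathbb P$-a.e.~$\omega$. Hence $\Delta_k\to0$ a.e. Moreover $\log^+\|(A_k)^{\pm1}_\omega\|_\infty\le\log^+\|A^{\pm1}_\omega\|_\infty+\|(A_k)^{\pm1}_\omega-A^{\pm1}_\omega\|_\infty$ (using $\log^+(a+b)\le\log^+a+b$), so the majorant converges in $L^1$. The generalized dominated convergence theorem (Pratt's lemma) then gives $\int\min(\cdots)\,d\mathbb P\to0$. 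Since every subsequence has a further subsequence along which this holds, the full sequence converges, and $\|\phi_1^{A_k}-\phi_1^A\|_\infty\to0$.

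\emph{Step 2 (n steps).} We write $A^n_\omega(x)$ as the product of the one-step maps along the orbit $f^j_\omega(x)$, and $(A_k)^n_\omega(x)$ along $g^j_{k,\omega}(x)$. The orbits differ, so we telescope:
\[
\log\|(A_k)^n_\omega v\|-\log\|A^n_\omega v\|=\sum_j\Bigl(\log\|(A_k)_{\omega_j}(g^j x)w_j\|-\log\|A_{\omega_j}(f^j x)w'_j\|\Bigr),
\]
with normalized vectors $w_j,w'_j$. Each term is controlled by two quantities. The first is $\|(A_k)_{\omega_j}-A_{\omega_j}\|_\infty$. The second is the modulus of continuity of $A_{\omega_j}$ (in the $\iota$-picture) between the base points $g^jx$ and $f^jx$, together with the angle between $w_j$ and $w'_j$. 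These discrepancies are propagated inductively: $d(g^j_k x,f^jx)\to0$ uniformly in $x$ for a.e.~$\omega$ along a subsequence, by uniform continuity of each $f_{\omega_i}$ on compact $X$ and $d_{C^0}$-convergence. The angle discrepancy is controlled by the uniform continuity of $(x,\hat v)\mapsto$ the projective action. Consequently, for a.e.~$\omega$,
\[
\sup_{(x,\hat v)}\bigl|\log\|(A_k)^n_\omega(x)v\|-\log\|A^n_\omega(x)v\|\bigr|\to0.
\]
The domination is $\sum_{j<n}$ of the one-step log-moment majorants, which are $L^1$-convergent by independence and Step 1. Pratt's lemma again finishes the argument.

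The main obstacle is Step 2, specifically obtaining a.e.\ uniform-in-$(x,\hat v)$ convergence when the base orbits of $A_k$ and $A$ differ. This needs continuity of $A_\omega$ in $x$ (a continuous random map), combined with uniform continuity on the compact $\hat{\mathcal E}$. The integration step is routine once the $\log^+$ majorants are seen to converge in $L^1$.
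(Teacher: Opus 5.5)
Your proof is correct, but for $n\ge2$ it follows a genuinely different route from the paper's, and the one-step estimate is also handled differently.

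For $n=1$, the paper bounds the log-difference by $2\log(1+UE_k)+R_k+E_k$, with $U=\|A_\omega^{-1}\|_\infty$, $E_k=\|A_{k,\omega}-A_\omega\|_\infty$ and $R_k=\|A_{k,\omega}^{-1}-A_\omega^{-1}\|_\infty$. It then proves $\int\log(1+UE_k)\,d\mathbb P\to0$ by splitting over $\{E_k\le\varepsilon\}$, without extracting subsequences. You instead truncate $E_k\max(U_k,U)$ against the $\log^+$-majorant and apply Pratt's lemma along a.e.-convergent subsequences. Both get around the same obstruction: only $\log^+U$, not $U$, is integrable.

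For $n\ge2$, the paper never compares orbits. It uses four ingredients:
\begin{itemize}
\item the Bernoulli additivity $\phi_n^A=\sum_{i<n}\hat P^i\phi_A$;
\item the fact that Markov operators are $\|\cdot\|_\infty$-contractions;
\item the telescoping identity $\hat P_k^i-\hat P^i=\sum_{\ell}\hat P_k^{i-1-\ell}(\hat P_k-\hat P)\hat P^\ell$;
\item the strong convergence $\|(\hat P_k-\hat P)\psi\|_\infty\to0$ for continuous $\psi$, which only needs $\sup_z d(\hat A_{k,\omega}z,\hat A_\omega z)\to0$ in probability.
\end{itemize}
This reduces everything to one-step statements and is quite economical.

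Your quenched induction along the two orbits is more hands-on, but it buys more. It proves the stronger statement $\int\sup_{z}\bigl|\log\|A^n_{k,\omega}(x)v\|-\log\|A^n_\omega(x)v\|\bigr|\,d\mathbb P\to0$, with the supremum inside the integral. It also does not rely on the Markov (Bernoulli) structure. Your last step does not need independence, only that $\omega_j$ has the same law as $\omega_0$, so that the shifted majorants $H_k\circ\sigma^j$ still converge in $L^1$.

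Two minor slips:
\begin{itemize}
\item In Step~1, the ``trivial bound'' is a bound on the log-difference itself, not on $\Delta_k=E_k\max(U_k,U)$. Your ``min of the two bounds'' is what you actually use.
\item In Step~2, the induction controlling the angle between $w_j$ and $w'_j$ should explicitly invoke the one-step projective estimate $d(\hat A_{k,\omega}z,\hat A_\omega z)\le C_0\bigl(d_{C^0}(A^\pi_{k,\omega},A^\pi_\omega)+\|A_\omega^{-1}\|_\infty E_k\bigr)$. This is exactly the bound the paper uses to prove $\hat P_k\psi\to\hat P\psi$. It should be combined with the uniform continuity of $\hat A_{\omega_j}$ on the compact space $\hat{\mathcal E}$.
\end{itemize}
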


\begin{proof}
We prove first the lemma for $n=1$:

\begin{claim} \label{claim:phi}
    It holds that $\|\phi_{A_k}-\phi_A\|_\infty\to0$ as $k\to \infty$.
\end{claim}
\begin{proof} Set
$E_k:=\|A_{k,\omega}-A_\omega\|_\infty$, 
        $R_k:=\|(A_{k,\omega})^{-1}-(A_\omega)^{-1}\|_\infty$ and $U:=\|(A_\omega)^{-1}\|_\infty$. 
By assumption, $E_k\to0$  and $R_k\to0$ in $L^1(\mathbb P)$.   For \(z=(x,\hat v)\), since
$\|A_\omega(x)v\|
        \geq
        \|A_\omega(x)^{-1}\|^{-1}
        \geq
        U^{-1}$, we have
\[
\begin{aligned}
\log\frac{\|A_{k,\omega}(x)v\|}{\|A_\omega(x)v\|}
&\leq
\log\left(
1+U E_k
\right).
\end{aligned}
\]
Reversing the roles of \(A_k\) and \(A\), and using
$\|(A_{k,\omega})^{-1}\|_\infty
        \leq
        U+R_k$, $(1+(a+b)c)\leq (1+ac)(1+bc)$  and $\log (1+ab) \leq a + b$ for $a,b,c\geq 0$,
we get
\[
\begin{aligned}
\left|
\log\|A_{k,\omega}(x)v\|
-
\log\|A_\omega(x)v\|
\right| &\leq
\log\left(
1+U E_k
\right)
+
\log\left(
1+(U+R_k)E_k
\right) \\ 
&\leq 2 \log\left(
1+U E_k
\right) + \log \left(1+ R_k E_k\right) \leq 2 \log\left(
1+U E_k
\right) + R_k+ E_k.
\end{aligned}
\]
Since $R_k\to 0$ and $E_k\to 0$ in $L^1(\mathbb{P})$, to prove the claim is enough to show that $\log(1+UE_k) \to 0$ in $L^1(\mathbb{P})$. 
To see this, fix \(\varepsilon>0\) and using that $\log (1+ab)\leq \log(1+a)+\log(1+b)$ and $\log(1+b)\leq b$ for every $a,b\geq 0$, 
\begin{equation} \label{eq:onde-ver}
    \begin{aligned}
            \int\log(1+UE_k)\,d\mathbb P &=
         \int_{\{E_k\leq\varepsilon\}}\log(1+UE_k)\,d\mathbb P
        +
        \int_{\{E_k>\varepsilon\}}\log(1+UE_k)\,d\mathbb P \\
        &\leq\int\log(1+\varepsilon  U)\,d\mathbb P
        +
        \int_{\{E_k>\varepsilon\}} \log(1+U) \, d\mathbb{P} +  \int E_k\,d\mathbb P.
\end{aligned}
\end{equation}
Since $\log^+ U$ is $\mathbb{P}$-integrable and $\log (1+a U) \leq \log^+ U + \log(1+a)$ for any $a\geq 0$, by dominated convergence,
\[
        \int\log(1+\varepsilon U)\,d\mathbb P
        \longrightarrow0
        \quad\text{as }\varepsilon\downarrow0.
\]
Also  since $E_k\to 0$ in $L^1(\mathbb{P})$, hence also in probability and thus, the integrability of $\log(1+U)$ and the absolute continuity of the Lebesgue integral gives 
$$
\int_{\{E_k>\varepsilon\}} \log(1+U) \, d\mathbb{P}  \longrightarrow 0
        \quad\text{as } k \to \infty.$$
Hence, taking \(\limsup_{k\to\infty}\) in~\eqref{eq:onde-ver},
we obtain
\begin{equation} \label{eq:final}
       \limsup_{k\to\infty}
   \int\log(1+UE_k)\,d\mathbb P
   \leq
   \int\log(1+\varepsilon U)\,d\mathbb P.
\end{equation}
Finally, $0\leq\log(1+\varepsilon U)\leq\log(1+U)$ for $0<\varepsilon\leq1$, and \(\log(1+U)\in L^1(\mathbb P)\). Therefore, by dominated
convergence,
$\int\log(1+\varepsilon U)\,d\mathbb P
   \longrightarrow0$ as $\varepsilon\downarrow0$. We conclude that
\[
   \int\log(1+UE_k)\,d\mathbb P\longrightarrow0,
\]
which, in view of~\eqref{eq:final} proves the claim.
\end{proof}

Let $\hat P\psi(z)
        :=
        \int \psi(\hat A_\omega z)\,d\mathbb P$,  $z\in\hat{\mathcal E}$,  be the annealed Markov operator associated to the projective cocycle
\(\hat A\). Similarly, write \(\hat P_k\) the annealed Markov operator associated with $\hat A_k$. We have that 
\[
        \phi_n^A 
        =
        \sum_{i=0}^{n-1}\hat P^i\phi_A,
        \qquad
        \phi_n^{A_k}
        =
        \sum_{i=0}^{n-1}\hat P_k^i\phi_{A_k}.
\]
Therefore
\begin{equation} \label{eq:conclusion}
\begin{aligned}
\|\phi_n^{A_k}-\phi_n^{A}\|_\infty
&\le
\sum_{i=0}^{n-1}
\|\hat P_k^i\phi_{A_k}-\hat P^i\phi_A\|_\infty        
\le
\sum_{i=0}^{n-1}
\|\hat P_k^i(\phi_{A_k}-\phi_A)\|_\infty
+
\sum_{i=0}^{n-1}
\|(\hat P_k^i-\hat P^i)\phi_A\|_\infty .
\end{aligned}
\end{equation}
Since \(\hat P_k\) is a Markov operator, it is a contraction and  hence $\|\hat P_k^i(\phi_{A_k}-\phi_A)\|_\infty
        \le
        \|\phi_{A_k}-\phi_A\|_\infty\to 0$ by Claim~\ref{claim:phi}. Thus the first sum in the last inequality in~\eqref{eq:conclusion} tends to zero as $k\to\infty$.   
On the other hand, the telescoping identity for powers gives
\[
        \hat P_k^i-\hat P_A^i
        =
        \sum_{\ell=0}^{i-1}
        \hat P_k^{i-1-\ell}
        (\hat P_k-\hat P)
        \hat P^\ell .
\]
and hence, using again that \(\hat P_k\) is a contraction,
\begin{equation} \label{eq:telescopi}
\begin{aligned}
\|(\hat P_k^i-\hat P^i)\phi_A\|_\infty
&\le
\sum_{\ell=0}^{i-1}
\|(\hat P_k-\hat P)\hat P^\ell\phi_A\|_\infty .
\end{aligned}
\end{equation}
\begin{claim} \label{claim:P}
It holds that $\|(\hat P_k-\hat P)\psi\|_\infty\to0$ as $k\to\infty$ for every $\psi\in C(\hat{\mathcal E})$. 
\end{claim}
\begin{proof} Since \(\psi\) is continuous on the compact space
\((\hat{\mathcal E},d_{{\mathcal E}})\), it is uniformly continuous. Let
$\omega_\psi(t)
        :=
        \sup\{|\psi(z)-\psi(z')|:
        d_{\mathcal E}(z,z')\le t\}$ be its modulus of continuity. Then
\[
\begin{aligned}
\|\hat P_k\psi-\hat P\psi\|_\infty
&\le
\int
\sup_{z\in\hat{\mathcal E}}
|\psi(\hat A_{k,\omega}z)-\psi(\hat A_\omega z)|
\,d\mathbb P(\omega)        \le
\int \omega_\psi(\Delta_k(\omega))\,d\mathbb P
\end{aligned}
\]
where 
\[
        \Delta_k(\omega)
        :=
        \sup_{z\in\hat{\mathcal E}}
        d_{\mathcal E}
        \bigl(\hat A_{k,\omega}z,\hat A_\omega z\bigr).
\]
Set again $E_k:=\|A_{k,\omega}-A_\omega\|_\infty$, $U:=\|(A_\omega)^{-1}\|_\infty$ and $\rho_k:=d_{C^0}(A^\pi_{k,\omega},A^\pi_\omega)$. Given \(z=(x,\hat v)\in\hat{\mathcal E}\), write
$y_k:=A^\pi_{k,\omega}(x)$, $y:=A^\pi_\omega(x)$,
and $u_k:=A_{k,\omega}(x)v$, $u:=A_\omega(x)v$. Hence, there is a constant  \(C_0>0\),
\begin{align*}
d_{\hat{\mathcal E}}
\bigl(\hat A_{k,\omega}z,\hat A_\omega z\bigr)
&\le
C_0\bigl(d(y_k,y)+\theta(\hat{u}_k,\hat{u})\bigr)  
\le C_0\bigl(\rho_k+\big\| 
\frac{u_k}{\|u_k\|}
-
\frac{u}{\|u\|}
\big\|\bigr) 
\\ 
&\le
2C_0(\rho_k+\frac{\|u_k-u\|}{\|u\|}) \leq 2C_0 (\rho_k + UE_k).
\end{align*}
Thus, $\Delta_k \le C(\rho_k + U E_k)$.
    Since $E_k \to 0$ and $\rho_k \to 0$ in $L^1(\mathbb P)$, they converge to 0 in probability. Because $A_\omega$ is an isomorphism for $\mathbb P$-a.e. $\omega$, we have $U(\omega) < \infty$ a.s. Consequently, $U E_k \to 0$ in probability, which implies $\Delta_k \to 0$ in probability. Since   $\omega_\psi(t) \to 0$ as $t \to 0$, it follows that $\omega_\psi(\Delta_k) \to 0$ in probability. Hence, since  $\omega_\psi(\Delta_k) \le 2\|\psi\|_\infty$, bounded convergence in probability implies that $\int \omega_\psi(\Delta_k) \, d\mathbb P \to 0$. This completes the proof of Claim~\ref{claim:P}.
\end{proof}

Since $\phi_A$ is continuous on the compact set $\hat{\mathcal E}$ and $\hat P$ is a Markov operator associated with continuous maps, the function $\hat P^\ell \phi_A$ is continuous for every $\ell \ge 0$. By Claim~\ref{claim:P}, for each $0 \le \ell \le i-1$, we have $\|(\hat P_k - \hat P)\hat P^\ell \phi_A\|_\infty \to 0$ as $k \to \infty$. Summing over the finitely many terms in \eqref{eq:telescopi}, it follows that $\|(\hat P_k^i - \hat P^i)\phi_A\|_\infty \to 0$ as $k \to \infty$. Finally, from \eqref{eq:conclusion}, both sums converge to $0$ as $k \to \infty$. Therefore, $\|\phi_n^{A_k} - \phi_n^A\|_\infty \to 0$, completing the proof of the lemma. 
\end{proof}

Let $\mu$ be $A^\pi$-invariant stationary measure of a linear bundle isomorphisms $A\in \mathrm{L}_{\mathbb{P}}(\pi)$. Define
$$
\lambda_-(\mu,A)=\inf_{\hat \mu \in \mathcal{I}_\mu(\hat A)} \int \phi_A \, d\hat\mu \quad \text{and} \quad \lambda_-(A)=\inf_{\nu \in \mathcal{I}(A^\pi)} \lambda_-(\nu,A).
$$
We have that 
\[
\lambda_m(\mu,A)
 :=
 \lim_{n\to\infty}\frac1n
 \int\log\mathfrak m(A_\omega^n(x))\,d\mathbb P\,d\mu
 \leq \lambda_-(\mu,A)\leq\lambda_1(\mu,A).
\]

\begin{definition} A measure $\mu$ is \emph{quasi-irreducible} if $\lambda_-(\mu,A)=\lambda_1(\mu,A)$.  
    We also say that $A$ is 
    \begin{itemize}
        \item \emph{quasi-irreducible} if $\lambda_-(\mu,A)=\lambda_1(\mu,A)$ for every $\mu\in \mathcal{I}(A^\pi)$;
        \item \emph{maximal quasi-irreducible} if $\lambda_-(A)=\lambda_1(A)$;
        \item \emph{expanding} if $\lambda_-(A)>0$.
    \end{itemize}   
    The dual notions are defined by applying the corresponding definitions to the inverse-dual random map $A^{-*}:\Omega\times\mathcal E^*\to\mathcal E^*$ covering $A^\pi:\Omega \times X \to X$. Thus, $\mu$ is \emph{coquasi-irreducible} for $A$ if it is quasi-irreducible for $A^{-*}$ and $A$ is \emph{coquasi-irreducible}, \emph{maximal coquasi-irreducible} and \emph{coexpanding} if $A^{-*}$ is quasi-irreducible, maximal quasi-irreducible and expanding respectively.
\end{definition}

\begin{prop} \label{prop:mainthmD} The functions 
\begin{itemize}
    \item 
$A\in \mathrm{L}_{\mathbb{P}}(\pi,\mu) \mapsto  \lambda_1(\mu,A)$ and $A\in \mathrm{L}_{\mathbb{P}}(\pi) \mapsto  \lambda_1(A)$ are upper semicontinuous. 
\item $A\in \mathrm{L}_{\mathbb{P}}(\pi,\mu) \mapsto  \lambda_-(\mu,A)$ and $A\in \mathrm{L}_{\mathbb{P}}(\pi) \mapsto  \lambda_-(A)$ are lower semicontinuous. 
\end{itemize}
In particular, given $A_0\in \mathrm{L}_{\mathbb{P}}(\pi)$,
\begin{itemize}
    \item if $\mu$ is quasi-irreducible for $A_0$, then $A \in \mathrm{L}_{\mathbb{P}}(\pi,\mu)\mapsto \lambda_1(\mu,A)$ is continuous at $A_0$.  
    \item if $A_0$ is maximal quasi-irreducible, then $A \in \mathrm{L}_{\mathbb{P}}(\pi)\mapsto \lambda_1(A)$ is continuous at $A_0$.
\end{itemize}
\end{prop}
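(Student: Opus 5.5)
The plan is to write each quantity as an infimum (or supremum) of functions that are continuous in $A$. The continuity of these functions will come from Lemma~\ref{lem:uniform-conv-phi}. Then $\lambda_-\le\lambda_1$ together with quasi-irreducibility closes the sandwich.

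\textbf{Upper semicontinuity of $\lambda_1$.} I use the uniform-limit characterizations recalled in the introduction, transposed to the linear-bundle setting (\cite{BN26}):
\[
\lambda_1(\mu,A)=\inf_{n\ge1}\frac1n\int \max_{\hat v\in P(\mathcal E_x)}\phi_n^A(x,\hat v)\,d\mu(x),\qquad \lambda_1(A)=\inf_{n\ge1}\frac1n\max_{\hat{\mathcal E}}\phi_n^A .
\]
Fix $n$ and let $A_k\to A$ in $\mathrm d$.
\begin{itemize}
\item By Lemma~\ref{lem:uniform-conv-phi}, $\|\phi_n^{A_k}-\phi_n^A\|_\infty\to0$.
\item Hence $\max_{P(\mathcal E_x)}\phi_n^{A_k}\to\max_{P(\mathcal E_x)}\phi_n^{A}$ uniformly in $x$.
\item Its $\mu$-integral therefore converges; $\mu$ is fixed, and the fibrewise max is measurable since $\phi_n^A$ is continuous.
\item Likewise $\max_{\hat{\mathcal E}}\phi_n^{A_k}\to\max\phi_n^A$.
\end{itemize}
An infimum of continuous functions is upper semicontinuous, which gives both claims. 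Alternatively, subadditivity of $\log\|A^n_\omega(x)\|$ gives $\lambda_1(\mu,A)=\inf_n\frac1n\int\log\|A^n_\omega(x)\|\,d\mathbb P\,d\mu$, and one shows this integrand is continuous in $A$ by the same estimate as in Claim~\ref{claim:phi}, applied to operator norms. I would keep the $\phi_n$ route so that everything reduces to Lemma~\ref{lem:uniform-conv-phi}.

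\textbf{Lower semicontinuity of $\lambda_-$.} I use the dual representations
\[
\lambda_-(\mu,A)=\sup_{n\ge1}\frac1n\int\min_{\hat v}\phi_n^A(x,\hat v)\,d\mu,\qquad \lambda_-(A)=\sup_n\frac1n\min_{\hat{\mathcal E}}\phi_n^A,
\]
from \cite[Cor.~D.3, \S D.4]{BN26} and \cite[Thm.~3.2.1]{BM}. For $\mu$ not necessarily ergodic, I would either take the formula for general $\mu$, or note that $\phi_n^A$ is $\hat P$-additive and that $\int\min_{\hat v}\phi_n\,d\mu$ is superadditive in $n$ (using stationarity of $\mu$). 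Then Fekete's lemma gives the limit as a supremum. One inequality, $\ge$, holds because $\int\phi_n\,d\hat\mu=n\int\phi_A\,d\hat\mu$ for any lift $\hat\mu$. The reverse is the variational identity quoted in the paper.

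This is the step I expect to be the main obstacle: the precise justification of the supremum formula in the bundle setting for arbitrary, possibly non-ergodic, $\mu$. My fallback is ergodic decomposition combined with the cited results. Granting the formula, each term converges uniformly by Lemma~\ref{lem:uniform-conv-phi}, so a supremum of continuous functions is lower semicontinuous.

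\textbf{Conclusion.} Suppose $\mu$ is quasi-irreducible for $A_0$ and $A_k\to A_0$ in $\mathrm L_{\mathbb P}(\pi,\mu)$. Then
\[
\lambda_1(\mu,A_0)=\lambda_-(\mu,A_0)\le\liminf_k\lambda_-(\mu,A_k)\le\liminf_k\lambda_1(\mu,A_k)\le\limsup_k\lambda_1(\mu,A_k)\le\lambda_1(\mu,A_0),
\]
so $\lambda_1(\mu,\cdot)$ is continuous at $A_0$. For maximal quasi-irreducibility the same chain works with $\lambda_-(A)\le\lambda_1(A)$. This last inequality holds since $\lambda_-(A)\le\lambda_-(\nu,A)\le\lambda_1(\nu,A)\le\lambda_1(A)$ for any $\nu\in\mathcal I(A^\pi)$, and $\mathcal I(A^\pi)$ is nonempty by compactness.
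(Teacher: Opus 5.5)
Your proposal is correct and matches the paper's proof essentially step for step. The paper defines the same quantities $a_n(\mu,A)$, $b_n(\mu,A)$, $\bar a_n(A)$, $\bar b_n(A)$, gets their continuity in $A$ from Lemma~\ref{lem:uniform-conv-phi}, and quotes $\lambda_1=\inf_n b_n$ and $\lambda_-=\sup_n a_n$ from \cite{BN26} and \cite[Theorem C]{BM}, then closes with the same sandwich. The step you flag as the main obstacle is handled in the paper only by citation; it in fact holds for non-ergodic $\mu$ by a Krylov--Bogolyubov argument. Take $\hat v_n(x)$ a fibrewise minimiser of $\phi_n^A$ and Ces\`aro-average $\int\delta_{(x,\hat v_n(x))}\,d\mu(x)$ under $\hat P^*$; any limit point is a lift of $\mu$ with $\int\phi_A = \lim_n a_n(\mu,A)$.
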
 
\begin{proof}
Now define
\begin{align*}
a_n(\mu,A)
 :=
 \frac1n\int_X
 \min_{\hat v\in P(\mathcal E_x)}
 \phi_n^A(x,\hat v)\,d\mu(x) \quad \text{and} \quad 
b_n(\mu,A)
 :=
 \frac1n\int_X
 \max_{\hat v\in P(\mathcal E_x)}
 \phi_n^A(x,\hat v)\,d\mu(x).
\end{align*}
By Lemma~\ref{lem:uniform-conv-phi}, for every fixed \(n\ge1\),
the maps \(A\mapsto a_n(\mu,A)\) and \(A\mapsto b_n(\mu,A)\)
are continuous. According to~\cite[Cor.~V and~\S D.4]{BN26},
\[
\lambda_1(\mu,A)=\inf_{n\ge1}b_n(\mu,A),
\qquad
\lambda_-(\mu,A)=\sup_{n\ge1}a_n(\mu,A).
\]
Therefore \(A\mapsto\lambda_1(\mu,A)\) is upper semicontinuous, because
it is the infimum of continuous functions, while
\(A\mapsto\lambda_-(\mu,A)\) is lower semicontinuous, because it is the
supremum of continuous functions. In particular, if $\mu$ is quasi-irreducible for $A_0\in \mathrm{L}_{\mathbb{P}}(\pi)$, since $\lambda_1(\mu,A_0)=\lambda_-(\mu,A_0)$, it follows that $A\mapsto \lambda_1(\mu,A)$ is continuous at $A_0$.

Similarly, define
$$
\bar{a}_n(A)=\min_{z\in \hat{\mathcal{E}}} \phi^A_n(z) \quad \text{and} \quad 
\bar{b}_n(A)=\max_{z\in \hat{\mathcal{E}}} \phi^A_n(z).
$$
Again, by Lemma~\ref{lem:uniform-conv-phi}, $\bar{a}_n$ and $\bar{b}_n$ are continuous functions for every $n\ge 1$. Thus, since according to~\cite[Theorem C]{BM}
\[
\lambda_-(A)=\sup_{n\ge1}\bar a_n(A),
\qquad
\lambda_1(A)=\inf_{n\ge1}\bar b_n(A).
\]
we get that $A\mapsto\lambda_-(A)$ and $A\mapsto\lambda_1(A)$ are lower and upper semicontinuous functions respectively. In particular, when $A_0\in \mathrm{L}_{\mathbb{P}}(\pi)$ is a maximal quasi-irreducible, we have $\lambda_1(A_0)=\lambda_-(A_0)$ and thus, $A\mapsto\lambda_1(A)$ is continuous at $A_0$. 
\end{proof}

\begin{prop} \label{prop:thm-prop} Let $A\in \mathrm{L}_{\mathbb{P}}(\pi)$. 
The functions 
$$\mu \in \mathcal{I}(A^\pi) \mapsto \lambda_-(\mu,A) \quad  \text{and} \quad \mu \in \mathcal{I}(A^\pi) \mapsto \lambda_1(\mu,A)$$ are lower and upper semicontinuous respectively. In particular, if $A$ is quasi-irreducible, then $\mu \in \mathcal{I}(A^\pi) \mapsto\lambda_1(\mu,A)$ is continuous.    
\end{prop}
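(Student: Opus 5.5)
We will follow the same scheme as in Proposition~\ref{prop:mainthmD}: write $\lambda_1(\mu,A)$ as an infimum and $\lambda_-(\mu,A)$ as a supremum of functions of $\mu$ that are continuous for the weak$^*$ topology on $\mathcal{I}(A^\pi)$. Put
\[
a_n(\mu):=\frac1n\int_X \min_{\hat v\in P(\mathcal E_x)}\phi_n^A(x,\hat v)\,d\mu(x),
\qquad
b_n(\mu):=\frac1n\int_X \max_{\hat v\in P(\mathcal E_x)}\phi_n^A(x,\hat v)\,d\mu(x).
\]
By \cite[Cor.~V and \S D.4]{BN26}, already used in the proof of Proposition~\ref{prop:mainthmD}, we have for every $\mu\in\mathcal{I}(A^\pi)$
\[
\lambda_1(\mu,A)=\inf_{n\ge1}b_n(\mu),\qquad \lambda_-(\mu,A)=\sup_{n\ge1}a_n(\mu).
\]
Once each $a_n$ and $b_n$ is shown to be weak$^*$ continuous in $\mu$, the map $\mu\mapsto\lambda_1(\mu,A)$ is an infimum of continuous functions, hence upper semicontinuous. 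Likewise $\mu\mapsto\lambda_-(\mu,A)$ is lower semicontinuous.

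The main step is the continuity of $a_n$ and $b_n$. By the integrability of $\log^+\|A_\omega\|_\infty$ and $\log^+\|A_\omega^{-1}\|_\infty$ and dominated convergence, $\phi_n^A$ is a continuous real-valued function on the compact space $\hat{\mathcal E}$. This uses that $A_\omega$ is a continuous bundle map, together with the bound $|\log\|A^n_\omega(x)v\||\le \sum_{j}(\log^+\|A_{\omega_j}\|_\infty+\log^+\|A_{\omega_j}^{-1}\|_\infty)$. Then $x\mapsto \max_{\hat v\in P(\mathcal E_x)}\phi_n^A(x,\hat v)$ is continuous on $X$. This is the point needing care, since it is a fibrewise max over a continuously varying compact fibre.

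I would prove this continuity using local trivializations. Over a small open set $U\subset X$ the bundle is $U\times\mathbb R^m$, so $\hat{\mathcal E}|_U\cong U\times P(\mathbb R^m)$. The function $\psi(x,\hat v)=\phi_n^A(x,\hat v)$ becomes continuous on $U\times P(\mathbb R^m)$, and the maximum over the fixed compact set $P(\mathbb R^m)$ of a jointly continuous function is continuous in $x$ (the standard Berge/uniform-continuity argument). The same holds for the minimum. Integration of a continuous function on compact $X$ against $\mu$ is weak$^*$ continuous, so $a_n$ and $b_n$ are continuous. Alternatively, if one prefers to avoid trivializations, upper semicontinuity of the max suffices for $b_n$ to be upper semicontinuous, and that already gives upper semicontinuity of the infimum. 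Symmetrically, lower semicontinuity of the min suffices for $a_n$ to be lower semicontinuous and hence for the supremum. So only semicontinuity of the fibrewise extrema is really needed.

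Finally, suppose $A$ is quasi-irreducible, so $\lambda_-(\mu,A)=\lambda_1(\mu,A)$ for all $\mu\in\mathcal{I}(A^\pi)$. The function $\mu\mapsto\lambda_1(\mu,A)$ then coincides with a lower semicontinuous function and is itself upper semicontinuous, hence continuous. For the pointwise version, where only $\mu_0$ is quasi-irreducible, we use the chain
\[
\lambda_1(\mu_0)=\lambda_-(\mu_0)\le\liminf\lambda_-(\mu_k)\le\liminf\lambda_1(\mu_k)\le\limsup\lambda_1(\mu_k)\le\lambda_1(\mu_0),
\]
which gives continuity at $\mu_0$.
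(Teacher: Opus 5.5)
Your proof is correct and essentially matches the paper's. The paper simply cites \cite[Claim~2.2.7]{BM} and its dual result, and that amounts to your observation that $\lambda_1(\mu,A)=\inf_n b_n(\mu)$ and $\lambda_-(\mu,A)=\sup_n a_n(\mu)$, with each $a_n,b_n$ weak$^*$ (semi)continuous in $\mu$; this is the same representation the paper uses in Proposition~\ref{prop:mainthmD}, and the continuity under quasi-irreducibility then follows from $\lambda_-(\mu,A)=\lambda_1(\mu,A)$ exactly as in your sandwich argument, which also covers continuity at a single quasi-irreducible $\mu_0$.
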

\begin{proof} The semicontinuity follows immediately by~\cite[Claim~2.2.7]{BM} and its dual result. The continuity also is immediate because quasi-irreducibility implies $\lambda_-(\mu,A)=\lambda_1(\mu,A)$. 
\end{proof}

\begin{proof}[Proof of Theorem~\ref{main:thmD} and~\ref{main:thmDbis}] These results are consequence of Proposition~\ref{prop:mainthmD} applied to $A=Df$ and $A=(Df^{-1})^*$ observing that $\lambda_1(\mu,Df)=\lambda_1(\mu)$ and $\lambda_1(\mu,(Df^{-1})^*)=-\lambda_m(\mu)$.    
\end{proof}

\begin{proof}[Proof of Theorem~\ref{prop:continuity-measure-lyapunov} and~\ref{prop:continuity-measure-lyapunov-bis}]
 Similarly as before, the continuity of $\mu\mapsto \lambda_1(\mu)$ and $\mu\mapsto \lambda_m(\mu)$ at quasi-irreducible and coquasi-irreducible random maps follows from Proposition~\ref{prop:thm-prop}.   
\end{proof}

\section{Proof of propositions and corollaries}
\label{sec:proof-corollaries}

In this section, we prove the propositions and corollaries stated in the
introduction. We first prove them for a general random linear bundle isomorphism,
and then recover the corresponding statements for the tangent and cotangent
cocycles.

We first record the linear-bundle version of Corollary~\ref{maincor:expanding-quasi}. 

\begin{prop}
\label{prop:abstract-expanding-quasi}
Let  $A:\Omega \times \mathcal E \to \mathcal E$ be a random linear bundle isomorphism covering a continuous random map $f:\Omega \times X \to X$ of a compact metric space $X$ satisfying that $\log^+\|A_\omega\|_\infty$ is $\mathbb{P}$-integrable.   Then the following statements hold.
\begin{enumerate}
\item \(A\) is expanding if and only if every non-trivial \(A\)-invariant
subbundle \(\mathcal L\subset\mathcal E\) over an
\(f\)-stationary measure \(\mu\) satisfies
$\lambda_1(\mu,A|_{\mathcal L})>0$.
\item If \(A\) is expanding and $\lambda_2(\mu,A)\le0$
for every $\mu\in\mathcal{I}(f)$, then \(A\) is quasi-irreducible.
\item If \(A\) is quasi-irreducible, then
\[
        A\text{ is expanding}
        \quad\Longleftrightarrow\quad
        \lambda_1(\mu,A)>0
        \text{ for every }\mu\in\mathcal{I}(f).
\]
\item If $\lambda_2(\mu,A)\le0<\lambda_1(\mu,A)$ for every $\mu\in\mathcal{I}(f)$, then
\[
        A\text{ is expanding}
        \quad\Longleftrightarrow\quad
        A\text{ is quasi-irreducible}.
\]
\end{enumerate}
\end{prop}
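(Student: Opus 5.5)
The plan is to reduce everything to Lemma~\ref{lem:bundle-generated-by-nu}, which attaches to each ergodic $\nu\in\mathcal{I}(\hat A)$ the invariant bundle $\mathcal L_\nu$ with $\int\phi_A\,d\nu=\lambda_1(\mu|_{\mathcal L_\nu})$ and $\lambda_-(\mu)\le\lambda_1(\mu|_{\mathcal F})$ for every invariant $\mathcal F$. We combine this with the lower variational formula (from \cite{BM,BN26}) saying that $\lambda_-(A)$ is a minimum over ergodic $\nu\in\mathcal{I}(\hat A)$. The projection of an ergodic $\nu$ is an ergodic $\mu$.

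\emph{Item (1).} For the forward direction, let $\mathcal L$ be a non-trivial $A$-invariant bundle over $\mu$. The last assertion of Lemma~\ref{lem:bundle-generated-by-nu} gives
\[
\lambda_1(\mu|_{\mathcal L})\ge\lambda_-(\mu)\ge\lambda_-(A)>0.
\]
If the lemma is only available for ergodic $\mu$, we first apply it on ergodic components and then integrate. For the converse, pick an ergodic $\nu$ attaining $\lambda_-(A)$ and set $\mu=\pi\nu$. Then
\[
\lambda_-(A)=\int\phi_A\,d\nu=\lambda_1(\mu|_{\mathcal L_\nu}).
\]
Since $\mathcal L_\nu$ contains $\operatorname{span}\operatorname{supp}\nu_x\neq 0$, it is non-trivial, so this quantity is positive by hypothesis.

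\emph{Item (2).} This is the main step. Fix an ergodic $\nu$ over an ergodic $\mu$. By item (1), $\lambda_1(\mu|_{\mathcal L_\nu})=\int\phi_A\,d\nu>0$. We must show this equals $\lambda_1(\mu)$.

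Suppose instead that $\lambda_1(\mu|_{\mathcal L_\nu})<\lambda_1(\mu)$. Apply the Oseledets theorem to the cocycle $A$ over the ergodic skew product $F$ on $(\Omega\times X,\mathbb P\times\mu)$, using the one-sided filtration version (the backward-integrability hypothesis can be avoided, or the extra moment can be added as in Proposition~\ref{prop:SLLN}). Every $v\in\mathcal L_{\nu,x}$ satisfies
\[
\limsup_{n\to\infty}\tfrac1n\log\|A^n_\omega(x)v\|\le\lambda_1(\mu|_{\mathcal L_\nu})<\lambda_1(\mu).
\]
Hence $v$ lies in the second Oseledets subspace $V_2(\omega,x)$, where all growth rates are at most $\lambda_2(\mu)\le 0$. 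This forces $\lambda_1(\mu|_{\mathcal L_\nu})\le 0$, a contradiction.

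Therefore every ergodic lift of an ergodic $\mu$ is maximizing. For non-ergodic $\mu$, we decompose $\nu\in\mathcal{I}_\mu(\hat A)$ into ergodic components, which project to ergodic components of $\mu$. We then use that $\mu\mapsto\lambda_1(\mu)$ is affine, since it is the integral of a pointwise limit, to conclude $\int\phi_A\,d\nu=\lambda_1(\mu)$.

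The delicate point is precisely this dichotomy: the exponent on an invariant bundle must be either $\lambda_1(\mu)$ or at most $\lambda_2(\mu)$. If the filtration argument is awkward in this one-sided setting, the fallback is the non-random multiplicative ergodic theorem of Kifer \cite{Kif:86}.

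\emph{Items (3) and (4).} For (3), expanding gives $\lambda_1(\mu)\ge\lambda_-(\mu)\ge\lambda_-(A)>0$. Conversely, under quasi-irreducibility the minimizing ergodic $\nu$ satisfies $\lambda_-(A)=\int\phi_A\,d\nu=\lambda_1(\pi\nu)>0$. For (4), if $A$ is expanding then it is quasi-irreducible by (2), since $\lambda_2\le0$. If $A$ is quasi-irreducible then it is expanding by (3), since $\lambda_1>0$.
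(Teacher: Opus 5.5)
Your proposal is correct and follows the paper's proof essentially step by step. Item (1) goes through Lemma~\ref{lem:bundle-generated-by-nu} and a minimizing ergodic lift, item (2) through the bound $\lambda_1(\mu|_{\mathcal L})\le\lambda_2(\mu)$ for an invariant bundle whose exponent drops, and items (3)--(4) by combination. The only differences are that you actually justify that bound via the one-sided Oseledets filtration, which the paper merely asserts, and that in the converse of item (3) you reuse the minimizing ergodic lift instead of the paper's lower semicontinuity of $\mu\mapsto\lambda_-(\mu)$ on the compact set $\mathcal I(f)$.
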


\begin{proof}
Assume that \(A\) is expanding and let \(\mathcal L\) be a non-trivial
\(A\)-invariant bundle over an \(f\)-stationary measure \(\mu\). Hence $0<\lambda_-(A)\leq \lambda_1(\mu,A|_{\mathcal L})$;
see Lemma~\ref{lem:bundle-generated-by-nu} for more details.
Reciprocally, since
$\lambda_-(A)=\int \phi_A\,d\nu$ for some \(\nu\in\mathcal{I}(\hat A)\) ergodic, and according to
Lemma~\ref{lem:bundle-generated-by-nu},
$\int \phi_A\,d\nu
        =
        \lambda_1(\mu,A|_{\mathcal L})$,
where \(\mathcal L\) is the \(A\)-invariant bundle spanned by \(\nu\), we get
$ \lambda_-(A)=\lambda_1(\mu,A|_{\mathcal L})$.
Hence, the assumption that
$\lambda_1(\mu,A|_{\mathcal L})>0$ for every non-trivial \(A\)-invariant bundle \(\mathcal L\) implies that
\(\lambda_-(A)>0\). Thus \(A\) is expanding. This proves~(i).

To prove (ii), observe that if a measure \(\mu\) has a non-trivial equator \(\mathcal L\), then
$\lambda_1(\mu,A|_{\mathcal L})
        \leq
        \lambda_2(\mu,A)$.
Hence if \(A\) is expanding, then
$ 0<\lambda_1(\mu,A|_{\mathcal L})$ 
by (i), and thus \(\lambda_2(\mu,A)>0\). Since (ii) assumes that
$\lambda_2(\mu,A)\le0$
for every \(\mu\in\mathcal{I}(f)\), \(A\) needs to be
quasi-irreducible.

We prove (iii). Assume that \(A\) is quasi-irreducible and expanding. Since
quasi-irreducibility gives
$ \lambda_-(\mu,A)=\lambda_1(\mu,A)$ for every $\mu\in\mathcal{I}(f)$, we get
$\lambda_1(\mu,A)         =
        \lambda_-(\mu,A)
        \ge
        \lambda_-(A)>0$
for every \(\mu\in\mathcal{I}(f)\). Conversely, assume that \(A\) is quasi-irreducible and $\lambda_1(\mu,A)>0$
for every $\mu\in\mathcal{I}(f)$.
Then $\lambda_-(\mu,A)=\lambda_1(\mu,A)>0$
for every $\mu\in\mathcal{I}(f)$.
The map 
$ \mu\mapsto \lambda_-(\mu,A)$
is lower semicontinuous on the compact set \(\mathcal{I}(f)\). Hence, it
attains its minimum. Since it is positive at every point, this minimum is
positive. Therefore
$\lambda_-(A)
        =
        \inf_{\mu\in\mathcal{I}(f)}
        \lambda_-(\mu,A)
        >0$. 
Thus \(A\) is expanding. This proves~(iii).

Finally, (iv) follows immediately from (ii) and (iii).
\end{proof}

Applying Proposition~\ref{prop:abstract-expanding-quasi} to the linear random morphisms $A^{-*}$, whose Lyapunov exponents are $-\lambda_m(\mu,A)\ge\cdots\ge -\lambda_1(\mu,A)$,  we obtain the following generalization of  Corollary~\ref{cor:coexpanding-equator-positive}.

\begin{cor} \label{cor:abstract-coexpanding-coquasi}Let  $A:\Omega \times \mathcal E \to \mathcal E$ be a random linear bundle isomorphism covering a continuous random map $f:\Omega \times X \to X$ of a compact metric space $X$ satisfying that $\log^+\|(A_\omega)^{\pm 1}\|_\infty$ is $\mathbb{P}$-integrable.  Then the following statements hold. 
\begin{enumerate} \item \(A\) is coexpanding if and only if every non-trivial \(A^{-*}\)-invariant subbundle \(\mathcal L\subset\mathcal E^*\) over an \(f\)-stationary measure \(\mu\) satisfies $\lambda_1(\mu,A^{-*}|_{\mathcal L})>0$. In particular, $\lambda_m(\mu,A)<0$.  
\item If \(A\) is coexpanding and $\lambda_{m-1}(\mu,A)\ge0$ for every $\mu\in\mathcal{I}(f)$,  then \(A\) is coquasi-irreducible. 
\item If \(A\) is coquasi-irreducible, then \[ A\text{ is coexpanding} \quad\Longleftrightarrow\quad \lambda_m(\mu,A)<0 \text{ for every }\mu\in\mathcal{I}(f). \] 
\item If $\lambda_m(\mu,A)<0\le\lambda_{m-1}(\mu,A)$ for every $\mu\in\mathcal{I}(f)$,  then 
\[ A\text{ is coexpanding} \quad\Longleftrightarrow\quad A\text{ is coquasi-irreducible}. 
\] 
\end{enumerate} 
\end{cor}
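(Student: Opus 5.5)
The plan is to apply Proposition~\ref{prop:abstract-expanding-quasi} verbatim to the inverse-dual random linear bundle isomorphism $A^{-*}:\Omega\times\mathcal E^*\to\mathcal E^*$, $A^{-*}_\omega(x)=(A_\omega(x)^{-1})^*$, which covers the same base random map $f$, and then translate each item back through the dictionary between $A^{-*}$ and $A$. First I check that $A^{-*}$ lies in the scope of that proposition. It is a random linear bundle isomorphism, since its fibres are invertible. Because $\|(A_\omega(x)^{-1})^*\|=\|A_\omega(x)^{-1}\|$ for the dual Euclidean norms, $\log^+\|A^{-*}_\omega\|_\infty=\log^+\|(A_\omega)^{-1}\|_\infty$, and this is integrable by hypothesis.

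Next I record the spectral dictionary. The singular values of $(A^n_\omega(x)^{-1})^*$ are the reciprocals of those of $A^n_\omega(x)$, so the Lyapunov exponents of $A^{-*}$ over $\mu$ are $-\lambda_m(\mu,A)\ge\cdots\ge-\lambda_1(\mu,A)$. Hence
\[
\lambda_1(\mu,A^{-*})=-\lambda_m(\mu,A),\qquad \lambda_2(\mu,A^{-*})=-\lambda_{m-1}(\mu,A).
\]
By definition, $A$ is coexpanding, resp.\ coquasi-irreducible, exactly when $A^{-*}$ is expanding, resp.\ quasi-irreducible. Substituting into items (i)--(iv) of Proposition~\ref{prop:abstract-expanding-quasi} gives the four items:
\begin{itemize}
\item $\lambda_2(\mu,A^{-*})\le 0$ becomes $\lambda_{m-1}(\mu,A)\ge0$;
\item $\lambda_1(\mu,A^{-*})>0$ becomes $\lambda_m(\mu,A)<0$;
\item the hypothesis $\lambda_2\le0<\lambda_1$ becomes $\lambda_m(\mu,A)<0\le\lambda_{m-1}(\mu,A)$.
\end{itemize}

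The remaining point is the ``in particular'' clause of (i). I take $\mathcal L=\mathcal E^*$, which is trivially $A^{-*}$-invariant and non-trivial. The clause then gives $0<\lambda_1(\mu,A^{-*}|_{\mathcal E^*})=\lambda_1(\mu,A^{-*})=-\lambda_m(\mu,A)$. This holds for every $\mu\in\mathcal{I}(f)$, because coexpansion means $\tilde\lambda_-(A)=\lambda_-(A^{-*})>0$, and $\lambda_-(A^{-*})$ is a lower bound over all $\mu$.

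I expect no real obstacle. The only care needed is the exponent reindexing and the norm identity for the inverse transpose, and both are elementary linear algebra.
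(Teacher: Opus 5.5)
Your proposal is correct and is essentially the paper's own argument: the paper obtains the corollary in one line by applying Proposition~\ref{prop:abstract-expanding-quasi} to $A^{-*}$, using that its exponents are $-\lambda_m(\mu,A)\ge\cdots\ge-\lambda_1(\mu,A)$. Your extra checks make explicit what the paper leaves implicit, namely $\|(A_\omega(x)^{-1})^*\|=\|A_\omega(x)^{-1}\|$ for the integrability hypothesis and the choice $\mathcal L=\mathcal E^*$ for the ``in particular'' clause.
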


Before proving the corresponding generalization of  Proposition~\ref{prop:equator-coequator-extremal-dimension} we need the following lemma.

\begin{lemma}
\label{lem:annihilator-coequator-abstract}
Let  $A:\Omega \times \mathcal E \to \mathcal E$ be a random linear bundle isomorphism covering a continuous random map $f:\Omega \times X \to X$ of a compact metric space $X$ satisfying that $\log^+\|(A_\omega)^{\pm 1}\|_\infty$ is $\mathbb{P}$-integrable.  
Consider an ergodic measure $\mu\in\mathcal{I}(f)$. Suppose that \(\mu\) has a
coequator \(\mathcal F\subset\mathcal E^*\). Consider the annihilator bundle
$\mathcal K=\mathcal F^\perp$
given by
\[
        \mathcal K_x
        :=
        \mathcal F_x^\perp
        =
        \{v\in\mathcal E_x:\xi(v)=0
        \text{ for every }\xi\in\mathcal F_x\}.
\]
Then \(\mathcal K\) is an \(A\)-invariant bundle and
$\lambda_m(\mu,A|_{\mathcal K})
        =
        \lambda_m(\mu,A)$.
\end{lemma}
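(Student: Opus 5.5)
The argument has two parts: invariance of $\mathcal K$, which is pure linear algebra, and the exponent identity, which uses the quotient/duality relation between $\mathcal E/\mathcal K$ and $\mathcal F$. Throughout, the coequator $\mathcal F$ is proper, since the statement presupposes a nontrivial coequator; if $\mathcal F$ were all of $\mathcal E^*$, the defining strict inequality could not hold.

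\emph{Invariance.} The coequator satisfies $(A_\omega(x)^{-1})^*\mathcal F_x=\mathcal F_{f_\omega(x)}$ for $\mathbb P\times\mu$-a.e.~$(\omega,x)$. For $v\in\mathcal K_x$ and $\xi\in\mathcal F_x$,
\[
\big((A_\omega(x)^{-1})^*\xi\big)(A_\omega(x)v)=\xi(v)=0 .
\]
Since $(A_\omega(x)^{-1})^*\mathcal F_x$ exhausts $\mathcal F_{f_\omega(x)}$, this gives $A_\omega(x)\mathcal K_x\subset\mathcal K_{f_\omega(x)}$. The dimensions agree because $\dim\mathcal K_x=m-\dim\mathcal F_x$ and $\dim\mathcal F$ is a.e.\ constant. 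Hence $\mathcal K$ is an $A$-invariant bundle, and it is nontrivial.

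\emph{Reduction to a quotient identity.} The inequality $\lambda_m(\mu,A|_{\mathcal K})\ge\lambda_m(\mu,A)$ is immediate, since $\mathfrak m(A^n_\omega(x)|_{\mathcal K_x})\ge\mathfrak m(A^n_\omega(x))$. For the reverse inequality, identify $\mathcal F_x$ with $(\mathcal E_x/\mathcal K_x)^*$. Under this identification the action of $(A^{-1})^*$ on $\mathcal F$ is the inverse-dual of the induced quotient cocycle $\bar A$ on $\mathcal E/\mathcal K$. Therefore
\[
\tilde\lambda_1(\mu|_{\mathcal F})=-\lambda_{\min}(\mu,\bar A),
\]
where $\lambda_{\min}$ is the bottom exponent of the quotient. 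The coequator hypothesis $\tilde\lambda_1(\mu|_{\mathcal F})<-\lambda_m(\mu)$ then reads $\lambda_{\min}(\mu,\bar A)>\lambda_m(\mu,A)$.

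\emph{Exponent identity.} By the Oseledets/Kifer non-random filtration, the Lyapunov spectrum of $A$ over the ergodic $\mu$ is, as a multiset, the union of the spectrum of $A|_{\mathcal K}$ and the spectrum of $\bar A$. This is the block-triangular structure: relative to the splitting $\mathcal K\oplus\mathcal K^\perp$, the matrix of $A$ has $A|_{\mathcal K}$ and $\bar A$ as diagonal blocks, and the exponents of a block-triangular cocycle are those of its diagonal blocks. The value $\lambda_m(\mu,A)$ is therefore attained either by $A|_{\mathcal K}$ or by $\bar A$. Since every exponent of $\bar A$ is at least $\lambda_{\min}(\mu,\bar A)>\lambda_m(\mu,A)$, it must be attained by $A|_{\mathcal K}$, giving $\lambda_m(\mu,A|_{\mathcal K})\le\lambda_m(\mu,A)$.

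I expect the main obstacle to be justifying this spectrum decomposition for block-triangular random cocycles over a stationary measure. If a direct reference is inconvenient, a cleaner route avoids it by using volumes. Writing $k=\dim\mathcal K$, the relation
\[
|\det A^n_\omega(x)|=\det\big(A^n_\omega(x)|_{\mathcal K_x}\big)\cdot\det\bar A^n_\omega(x),
\]
together with the corresponding exponent sums, gives $\Sigma_m$ as a sum over the two blocks. Combining this with the bound $\lambda_{\min}(\bar A)>\lambda_m$ and the fact that the bottom $k$ exponents of $A$ must then include the whole spectrum of $A|_{\mathcal K}$ would force equality at the bottom. Either route needs only integrability of $\log^+\|A^{\pm1}\|$ and ergodicity of $\mu$.
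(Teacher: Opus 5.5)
Your proposal is correct and follows the paper's proof step by step. It uses the same annihilator argument for invariance and the same identification $\mathcal F_x\cong(\mathcal E_x/\mathcal K_x)^*$, which turns the coequator inequality into $\lambda_{\min}(\mu,\bar A)>\lambda_m(\mu,A)$. It then appeals to the block-triangular decomposition, which the paper invokes as $\lambda_m(\mu,A)=\min\{\lambda_m(\mu,A|_{\mathcal K}),\lambda_{\min}(\mu,A_{\mathcal Q})\}$ via \cite[Lem.~3.6]{FK83}.

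Your determinant fallback does not actually bypass that decomposition. The step ``the bottom $k$ exponents of $A$ include the whole spectrum of $A|_{\mathcal K}$'' is the same fact again, so the citation carries the argument in either version.
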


\begin{proof}
If \(v\in\mathcal K_x\) and
\(\eta\in\mathcal F_{f_\omega(x)}\), then, by invariance of
\(\mathcal F\), there exists \(\xi\in\mathcal F_x\) such that
\[
        \eta=(A_\omega(x)^{-1})^*\xi
        =
        \xi\circ A_\omega(x)^{-1}.
\]
Therefore $\eta(A_\omega(x)v)=\xi(v)=0$,
and hence
$ A_\omega(x)\mathcal K_x
        \subset
        \mathcal K_{f_\omega(x)}$.
Since \(A_\omega(x)\) is invertible for
\(\mathbb P\times\mu\)-a.e.~\((\omega,x)\), this inclusion is an equality.
This shows that \(\mathcal K\) is \(A\)-invariant.

Let $\mathcal Q:=\mathcal E/\mathcal K$
be the quotient bundle. Since
$\mathcal F=\mathcal K^\perp$, we have the natural identification
$
        \mathcal F_x
        \equiv
        \mathcal Q_x^*
        =
        (\mathcal E_x/\mathcal K_x)^*$.
Under this identification, the random inverse-dual cocycle $A^{-*}$ covering $f$  restricted to
\(\mathcal F\) is the inverse-dual of the quotient cocycle on \(\mathcal Q\).
Consequently,
\[
        \lambda_1(\mu,A^{-*}|_{\mathcal F})
        =
        -\lambda_{\min}(\mu,A_{\mathcal Q}),
\]
where \(\lambda_{\min}(\mu,A_{\mathcal Q})\) denotes the bottom Lyapunov
exponent of the quotient cocycle. Since \(\mathcal F\) is a coequator,
$   \lambda_1(\mu,A^{-*}|_{\mathcal F})
        <
        -\lambda_m(\mu,A)$,
and thus
$        \lambda_{\min}(\mu,A_{\mathcal Q})
        >
        \lambda_m(\mu,A)$.

Hence, taking into account that
$
        \lambda_m(\mu,A)
        =
        \min\{
        \lambda_m(\mu,A|_{\mathcal K}),
        \lambda_{\min}(\mu,A_{\mathcal Q})
        \}$,
cf.~\cite[Lem.~3.6]{FK83}, we get that
$
        \lambda_m(\mu,A)=\lambda_m(\mu,A|_{\mathcal K})$,
as we wanted to prove.
\end{proof}

We now prove the linear-bundle version of Proposition~\ref{prop:equator-coequator-extremal-dimension}.

\begin{prop}
\label{prop:abstract-equator-coequator-extremal-dimension}
Let  $A:\Omega \times \mathcal E \to \mathcal E$ be a random linear bundle isomorphism of rank \(m\) covering a continuous random map $f:\Omega \times X \to X$ of a compact metric space $X$ satisfying that $\log^+\|(A_\omega)^{\pm 1}\|_\infty$ is $\mathbb{P}$-integrable.    Let \(\mu\in\mathcal{I}(f)\) be ergodic. Then,
\begin{enumerate}
\item If $\lambda_1(\mu,A)=\lambda_m(\mu,A)$, 
then \(\mu\) is quasi-irreducible and coquasi-irreducible.
\item If \(\mu\) is quasi-irreducible for \(A\), then \(\mu\) has no coequator
of dimension \(m-1\).
\item If \(\mu\) is coquasi-irreducible for \(A\), then \(\mu\) has no equator
of dimension \(m-1\).
\item If \(m=2\), then \(\mu\) is quasi-irreducible if and only if \(\mu\) is
coquasi-irreducible.
\end{enumerate}
\end{prop}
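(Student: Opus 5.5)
We derive each item from Proposition~\ref{prop:non-linear-quasi-irreducibility}, applied to $A$ and to $A^{-*}$, together with Lemma~\ref{lem:annihilator-coequator-abstract} and its dual version obtained by exchanging the roles of $A$ and $A^{-*}$. Recall that $\lambda_1(\mu,A^{-*})=-\lambda_m(\mu,A)$ and that the exponents of $A^{-*}$ are $-\lambda_m\ge\dots\ge-\lambda_1$.

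For (i), assume $\lambda_1(\mu,A)=\lambda_m(\mu,A)$. Then, by the elementary inequality $\lambda_m(\mu,A)\le\lambda_-(\mu,A)\le\lambda_1(\mu,A)$, every lift is maximizing. Hence $\mu$ is quasi-irreducible. The same sandwich for $A^{-*}$ reads $-\lambda_1\le\tilde\lambda_-\le-\lambda_m$, and since these bounds coincide, $\mu$ is also coquasi-irreducible. Alternatively, every invariant subbundle $\mathcal L$ satisfies $\lambda_1(\mu|_{\mathcal L})\ge\lambda_m(\mu)=\lambda_1(\mu)$, so there are no equators, and Proposition~\ref{prop:non-linear-quasi-irreducibility} applies.

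For (ii), suppose $\mu$ is quasi-irreducible and has a coequator $\mathcal F$ of dimension $m-1$. By Lemma~\ref{lem:annihilator-coequator-abstract}, $\mathcal K=\mathcal F^\perp$ is a one-dimensional $A$-invariant bundle with $\lambda_m(\mu,A|_{\mathcal K})=\lambda_m(\mu,A)$. Since $\mathcal K$ is a line, $\lambda_1(\mu|_{\mathcal K})=\lambda_m(\mu|_{\mathcal K})=\lambda_m(\mu)$. Quasi-irreducibility and Proposition~\ref{prop:non-linear-quasi-irreducibility} force $\lambda_1(\mu|_{\mathcal K})=\lambda_1(\mu)$, because $\mathcal K$ is proper once $m\ge2$. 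Combining these gives $\lambda_1(\mu)=\lambda_m(\mu)$. But a coequator requires $\lambda_1(\mu,A^{-*}|_{\mathcal F})<-\lambda_m(\mu)=-\lambda_1(\mu)$, which is impossible because every exponent of $A^{-*}$ is at least $-\lambda_1(\mu)$. This contradiction proves (ii).

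The main subtle step is checking carefully that the restricted exponent on $\mathcal F$ is bounded below by the bottom exponent $-\lambda_1$ of $A^{-*}$. This holds since $\|B|_{\mathcal F}\|\ge\mathfrak m(B)$, and integrating gives $\lambda_1(\mu,A^{-*}|_{\mathcal F})\ge-\lambda_1(\mu,A)$. The case $m=1$ needs separate treatment: there a coequator of dimension $0$ is trivial, so (ii) holds vacuously.

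For (iii) we run the dual argument. Apply (ii) to the cocycle $A^{-*}$ on $\mathcal E^*$, whose inverse-dual is canonically $A$ under $\mathcal E^{**}=\mathcal E$. Quasi-irreducibility of $A^{-*}$ is coquasi-irreducibility of $A$, and a coequator of $A^{-*}$ is an equator of $A$. So (ii) for $A^{-*}$ says there is no equator of dimension $m-1$. We must confirm that Lemma~\ref{lem:annihilator-coequator-abstract} applies to $A^{-*}$, which it does since the integrability hypothesis is symmetric.

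For (iv), take $m=2$. The only proper dimension is $1=m-1$. By Proposition~\ref{prop:non-linear-quasi-irreducibility} and Theorem~\ref{prop:cotangent-equator-criterion} in its bundle form, quasi-irreducibility is equivalent to the absence of equators, and coquasi-irreducibility to the absence of coequators. If $\mu$ is quasi-irreducible, (ii) excludes one-dimensional coequators, so $\mu$ is coquasi-irreducible. Conversely, if $\mu$ is coquasi-irreducible, (iii) excludes one-dimensional equators, so $\mu$ is quasi-irreducible.
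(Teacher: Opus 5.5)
Your proposal is correct and follows essentially the same route as the paper: the sandwich $\lambda_m\le\lambda_-\le\lambda_1$ (and its dual for $A^{-*}$) for (i), the one-dimensional annihilator $\mathcal K=\mathcal F^\perp$ from Lemma~\ref{lem:annihilator-coequator-abstract} for (ii), duality through $A^{-*}$ for (iii), and (ii) together with (iii) for (iv). The only difference is in the ordering of (ii): the paper first disposes of the case $\lambda_1(\mu,A)=\lambda_m(\mu,A)$ via (i) and then shows $\mathcal K$ is an equator, whereas you use quasi-irreducibility to force $\lambda_1(\mu,A)=\lambda_m(\mu,A)$ and then contradict the coequator inequality via $\|B|_{\mathcal F}\|\ge\mathfrak m(B)$, making explicit the degenerate-case step the paper leaves implicit.
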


\begin{proof}
Since $\lambda_m(\mu,A)\leq \lambda_-(\mu,A)\leq \lambda_1(\mu,A)$
and
$    -\lambda_1(\mu,A)
        =
        \tilde\lambda_m(\mu,A)
        \leq
        \tilde\lambda_-(\mu,A)
        \leq
        \tilde\lambda_1(\mu,A)
        =
        -\lambda_m(\mu,A),
$
if
$\lambda_1(\mu,A)=\lambda_m(\mu,A)$,
then immediately
$  \lambda_-(\mu,A)=\lambda_1(\mu,A)$
and
$  \tilde\lambda_-(\mu,A)=-\lambda_m(\mu,A)$.
Thus \(\mu\) is quasi-irreducible and coquasi-irreducible. This proves~(i).

We prove (ii). First of all, we assume that
$\lambda_1(\mu,A)>\lambda_m(\mu,A)$,
because otherwise item~(i) already forbids the existence of a coequator. Suppose,
by contradiction, that \(\mu\) is quasi-irreducible and has a coequator
\(\mathcal F\) with $\dim\mathcal F=m-1$.
Let $\mathcal K=\mathcal F^\perp\subset\mathcal E$
be the annihilator of \(\mathcal F\). Since
$    \dim\mathcal K
        =
        \dim\mathcal E-\dim\mathcal F
        =
        1$, 
we have
\[
\lambda_1(\mu,A|_{\mathcal K})
:=
\lim_{n\to\infty}
\frac1n
\int
\log\|A_\omega^n(x)|_{\mathcal K_x}\|
\,d\mathbb P\,d\mu
=
\lim_{n\to\infty}
\frac1n
\int
\log\mathfrak m(A_\omega^n(x)|_{\mathcal K_x})
\,d\mathbb P\,d\mu
=:
\lambda_m(\mu,A|_{\mathcal K}).
\]
Moreover, by Lemma~\ref{lem:annihilator-coequator-abstract},
\(\mathcal K\) is an \(A\)-invariant subbundle and $   \lambda_m(\mu,A|_{\mathcal K})
        =
        \lambda_m(\mu,A)$.
Since $\lambda_1(\mu,A|_{\mathcal K})
        =
        \lambda_m(\mu,A)
        <
        \lambda_1(\mu,A)$, $\mu$ has a non-trivial equator, contradicting the
quasi-irreducibility. This proves~(ii).

We prove (iii) by applying item (ii) to the inverse-dual cocycle $A^{-*}$ covering $f$. Indeed, quasi-irreducibility of $A^{-*}$  is precisely coquasi-irreducibility of \(A\).
Moreover, after the canonical identification \(\mathcal E^{**}\simeq\mathcal E\),
the inverse-dual cocycle of \(A^{-*}\) is again \(A\). Hence, the coequators of the
cotangent cocycle \(A^*\) are exactly the equators of the original cocycle \(A\). 
Therefore, if \(\mu\) is coquasi-irreducible for \(A\), then \(\mu\) is
quasi-irreducible for \(A^{-*}\). Applying item~(ii) to \(A^{-*}\), we conclude that
\(A^{-*}\) has no coequator of dimension \(m-1\). Equivalently, the original cocycle
\(A\) has no equator of dimension \(m-1\). This proves~(iii).

Finally, (iv) follows from (ii) and (iii), since for \(m=2\) non-trivial equators
and coequators need to have dimension one.
\end{proof}

We now prove the abstract linear-bundle version of Corollary~\ref{cor:interplay-expanding-coexpanding}.

\begin{cor}
\label{cor:abstract-interplay-expanding-coexpanding}
Let  $A:\Omega \times \mathcal E \to \mathcal E$ be a random linear bundle isomorphism of rank \(m\) covering a continuous random map $f:\Omega \times X \to X$ of a compact metric space $X$ satisfying that $\log^+\|(A_\omega)^{\pm 1}\|_\infty$ is $\mathbb{P}$-integrable.   Then the following statements hold.
\begin{enumerate}
\item If \(A\) is expanding, has no non-trivial coequators of dimension less than
\(m-1\), and $\lambda_2(\mu,A)<0$
   for every $\mu\in\mathcal{I}(f)$,
then \(A\) is coquasi-irreducible and coexpanding.

\item If \(A\) is coexpanding, has no non-trivial equators of dimension less than
\(m-1\), and  $\lambda_{m-1}(\mu,A)>0$
        for every $\mu\in\mathcal{I}(f)$,
then \(A\) is quasi-irreducible and expanding.
\item If \(m=2\) and
$ \lambda_2(\mu,A)<0<\lambda_1(\mu,A)$ for every $\mu\in\mathcal{I}(f)$,
then
\[
        \text{expanding}
        \quad\Longleftrightarrow\quad
        \text{coexpanding}
        \quad\Longleftrightarrow\quad
        \text{quasi-irreducible}
        \quad\Longleftrightarrow\quad
        \text{coquasi-irreducible}.
\]
\end{enumerate}
\end{cor}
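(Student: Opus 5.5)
Items (i) and (ii) are proved by combining Proposition~\ref{prop:abstract-expanding-quasi}, Corollary~\ref{cor:abstract-coexpanding-coquasi} and Proposition~\ref{prop:abstract-equator-coequator-extremal-dimension}; item (iii) then follows by specialising to $m=2$.

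For (i), assume that $A$ is expanding and that $\lambda_2(\mu,A)<0$ for every $\mu\in\mathcal{I}(f)$. Proposition~\ref{prop:abstract-expanding-quasi}(ii) gives that $A$ is quasi-irreducible. Proposition~\ref{prop:abstract-equator-coequator-extremal-dimension}(ii) then rules out a coequator of dimension $m-1$ for every ergodic $\mu$. By hypothesis there are no non-trivial coequators of dimension less than $m-1$. A coequator has dimension at most $m-1$, since the full bundle $\mathcal E^*$ cannot satisfy the strict drop. Hence every ergodic $\mu$ has trivial coequator.

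The coquasi-irreducibility of ergodic $\mu$ now follows from the cotangent version of Proposition~\ref{prop:non-linear-quasi-irreducibility}, applied to $A^{-*}$. For non-ergodic $\mu$ I use the ergodic decomposition. Any $\tilde\mu\in\mathcal{I}_\mu(\tilde A)$ decomposes into ergodic lifts over the ergodic components of $\mu$, and $\lambda_m$ is affine in $\mu$. So maximality on each component integrates to maximality of $\tilde\mu$.

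For coexpansion I use Corollary~\ref{cor:abstract-coexpanding-coquasi}(iii). It suffices to show $\lambda_m(\mu,A)<0$ for every $\mu$, and this follows from $\lambda_m\le\lambda_2<0$ when $m\ge2$. If $m=1$ the statement is degenerate and I expect it to be handled separately or excluded.

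Item (ii) is the dual statement. I apply (i) to $A^{-*}$, using the identification $(A^{-*})^{-*}=A$. Under this identification equators and coequators are exchanged, expanding and coexpanding are exchanged, and the exponents become $-\lambda_m\ge\dots\ge-\lambda_1$. The hypothesis that the second exponent of $A^{-*}$ is negative reads $-\lambda_{m-1}(\mu,A)<0$, that is, $\lambda_{m-1}(\mu,A)>0$, which is exactly the hypothesis of (ii).

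For (iii), take $m=2$, so that $\lambda_{m-1}=\lambda_1$. In this case the conditions on (co)equators of dimension less than $m-1=1$ are vacuous, because such bundles are trivial. Then (i) gives expanding $\Rightarrow$ coexpanding, using $\lambda_2<0$. Likewise (ii) gives coexpanding $\Rightarrow$ expanding, using $\lambda_1>0$. Proposition~\ref{prop:abstract-expanding-quasi}(iv) gives expanding $\iff$ quasi-irreducible, since $\lambda_2\le0<\lambda_1$. Finally, Proposition~\ref{prop:abstract-equator-coequator-extremal-dimension}(iv), applied on ergodic components together with the decomposition argument above, gives quasi-irreducible $\iff$ coquasi-irreducible.

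The main obstacle is the passage from ergodic measures, where the equator and coequator characterisations live, to arbitrary stationary measures. In particular, one must check that the ergodic decomposition of a lift in $\mathcal{I}_\mu(\tilde A)$ really consists of lifts over ergodic components of $\mu$. I would argue this through the map $\tilde\mu\mapsto\pi_*\tilde\mu$ and the extremality of ergodic measures.
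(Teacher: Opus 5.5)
Your proposal is correct and follows essentially the same route as the paper. Item (i) goes through Proposition~\ref{prop:abstract-expanding-quasi}(ii), Proposition~\ref{prop:abstract-equator-coequator-extremal-dimension}(ii) and Corollary~\ref{cor:abstract-coexpanding-coquasi}(iii); item (ii) is the dual argument, where your shortcut of applying (i) to $A^{-*}$ amounts to the paper's ``symmetric'' proof; and item (iii) follows for $m=2$. The paper leaves implicit the passage from ergodic to general stationary measures, and your ergodic-decomposition sketch correctly fills it: the ergodic components of a lift project to ergodic measures, and $\lambda_m$ is affine in $\mu$.
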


\begin{proof}
We prove (i). Since \(A\) is expanding and
$ \lambda_2(\mu,A)<0$ for every $\mu\in\mathcal{I}(f)$,
by Proposition~\ref{prop:abstract-expanding-quasi}(ii), \(A\) is
quasi-irreducible. Hence, by
Proposition~\ref{prop:abstract-equator-coequator-extremal-dimension}(ii), an
ergodic stationary measure has no coequator of dimension \(m-1\). Since
ergodic \(f\)-stationary measures also have no non-trivial coequators of
dimension less than \(m-1\), we conclude that \(A\) is coquasi-irreducible.
Finally, since $\lambda_m(\mu,A)\leq\lambda_2(\mu,A)<0$
for every $\mu\in\mathcal{I}(f)$,
and \(A\) is coquasi-irreducible, Corollary~\ref{cor:abstract-coexpanding-coquasi}(iii) implies that \(A\) is
coexpanding.

We prove (ii). The proof is symmetric. Since \(A\) is coexpanding and
$\lambda_{m-1}(\mu,A)>0$ for every $\mu\in\mathcal{I}(f)$,
Corollary~\ref{cor:abstract-coexpanding-coquasi}(ii) implies that \(A\) is
coquasi-irreducible. Hence, by
Proposition~\ref{prop:abstract-equator-coequator-extremal-dimension}(iii), an
ergodic stationary measure has no equator of dimension \(m-1\). Since ergodic
\(f\)-stationary measures also have no non-trivial equators of dimension
less than \(m-1\), we conclude that \(A\) is quasi-irreducible. Finally, since $\lambda_1(\mu,A)\geq\lambda_{m-1}(\mu,A)>0$
for every $\mu\in\mathcal{I}(f)$,
and \(A\) is quasi-irreducible,
Proposition~\ref{prop:abstract-expanding-quasi}(iii) implies that \(A\) is
expanding.

Finally, (iii) follows from (i) and (ii), by observing that if \(m=2\)
non-trivial equators and coequators need to have dimension one.
\end{proof}

\subsection{Proof of the intermediate dimensional statements}

We finally explain why statements announced in
\S\ref{subsec:higher-dimensional-expansion} follow from the abstract
linear-bundle results proved above. Let \(A\in \mathrm{L}_{\mathbb{P}}(\pi)\) be a random linear bundle
isomorphism, and fix \(1\le k\le m-1\) where $m=\dim \mathcal E$. 
For the lower variational quantity one must distinguish the
Grassmannian \(G_k(\mathcal E)\) from the whole projective bundle
\(\mathbb P(\wedge^k\mathcal E)\). The required equality follows from
the finite-dimensional identity
\[
   \min_{[v]\in\mathbb P(\wedge^k V)}
   \log\frac{\|\wedge^kLv\|}{\|v\|}
   =
   \log m(\wedge^kL)
   =
   \min_{W\in G_k(V)}
   \log\operatorname{Jac}_k(L|_W),
\]
valid for every invertible linear map \(L:V\to V'\).
Together with the lower Grassmannian variational formula
\cite[Cor.~D.5]{BN26}, this gives
\[
   \lambda_-(\mu,\wedge^k A)
   =
   \lambda_-^{[k]}(\mu,A).
\]
For the upper quantity, the Pl\"ucker embedding and the corresponding
maximal variational formula give
\[
   \lambda_+(\mu,\wedge^k A)
   =
   \lambda_+^{[k]}(\mu,A)
   =
   \Sigma_k(\mu,A).
\]
Moreover, the first two Lyapunov exponents of the exterior-power
cocycle satisfy
\[
   \lambda_1(\mu,\wedge^kA)
   =
   \Sigma_k(\mu,A),
   \qquad
   \lambda_2(\mu,\wedge^kA)
   =
   \Gamma_k(\mu,A).
\]
Consequently, \(A\) is \(k\)-expanding if and only if
\(\wedge^kA\) is expanding, and \(\mu\) is \(k\)-quasi-irreducible
for \(A\) if and only if it is quasi-irreducible for
\(\wedge^kA\).


In view of this, applying Proposition~\ref{prop:mainthmD},~\ref{prop:abstract-expanding-quasi},~\ref{prop:abstract-equator-coequator-extremal-dimension} and Corollary~\ref{cor:abstract-coexpanding-coquasi},~\ref{cor:abstract-interplay-expanding-coexpanding} to \(\wedge^kA\), we get
the corresponding consequences announced in~\S\ref{subsec:higher-dimensional-expansion}. The only point one needs to be careful with is applying Proposition~\ref{prop:mainthmD} to exterior powers while keeping the topology of the original cocycle. To achieve this, one needs the following elementary observation:

\begin{lemma}
\label{lem:exterior-power-continuity}
Let $A,(B_n)_{n\geq 1}$ be in $\mathrm{L}_{\mathbb{P}}(\pi)$ such that ${\rm d}(B_n,A)\to 0$. Assume that there is~\(C>0\)~such~that
\[
        \|A_\omega\|_\infty,\;
        \|B_{n,\omega}\|_\infty,\;
        \|(A_\omega)^{-1}\|_\infty,\;
        \|(B_{n,\omega})^{-1}\|_\infty
        \le C \quad \text{for \(\mathbb P\)-a.e.~\(\omega\) and every \(n\ge1\).}
\]
Then, ${\rm d}(\wedge^k B_n,\wedge^k A)\to 0$  for every $k=1,\dots,m$.
\end{lemma}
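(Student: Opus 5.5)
The proof reduces to three elementary estimates for exterior powers of linear maps, applied pointwise in $(\omega,x)$. The metric $\mathrm d$ has three integrated terms, handled in turn. The base term $d_{C^0}$ is unchanged, since $\wedge^k B_n$ and $\wedge^k A$ cover the same base maps $B_n^\pi$ and $A^\pi$. For the other two terms, we first fix the auxiliary embedding used for $\wedge^k\mathcal E$. The natural choice is $\wedge^k\iota:\wedge^k\mathcal E\hookrightarrow X\times\wedge^k\mathbb R^N$, which is again a continuous fibrewise isometric embedding. The topology on $\mathrm L_{\mathbb P}(\wedge^k\pi)$ does not depend on this choice, as noted before Lemma~\ref{lem:uniform-conv-phi}. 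With this embedding, the embedded linear map $\iota_{f(x)}\circ\wedge^kF_x\circ\iota_x^{-1}\circ P_x$ equals $\wedge^k$ of the embedded map $\iota_{f(x)}\circ F_x\circ\iota_x^{-1}\circ P_x$ on $\mathbb R^N$, because $\wedge^k P_x$ is the orthogonal projection onto $\wedge^k\iota_x(\mathcal E_x)$. So it suffices to compare $\wedge^k S$ and $\wedge^k T$ for linear maps $S,T:\mathbb R^N\to\mathbb R^N$ with $\|S\|,\|T\|\le C$.

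The key algebraic estimate comes from the telescoping identity
\[
\wedge^kS-\wedge^kT=\sum_{j=1}^{k} S^{\wedge (j-1)}\wedge (S-T)\wedge T^{\wedge(k-j)},
\]
read on decomposable vectors $v_1\wedge\dots\wedge v_k$ by replacing one factor at a time. It gives
\[
\|\wedge^kS-\wedge^kT\|\le c_{k,N}\,k\,C^{k-1}\|S-T\|
\]
for a combinatorial constant $c_{k,N}$ that depends only on $k$ and $N$. Applying this pointwise with $S,T$ the embedded $B_{n,\omega}(x)$ and $A_\omega(x)$, and taking the supremum over $x$, yields
\[
\|\wedge^kB_{n,\omega}-\wedge^kA_\omega\|_\infty\le c\,C^{k-1}\|B_{n,\omega}-A_\omega\|_\infty .
\]
The inverse term works the same way, since $(\wedge^kB)^{-1}=\wedge^k(B^{-1})$ and the inverses are also bounded by $C$. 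Integrating over $\omega$ then gives $\mathrm d(\wedge^kB_n,\wedge^kA)\le c'\,\mathrm d(B_n,A)\to0$, with $c'$ depending only on $k$, $N$ and $C$.

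Two side points need checking. First, $\wedge^kA$ and $\wedge^kB_n$ must belong to $\mathrm L_{\mathbb P}(\wedge^k\pi)$. This follows from $\|\wedge^kB\|\le\|B\|^k$, which makes the log-integrability immediate, and from the uniform bound $C$. Second, and this is the main obstacle, the inverse term must be handled correctly. The embedded inverse is defined through $\iota^{-1}$ and $P$, and we need it to be the $k$-th power of the embedded inverse. This holds because $\wedge^k$ is functorial and $\wedge^kP_x$ is exactly the projection onto the embedded $k$-th exterior fibre. The uniform bound $C$ is essential here: it controls the factor $C^{k-1}$, which without it would not be integrable against $\|B_{n,\omega}-A_\omega\|_\infty$.
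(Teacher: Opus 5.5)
Your proposal is correct and follows essentially the paper's argument: a telescoping estimate $\|\wedge^kL-\wedge^kM\|\lesssim kC^{k-1}\|L-M\|$, applied pointwise to the direct and inverse terms via $(\wedge^kB)^{-1}=\wedge^k(B^{-1})$, with the base $d_{C^0}$ term unchanged, then integrated. The only differences are that you make the auxiliary embedding $\wedge^k\iota$ explicit, which the paper leaves implicit, and that you accept a dimension-dependent constant $c_{k,N}$, coming from evaluating on decomposable basis vectors, where the paper states the estimate with constant $1$; neither affects the conclusion.
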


\begin{proof}
We use the elementary estimate
\[
        \|\wedge^k L-\wedge^k M\|
        \le
        \sum_{j=0}^{k-1}
        \|L\|^j\|M\|^{k-1-j}\|L-M\|
\]
for linear maps \(L,M\) between Euclidean spaces. In particular, if
\(\|L\|,\|M\|\le C\), then
\[
        \|\wedge^k L-\wedge^k M\|
        \le
        kC^{k-1}\|L-M\|.
\]
Applying this with $L=B_{n,\omega}(x)$,  $M=A_\omega(x)$,
and taking the supremum over \(x\in X\), we get
\[
        \|\wedge^k B_{n,\omega}-\wedge^k A_\omega\|_\infty
        \le
        kC^{k-1}
        \|B_{n,\omega}-A_\omega\|_\infty .
\]
Moreover, since
$ (\wedge^k B_{n,\omega})^{-1}
        =
        \wedge^k((B_{n,\omega})^{-1})$ and
$ (\wedge^k A_\omega)^{-1}
        =
        \wedge^k((A_\omega)^{-1})$, the same estimate gives
\[
        \|(\wedge^k B_{n,\omega})^{-1}
        -
        (\wedge^k A_\omega)^{-1}\|_\infty
        \le
        kC^{k-1}
        \|(B_{n,\omega})^{-1}-(A_\omega)^{-1}\|_\infty .
\]
The exterior-power cocycle has the same base map as the original cocycle. Hence
\begin{align*}
        {\rm d}(\wedge^k B_n,\wedge^k A)
        &\le
        \int \left( d_{C^0}\big((B_n^\pi)_\omega,(f)_\omega\big)
         +
        kC^{k-1}
        \|B_{n,\omega}-A_\omega\|_\infty
        +
        kC^{k-1}
        \|(B_{n,\omega})^{-1}-(A_\omega)^{-1}\|_\infty
        \right) \,d\mathbb P.
\end{align*}
The right-hand side converges to zero because \({\rm d}(B_n,A)\to0\). Thus
${\rm d}(\wedge^k B_n,\wedge^k A)\to0$.
\end{proof}

\bibliographystyle{alpha3}
\bibliography{biblio}

\end{document}